\DeclareRobustCommand{\SkipTocEntry}[5]{}
\newtheorem{satz}{Theorem}[section]
\newtheorem{Theorem}[satz]{Theorem}
\newtheorem{Lemma}[satz]{Lemma}
\newtheorem*{Hauptsatz}{Theorem}
\newtheorem{Prop}[satz]{Proposition}
\newtheorem{example}[satz]{Example}
\newtheorem{Cor}[satz]{Corollary}
\newtheorem*{Conj*}{Conjecture}
\theoremstyle{definition}
\newtheorem{Definition}[satz]{Definition}
\newtheorem{Notation}[satz]{Notation}
\newtheorem{remark}[satz]{Remark}
\newcommand{\Z}{\mathbb{Z}}
\newcommand{\D}{\mathcal{D}}
\newcommand{\E}{\mathcal{E}}
\newcommand{\bs}{\backslash}
\newcommand{\variable}{\underline{\;\;}}
\providecommand{\Mf}[1]{\langle#1\rangle}
\DeclareMathOperator{\im}{im}
\DeclareMathOperator{\id}{id}
\DeclareMathOperator{\NF}{NF}
\DeclareMathOperator{\llcm}{l-lcm}
\DeclareMathOperator{\rlcm}{r-lcm}
\DeclareMathOperator{\lgcd}{l-gcd}
\DeclareMathOperator{\rgcd}{r-gcd}
\DeclareMathOperator{\QF}{QF}
\DeclareMathOperator{\sh}{sh}
\DeclareMathOperator{\hgt}{ht}
\newcommand{\wt}{\widetilde}
\newcommand{\sm}{\smallsetminus}
\providecommand{\abs}[1]{\lvert#1\rvert}
\providecommand{\norm}[1]{\lVert#1\rVert}
\newenvironment{pf}{\begin{proof}[Proof]}{\end{proof}}
\numberwithin{equation}{section}
\newcommand{\sms}[1]{\raisebox{0pt}[\height][0pt]{\ensuremath{\scriptstyle#1}}}
\tikzstyle{cloud} = [draw, rectangle, node distance=2cm, minimum size=6.5mm]
\begin{document}

\author{Alexander He\ss{}}
\author{Viktoriya Ozornova}
\address{ Fachbereich Mathematik, Universit\"at Bremen, 28359 Bremen }
\thanks{\texttt{ozornova@math.uni-bremen.de}}
\title{Factorability, String Rewriting and Discrete Morse Theory}

\thanks{\texttt{hess@math.uni-bonn.de}}

\begin{abstract}
This article deals with the notion of factorability. Elements of a factorable group or monoid possess a normal form, which leads to a small complex homotopy equivalent to its bar complex, thus computing its homology. We investigate the relations to string rewriting and to discrete Morse theory on the bar complex. Furthermore, we describe a connection between factorability and Garside theory.
\end{abstract}

\maketitle

\tableofcontents

\section{Introduction}
\label{Introduction}

This article investigates combinatorial properties of certain groups and monoids with a view towards their homology. 

One of the methods to compute the homology of a group is to consider its bar complex. However, the bar complex is very large and hard to deal with. While studying moduli spaces of Riemann surfaces, C.-F.~B\"odigheimer and B.~Visy discovered that symmetric groups have normal forms with some remarkable properties. Among other consequences, these normal forms lead to a chain complex, much smaller than the bar resolution, computing homology of the symmetric groups. C.-F.~B\"odigheimer and B.~Visy abstracted the properties needed for these normal forms and defined the notion of a factorable group (cf.\  \cite{Visy}, \cite{CFBVisy}). Later on, R.~Wang and the first author extended the definition of factorability to categories and monoids in \cite{Wang} and \cite{AlexThesis}.

A factorability structure depends not only on the group or monoid itself but also on the chosen generating system. The factorability structure consists of a factorization map which assigns to each element of the monoid a preferred generator that is split off in a geodesic, i.e., word-length-preserving way. This map is subject to several axioms, which in particular ensure a certain, non-obvious compatibility with the multiplication in the monoid. Such a factorability structure yields a choice of geodesic normal forms, i.e., minimal representatives of each element of the monoid in terms of the chosen generating system.

A consequence of the aforementioned axioms is the existence of a quite small complex computing the homology of those objects. This complex was introduced and first studied by B.~Visy, and he showed that it computes the homology in the case of symmetric groups in \cite{Visy}, \cite{CFBVisy}. R.~Wang has extended this result to groups with finite chosen generating system in \cite{Wang}. We present a proof in the more general case of monoids with arbitrary chosen generating systems (see also \cite{AlexThesis}).

In more detail, the first aim of this article is to investigate the rewriting system and the corresponding matching on the bar complex arising from a factorability structure. While the former turns out not to be noetherian in some cases, the latter always is, and the resulting chain complex can be identified with the one introduced by B.~Visy (cf.\ \cite{Visy}, \cite{CFBVisy}). The basis of this complex is described by the following theorem, generalizing the results by Visy and Wang (for the notion of stability, see Definition \ref{DefinitionStable}):

\begin{Hauptsatz}
 Let $(M, \E, \eta)$ be a factorable monoid. Then there is a chain complex $(\mathbb{V}_*, \partial^{\mathbb{V}})$ of free abelian groups with bases
\begin{eqnarray*}
\left\lbrace [m_n|\ldots |m_1]\,  |\, m_i \in \E, m_i\neq 1, (m_{i+1}, m_i) \mbox{ unstable}\right\rbrace
\end{eqnarray*}
in degree $n$, which computes the homology of the monoid $M$.
\end{Hauptsatz}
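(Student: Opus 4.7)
The plan is to apply discrete Morse theory to the normalized bar resolution $B_*(M)$, whose free $\mathbb{Z}$-basis in degree $n$ consists of tuples $[m_n|\ldots|m_1]$ with $m_i \in M \setminus \{1\}$ and which computes $H_*(M;\mathbb{Z})$. The goal is to construct an acyclic matching on these basis elements whose critical (unmatched) cells are precisely the tuples described in the statement, so that the Morse-theoretic machinery produces a chain complex $(\mathbb{V}_*, \partial^{\mathbb{V}})$ that is chain-homotopy equivalent to $B_*(M)$ and supported on those critical cells.

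The matching is built from the factorization map $\eta$ and the stability relation. For a non-critical cell $[m_n|\ldots|m_1]$, the factorability axioms should guarantee that exactly one of two situations applies, singled out by a deterministic scanning rule (say, acting at the rightmost position where something can be done): either some entry $m_i$ lies outside $\E$, in which case splitting $m_i$ along its preferred factor $\eta(m_i)\in\E$ produces a neighbouring cell of degree $n+1$; or every entry already lies in $\E\setminus\{1\}$ but some consecutive pair $(m_{i+1},m_i)$ is stable, in which case merging these two entries yields a neighbouring cell of degree $n-1$. Carefully arranged, these two operations are mutually inverse and assemble into a fixed-point-free involution on the non-critical cells. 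The critical cells are exactly those on which no intervention is possible, namely tuples with $m_i\in\E\setminus\{1\}$ and every consecutive pair unstable --- the basis named in the statement.

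With the matching in hand, the remaining work is standard: one checks that matched pairs are incident in the bar differential with coefficient $\pm 1$, that the scanning rule yields a genuine involution, and, most importantly, that the matching is noetherian. Once this is in place, discrete Morse theory delivers $(\mathbb{V}_*,\partial^{\mathbb{V}})$ together with mutually inverse chain-homotopy equivalences between $\mathbb{V}_*$ and $B_*(M)$, so that $H_*(\mathbb{V}_*)\cong H_*(M)$. A direct comparison of the two constructions then identifies $\mathbb{V}_*$ with the complex introduced by B.~Visy in \cite{Visy}, \cite{CFBVisy}.

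The main obstacle is noetherianity. As the introduction warns, the accompanying string rewriting system on $\E^{*}$ need not terminate, so no naive word-length measure will suffice for the matching either. A finer well-ordering on cells, likely blending the position at which the scanning rule fires, the factorization lengths of the individual entries, and lexicographic data, must be arranged so that every alternating sequence of matched pairs and boundary incidences is strictly decreasing. Constructing this invariant and verifying it against the factorability axioms --- especially the axiom encoding compatibility of $\eta$ with multiplication --- is the technical heart of the proof. A subsidiary but also non-trivial point, dependent on the same axioms, is showing that the ``split'' and ``merge'' alternatives are mutually exclusive and jointly exhaustive on every non-critical cell, so that the scanning rule genuinely produces an involution.
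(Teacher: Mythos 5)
Your overall framework is the paper's own: a $\Z$-compatible matching on the reduced bar complex whose essential cells are exactly the tuples in the statement, followed by the Brown--Cohen--Forman theorem. But the proposal stops exactly where the actual work begins, and on the decisive point your suggested route is the one the paper's own warning rules out. First, the matching itself: as literally stated, your dichotomy (split whenever some entry lies outside $\E$; merge only when all entries lie in $\E$ and some consecutive pair is stable) does not produce an involution. For instance, a cell $[m_3|m_2|m_1]$ with $m_1\in\E$, $m_2\notin\E$ and $(m_2,m_1)$ stable must be \emph{merged} at position $1$; if one splits $m_2$ instead, the partner cell again has an entry outside $\E$ and would be split once more, so $\mu^2\neq\id$. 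The paper resolves this by deciding at the \emph{height} of the cell (the length of the maximal essential right prefix): the cell is collapsible if the pair just above the height is stable, and redundant otherwise, in which case that entry is automatically outside $\E$; the verification that this is an involution with incidence numbers $\pm1$ uses the Recognition Principle, i.e.\ genuinely the strong factorability axiom and not only weak factorability (Proposition \ref{MatchingofFactorableMonoid}).

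Second, and more seriously, noetherianity is only announced, not proved, and the invariant you propose --- a well-ordering of cells blending the scanning position, factorization lengths and lexicographic data --- is precisely the kind of per-cell decreasing measure that the failure of termination of the associated rewriting system makes implausible; you note this tension but do not resolve it. The paper's argument is of a different nature and contains the key idea missing here: along a hypothetical infinite chain $x_1\vdash x_2\vdash\cdots$ of redundant cells one may assume the total norm constant, so the chain of collapsible partners is obtained by successively applying maps $f_{i_2},f_{i_3},\ldots$, and is encoded by a sequence of positions in $F_n$ which is automatically reduced and right-most; using the weak factorability identities one shows that a connected pattern $(k+1,k,k+1)$ can never arise up to $P$-equivalence, i.e.\ these sequences are \emph{small} (Lemma \ref{CriterionSmallness} and the case analysis in Proposition \ref{MatchingofFactorableMonoid}); and Lemma \ref{FinitenessSmallSequences} shows that there are only finitely many reduced, right-most, small sequences in $F_n$, which bounds the length of any such chain. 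It is this finiteness statement about the combinatorial monoids $P_n$ and $Q_n$ --- not any well-ordering of cells --- that makes the matching noetherian, and without it (or a worked-out substitute) your outline does not yet constitute a proof. The remaining steps you cite (homotopy equivalence via Theorem \ref{BCF}, identification of the basis, and the comparison with Visy's complex) are indeed routine once noetherianity is in hand.
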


Moreover, in Proposition \ref{VisyDifferentialCoherent} and Theorem \ref{THM:differential Visy resolution}, we give an explicit description of the differentials in this complex. 

Counterexamples for noetherianity of the rewriting system exist even if this rewriting system is finite and the monoid described by it is right-cancellative. An example of such a monoid is given in the appendix.  However, there are several cases where the corresponding rewriting system is noetherian, as for example in the following theorem (this is Corollary \ref{RewritingSystemLLGaussch}):

\begin{Hauptsatz}
 Let $M$ be a right-cancellative, left-noetherian monoid in which any two elements admitting a common left-multiple also admit a least common left-multiple. Then the rewriting system associated as above with the factorability structure on $M$ is complete.
\end{Hauptsatz}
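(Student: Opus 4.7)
Recall that a string rewriting system is \emph{complete} when it is both noetherian (terminating) and confluent. By Newman's lemma, under termination, confluence is equivalent to local confluence. The plan splits accordingly: establish termination using left-noetherianity, then local confluence using the least-common-left-multiple hypothesis, with right-cancellativity serving to pin down the common reducts.

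\emph{Termination.} The factorability rewrites are length-preserving, so a naive word-length argument cannot work. Instead, I would attach to each word $[m_k|\ldots|m_1]$ a tuple encoding the left-divisibility data at each position, for instance the sequence of left-divisor classes of the partial right products $m_k m_{k-1}\cdots m_i$, or equivalently of the generators successively split off by iterating $\eta$. A single rewrite at position $i$ leaves the full product invariant but refines this local factorization: by the factorability axioms, the generator produced at position $i+1$ is a proper left-divisor of the old one, while positions further to the right of the rewrite are unaffected. Using left-noetherianity, the induced product order on such tuples is well-founded, so every rewriting sequence terminates.

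\emph{Local confluence.} Critical pairs arise at overlapping positions, namely in a subword $[\ldots|c|b|a|\ldots]$ where both $(c,b)$ and $(b,a)$ are unstable. I would resolve such a pair as follows. Both initial rewrites yield length-three subwords with the same underlying product $cba$. Using the LCM hypothesis, one identifies a canonical minimal three-letter factorization of $cba$ that both branches must eventually reach, since $\eta$ is governed by left-divisibility and the LCM provides the unique minimal common refinement of the two factorization strategies. Right-cancellativity then forces the generators produced along each branch to agree letter by letter once they are assembled into the same product. Combined with termination, this yields confluence via Newman's lemma.

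\emph{Main obstacle.} The heart of the argument is the local confluence step. The difficulty is that a rewrite at one position can \emph{destabilize} adjacent positions, triggering a cascade of further rewrites; verifying that the two cascades arising from a single critical pair converge to the same normal form requires careful case analysis and strict use of the LCM property to coordinate the branches, as well as right-cancellativity to conclude pointwise equality from products. Termination, by comparison, is a relatively direct consequence of left-noetherianity once the correct divisibility-based measure has been identified.
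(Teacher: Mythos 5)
There is a genuine gap, and it sits exactly where you declare the argument to be ``relatively direct'': the termination step (on which your Newman-lemma confluence step also depends). A rewrite at positions $(i+1,i)$ replaces $(x,y)$ by $(\overline{\eta}(xy),\eta'(xy))$, and for the structure of Theorem \ref{DefinitionFaktorabilitaetGauss} the new right-hand letter $\eta'(xy)$ is the greatest right-divisor of $xy$ in $\E$, hence a left-\emph{multiple} $ty$ of $y$: the entry at position $i$ goes up, not down. So neither the componentwise (``product'') order on the letters nor the one on the partial products $m_k\cdots m_j$ decreases. One can try to repair this by a lexicographic order in which the left positions are more significant, using that $x=\overline{\eta}(xy)\,t$ (right-cancel $y$ from $xy=\overline{\eta}(xy)ty$), i.e.\ $\overline{\eta}(xy)\preceq x$; but then you need this to be \emph{strict}, i.e.\ you must exclude $x=xt$ with $t\neq 1$. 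The hypotheses do not exclude it: neither left-cancellativity nor atomicity is assumed. For instance $M=\langle x,t,y\mid xt=x\rangle$ is right-cancellative, left-noetherian and has conditional least common left-multiples, and $\E=\{x,t,y,ty\}$ is generating and closed under least common left-multiples and left-complements; there the pair $(x,y)$ is unstable and rewrites to $(x,ty)$, so the left letter is unchanged, the right letter strictly grows, and the tuple of partial products does not change at all. Your proposed measures are blind to such steps, so the termination argument collapses; and the paper's appendix shows termination genuinely fails for general factorable monoids (even finite, right-cancellative ones), so any correct proof must use the Gaussian hypotheses in a much more substantial way than a one-line divisibility measure.

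For comparison, the paper's route inverts your division of labour. The confluence-type half needs no critical-pair analysis and no Newman's lemma: strong minimality and uniqueness of the irreducible word in each class hold for \emph{every} factorable monoid by Lemma \ref{FactorabilityStructureInducesRewriting}, via Rodenhausen's normal forms. The entire content of Corollary \ref{RewritingSystemLLGaussch} is noetherianity, obtained by verifying the ungraded identities $(xs)'=(x's)'$ and $\overline{xs}=\overline{x}\cdot\overline{x's}$ for the greedy structure (using the greatest-right-divisor description of $\eta'$, closure of $\E$ under left-complements, and right-cancellativity) and then invoking Theorem \ref{CompleteRewritingSystemWeak}, whose proof bounds the length of effective rewriting sequences through the combinatorics of $Q_n'$ (Theorem \ref{AnBnCn}) and the strengthened Evaluation Lemma \ref{EvaluationLemmaQnStrich}. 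So the LCM-driven critical-pair analysis you single out as the main obstacle is unnecessary, while the cascade phenomena you mention are precisely what make termination the hard part, and that is where your proof is missing its key ingredient.
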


This statement generalizes the complete rewriting systems for Garside groups described in \cite{HermillerMeier}. 
\\

 A further aim of this article is to show that a large class of groups and monoids can be equipped with a factorability structure. Before we formulate our result, we report shortly on previously known examples. Among the few families of examples besides symmetric groups, the dihedral groups were known to carry an interesting factorability structure. The first-named author investigated another family of monoids closely related to Thompson's group $F$ in his thesis \cite{AlexThesis}. The second-named author and C.-F.~Bödigheimer showed that $O(V)$ carries a factorability structure with respect to the generating system of all reflections in \cite{MyThesis}. Moreover, in \cite{MyThesis} it is shown that the Coxeter groups of $B$-series admit a factorability structure with respect to all reflections. Whether the $D$-series and the exceptional irreducible finite Coxeter groups are factorable with respect to all reflections, is open.
 
 One should keep in mind that the existence of a factorability structure depends on the choice of the generating system. For example, the symmetric groups are factorable with all transpositions as a generating system, but they do not admit a factorability structure if we consider the generating system of simple transpositions. One obstruction to being a factorable monoid is a theorem due to M.~Rodenhausen \cite{Moritz} stating that the monoid has to admit with the given generating system a presentation with relations of length at most $4$.

The main result of this part provides factorability structures on a large class of monoids (see Theorem \ref{DefinitionFaktorabilitaetGauss}):

\begin{Hauptsatz} 
 Let $M$ be a a right-cancellative, left-noetherian monoid so that any two elements admitting a common left-multiple also admit a least common left-multiple. Let $\mathcal{E}$ be a generating subset of $M$ that is closed under least common left-multiple and left-complement. Then $(M, \mathcal{E})$ is factorable. 
\end{Hauptsatz}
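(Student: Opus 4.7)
The plan is to equip $M$ with the greedy Garside-style factorization. For $m \neq 1$, consider the set $D(m) := \{ e \in \mathcal{E} \mid e \text{ left-divides } m \}$, which is non-empty since any expression of $m$ in $\mathcal{E}$ begins with an element of $D(m)$, and all of whose members admit $m$ as a common left-multiple. The hypotheses furnish the l-lcm of any pair in $D(m)$ inside $M$; extending from pairs to finite subsets by iteration and then to $D(m)$ itself by left-noetherianity yields $\rho(m) := \llcm(D(m)) \in M$. Closure of $\mathcal{E}$ under $\llcm$ forces $\rho(m) \in \mathcal{E}$. Using cancellativity to single out a unique $r(m) \in M$ with $m = \rho(m) \cdot r(m)$, I set $\eta(m) := (\rho(m), r(m))$ and $\eta(1) := (1,1)$.

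The next step is to establish that $\eta$ is geodesic: $\|m\| = 1 + \|r(m)\|$ for $m \neq 1$, where $\|\cdot\|$ denotes word length with respect to $\mathcal{E}$. The upper bound is immediate. For the lower bound, given a shortest expression $m = e_1 e_2 \cdots e_n$, I would inductively absorb the leading generator into a growing l-lcm approximating $\rho(m)$, using closure of $\mathcal{E}$ under left-complement to ensure that the residual generator produced at each step remains in $\mathcal{E}$. This yields an expression of $r(m)$ of length $n-1$. Iterating $\eta$ terminates by left-noetherianity, producing canonical normal forms of length $\|m\|$.

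The technical core is the verification of the factorability axioms describing how $\eta$ interacts with multiplication, in particular with left-multiplication by a generator. Given $s \in \mathcal{E}$ and $m \in M$ with $\eta(m) = (\rho, r)$, one must describe $\eta(s\rho \cdot r)$ in terms of $s$ and $(\rho, r)$ in the manner prescribed by the axioms. The case split is governed by whether the pair $(s, \rho)$ is stable: if so, $s$ itself is the leading factor of $s \rho r$ and the axioms hold transparently; if not, the interaction is controlled by $\llcm(s, \rho)$, and one rewrites using the left-complements of $s$ and $\rho$. The closure assumptions guarantee that $\llcm(s, \rho)$ and these complements all remain in $\mathcal{E}$, and right-cancellativity allows comparison of the resulting factorization with the one obtained by recomputing $\rho$ on $s \rho r$.

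The principal obstacle I anticipate is this last verification: matching the explicit rewrite pattern dictated by the factorability axioms against the outcome of greedy recomputation, at every pair, requires tracking several simultaneous l-lcm and left-complement calculations. The closure properties of $\mathcal{E}$ are engineered precisely so that no auxiliary element ever leaves the generating set, but the bookkeeping is delicate; left-noetherianity recurs in these verifications as the tool bounding the inductions, while right-cancellativity is the algebraic lever that converts uniqueness of complements into equality of factorizations.
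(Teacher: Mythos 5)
Your construction is set up on the wrong side, and this is not a cosmetic issue: it breaks the proof at the very first step. You take $D(m)$ to be the set of generators that \emph{left}-divide $m$ (prefixes) and assert that its members ``admit $m$ as a common left-multiple''. They do not: if $e\preceq m$, then $m$ is a common \emph{right}-multiple of the elements of $D(m)$. The hypotheses of the theorem only guarantee least common \emph{left}-multiples (and closure of $\mathcal{E}$ under $\llcm$ and left-complement), so nothing in the assumptions lets you form $\rho(m)=\llcm(D(m))$; in a genuinely one-sided locally Gaussian monoid the prefixes of $m$ in $\mathcal{E}$ need not admit any common multiple on the relevant side, and no ``maximal prefix in $\mathcal{E}$'' need exist. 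Moreover, even if your $\rho(m)$ existed, the map $\eta(m)=(\rho(m),r(m))$ splits a generator off on the \emph{left}, so $\eta'(m)=r(m)$ need not lie in $\mathcal{E}$, violating axiom (F3) of Definition \ref{FactDefMonoid}; correspondingly, your ``technical core'' analyses left-multiplication $s\cdot m$ by a generator, whereas the factorability axiom (diagram \ref{eqWF}, Lemma \ref{FactAxiom}) is a compatibility with \emph{right} multiplication $(x,t)\mapsto xt$ for $t\in\mathcal{E}$. What you have sketched is at best a factorability structure for the opposite monoid under the mirrored hypotheses, which is not the statement.

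The construction the hypotheses actually support is the right-greedy one: $\eta'(m)$ is the \emph{greatest right-divisor} of $m$ lying in $\mathcal{E}$. Here the side matches: the right-divisors of $m$ in $\mathcal{E}$ admit $m$ as a common left-multiple, so pairwise least common left-multiples exist, lie in $\mathcal{E}$ by closure, and still right-divide $m$; left-noetherianity then yields a greatest such divisor (this is the Dehornoy--Lafont normal form, Theorem \ref{NFGauss} and Corollary \ref{NormalFormGreatestLeftDivisor}). The remaining verification is then organized differently from your plan as well: since $M$ is right-cancellative, factorability reduces to weak factorability (Corollary \ref{RechtskuerzbarAequivalenz}), and weak factorability is checked via Lemma \ref{FactAxiom} by comparing $\NF(x)$ with $\NF(xa)$ for $a\in\mathcal{E}$ (Lemma \ref{LetztesAxiom}: either $x_1a$ is the new last letter or $\NF(x_1a)=zy_1$ with $z\in\mathcal{E}$), which also gives geodesicity of the normal form (Corollary \ref{NFLaenge}, Lemma \ref{LLGaussschNFNorm}). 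If you transpose your entire argument to the right-divisor side and replace the left-multiplication analysis by this analysis of $xa$, you recover essentially the paper's proof; as written, the proposal does not go through.
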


This class of factorable monoids includes in particular the Artin monoids introduced by E.~Brieskorn and K.~Saito (\cite{BrieskornSaito}) while studying the corresponding Artin groups. Monoids as in this theorem were studied by P.~Dehornoy and Y.~Lafont under the name ``left locally Gaussian'' in \cite{DehornoyLafont}. Similar concepts have already been defined by P.~Dehornoy in \cite{DehornoyF} and by P.~Dehornoy and L.~Paris in \cite{DehornoyParis}. These concepts were developed to abstract and generalize the work by F.~Garside (\cite{Garside}), where he solves the word problem and the conjugation problem for braid groups. There are several further treatments of these and similar structures in the literature, mostly united by the term ``Garside theory'', e.g. in \cite{DDGKM}, \cite{GodelleParis}, \cite{DigneMichel}. 

For a certain subclass of monoids mentioned above, namely for the Garside monoids, it is possible to extend the factorability structure to the group of fractions of the monoid. In particular, one 
obtains a factorability structure on the 
braid groups and, more generally, on all Artin groups of finite type. 
\\

The structure of the article is as follows. In Section \ref{Factorability: Basic Definitions}, we report on basic definitions and existing results about factorability. In Section \ref{Local factorability}, we report on the alternative description of factorability by M.~Rodenhausen \cite{Moritz}. 

In Section \ref{Rewriting system basics}, we give a brief overview of rewriting systems basics needed in this article. In Section \ref{Rewriting System of a Factorable Monoid}, we describe the rewriting system arising from a factorability structure, which is always convergent. To show that it is noetherian in some cases, we have to prove a combinatorial lemma in Section \ref{Finiteness of Qn'}. Then the proof of the completeness of the corresponding rewriting systems is given in Section \ref{Factorable Monoids with Complete Rewriting Systems}. 

Next, the basics of discrete Morse theory used in this article are collected in Section \ref{sec:discrete morse theory}. After this, we define a matching on the bar complex of a factorable monoid, similar to the matching arising from the complete rewriting system on a monoid (cf.\ \cite{Brown}, \cite{Cohen}) and show that it is noetherian in Section \ref{Matching for Factorable Monoids}. We describe the resulting chain complex in Section \ref{Chain Complex for Homology of Factorable Monoids}. 

In Section \ref{Garside theory: Basics}, we first report on basic Garside theory. Then we explore the connection between factorability and Garside theory. In particular, we show in Section \ref{Factorability structure on locally Gaussian monoids} that each right-cancellative, left-noetherian monoid in which any two elements admitting a common left-multiple also admit a least common left-multiple is factorable. We describe the factorability structure more concretely for Artin monoids in Section \ref{Factorability Structure on Artin Monoids}. In Section \ref{Factorability structure on Garside groups}, we show that the factorability structures on Garside monoids can be extended to the corresponding groups of fractions. Finally, in the appendix, we exhibit an example where the associated rewriting system of a factorable monoid is not noetherian.

\addtocontents{toc}{\SkipTocEntry}
\subsection*{Acknowledgements}

This article arises from the theses of the authors written under the supervision of C.-F. B\"odigheimer. The authors wish to express their gratitude to him for his permanent encouragement and support. We would like to thank Lennart Meier and Felix Boes for reading preliminary versions of this article. The first author wishes to thank GRK 1150 Homotopy and Cohomology (University of Bonn) for financial support. The second author thanks GRK 1150 Homotopy and Cohomology (University of Bonn), the International Max Planck Research School on Moduli Spaces (MPI Bonn) and the SFB 647 Space-Time-Matter (Free University Berlin) for their financial support.

\section{Factorability: Basic Definitions}
\label{Factorability: Basic Definitions}
In this section, we are going to define factorability structures. We will collect some basic facts and notation. The idea of factorability is to provide a special sort of structure on groups (and later on monoids) which allows to get, starting with the bar complex, a much smaller complex for computing group homology (or corresponding analogues). The original definition of this structure is due to B.~Visy (\cite{Visy}) and C.-F.~Bödigheimer, and can be found in \cite{CFBVisy}, along with the main results on factorability. The definition was generalized to monoids by R.~Wang (\cite{Wang}) and the first author (\cite{AlexThesis}). 

Let $M$ be a monoid and $\mathcal{E}$ a generating set. By $\mathcal{E}^*$, we denote the free monoid generated by $\mathcal{E}$. Recall that since $\mathcal{E}$ is a generating system for $M$, there is a monoid homomorphism $\mathcal{E}^*\to M$ sending each word to the element it represents. Denote by $N_{\mathcal{E}}$ the word length with respect to $\mathcal{E}$, i.e., $N_{\mathcal{E}}(m)$ is the least length of a word in $\mathcal{E}^*$ which represents the given element $m\in M$. If there is no danger of confusion, we write $N=N_{\mathcal{E}}$ for short. We proceed with the definition of factorability. 

\begin{Definition}\label{FactDefMonoid}
Let $M$ be a monoid and $\mathcal{E}$ a generating set not containing $1$. We say, a map
 \begin{eqnarray*}
 \eta=(\overline{\eta}, \eta')\colon M\to M\times M
 \end{eqnarray*}
  is a \textbf{factorization map} if it satisfies the following three axioms:
  \begin{enumerate}
\item[(F1)] For all $m\in M$, we have $m=\overline{\eta}(m)\eta'(m)$.
\item[(F2)] For all $m \in M$, we have $N(m)=N(\overline{\eta}(m))+N(\eta'(m))$.
\item[(F3)] For all $m \in M\setminus \{1\}$, the element $\eta'(m)$ lies in $\mathcal{E}\setminus\{1\}$.
\end{enumerate}

   For $1\leq i\leq n-1$, we denote by $f_i$ the map $M^n \to M^{n}$ which assigns to $(x_n, \ldots, x_1) \in M^n$ the tuple $(x_n, \ldots, x_{i+2}, \eta(x_{i+1}x_i), x_{i-1}, \ldots, x_1)$. We call $\eta$ a \textbf{factorability structure} on $(M,\mathcal{E})$ if the three maps 
\begin{eqnarray*}
 f_1f_2f_1f_2, \quad f_2f_1f_2, \quad f_2f_1f_2f_1\colon M^3 \to M^3 
\end{eqnarray*}
are equal in the \textbf{graded sense}, i.e., for each tuple $(x_3, x_2, x_1)\in M^3$, the three maps agree or each of them lowers the sum of the norms of the entries. If $\eta$ is a factorability structure on $M$, we call the triple $(M, \mathcal{E}, \eta)$ a \textbf{factorable monoid}.

 We call the triple $(M, \mathcal{E}, \eta)$ \textbf{weakly factorable monoid} if $\eta$ is a factorization map, and, in addition, the following diagram commutes in the graded sense (same sense as above):
 
 \begin{equation*}\label{eqWF}
 \begin{gathered}
 \xymatrix{ 
M\times \mathcal{E}\ar[dd]^{\mu}\ar[r]^-{\eta\times \id} &M\times \widetilde{\mathcal{E}} \times \mathcal{E}\ar[d]^{\id \times \mu}&\\
&M\times M\ar[r]^-{\id\times \eta}&M\times M\times \widetilde{\mathcal{E}}\ar[d]^{\mu\times \id}\\ 
M\ar[rr]_{\eta}&  & M\times \widetilde{\mathcal{E}}
}
\end{gathered}
\tag{WF}
\end{equation*}
Here, we write $\widetilde{\mathcal{E}}=\mathcal{E}\cup\{1\}$. Moreover, $\mu$ denotes the multiplication in the monoid.

From now on, we write $\overline{x}=\overline{\eta}(x)$ and $x'=\eta'(x)$ for an element $x\in M$ and a factorization map $\eta$ whenever confusion is unlikely. 
\end{Definition}

The following remark is immediate.
\begin{remark}\label{EtaErzeuger}
 Let $M$ be a monoid with a chosen generating system $\E$, and let $\eta\colon M\to M \times M$ be a factorization map. Let $t$ be an element of $\E$. Then
 \begin{enumerate}
  \item $\eta(1)=(1,1)$,
  \item $\eta(t)=(1,t)$.
 \end{enumerate}
\end{remark}

The following notion turns out to be useful in our context.

\begin{Definition}
In a monoid $M$ with a chosen generating set $\E$, we call a pair $(a,b) \in M \times M$ \textbf{geodesic} if $N_{\E}(ab)=N_{\E}(a)+N_{\E}(b)$, where $N_{\E}$ denotes the word length with respect to $\E$.
\end{Definition}

\begin{remark} \label{EineImplikationFaktorabel}
\begin{enumerate}
 \item From a result of Visy (\cite{Visy}, Corollary 3.1.7), given a factorization map $\eta$ on a monoid $M$ with a chosen generating system $\mathcal{E}$ and a geodesic pair $(x,y)$, the pairs $(x', y)$ and $(\overline{x}, \overline{x'y})$ are automatically geodesic. Note that Visy only shows this for groups, but word-by-word the same argument works also for monoids. 
 \item Furthermore, we can also transfer Visy's proof that the diagram above commutes in graded sense for all of $M\times M$ (instead of only $M\times \mathcal{E}$, cf.\ \cite{Visy}, Proposition 3.1.8. Indeed, Visy even uses this stronger statement for the definition of factorability.)
\end{enumerate}
\end{remark}

We give a slight reformulation of the last condition \ref{eqWF} which is easier to handle:
\begin{Lemma}[\cite{Visy}] \label{FactAxiom}
Let $M$ be a monoid with a chosen generating system $\E$. For a factorization map $\eta\colon M\to M\times M$, the condition \ref{eqWF} for a pair $(x,t) \in M\times \E$ is equivalent to: If both $(\eta'(x), t)$ and $(\overline{\eta}(x), \overline{\eta}(\eta'(x)t))$ are geodesic pairs, then $(x,t)$ is a geodesic pair and the equalities $\eta'(xt)=\eta'(\eta'(x)t)$ and $\overline{\eta}(x)\overline{\eta}(\eta'(x)t)=\overline{\eta}(xt)$ hold. In groups or, more generally, right-cancellative monoids, the last equation holds automatically by cancellation. 

Note furthermore that if $\eta'(ab)=\eta'(\eta'(a)b)$ and $\overline{\eta}(a)\overline{\eta}(\eta'(a)b)=\overline{\eta}(ab)$ hold for some pair $(a,b)\in M\times \E$, the norm condition for this pair is automatically satisfied. 
\end{Lemma}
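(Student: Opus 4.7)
The plan is to unwind the diagram \ref{eqWF} at the specific pair $(x,t) \in M \times \E$. The top-right composite sends $(x,t)$ through the intermediate stages $(\overline{\eta}(x), \eta'(x), t)$, $(\overline{\eta}(x), \eta'(x)t)$, $(\overline{\eta}(x), \overline{\eta}(\eta'(x)t), \eta'(\eta'(x)t))$, and terminates at $A(x,t) := (\overline{\eta}(x)\overline{\eta}(\eta'(x)t),\, \eta'(\eta'(x)t))$, while the left-bottom composite yields $B(x,t) := (\overline{\eta}(xt),\, \eta'(xt))$. Graded commutativity at $(x,t)$ demands that $A(x,t) = B(x,t)$ or that each composite strictly lowers the total norm $N(x)+1$. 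The key observation is that, by (F2), the $\eta$-steps preserve the total norm, whereas each multiplication step drops it iff the corresponding pair fails to be geodesic. Consequently $A$ is norm-preserving exactly when both $(\eta'(x),t)$ and $(\overline{\eta}(x), \overline{\eta}(\eta'(x)t))$ are geodesic, while $B$ is norm-preserving exactly when $(x,t)$ is geodesic.

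With this norm accounting, the equivalence will follow by case analysis. For the forward direction, suppose \ref{eqWF} holds at $(x,t)$ and that both listed pairs are geodesic; then $A$ preserves norm and cannot belong to the ``strictly lowering'' branch of graded commutativity, forcing $A(x,t) = B(x,t)$. Reading off the two components yields the two claimed equalities, and comparing total norms gives $N(xt) = N(x) + 1$, i.e.\ that $(x,t)$ is geodesic. For the converse, I would assume the reformulation and split on whether both distinguished pairs are geodesic: in the geodesic case the hypothesis delivers $A(x,t) = B(x,t)$ directly; in the remaining case the contrapositive of Remark~\ref{EineImplikationFaktorabel} shows that $(x,t)$ cannot be geodesic either, so that $A$ and $B$ both strictly lower the norm.

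For the two addenda, I would argue as follows. In a right-cancellative monoid, applying (F1) to $xt$ and to $\eta'(x)t$ yields $\overline{\eta}(xt)\,\eta'(xt) = xt = \overline{\eta}(x)\,\overline{\eta}(\eta'(x)t)\cdot \eta'(\eta'(x)t)$, so the $\eta'$-equality permits cancellation of the rightmost factor (the case in which that factor equals $1$ being trivial) and produces the $\overline{\eta}$-equality. Similarly, if both equalities already hold for a pair $(a,b)$, then $A(a,b) = B(a,b)$ componentwise, which places us in the ``outputs agree'' branch of graded commutativity, so no separate norm inequality needs to be verified. The main obstacle I anticipate is precisely the non-geodesic case of the converse: without Remark~\ref{EineImplikationFaktorabel} one could not rule out a norm mismatch between $A(x,t)$ and $B(x,t)$ that would cause \ref{eqWF} to fail for reasons the reformulation is silent about.
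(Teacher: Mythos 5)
Your argument is correct: the norm bookkeeping (the $\eta$-steps preserve norm by (F2), each multiplication step drops it exactly when the corresponding pair is non-geodesic) identifies when each composite of \ref{eqWF} is norm-preserving, the forward direction and the geodesic case of the converse then follow by comparing components and total norms, and the non-geodesic case is correctly discharged by Remark \ref{EineImplikationFaktorabel}, with the two addenda handled by right cancellation of $\eta'(xt)$ and by noting that the two equalities already force the composites to agree. The paper itself gives no proof of this lemma (it is cited from Visy), but your route — unwinding the diagram with this graded/norm analysis plus Remark \ref{EineImplikationFaktorabel} — is exactly the intended argument that the surrounding text presupposes.
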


\begin{remark}

\begin{enumerate}
\item Note that the original definition of factorability for groups (\cite{CFBVisy}), directly transferred to monoids, is exactly the weakly factorability property. However, it turns out in general not to be the right definition for our purposes, in particular, in order to obtain a small chain complex computing the monoid homology. See also \cite{StefanMehner}.  
 \item To illustrate the situation, we will depict $f_i$ by the following diagram: 
 \end{enumerate}
 
\begin{center}
 \begin{tikzpicture}
  \node[cloud](a1){$x_n$};
 \node[right of=a1](b1){$\ldots$};
 \node[cloud, right of=b1](c1){$x_{i+2}$};
  \node[cloud, right of=c1](d1){$x_{i+1}$};
 \node[cloud, right of=d1](e1){$x_{i}$};
 \node[cloud, right of=e1](f1){$x_{i-1}$};
 \node[right of=f1, xshift=0.5cm](g1){$\ldots$};
 \node[cloud, right of=g1](h1){$x_1$};
  \node[cloud, below of=a1](a2){$x_n$};
 \node[right of=a2](b2){$\ldots$};
 \node[cloud, right of=b2](c2){$x_{i+2}$};
  \node[cloud, right of=c2](d2){$\overline{x_{i+1}x_i}$};
 \node[cloud, right of=d2](e2){$(x_{i+1}x_{i})'$};
 \node[cloud, right of=e2](f2){$x_{i-1}$};
 \node[right of=f2, xshift=0.5cm](g2){$\ldots$};
 \node[cloud, right of=g2](h2){$x_1$};

 \draw (a1)--(a2);
  \draw (c1)--(c2);
\draw (d1.south) -- ++(0.0, -0.5) -| ++(1.0, -0.5) -- ++(0.0,-0.0)-- ++(-1.0, 0.)--  (d2.north);
    \draw (e1.south) -- ++(0.0,-0.5) -| ++(-1.0, -0.5) -- ++(0.0,-0.0)-- ++(1.0, 0.)--  (e2.north);
   \draw (f1)--(f2);
 \draw (h1)--(h2);
\end{tikzpicture}
\end{center}

 \begin{enumerate}
 \setcounter{enumi}{2}
  \item With this notation, we can depict the three maps in Definition \ref{FactDefMonoid} as follows:
  \end{enumerate}
\vspace{0.5cm}
\begin{center}
 \begin{tikzpicture}[node distance = 1.5cm, auto, scale=0.75, minimum size=4.875mm]
 \tikzstyle{every node}=[font=\tiny]
 \node[rectangle, draw](a1){$a$};
 \node[rectangle, draw,right of=a1](b1){$b$};
 \node[rectangle, draw, right of=b1](c1){$c$};
 \node[rectangle, draw, below of=a1](a2){};
 \node[rectangle, draw, below of=b1](b2){};
 \node[rectangle, draw, below of=c1](c2){};
  \node[rectangle, draw, below of=a2](a3){};
 \node[rectangle, draw, below of=b2](b3){};
 \node[rectangle, draw, below of=c2](c3){};
  \node[rectangle, draw, below of=a3](a4){};
 \node[rectangle, draw,  below of=b3](b4){};
 \node[rectangle, draw,  below of=c3](c4){};
  \node[rectangle, draw,  below of=a4](a5){};
 \node[rectangle, draw,  below of=b4](b5){};
 \node[rectangle, draw,  below of=c4](c5){};
 
 \node[rectangle, draw,  left of=a1](f1){$c$};
\node[rectangle, draw, left of=f1](e1){$b$};
\node[rectangle, draw, left of=e1](d1){$a$};
 \node[rectangle, draw, below of=d1](d2){};
 \node[rectangle, draw, below of=e1](e2){};
 \node[rectangle, draw,  below of=f1](f2){};
  \node[rectangle, draw, below of=d2](d3){};
 \node[rectangle, draw,  below of=e2](e3){};
 \node[rectangle, draw,  below of=f2](f3){};
  \node[rectangle, draw, below of=d3](d4){};
 \node[rectangle, draw,  below of=e3](e4){};
 \node[rectangle, draw,  below of=f3](f4){};
 
  \node[rectangle, draw, left of=d1](i1){$c$};
\node[rectangle, draw, left of=i1](h1){$b$};
\node[rectangle, draw, left of=h1](g1){$a$};
 \node[rectangle, draw, below of=g1](g2){};
 \node[rectangle, draw, below of=h1](h2){};
 \node[rectangle, draw, below of=i1](i2){};
  \node[rectangle, draw, below of=g2](g3){};
 \node[rectangle, draw, below of=h2](h3){};
 \node[rectangle, draw, below of=i2](i3){};
  \node[rectangle, draw, below of=g3](g4){};
 \node[rectangle, draw, below of=h3](h4){};
 \node[rectangle, draw, below of=i3](i4){};
  \node[rectangle, draw, below of=g4](g5){};
 \node[rectangle, draw, below of=h4](h5){};
 \node[rectangle, draw, below of=i4](i5){};

\draw (a1) -- (a2); 
    \draw (b1.south) -- ++(0.0, -0.5) -| ++(1.0, -0.5) -- ++(0.0,-0.0)-- ++(-1.0, 0.)--  (b2.north);
    \draw (c1.south) -- ++(0.0,-0.5) -| ++(-1.0, -0.5) -- ++(0.0,-0.0)-- ++(1.0, 0.)--  (c2.north);
\draw (c2) -- (c3); 
    \draw (a2.south) -- ++(0.0, -0.5) -| ++(1.0, -0.5) -- ++(0.0,-0.0)-- ++(-1.0, 0.)--  (a3.north);
    \draw (b2.south) -- ++(0.0,-0.5) -| ++(-1.0, -0.5) -- ++(0.0,-0.0)-- ++(1.0, 0.)--  (b3.north);
\draw (a3) -- (a4); 
    \draw (b3.south) -- ++(0.0, -0.5) -| ++(1.0, -0.5) -- ++(0.0,-0.0)-- ++(-1.0, 0.)--  (b4.north);
    \draw (c3.south) -- ++(0.0,-0.5) -| ++(-1.0, -0.5) -- ++(0.0,-0.0)-- ++(1.0, 0.)--  (c4.north);
\draw (c4) -- (c5); 
    \draw (a4.south) -- ++(0.0, -0.5) -| ++(1.0, -0.5) -- ++(0.0,-0.0)-- ++(-1.0, 0.)--  (a5.north);
    \draw (b4.south) -- ++(0.0,-0.5) -| ++(-1.0, -0.5) -- ++(0.0,-0.0)-- ++(1.0, 0.)--  (b5.north);
    
    \draw (f1) -- (f2); 
    \draw (d1.south) -- ++(0.0, -0.5) -| ++(1.0, -0.5) -- ++(0.0,-0.0)-- ++(-1.0, 0.)--  (d2.north);
    \draw (e1.south) -- ++(0.0,-0.5) -| ++(-1.0, -0.5) -- ++(0.0,-0.0)-- ++(1.0, 0.)--  (e2.north);
\draw (d2) -- (d3); 
    \draw (e2.south) -- ++(0.0, -0.5) -| ++(1.0, -0.5) -- ++(0.0,-0.0)-- ++(-1.0, 0.)--  (e3.north);
    \draw (f2.south) -- ++(0.0,-0.5) -| ++(-1.0, -0.5) -- ++(0.0,-0.0)-- ++(1.0, 0.)--  (f3.north);
\draw (f3) -- (f4); 
    \draw (d3.south) -- ++(0.0, -0.5) -| ++(1.0, -0.5) -- ++(0.0,-0.0)-- ++(-1.0, 0.)--  (d4.north);
    \draw (e3.south) -- ++(0.0,-0.5) -| ++(-1.0, -0.5) -- ++(0.0,-0.0)-- ++(1.0, 0.)--  (e4.north);
    
       \draw (i1) -- (i2); 
    \draw (g1.south) -- ++(0.0, -0.5) -| ++(1.0, -0.5) -- ++(0.0,-0.0)-- ++(-1.0, 0.)--  (g2.north);
    \draw (h1.south) -- ++(0.0,-0.5) -| ++(-1.0, -0.5) -- ++(0.0,-0.0)-- ++(1.0, 0.)--  (h2.north);
\draw (g2) -- (g3); 
    \draw (h2.south) -- ++(0.0, -0.5) -| ++(1.0, -0.5) -- ++(0.0,-0.0)-- ++(-1.0, 0.)--  (h3.north);
    \draw (i2.south) -- ++(0.0,-0.5) -| ++(-1.0, -0.5) -- ++(0.0,-0.0)-- ++(1.0, 0.)--  (i3.north);
\draw (i3) -- (i4); 
    \draw (g3.south) -- ++(0.0, -0.5) -| ++(1.0, -0.5) -- ++(0.0,-0.0)-- ++(-1.0, 0.)--  (g4.north);
    \draw (h3.south) -- ++(0.0,-0.5) -| ++(-1.0, -0.5) -- ++(0.0,-0.0)-- ++(1.0, 0.)--  (h4.north);
  \draw (g4) -- (g5); 
    \draw (h4.south) -- ++(0.0, -0.5) -| ++(1.0, -0.5) -- ++(0.0,-0.0)-- ++(-1.0, 0.)--  (h5.north);
    \draw (i4.south) -- ++(0.0,-0.5) -| ++(-1.0, -0.5) -- ++(0.0,-0.0)-- ++(1.0, 0.)--  (i5.north);
 \end{tikzpicture}
\end{center}

\end{remark}

\begin{Notation} \label{NotationAlphaI}
If we have any map $\alpha\colon X^k \to X^l$ for some set $X$ and some natural numbers $k$ and $l$, we will define maps $\alpha_i\colon X^n \to X^{n-k+l}$ for $n\geq k$ and $1\leq i\leq n-k+1$ via
\begin{eqnarray*}
 \alpha_i(x_n,\ldots, x_1)=(x_n, \ldots, x_{i+k}, \alpha(x_{i+k-1},\ldots, x_i), x_{i-1}, \ldots, x_1).
\end{eqnarray*}

\end{Notation}

Since Definition \ref{FactDefMonoid} above is rather hard to check, we are going to use also an equivalent description. We will now define the recognition principle and then formulate this equivalent description.

\begin{Definition}[\cite{AlexThesis}, Recognition Principle]\label{RecognitionPrinciple}
Let $M$ be a monoid, let $\mathcal{E}$ be a generating system for $M$ and let $\eta\colon M\to M\times M$ be a factorization map. We say that $\eta$ satisfies the \textbf{recognition principle} if for all $m\in M$, $a\in \E$, the pair $(m,a)$ satisfies $\eta(ma)=(m,a)$ if and only if the pair $(m',a)$ satisfies $\eta(m'a)=(m',a)$, where $m'=\eta'(m)$. 
\end{Definition}

The criterion for factorability is now as follows:

\begin{Theorem}[\cite{AlexThesis}, Theorem 2.2.6]\label{CharacterizationRecognitionPrinciple}
Let $M$ be a monoid, let $\mathcal{E}$ be a generating system for $M$ and let $\eta\colon M\to M\times M$ be a factorization map. Then $\eta$ is a factorability structure on $M$ in the sense of Definition \ref{FactDefMonoid} if and only if it is a weak factorability structure and satisfies in addition Recognition Principle \ref{RecognitionPrinciple}. 
\end{Theorem}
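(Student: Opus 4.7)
My approach is to prove the two implications separately, each by specializing or generalizing the axioms carefully.

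For the forward direction (factorability implies WF and RP), the key idea is to exploit the 3-tuple condition on carefully chosen triples. Given a pair $(x, t) \in M \times \mathcal{E}$, the natural triple to consider is $(\bar{\eta}(x), \eta'(x), t)$. By axiom (F1) and the definition of $\eta$, we have $\eta(\bar{\eta}(x) \cdot \eta'(x)) = \eta(x) = (\bar{\eta}(x), \eta'(x))$, so the map $f_2$ fixes this triple. Consequently the three compositions $f_1 f_2 f_1 f_2$, $f_2 f_1 f_2$, $f_2 f_1 f_2 f_1$ simplify to $f_1 f_2 f_1$, $f_2 f_1$, and $f_2 f_1 f_2 f_1$ respectively; comparing their outputs entry by entry, I would read off precisely the equalities of Lemma \ref{FactAxiom} that characterize the graded commutativity of the WF diagram at $(x, t)$. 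For the Recognition Principle, I apply the same device to the triple $(\bar{\eta}(m), \eta'(m), a)$ with $a \in \mathcal{E}$: the forced graded agreement of the three compositions, interpreted as a statement about whether $a$ is ``absorbed'' at the last letter when one multiplies with $\bar{\eta}(m) \eta'(m) = m$ versus with $\eta'(m)$ alone, produces the biconditional $\eta(ma) = (m, a) \iff \eta(m'a) = (m', a)$.

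For the reverse direction (WF and RP imply factorability), I must verify the 3-tuple axiom on an arbitrary triple $(c, b, a) \in M^3$. The plan is to dispose of the non-geodesic case first: if $N(cba) < N(c) + N(b) + N(a)$, then a step-by-step analysis of the three compositions shows that each strictly lowers the total norm, so graded agreement holds trivially. In the geodesic case, I would trace through each of the three compositions applying WF at every $f_i$ step; the resulting outputs should in each case coincide with the iterated $\eta$-factorization of $cba$ as a length-3 tuple $(\overline{\overline{cba}}, (\overline{cba})', (cba)')$, and matching the intermediate simplifications proves the equality.

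The main obstacle is the reverse direction, and specifically the following technical issue: WF only controls $\eta$ on a product $y \cdot t$ when $t \in \mathcal{E}$, but when running the compositions one is forced to consider $\eta$ applied to products such as $c \cdot \overline{ba}$, where $\overline{ba}$ is not a priori a generator. The Recognition Principle is exactly the device that resolves this: it asserts that whether $\eta(c \cdot y) = (c, y)$ holds depends only on the last-letter data $\eta'(c)$, thereby reducing the question to a two-variable one of WF type. Invoking RP at the right moments, and simultaneously tracking whether each step preserves the norm sum (so that equality is forced) or strictly lowers it (so that the graded condition is trivially met), constitutes the delicate bookkeeping at the heart of this direction.
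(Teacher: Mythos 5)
Your overall plan (prove both implications, test the triple axiom on well-chosen triples, keep careful track of norms) matches the paper's strategy, but both directions as you have set them up contain genuine gaps. In the forward direction the triple $(\overline{\eta}(x),\eta'(x),t)$ is the wrong test object: since $f_2$ fixes it and $f_1$ only refactors $\eta'(x)t$, every intermediate tuple in all three compositions evaluates $\eta$ at $\eta'(x)t$, at $\overline{\eta}(x)\cdot\overline{\eta}(\eta'(x)t)$, and at further refinements of these --- never at $xt$ itself (nor at $ma$ in the Recognition Principle case). But the conclusions you want, namely the identity $\eta(xt)=(\overline{\eta}(x)\,\overline{\eta}(\eta'(x)t),\,\eta'(\eta'(x)t))$ of Lemma \ref{FactAxiom} and the stability or instability of $(m,a)$, are statements about the value of $\eta$ at $xt$ resp.\ $ma$, so graded agreement of the three maps on your triple can never produce them. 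The paper's trick is to pad with a unit and test on $(x,t,1)$: the composition beginning with $f_2$ then computes $\eta(xt)$ and $\eta(\overline{\eta}(xt))$, while the one beginning with $f_1$ merely shuffles the $1$ and passes through $(\overline{\eta}(x),\eta'(x),t)$; comparing the two outputs (together with the norm bookkeeping) yields exactly Lemma \ref{FactAxiom}, and the same computation with the assumed stabilities gives both implications of the Recognition Principle.

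In the reverse direction the obstacle you identify --- that (WF) only controls products whose right factor lies in $\E$ --- is real, but the Recognition Principle is not the device that removes it: Definition \ref{RecognitionPrinciple} is itself only a statement about pairs $(m,a)$ with $a\in\E$, relating $(m,a)$ to $(\eta'(m),a)$, and says nothing about $\eta(c\cdot\overline{\eta}(ba))$ with a non-generator right factor. What handles that issue is the extension of the graded commutativity of \ref{eqWF} from $M\times\E$ to all of $M\times M$ (Remark \ref{EineImplikationFaktorabel}, Visy's argument), which uses weak factorability alone; it cannot stand in for the Recognition Principle, since weak factorability alone does not imply factorability. In the paper's proof of Lemma \ref{WeakFactRecognImpliesFact} the Recognition Principle enters at exactly one point: writing $f_i=\eta_i\mu_i$, one shows formally (graded \ref{eqWF} on $M\times M$, associativity $\mu_1\mu_2=\mu_1\mu_1$, and $\mu_i\eta_i=\id$) that $f_2f_1f_2\equiv f_2f_1f_2f_1\equiv\eta_2\eta_1\mu_1\mu_2$, i.e.\ both compute the normal-form triple $(\overline{\overline{z}},(\overline{z})',z')$ of $z=x_3x_2x_1$ when norms survive; then the Recognition Principle, applied to the by-definition stable pair $(\overline{z},z')$ with $z'\in\E$, gives stability of $((\overline{z})',z')$, so $f_1$ fixes this output, which is precisely what $f_1f_2f_1f_2\equiv f_2f_1f_2$ requires. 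Your sketch conflates these two ingredients, and your dispatch of the non-geodesic case (each composition strictly drops the norm) also silently relies on the $M\times M$ extension rather than on (WF) as stated; without separating and justifying these points the argument does not close.
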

We will subdivide the proof in several parts. Some parts are due to or inspired by M. Rodenhausen. We start by showing that factorability structure is a weak factorability structure:
\begin{Lemma}\label{FaktorabilitaetImpliziertSchwacheFakt}
 Let $M$ be a monoid with a chosen generating system $\mathcal{E}$ and let $\eta\colon M\to M\times M$ be a factorability structure. Then $\eta$ is also a weak factorability structure. 
\end{Lemma}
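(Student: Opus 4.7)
The plan is as follows. By Lemma \ref{FactAxiom}, I can reduce the weak-factorability condition to the following statement: for any pair $(x, t) \in M \times \E$ such that $(x', t)$ and $(\overline{x}, \overline{\eta}(x' t))$ are geodesic, the two identities $\eta'(xt) = \eta'(x' t)$ and $\overline{\eta}(x) \cdot \overline{\eta}(x' t) = \overline{\eta}(xt)$ hold; the geodesicity of $(x, t)$ itself then follows from the last remark in that lemma.

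I will extract both identities from a single instance of the factorability axiom of Definition \ref{FactDefMonoid}, applied to the padded triple $(x, t, 1) \in M^3$. Setting $(a, b) := \eta(x' t)$ and $(p, q) := \eta(\overline{x}\, a)$, one can unwind $f_2 f_1 f_2 f_1 (x, t, 1)$ step by step (using Remark \ref{EtaErzeuger} to simplify $\eta(t)$ and $\eta(x)$) and finish at the tuple $(p, q, b)$. Similarly, $f_2 f_1 f_2 (x, t, 1)$ unwinds to
\begin{align*}
\bigl(\overline{\eta}(\overline{\eta}(xt)),\ \eta'(\overline{\eta}(xt)),\ \eta'(xt)\bigr),
\end{align*}
where the additional simplification $\eta(\eta'(xt)) = (1, \eta'(xt))$ from Remark \ref{EtaErzeuger} (which holds uniformly whether $\eta'(xt) \in \E$ or $\eta'(xt) = 1$) is used.

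The key observation, and the main point of choosing this triple, is that at every intermediate step in the computation of $f_2 f_1 f_2 f_1(x, t, 1)$ the total norm stays equal to $N(x) + 1$: this follows from axiom (F2) at each step together with the geodesicity hypotheses on $(x', t)$ and $(\overline{x}, a)$. Consequently, the graded-sense clause in Definition \ref{FactDefMonoid} forces the three maps $f_1 f_2 f_1 f_2$, $f_2 f_1 f_2$ and $f_2 f_1 f_2 f_1$ to coincide on $(x, t, 1)$, rather than all lowering the norm.

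Finally, comparing the two unwound expressions componentwise gives $\eta'(xt) = b = \eta'(x' t)$ directly, and $\eta(\overline{\eta}(xt)) = (p, q)$; via axiom (F1) the latter yields $\overline{\eta}(xt) = pq = \overline{x}\, a = \overline{\eta}(x) \cdot \overline{\eta}(x' t)$, the second required identity. The step I expect to require the most care is the norm bookkeeping for $f_2 f_1 f_2 f_1$: one must check at each of the four intermediate tuples that the norm is preserved, invoking (F2) and the appropriate geodesicity hypothesis at the right moment. Everything else is then a direct consequence of the graded-sense dichotomy.
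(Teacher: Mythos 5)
Your proposal is correct and follows essentially the same route as the paper: both reduce via Lemma \ref{FactAxiom} to the pair of identities, apply the factorability axiom of Definition \ref{FactDefMonoid} to the padded triple $(x,t,1)$, check that the geodesicity hypotheses make $f_2f_1f_2f_1$ norm-preserving there, and then compare it componentwise with $f_2f_1f_2(x,t,1)$. The only cosmetic difference is how geodesicity of $(x,t)$ is recovered --- the paper reads it off from the fact that the first step of the norm-preserving $f_2f_1f_2$ is $\eta(xt)$, while you cite the closing remark of Lemma \ref{FactAxiom}; both are valid.
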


\begin{pf}
Consider a pair $(x,t)\in M\times \E$. We want to use Lemma \ref{FactAxiom}. So assume $(x',t)$ and $(\overline{x}, \overline{x't})$ are geodesic pairs. We consider the triple $(x,t,1)\in M^3$. We apply to it $f_2f_1f_2f_1$ first. We use Remark \ref{EtaErzeuger} (from now on without mentioning it explicitly).

\begin{center}
     \begin{tikzpicture}[node distance = 2cm, auto]
 \node[cloud](a1){$x$};
 \node[cloud, right of=a1](b1){$t$};
 \node[cloud, right of=b1](c1){$1$};
 \node[cloud, below of=a1](a2){$x$};
 \node[cloud, below of=b1](b2){$1$};
 \node[cloud, below of=c1](c2){$t$};
  \node[cloud, below of=a2](a3){$\overline{x}$};
 \node[cloud, below of=b2](b3){$x'$};
 \node[cloud, below of=c2](c3){$t$};
  \node[cloud, below of=a3](a4){$\overline{x}$};
 \node[cloud, below of=b3](b4){$\overline{x't}$};
 \node[cloud, below of=c3](c4){$(x't)'$};
  \node[cloud, below of=a4](a5){$\overline{\overline{x}\cdot\overline{x't}}$};
 \node[cloud, below of=b4](b5){$\left(\overline{x}\cdot\overline{x't}\right)'$};
 \node[cloud, below of=c4](c5){$(x't)'$};


\draw (a1) -- (a2); 
    \draw (b1.south) -- ++(0.0, -0.5) -| ++(1.0, -0.5) -- ++(0.0,-0.0)-- ++(-1.0, 0.)--  (b2.north);
    \draw (c1.south) -- ++(0.0,-0.5) -| ++(-1.0, -0.5) -- ++(0.0,-0.0)-- ++(1.0, 0.)--  (c2.north);
\draw (c2) -- (c3); 
    \draw (a2.south) -- ++(0.0, -0.5) -| ++(1.0, -0.5) -- ++(0.0,-0.0)-- ++(-1.0, 0.)--  (a3.north);
    \draw (b2.south) -- ++(0.0,-0.5) -| ++(-1.0, -0.5) -- ++(0.0,-0.0)-- ++(1.0, 0.)--  (b3.north);
\draw (a3) -- (a4); 
    \draw (b3.south) -- ++(0.0, -0.5) -| ++(1.0, -0.5) -- ++(0.0,-0.0)-- ++(-1.0, 0.)--  (b4.north);
    \draw (c3.south) -- ++(0.0,-0.5) -| ++(-1.0, -0.5) -- ++(0.0,-0.0)-- ++(1.0, 0.)--  (c4.north);
\draw (c4) -- (c5); 
    \draw (a4.south) -- ++(0.0, -0.5) -| ++(1.0, -0.5) -- ++(0.0,-0.0)-- ++(-1.0, 0.)--  (a5.north);
    \draw (b4.south) -- ++(0.0,-0.5) -| ++(-1.0, -0.5) -- ++(0.0,-0.0)-- ++(1.0, 0.)--  (b5.north);
 \end{tikzpicture}
    \end{center}

By assumption, all of the multiplied pairs are geodesic, so the application of $f_2f_1f_2f_1$ preserves the sum of the norms when applied to $(x,t,1)$. By the definition of factorability, so does $f_2f_1f_2$, and the results are equal.
    
The application of $f_2f_1f_2$ to the triple $(x,t,1)$ yields:
\begin{center}
   \begin{tikzpicture}[node distance = 2cm, auto]
 \node[cloud](a1){$x$};
 \node[cloud, right of=a1](b1){$t$};
 \node[cloud, right of=b1](c1){$1$};
 \node[cloud, below of=a1](a2){$\overline{xt}$};
 \node[cloud, below of=b1](b2){$(xt)'$};
 \node[cloud, below of=c1](c2){$1$};
 \node[cloud, below of=a2](a3){$\overline{xt}$};
 \node[cloud, below of=b2](b3){$1$};
 \node[cloud, below of=c2](c3){$(xt)'$};
 \node[cloud, below of=a3](a4){$\overline{\overline{xt}}$};
 \node[cloud, below of=b3](b4){$\left(\overline{xt}\right)'$};
 \node[cloud, below of=c3](c4){$(xt)'$};
\draw (c1) -- (c2); 
    \draw (a1.south) -- ++(0.0, -0.5) -| ++(1.0, -0.5) -- ++(0.0,-0.0)-- ++(-1.0, 0.)--  (a2.north);
    \draw (b1.south) -- ++(0.0,-0.5) -| ++(-1.0, -0.5) -- ++(0.0,-0.0)-- ++(1.0, 0.)--  (b2.north);
    \draw (a2) -- (a3);
    \draw (b2.south) -- ++(0.0, -0.5) -| ++(1.0, -0.5) -- ++(0.0,-0.0)-- ++(-1.0, 0.)--  (b3.north);
    \draw (c2.south) -- ++(0.0,-0.5) -| ++(-1.0, -0.5) -- ++(0.0,-0.0)-- ++(1.0, 0.)--  (c3.north);
    \draw (c3) -- (c4); 
    \draw (a3.south) -- ++(0.0, -0.5) -| ++(1.0, -0.5) -- ++(0.0,-0.0)-- ++(-1.0, 0.)--  (a4.north);
    \draw (b3.south) -- ++(0.0,-0.5) -| ++(-1.0, -0.5) -- ++(0.0,-0.0)-- ++(1.0, 0.)--  (b4.north);

 \end{tikzpicture}
    \end{center}
    
    Observe that an application of $f_i$ to any tuple either preserves or lowers the sum of the norms of the entries. Since we know that the application of $f_2f_1f_2$ does not lower the norm, we can conclude that $(x,t)$ is a geodesic pair. Moreover, since the resulting triples of applying $f_2f_1f_2$ and $f_2f_1f_2f_1$ are equal, they are still equal after multiplying the two left entries, and so we obtain:
    \begin{eqnarray*}
     (\overline{x}\cdot \overline{x't}, (x't)')=(\overline{xt}, (xt)').
    \end{eqnarray*}
This was exactly to be shown according to Lemma \ref{FactAxiom}. 
\end{pf}

Next, we are going to show that a factorability structure satisfies the recognition principle. 

\begin{Lemma}
 Let $M$ be a monoid with a chosen generating system $\E$, and let $\eta$ be a factorability structure on $(M,\E)$. Then $\eta$ satisfies Recognition Principle (as in Definition \ref{RecognitionPrinciple}).
\end{Lemma}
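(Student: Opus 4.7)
The plan is to prove the two directions of the biconditional separately, using that $\eta$ is already known to be weakly factorable by Lemma \ref{FaktorabilitaetImpliziertSchwacheFakt}, so that the criterion of Lemma \ref{FactAxiom} is at our disposal.

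For the easier direction, assume $\eta(m'a) = (m',a)$. Then $(m',a)$ is geodesic by axiom (F2), and $(\overline{m}, \overline{m'a}) = (\overline{m}, m')$ is geodesic since it is the standard factorization of $m$. Lemma \ref{FactAxiom} then gives at once $\eta'(ma) = \eta'(m'a) = a$ and $\overline{\eta}(m)\cdot\overline{\eta}(m'a) = \overline{\eta}(ma)$, from which I read off $\overline{\eta}(ma) = \overline{m}\cdot m' = m$, hence $\eta(ma) = (m,a)$.

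The harder direction will be to show that $\eta(ma) = (m,a)$ implies $\eta(m'a) = (m',a)$. For this, I plan to apply the factorability axiom to the triple $(\overline{m}, m', a) \in M^3$ and compare the three maps $f_1f_2f_1f_2$, $f_2f_1f_2$ and $f_2f_1f_2f_1$. The key observation is that $f_2$ acts trivially on $(\overline{m}, m', a)$ because $\eta(\overline{m}\cdot m') = \eta(m) = (\overline{m}, m')$, while $f_1$ sends the triple to $(\overline{m}, \overline{m'a}, (m'a)')$. To evaluate the further compositions, I will use that, by Remark \ref{EineImplikationFaktorabel}(1) applied to the geodesic pair $(m,a)$, both $(m',a)$ and $(\overline{m}, \overline{m'a})$ are geodesic; weak factorability (Lemma \ref{FactAxiom}) then yields $\overline{m}\cdot\overline{m'a} = \overline{ma} = m$ and $(m'a)' = \eta'(ma) = a$. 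Substituting these identities throughout the four-fold compositions gives
\begin{align*}
f_2f_1f_2(\overline{m}, m', a) &= f_2f_1f_2f_1(\overline{m}, m', a) = (\overline{m}, m', a), \\
f_1f_2f_1f_2(\overline{m}, m', a) &= (\overline{m}, \overline{m'a}, (m'a)').
\end{align*}
Both of these triples have sum of norms equal to $N(\overline{m}) + N(m') + N(a)$, the norm sum of the input, so none of the three maps lowers the sum. The graded-equality clause of the factorability axiom then forces the three outputs to coincide, giving $\overline{m'a} = m'$ and $(m'a)' = a$, which is precisely $\eta(m'a) = (m',a)$.

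I expect the main obstacle to be the careful bookkeeping in the second direction: at every intermediate step of the four-fold compositions one must verify that the relevant inner pair is geodesic, so that the weakly-factorable identities can be invoked to resolve the $\eta$ there, and one must simultaneously track that the total norm is preserved. Once this is in place, the graded-equality axiom of factorability does the substantive work for free, upgrading ``norms are preserved'' to ``outputs agree,'' which is exactly the content of the Recognition Principle.
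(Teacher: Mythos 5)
Your argument is correct, and it differs from the paper's proof in two respects worth noting. For the direction ``$\eta(m'a)=(m',a)$ implies $\eta(ma)=(m,a)$'' you argue directly from Lemma \ref{FactAxiom}: stability of $(m',a)$ makes both hypothesis pairs geodesic, and the two equalities supplied by that lemma give $\overline{\eta}(ma)=\overline{m}\cdot m'=m$ and $\eta'(ma)=a$ without any cancellation. The paper does not argue this way in the lemma itself (it reserves that argument for Corollary \ref{RechtskuerzbarAequivalenz}); instead it runs both directions through the padded triple $(m,a,1)$, comparing $f_2f_1f_2$, $f_2f_1f_2f_1$ and $f_1f_2f_1f_2$ on it, so that the needed computations are exactly the diagrams already established in Lemma \ref{FaktorabilitaetImpliziertSchwacheFakt}. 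For the harder direction you also invoke the graded-equality axiom, but on the triple $(\overline{m},m',a)$ rather than $(m,a,1)$: you first use Remark \ref{EineImplikationFaktorabel}(1) on the geodesic pair $(m,a)$ together with Lemma \ref{FactAxiom} to get $(m'a)'=a$ and $\overline{m}\cdot\overline{m'a}=m$, which lets you evaluate all the compositions, observe that $f_2f_1f_2$ fixes the triple (so no norm drop occurs), and then read $\overline{m'a}=m'$ off the forced equality with $f_1f_2f_1f_2$. Both routes are sound; the paper's choice of triple makes the evaluations immediate once the earlier lemma is in place, while yours avoids the unit padding at the cost of front-loading the weak-factorability bookkeeping, which you carry out correctly.
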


\begin{pf}
 Let $x$ be in $M$ and let $t$ be an element of $\E$. 
 
 First, assume that the pair $(x,t)$ is $\eta$-stable. In particular, it is geodesic. By Remark \ref{EineImplikationFaktorabel}, we are in the same situation as in the proof of the last lemma, so we obtain in particular:
 \begin{eqnarray*}
 f_2f_1f_2(x, t,1)=(\overline{\overline{xt}},\left(\overline{xt}\right)',(xt)').
 \end{eqnarray*}
 By assumption, this triple reduces to $(\overline{x}, x', t)$. Since $\eta$ is a factorability structure and $f_2f_1f_2$ is norm-preserving on $(x,t,1)$, we know that the application of $f_1$ to $f_2f_1f_2(x,t,1)$ does not change the triple, so we have 
 \begin{eqnarray*}
  \eta(x't)=(x',t),
 \end{eqnarray*}
 i.e., the pair $(x', t)$ is $\eta$-stable.
 
 Conversely, assume now that the pair $(x', t)$ is $\eta$-stable. Then, as in the last lemma, 
 \begin{eqnarray*}
  f_2f_1f_2f_1(x,t,1)=(\overline{\overline{x}\cdot\overline{x't}}, \left(\overline{x}\cdot\overline{x't}\right)', (x't)').
 \end{eqnarray*}
By assumption, this triple simplifies to $(\overline{x}, x', t)$. In particular, the application of $f_2f_1f_2f_1$ is norm-preserving, so we get an equality 
 \begin{eqnarray*}
 (\overline{x}, x', t)=f_2f_1f_2(x, t,1)=(\overline{\overline{xt}},\left(\overline{xt}\right)',(xt)').
 \end{eqnarray*}
So $(xt)'=t$ and 
\begin{eqnarray*}
 \overline{\overline{xt}}\cdot\left(\overline{xt}\right)'=\overline{xt}=\overline{x}\cdot x'=x,
\end{eqnarray*}
thus the pair $(x,t)$ is stable. This finishes the proof of the lemma.
\end{pf}

For the last part of Theorem \ref{CharacterizationRecognitionPrinciple}, we need to show the other implication. This is done in the next lemma.

\begin{Lemma} \label{WeakFactRecognImpliesFact}
 Let $M$ be a monoid, $\E$ a chosen generating system and $\eta$ a weak factorability structure on $M$ satisfying recognition principle. Then $\eta$ is a factorability structure. 
\end{Lemma}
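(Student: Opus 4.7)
The plan is to verify that on any triple $(a, b, c) \in M^3$ the three compositions $f_1f_2f_1f_2$, $f_2f_1f_2$ and $f_2f_1f_2f_1$ agree in the graded sense: they produce identical triples, or else each strictly lowers $\Sigma := N(a)+N(b)+N(c)$. A key preparation is to promote weak factorability from $M \times \E$ to $M \times M$, as noted in Remark \ref{EineImplikationFaktorabel}(2). Combined with Lemma \ref{FactAxiom}, this yields, for any pair $(x,y) \in M\times M$ with $(x',y)$ and $(\overline{x}, \overline{x'y})$ geodesic, the identity $\eta(xy) = (\overline{x}\cdot \overline{x'y},\, (x'y)')$, which will be the workhorse for all subsequent computations.

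I would split the argument into two cases. In the first case, every intermediate pair multiplied along any of the three compositions is geodesic, so $\Sigma$ is preserved throughout. A direct calculation using the promoted weak factorability then shows that each composition produces the triple obtained by iterating $\eta$ twice on the product $abc$, namely
\begin{equation*}
\bigl(\overline{\overline{abc}},\; \overline{abc}\,',\; (abc)'\bigr).
\end{equation*}
For instance, on $f_2f_1f_2(a,b,c)$, the first $f_2$ peels off the last generator of $ab$, the subsequent $f_1$ combines $(ab)'$ with $c$, and the identity above (applied to $(\overline{ab}, (ab)'c)$) identifies the rightmost generator produced with $(abc)'$; a further $f_2$-step then gives the claimed triple, and the two other compositions are handled analogously.

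In the complementary case, at least one intermediate multiplication is non-geodesic, and thus at least one of the three compositions strictly lowers $\Sigma$. The task is to show that this phenomenon infects all three. Here the recognition principle of Definition \ref{RecognitionPrinciple} is essential: it asserts that stability of $(x, t)$ with $t \in \E$ is equivalent to that of $(x', t)$, which allows non-stability (and hence non-geodesicity after an application of $\eta$) to be transferred between the intermediate triples of the three different compositions. A case distinction depending on which of $(a,b)$ or $(b,c)$ first exhibits non-geodesicity, together with this transfer, will then show that $\Sigma$ strictly drops along each of the three compositions.

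The main obstacle is precisely this bookkeeping in the non-geodesic case: verifying that a single local non-geodesicity propagates through all three of the compositions being compared. The recognition principle is the decisive input enabling this propagation, and it is exactly the extra ingredient that weak factorability alone lacks; without it, the weak factorability diagram on $M \times \E$ would not suffice to force the stability conditions of one composition to match those of the other two.
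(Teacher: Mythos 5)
There is a genuine gap, and it lies exactly in where you let the Recognition Principle do its work. In your all-geodesic Case 1 you assert that ``a direct calculation using the promoted weak factorability'' shows that all three compositions return $\bigl(\overline{\overline{abc}},(\overline{abc})',(abc)'\bigr)$. That is fine for $f_2f_1f_2$ and $f_2f_1f_2f_1$, but not for $f_1f_2f_1f_2$: after the norm-preserving application of $f_2f_1f_2$ the triple is $\eta_2\eta_1(abc)$, and the remaining $f_1$ multiplies the pair $\bigl((\overline{abc})',(abc)'\bigr)$. This pair is automatically geodesic (Remark \ref{EineImplikationFaktorabel}, applied to the geodesic pair $\eta(abc)$), so the situation belongs to your Case 1; but geodesic is not stable, and without further input $f_1$ may re-factorize the pair into a different pair of the same total norm, in which case the three maps neither agree nor all lower $\Sigma$. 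Proving stability of $\bigl((\overline{abc})',(abc)'\bigr)$ is the one indispensable use of the Recognition Principle: $(\overline{abc},(abc)')=\eta(abc)$ is stable by construction, and Recognition transfers stability to $\bigl((\overline{abc})',(abc)'\bigr)$. This is precisely how the paper argues, after first establishing $f_2f_1f_2\equiv f_2f_1f_2f_1$ purely formally from the diagram \ref{eqWF} on $M\times M$, by writing $f_i=\eta_i\mu_i$ and using $\mu_1\mu_2=\mu_1\mu_1$ and $\mu_i\eta_i=\id$, so that both maps are graded-equal to $\eta_2\eta_1\mu_1\mu_2$.

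Conversely, your Case 2 assigns Recognition a role it does not have. If some intermediate multiplication is non-geodesic, then $f_1f_2f_1f_2$ lowers $\Sigma$ as soon as $f_2f_1f_2$ does, simply because $f_1$ never increases $\Sigma$; and the comparison of $f_2f_1f_2$ with $f_2f_1f_2f_1$ in this situation follows from weak factorability alone, e.g.\ via Lemma \ref{FactAxiom}: if $\bigl((ab)',c\bigr)$ and $\bigl(\overline{ab},\overline{(ab)'c}\bigr)$ were geodesic, then $(ab,c)$ would be geodesic, contradicting $N(abc)<N(a)+N(b)+N(c)$ when $(b,c)$ fails to be geodesic (the paper's operator identity disposes of all these subcases at once). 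Moreover, the proposed transfer mechanism rests on the slip ``non-stability (and hence non-geodesicity after an application of $\eta$)'': instability does not imply non-geodesicity --- an unstable pair can perfectly well be geodesic, and $\eta$ then merely re-factorizes its product with no drop in $\Sigma$. So as written, your plan spends the Recognition Principle on the easy, norm-dropping half, where weak factorability suffices, and leaves the genuinely hard step --- the leading $f_1$ in the norm-preserving case --- resting on weak factorability alone, which is known to be insufficient (cf.\ \cite{StefanMehner}); repairing the sketch amounts to moving the Recognition argument to that final step.
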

\begin{pf}
 Note that commutativity in graded sense behaves well under precomposition with arbitrary maps and under postcomposition with maps which do not increase the norm. If we write $\alpha\equiv \beta$ for equality in graded sense, we can express the compatibility with composition as $\alpha\gamma\equiv \beta \gamma$ for all $\gamma$ and $\gamma \alpha\equiv \gamma \beta$ for $\gamma$ not norm-increasing. 
 
 We denote by $\mu$ the multiplication in the monoid and use Notation \ref{NotationAlphaI} to define $\mu_i$ on tuples of monoid elements. Then we can write the compositions in Diagram \ref{eqWF} as $\eta_1\mu_1$ and $\mu_2\eta_1\mu_1\eta_2$, respectively. Recall that by Remark \ref{EineImplikationFaktorabel}, these compositions are equal in graded sense on all of $M\times M$. Note that we can write $f_i$ as $\eta_i\mu_i$. So we consider 
 \begin{eqnarray*}
  f_2f_1f_2=\eta_2\mu_2\eta_1\mu_1\eta_2\mu_2.
 \end{eqnarray*}
By the preconsideration, we can derive from Diagram \ref{eqWF} that 
\begin{eqnarray*} 
f_2f_1f_2\equiv \eta_2\eta_1\mu_1\mu_2. 
\end{eqnarray*} 

On the other hand, 
\begin{eqnarray*}
 f_2f_1f_2f_1=\eta_2\mu_2\eta_1\mu_1\eta_2\mu_2\eta_1\mu_1\equiv\eta_2\eta_1\mu_1\mu_2\eta_1\mu_1.
\end{eqnarray*}
Observe that associativity of $\mu$ can be expressed by $\mu_1\mu_2=\mu_1\mu_1$. Furthermore, the condition (F2) implies that $\mu_i\eta_i=\id$. This in turn shows:
\begin{eqnarray*}
 f_2f_1f_2f_1\equiv\eta_2\eta_1\mu_1\mu_1=\eta_2\eta_1\mu_1\mu_2.
\end{eqnarray*}
Thus, we have shown $f_2f_1f_2\equiv f_2f_1f_2f_1$. 

Now we have to show that $f_2f_1f_2\equiv f_1f_2f_1f_2$. Observe that it is enough to show that whenever $f_2f_1f_2$ is norm-preserving for some triple, the application of $f_1$ to the resulting triple does not change it. Given a triple $(x_3,x_2,x_1)$ for which $f_2f_1f_2$ is norm-preserving, we set $x$ to be the product $x_3x_2x_1=\mu_1\mu_2(x_3,x_2,x_1)$. Then
\begin{eqnarray*}
 f_2f_1f_2(x_3, x_2,x_1)=\eta_2\eta_1(x)=(\overline{\overline{x}}, \left(\overline{x}\right)', x').
\end{eqnarray*}

Since $(\overline{x}, x')$ is by definition a stable pair, we may apply the Recognition Principle and conclude that $\left(\left(\overline{x}\right)', x'\right)$ is a stable pair, so the triple $f_2f_1f_2(x_3, x_2, x_1)$ remains unchanged under $f_1$. This completes the proof of the lemma.

\end{pf}

Observe that the last three lemmas combined give precisely Theorem \ref{CharacterizationRecognitionPrinciple}.

The following corollary will be important in our setting. The statement was first observed by M.Rodenhausen.

\begin{Cor}[\cite{AlexThesis}, Sections 2.1-2.2]\label{RechtskuerzbarAequivalenz}
 For right-cancellative monoids, the notion of factorability as in Definition \ref{FactDefMonoid} coincides with the notion of weak factorability. 
\end{Cor}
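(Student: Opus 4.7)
My plan is to reduce the statement to Theorem \ref{CharacterizationRecognitionPrinciple}. Lemma \ref{FaktorabilitaetImpliziertSchwacheFakt} already gives the implication ``factorable $\Rightarrow$ weakly factorable'' for arbitrary monoids, so all that remains is to show that in a right-cancellative monoid, a weak factorability structure automatically satisfies the Recognition Principle of Definition \ref{RecognitionPrinciple}. Granting this, Theorem \ref{CharacterizationRecognitionPrinciple} upgrades weak factorability to full factorability, closing the equivalence.

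To verify the Recognition Principle, fix $m \in M$, $a \in \E$ and set $m' = \eta'(m)$. In both directions of the iff, the workhorse will be Lemma \ref{FactAxiom}, which in the weak factorability setting asserts that whenever both $(m', a)$ and $(\overline{m}, \overline{m'a})$ are geodesic, the pair $(m,a)$ is geodesic and $\eta'(ma) = \eta'(m'a)$; the companion identity $\overline{m}\cdot\overline{m'a} = \overline{ma}$ holds automatically by right-cancellation, as noted in that lemma.

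For the forward direction, stability of $(m, a)$ forces $N(ma) = N(m)+1$, so $(m,a)$ is geodesic; Remark \ref{EineImplikationFaktorabel}(1) then supplies geodesicity of both $(m', a)$ and $(\overline{m}, \overline{m'a})$. Lemma \ref{FactAxiom} yields $\eta'(m'a) = \eta'(ma) = a$, and combining this with $m'a = \overline{m'a}\cdot a$ and right-cancelling by $a$ gives $\overline{m'a} = m'$, i.e.\ $(m',a)$ is stable. For the converse, stability of $(m', a)$ gives $m'a = \overline{m'a}\cdot a$ and hence $\overline{m'a} = m'$ directly by right-cancellation; then $(\overline{m},\overline{m'a}) = (\overline{m}, m')$ is geodesic by axiom (F2) applied to $m$, while $(m',a)$ is geodesic by stability, so another application of Lemma \ref{FactAxiom} delivers $\eta'(ma) = a$ and $\overline{ma} = \overline{m}\cdot m' = m$, i.e.\ $(m,a)$ is stable.

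The only place where right-cancellativity is essential is the step from $m'a = \overline{m'a}\cdot a$ to $\overline{m'a} = m'$; it is also precisely this identification that frees us from having to impose the Recognition Principle as a separate axiom. Everything else in the argument is a bookkeeping exercise combining the geodesicity hypotheses of Lemma \ref{FactAxiom} with Remark \ref{EineImplikationFaktorabel}(1), so I expect no genuinely hard step.
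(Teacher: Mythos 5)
Your argument is correct and follows essentially the same route as the paper: reduce via Theorem \ref{CharacterizationRecognitionPrinciple} to showing that a weak factorability structure on a right-cancellative monoid satisfies the Recognition Principle, then verify both implications using the geodesicity inputs for Lemma \ref{FactAxiom} and a single right-cancellation to identify $\overline{x't}$ with $x'$. The only cosmetic difference is that in the forward direction you invoke Remark \ref{EineImplikationFaktorabel}(1) together with Lemma \ref{FactAxiom}, whereas the paper reads the same conclusion off the graded commutativity of Diagram \ref{eqWF} directly; these are interchangeable.
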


\begin{pf}
Due to Theorem \ref{CharacterizationRecognitionPrinciple}, we only have to show that a weak factorability structure on a right-cancellative monoid always satisfies Recognition Principle. So let $M$ be a right-cancellative monoid, $\E$ a generating set for $M$ and $\eta$ a weak factorability structure on $M$.

First, let $(x,t)\in M\times \E$ be a stable pair. In particular, it is geodesic, thus we know that 
\begin{eqnarray*}
 (x,t)=(\overline{x}\cdot \overline{x't}, (x't)'). 
\end{eqnarray*}
Thus $t=(x't)'$ and $x'\cdot t=\overline{x't}\cdot (x't)'$, so cancelling on the right yields $x'=\overline{x't}$, so that the pair $(x',t)$ is stable.

Now assume that the pair $(x',t)$ is stable for some pair $(x,t) \in M\times \E$. Then this pair is geodesic, and the pair $(\overline{x}, \overline{x't})=(\overline{x}, x')$ is geodesic as well. So by Lemma \ref{FactAxiom}, we know that $(x,t)$ is geodesic and that
\begin{eqnarray*}
 \eta(xt)=(\overline{x}\cdot\overline{x't}, (x't)')=(x,t).
\end{eqnarray*}
We have established the Recognition Principle, and thus the proof of the corollary.
\end{pf}

\section{Local Factorability and Normal Forms}
\label{Local factorability}

An important property of factorable monoids is the existence of well-behaved normal forms. Indeed, if we start with an element $x \in M$, we first write it as $x=\overline{x}x'$. We can continue with $\overline{x}$ and write it again as $\overline{x}=\overline{\overline{x}}\cdot (\overline{x})'$. Inductively, we can write $x$ as a product $x_k\ldots x_1$ of $k=N(x)$ generators. This normal form has the property of being everywhere stable. We start with the definition of this notion. 

\begin{Definition}[\cite{AlexThesis}, Rodenhausen]\label{DefinitionStable} 

 Let $(M,\mathcal{E}, \eta)$ be a factorable monoid. We say a tuple $(x_n, \ldots, x_1)\in M^n$ is \textbf{stable} at the $i$-th position if $\eta(x_{i+1}x_i)=(x_{i+1}, x_i)$. 
  We call a tuple \textbf{everywhere stable} if it is stable at each position. 
\end{Definition}

The tuple $(x_k, \ldots, x_1)$ associated with $x \in M$ by the procedure defined above has now the following property:

\begin{Lemma}[\cite{AlexThesis} Remark 2.1.27, Rodenhausen]\label{EtaNormalFormStabil}
 Let $(M,\mathcal{E}, \eta)$ be a factorable monoid. Then the tuple $(x_k, \ldots, x_1)$ associated with $x \in M$ by the procedure as above is everywhere stable. This tuple associated with $x$ will be called the \textbf{normal form} of $x$ (with respect to this factorability structure). 
\end{Lemma}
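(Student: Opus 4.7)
The plan is to reduce the statement directly to the Recognition Principle of Definition \ref{RecognitionPrinciple}, which is available for any factorable monoid thanks to Theorem \ref{CharacterizationRecognitionPrinciple}. The whole argument then collapses to a single application of that principle together with the tautology that defines the iteration.

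First I would fix notation for the iteration. Set $z_0 := x$ and recursively $z_j := \overline{\eta}(z_{j-1})$, $w_j := \eta'(z_{j-1})$ for $j \geq 1$. Axiom (F1) gives $z_{j-1} = z_j\, w_j$, axiom (F2) together with axiom (F3) gives $N(z_{j-1}) = N(z_j) + 1$ as long as $z_{j-1} \neq 1$, and $w_j \in \mathcal{E}$ in that case. Hence after exactly $k = N(x)$ iterations one reaches $z_k = 1$, and the normal form described in the paragraph preceding the lemma is precisely $(x_k,\ldots,x_1) = (w_k,\ldots,w_1)$.

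The key observation is that the outer pair $(z_j, w_j)$ is itself stable for every $1 \leq j \leq k-1$. This is immediate from the definitions:
\[
 \eta(z_j\, w_j) \;=\; \eta(z_{j-1}) \;=\; \bigl(\overline{\eta}(z_{j-1}),\, \eta'(z_{j-1})\bigr) \;=\; (z_j, w_j).
\]
Since $w_j \in \mathcal{E}$ and $\eta'(z_j) = w_{j+1}$, the Recognition Principle applied to $(z_j, w_j)$ yields that $(w_{j+1}, w_j) = (x_{j+1}, x_j)$ is stable as well. As this is precisely stability of the normal form at the $j$-th position, the tuple $(x_k,\ldots,x_1)$ is everywhere stable.

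I do not anticipate any real obstacle: once the Recognition Principle is in hand, the lemma is essentially a one-line consequence, and all nontrivial work has been absorbed into Theorem \ref{CharacterizationRecognitionPrinciple}. The only point requiring care is that one must know $w_j \in \mathcal{E}$ (not just $\mathcal{E} \cup \{1\}$) in order to apply the Recognition Principle, which is exactly what (F3) guarantees for $j \le k$.
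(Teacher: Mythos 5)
Your argument is correct: the pair $\bigl(\overline{\eta}(z_{j-1}),\eta'(z_{j-1})\bigr)=(z_j,w_j)$ is stable by the very definition of $\eta$, and since any factorability structure satisfies the Recognition Principle (Theorem \ref{CharacterizationRecognitionPrinciple}, established in Section \ref{Factorability: Basic Definitions} independently of this lemma, so there is no circularity), stability passes to $(\eta'(z_j),w_j)=(x_{j+1},x_j)$, which is exactly stability of the normal form at position $j$; the check that $w_j\in\mathcal{E}$ via (F3) is the only point of care and you handle it. The paper itself only cites \cite{AlexThesis} for this statement, and your single application of the Recognition Principle is precisely the mechanism the paper uses in the analogous step of the proof of Lemma \ref{WeakFactRecognImpliesFact}, so this is essentially the intended argument.
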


M.~Rodenhausen gave the following alternative description of factorability, which needs to be defined only on pairs of generators. A byproduct of this description is the existence of very special presentations for factorable monoids. Since the proofs of M.~Rodenhausen were unpublished so far, the second author wrote them down in her thesis (\cite{MyThesis}) with Rodenhausen's kind permission. 

\begin{Definition}(Rodenhausen)\label{Phifaktorabilitaet}
 Let $M$ be a monoid and $\mathcal{E}$ a generating system of this monoid. Denote by $\mathcal{E}^+$ the union of this generating system with $\{1\}$, and by $\mathcal{E}^*$ the free monoid generated by $\mathcal{E}$. In this section, we always assume $1\notin \E$. Then a \textbf{local factorability structure} is a map 
\begin{eqnarray*}
 \varphi\colon \mathcal{E}^+\times \E^+ \to \E^+\times \E^+
\end{eqnarray*}
with the following properties: 
\begin{enumerate}
 \item $M\cong \left\langle\mathcal{E}| (a,b)=\varphi(a,b)\right\rangle$.
 \item Idempotency: $\varphi^2=\varphi$.
 \item Value on norm $1$ elements: $\varphi(a,1)=(1,a)$.
 \item Stability for triples: $\varphi_2\varphi_1\varphi_2(a,b,c)$ is ($\varphi$-)totally stable (i.e., applying any $\varphi_i$ to this tuple leaves it unchanged) or contains a $1$, for all $a,b,c\in \mathcal{E}$.
 \item Normal form condition: $\NF(a,b,c)=\NF(\varphi_1(a,b,c))$ for all $a,b,c\in \E$. 
\end{enumerate}
(Recall we use the notational convention \ref{NotationAlphaI} to define $\varphi_i$). Here, the normal form of a tuple $(a_n, \ldots, a_1)$ is an element of $\E^*$ defined inductively as follows: The normal form of a string containing $1$ is the normal form of the same string with $1$ removed. For a string not containing $1$, define
\begin{eqnarray*}
 \NF(a_n,\ldots, a_1)=\begin{cases}
                       \varphi_{n-1}\ldots \varphi_1(\NF(a_n,\ldots, a_2), a_1), \mbox{ if it contains no } 1; \\
		      \NF(\varphi_{n-1}\ldots \varphi_1(\NF(a_n,\ldots, a_2), a_1)), \mbox{ otherwise. }
                      \end{cases}
\end{eqnarray*}
Define $\NF(a)=a$ for all $a\in \E$ and $\NF(())=()$. 
Furthermore, we call a string of the form $(1, 1,\ldots, 1, \NF(x))$ an \textbf{extended} normal form of $x$.
\end{Definition}

\begin{Theorem}[Rodenhausen]\label{PhiFaktorabilitaetEtaFaktorabilitaet}
 If a monoid $M$ with a generating system $\mathcal{E}$ is factorable, then $\varphi(a,b)=\eta(ab)$ defines a local factorability structure on this monoid. Conversely, one can construct out of a local factorability structure a factorability structure in the usual sense, and those share the same normal forms. These constructions are inverse to each other. 
\end{Theorem}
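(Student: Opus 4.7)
The proof has two directions plus a verification that the constructions are mutually inverse; I organize the plan along these lines.

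\textbf{Forward direction.} Given a factorability structure $\eta$ on $(M,\E)$, I set $\varphi(a,b) := \eta(ab)$ for $(a,b)\in\E^+\times\E^+$. This lies in $\E^+\times\E^+$ because $N(ab)\leq 2$ forces $\overline{\eta}(ab)\in\E^+$, while $\eta'(ab)\in\E^+$ by (F3). I then verify the five axioms in turn: idempotency (2) is the computation $\varphi^2(a,b)=\eta(\overline{ab}\cdot(ab)')=\eta(ab)$ via (F1); axiom (3) is Remark~\ref{EtaErzeuger}. For axiom (1) I observe that on tuples of generators the map $\varphi_i$ agrees with $f_i$, so iterating the $\varphi_i$ on any word representing $m\in M$ produces the everywhere-stable $\eta$-normal form of Lemma~\ref{EtaNormalFormStabil}; this both shows that the relations hold in $M$ and that they suffice, via the normal form as a canonical representative. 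For axiom (4) I note that $\mu_i\eta_i=\id$ (from (F1),(F2)) implies $f_2\circ f_2 = f_2$, so $f_2f_1f_2(a,b,c)$ is automatically $\varphi_2$-stable; the graded equality $f_1f_2f_1f_2\equiv f_2f_1f_2$ from Definition~\ref{FactDefMonoid} then forces either $\varphi_1$-stability or the appearance of a~$1$. Axiom (5) follows by induction on tuple length, after showing that $\NF$ applied to a tuple in $\E^*$ representing $m\in M$ coincides with the $\eta$-normal form of $m$; then $\NF(a,b,c)=\NF(\varphi_1(a,b,c))$ is immediate since $\varphi_1(a,b,c)$ still represents $abc$.

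\textbf{Backward direction.} Given a local factorability structure $\varphi$, I construct $\eta\colon M\to M\times M$ as follows: for $m\in M$, pick any word $w\in\E^*$ representing $m$, compute $\NF(w)=(b_k,\ldots,b_1)$ via $\varphi$, and set $\eta(m):=(b_k\cdots b_2,\,b_1)$. The crucial step is well-definedness: by axiom (1), two words for $m$ differ by finitely many rewrites $(a,b)\leftrightarrow\varphi(a,b)$, and I must show each such rewrite preserves $\NF$. Elementary moves and unit handling are controlled by axioms (2), (3), (5), while the triple-stability axiom (4) provides confluence for overlapping applications of $\varphi_i$ and $\varphi_{i\pm 1}$. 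Once $\eta$ is well defined, (F1) follows since every $\varphi$-rewrite preserves products, (F3) since the last entry of $\NF$ lies in $\E$, and (F2) from the length-preserving nature of $\NF$. The graded equality of $f_1f_2f_1f_2$, $f_2f_1f_2$, and $f_2f_1f_2f_1$ on $M^3$ reduces, by unrolling each $x_i$ to a generator word, to axiom (4) applied to triples of generators.

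\textbf{Inverseness and main obstacle.} Starting from $\eta$, forming $\varphi(a,b)=\eta(ab)$, and then reconstructing an $\eta$ via normal forms recovers the original $\eta$, because the recursion defining $\NF$ implements exactly the iterated $\eta$-normal form of Lemma~\ref{EtaNormalFormStabil}. Conversely, starting from $\varphi$ and forming the induced $\eta$, the base case $k=2$ of $\NF$ yields $\eta(ab)=\varphi(a,b)$. The normal forms coincide by construction. The principal difficulty is the well-definedness of $\eta$ in the backward direction: it amounts to turning the presentation in axiom (1) into a confluence statement for the rewrite system $(a,b)\to\varphi(a,b)$, and showing that every critical peak (coming from two $\varphi$-applications at adjacent positions) is resolved, which is precisely the non-obvious content of the triple-stability axiom (4).
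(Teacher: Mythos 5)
The paper itself does not prove this theorem: it is attributed to Rodenhausen, with the write-up deferred to \cite{MyThesis}, and only two fragments of that proof (Lemmas \ref{Bedingung5PhiFakt} and \ref{NormalFormTriple}) are recorded in the appendix. So your outline can only be measured against what such a proof must contain, and at that level its architecture is the expected one, and several verifications are fine as you state them: idempotency of $\varphi$ from (F1), $\varphi(a,1)=(1,a)$ from Remark \ref{EtaErzeuger}, and your derivation of stability for triples is essentially complete once one adds the small observation that if the norm drops then the output triple, having three entries in $\E^+$ and total norm at most two, must contain a $1$; stability at position $2$ from $\mu_2\eta_2=\id$ plus the graded identity then gives stability at position $1$.

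The genuine gaps are that the load-bearing steps are asserted rather than argued. First, in the forward direction both axiom (1) and axiom (5) are reduced to the claim that the recursion of Definition \ref{Phifaktorabilitaet} applied to an arbitrary word computes the $\eta$-normal form of the element it represents; this is not a routine induction on length, since one must show that after splitting off a generator via Lemma \ref{FactAxiom} the already-established stable positions remain stable, which is exactly where the Recognition Principle \ref{RecognitionPrinciple} (the part of factorability beyond weak factorability) enters, and your sketch never invokes it. Second, in the backward direction the well-definedness of $\eta$ requires invariance of $\NF$ under a single $\varphi$-rewrite at an arbitrary position of a word of arbitrary length; axiom (5) gives only the length-three case, and saying that axiom (4) ``provides confluence'' is not an argument here: the associated rewriting system need not be noetherian (this is the whole point of the paper's appendix example), so one cannot pass from local confluence to well-definedness by termination, and the actual content is an explicit induction propagating (4) and (5) to long words — precisely the machinery of which Lemmas \ref{Bedingung5PhiFakt} and \ref{NormalFormTriple} are fragments. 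Third, the claim that the graded identity for the induced $\eta$ on all of $M^3$ ``reduces by unrolling each $x_i$ to a generator word'' to axiom (4) is the vaguest step; the clean route inside this paper's toolkit is instead to verify weak factorability and the recognition principle for the induced $\eta$ and then invoke Theorem \ref{CharacterizationRecognitionPrinciple} (Lemma \ref{WeakFactRecognImpliesFact}). As written, your proposal is a correct skeleton, but the theorem's actual content lies in these omitted inductions.
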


We will need some parts of the proof in order to construct a factorable monoid with non-complete associated rewriting system. The corresponding lemmas can be found in the appendix. 

\section{Rewriting System Basics}
\label{Rewriting system basics}

Our first aim is to investigate a connection between factorability structures and complete rewriting systems. The basic notions of rewriting systems are the topic of this section. The exposition in this section is based on D.~Cohen's survey article (\cite{Cohen}).

The basic idea of a rewriting system is easy: In a monoid presentation with generating set $S$, we specify a relation not just by a set, but by a pair of two words in the free monoid $S^*$ over $S$, i.e., an element in $S^*\times S^*$. Thus, we are going to determine a ``direction'' for each relation, and it must not be applied in the other direction. There are several properties which are desirable when considering such a rewriting system. We will be mostly interested in the notion of a complete rewriting system. In a complete rewriting system, any non-trivial chain of applications of rewriting rules stops after finite time producing a nice normal form. For our purposes, a result of K.~Brown (\cite{Brown}), which relates complete rewriting systems and noetherian matchings, is of particular interest. It will be made more precise in the next section.

	\begin{Definition}
		Let $S$ be a set (sometimes called alphabet) and denote by $S^*$ the free monoid over $S$. A set of \textbf{rewriting rules} $\mathcal R$ on $S$ is a set of tuples $(l,r) \in S^* \times S^*$. The string $l$ is called the \textbf{left side} and $r$ is called the \textbf{right side} of the rewriting rule.
		\begin{enumerate}
			\item	We introduce a relation on $S^*$ as follows: We say that $w$ \textbf{rewrites} to $z$, denoted by $w \to_{\mathcal R} z$, if there exist $u, v \in S^*$ and some rewriting rule $(l,r) \in \mathcal R$ such that $w = ulv$ and $z = urv$.
			
			\item	A word $w \in S^*$ is called \textbf{reducible} (with respect to $\mathcal R$) if there is some $z$ such that $w \to_{\mathcal R} z$. Otherwise, it is called \textbf{irreducible} (with respect to $\mathcal R$).
			
			\item	Denote by $\leftrightarrow_{\mathcal R}$ the reflexive, symmetric and transitive closure of $\to_{\mathcal R}$. Two words $w,z$ over $S$ are called \textbf{equivalent} if $w \leftrightarrow_{\mathcal R} z$. Set $M = S^* / \leftrightarrow_{\mathcal R}$. We then say that $(S, \mathcal R)$ is a \textbf{(string) rewriting system} for the monoid $M$.
		\end{enumerate}
	\end{Definition}

	  We now can define complete rewriting systems. 

	\begin{Definition}
		Let $(S, \mathcal R)$ be a rewriting system.
		\begin{enumerate}
			\item	$(S, \mathcal R)$ is called \textbf{minimal} if the right side $r$ of every rewriting rule $(l,r) \in \mathcal R$ is irreducible and if in addition the left side $l$ of every rewriting rule $(l,r) \in \mathcal R$ is irreducible with respect to $\mathcal R \sm \{(l,r)\}$. 
			
			\item	$(S, \mathcal R)$ is called \textbf{strongly minimal} if it is minimal and if in addition every element $s \in S$ is irreducible.
			
			\item	$(S, \mathcal R)$ is called \textbf{noetherian} if there is no infinite sequence
\begin{eqnarray*}
 w_1 \to_{\mathcal R} w_2 \to_{\mathcal R} w_3 \to_{\mathcal R} \ldots
\end{eqnarray*}
 of rewritings. This implies that every sequence of rewritings eventually arrives at an irreducible word.
			
			\item	$(S, \mathcal R)$ is called \textbf{convergent} if it is noetherian and if in every equivalence class of $\leftrightarrow_{\mathcal R}$ there is only one irreducible element.
		
		\item  A rewriting system is called \textbf{complete} if it is strongly minimal and convergent.\end{enumerate}
	\end{Definition}

As already mentioned, successive rewritings in a complete rewriting system $(S, \mathcal{R})$ induce a normal form, assigning to each $x\in M$ the unique irreducible word in $S^*$ in its equivalence class. 

We want to illustrate the notions with some small examples. 
\begin{example}
 \begin{enumerate}
  \item The set $S=\{a,b\}$ can be equipped with the rewriting system $aba\to bab$. The word $bab\in S^*$ is then irreducible, the word $abab$ is reducible. This rewriting system is strongly minimal. It is noetherian since each rewriting strictly decreases the number of $a$'s in the word. It is not convergent: We can rewrite $ababa$ to $babba$ and to $abbab$, which are both irreducible. The monoid defined by this rewriting system is isomorphic to the positive braid monoid $B_3^+$ on $3$ strands. 
 \item We can also equip the set $S=\{a,b\}$ with the rewriting system $ab\to ba$. It is not hard to see that this system is complete, and the set of normal forms is given by $b^{k}a^{l}$ with $k,l\geq 0$. The monoid given by this rewriting system is just the free abelian monoid on two generators.
 \item We can equip the set $S=\{a,b,c\}$ with the rewriting system $ab\to bc,\, bc\to ca$. This system is not minimal since there is a rewriting rule ($ab\to bc$), the right-hand side of which is reducible due to $bc \to ca$. The monoid defined by this rewriting system is a special case of Birman-Ko-Lee monoids (cf.\ \cite{BKL}). These are closely related to the braid groups.
 \item We consider the set $S=\{a,b,c\}$ with the rewriting system $ab\to bc,\, ab\to ca$. This system is again not minimal, since the left side $ab$ of the rewriting rule $ab\to bc$ is reducible even if we delete this rewriting rule. Note that this rewriting system defines the same monoid as the one in the last point.
 \item An easy example of a rewriting system which is not noetherian is given by a rewriting system $a\to a^2$ on the single generator $a$. 
 \end{enumerate}

\end{example}

The following theorem reveals an important connection between complete rewriting systems and homological properties of a monoid. 

\begin{Theorem}[Brown \cite{Brown}] \label{TheoremBrown}
	 Let $M$ be a monoid given by a complete rewriting system $(S, \mathcal{R})$. Then there exists a space homotopy equivalent to the bar complex of $M$ with cells in bijection with tuples $[x_n|\ldots |x_1]$ subject to the following conditions: If $w_i\in S^*$ is the irreducible representative of $x_i$, then we require
 \renewcommand{\labelenumi}{ (\alph{enumi})}
\begin{enumerate}
 \item $w_1\in S$,
 \item The word $w_{i+1}w_i$ is reducible for every $1\leq i\leq n-1$,
 \item  For every $1\leq i\leq n-1$, any proper (right) prefix of $w_{i+1}w_i$ is irreducible. 
\end{enumerate}
	\end{Theorem}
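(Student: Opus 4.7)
The plan is to apply discrete Morse theory (as recalled in Section~\ref{sec:discrete morse theory}) to the bar resolution $B_*M$ of the monoid $M$. Because $(S,\mathcal{R})$ is complete, every non-identity element of $M$ admits a unique non-empty irreducible representative in $S^*$, so we may index the $n$-cells of $B_*M$ by tuples of non-empty irreducible words $(w_n,\ldots,w_1)$. The task is then to construct a noetherian matching on this indexing set whose critical cells are precisely those satisfying (a), (b), (c); the discrete Morse theorem will then supply a CW model of $BM$ with one cell per critical cell, yielding the claimed homotopy equivalence.

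I would define the matching by a scanning rule that examines a cell from the right end and identifies the first ``defect'' with respect to conditions (a), (b), (c). Two elementary operations, both changing dimension by one, are available: a \emph{split} replaces a single entry $w_j = u \cdot s$ (with $s \in S$ and $u$ a non-empty prefix, both irreducible since they are factors of the irreducible word $w_j$) by the two entries $u, s$; and a \emph{merge} replaces two consecutive entries $w_{j+1}, w_j$ by their concatenation $w_{j+1} w_j$. Concretely, if $w_1 \notin S$, one splits off the last letter of $w_1$; otherwise one locates the smallest index $i$ at which (b) or (c) fails, and uses a merge at positions $i{+}1, i$ if $w_{i+1} w_i$ is itself irreducible (failure of (b)), or a split at position $i$ dictated by the shortest non-empty proper prefix $v$ of $w_i$ making $w_{i+1}\cdot v$ reducible (failure of (c)). A cell admits no such rule exactly when it satisfies (a), (b), (c); these are the critical cells.

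The technical heart of the proof is then (1) verifying that the rule defines an involution on non-critical cells -- i.e., the partner of a cell is matched back to it by the same rule -- and (2) proving noetherianity of the matching. Point (1) demands a careful case analysis: one must show that the scan fires at the same position for both members of a matched pair, using that all entries strictly to the right of the modified position coincide and that the minimality conditions built into the scan (the shortest reducible prefix in the (c) case; the smallest index of failure) guarantee the operation preserves the scan's progress at lower indices. Point (2) follows from noetherianity of $(S,\mathcal R)$: every infinite $M$-path of matched pairs would force an infinite descending chain of rewrites in some tracked concatenation, contradicting completeness; a well-founded ordering can be given by combining total word length with the position of the first scan fire. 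The principal obstacle will be point (1): the bookkeeping between splits, which introduce two new adjacent entries whose own scan behaviour must not trigger an earlier modification, and merges, which create new concatenations whose reducibility prefix structure must be tracked, is intricate, and this is precisely where the original argument of Brown concentrates its effort.
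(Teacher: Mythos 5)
Your overall strategy -- a collapsing scheme, i.e.\ a noetherian matching on the (reduced) bar complex whose critical cells are exactly the tuples satisfying (a)--(c) -- is precisely Brown's, and it is also the template this paper follows for its factorable analogue in Section \ref{Matching for Factorable Monoids} (the paper itself does not reprove the statement; it cites \cite{Brown}, cf.\ Remark \ref{RemarkBrownTheorem}). However, your definition of the matching is wrong in the decisive case, the failure of (c). Since each $w_i$ is irreducible, a reducible proper right prefix of $w_{i+1}w_i$ is necessarily of the form $v\,w_i$ with $v$ a non-empty proper right prefix (terminal segment) of $w_{i+1}$; so when (c) fails at position $i$ one must split the entry $w_{i+1}$, writing $w_{i+1}=u\,v$ with $v$ the shortest such terminal segment for which $v\,w_i$ is reducible. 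You instead split $w_i$, using a prefix $v$ of $w_i$ with $w_{i+1}v$ reducible. Such a $v$ need not exist when (c) fails (take a rule whose left side is exactly $v'w_i$ for a proper terminal segment $v'$ of $w_{i+1}$; in particular at $i=1$ your rule is vacuous, since $w_1\in S$ has no non-empty proper prefix, yet (c) can fail there), and when it does exist the rule is not an involution: in the new cell $[\dots|w_{i+1}|v|u|w_{i-1}|\dots]$ with $w_i=vu$, the pair $(u,w_{i-1})$ concatenates to a proper right prefix of $w_iw_{i-1}$, which is irreducible because the old cell was essential through position $i$; hence the new cell is collapsible at height $i-1$ and your scan merges it to $[\dots|w_{i+1}|v|uw_{i-1}|\dots]$, not back to the original cell, so $\mu^2\neq\id$. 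With the split taken on the $w_{i+1}$ side -- exactly as in the paper's matching, where the redundant cell of height $h$ is paired with $[\dots|\overline{\eta}(m_{h+1})|\eta'(m_{h+1})|m_h|\dots]$ -- the minimality of $v$ together with condition (c) for the tail is precisely what makes the involution close up; that verification then is routine.

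A second, smaller gap is your termination measure. ``Total word length combined with the position of the first scan fire'' is not well founded along the relation $\vdash$: splits and merges do not change the concatenated word at all, and the steps of $\vdash$ replace adjacent entries by the irreducible representative of their product, whose letter length can increase, since rules $(l,r)$ of a complete rewriting system need not be length-decreasing. Noetherianity of the matching must be deduced from noetherianity of $\to_{\mathcal R}$ via a well-founded order on tuples built from the rewriting order itself; this is where Brown's argument concentrates its effort (and where the paper, lacking a noetherian rewriting system in the factorable setting, substitutes the combinatorial analysis of small sequences in Proposition \ref{MatchingofFactorableMonoid}). As written, your sketch supplies neither a correct matching nor a valid termination order, so both pillars of the proof still need to be built.
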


\section{Rewriting System of a Factorable Monoid}
\label{Rewriting System of a Factorable Monoid}
We want to deal with the question when a factorability structure on a monoid provides a complete rewriting system (as defined in Section \ref{Rewriting system basics}) for this monoid. We always obtain a rewriting system with exactly one irreducible element in each equivalence class, but we will show that it is not necessarily noetherian. We will yet exhibit several cases where the given rewriting system is noetherian.
First, we are going to make precise which rewriting system is going to be associated with a factorability structure on a monoid. The choice is quite self-evident.

\begin{Lemma} \label{FactorabilityStructureInducesRewriting}
 Let $(M, \mathcal{E}, \eta)$ be a factorable monoid. Then the rewriting rules 
\begin{eqnarray*}
 (x,y)\to \eta(xy) \mbox{ for } x,y\in \mathcal{E} \mbox{ if } (x,y) \mbox{ is unstable}
\end{eqnarray*}
 define a strongly minimal rewriting system on $M$ with exactly one irreducible element in each equivalence class. (Here, if $\overline{xy}=1$, we interpret the rewriting rule as $(x,y)\to xy$.)
\end{Lemma}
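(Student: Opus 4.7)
The plan is to split the statement into three parts: (i) the rules form a well-defined string rewriting system presenting $M$, (ii) it is strongly minimal, and (iii) each $\leftrightarrow_{\mathcal{R}}$-equivalence class contains exactly one irreducible word. The driving observation is that ``$w$ is irreducible'' amounts to ``$w$ is everywhere stable'', which I will show is equivalent to ``$w$ is the normal form of its product'', built on Lemma \ref{FactAxiom} and Lemma \ref{EtaNormalFormStabil}.

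Strong minimality consists of three routine checks. Generators $s\in\mathcal{E}$ are length-$1$ words and no length-$2$ rule can act on them. The left side $(x,y)$ of a rule is its own unique length-$2$ subword, and since $\eta$ is a function there is exactly one rule with that left side, so deleting this rule leaves $(x,y)$ irreducible. The right side $\eta(xy)=(\overline{xy},(xy)')$ satisfies $\eta(\overline{xy}\cdot (xy)')=\eta(xy)=(\overline{xy},(xy)')$, so this pair is stable and no rule applies; in the degenerate case $\overline{xy}=1$ it reduces to the single generator $(xy)'$, trivially irreducible.

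For (iii), existence of an irreducible representative in each class comes from Lemma \ref{EtaNormalFormStabil}: the normal form $(m_k,\ldots,m_1)$ of any $m\in M$ is everywhere stable, hence irreducible. The substance is uniqueness, which I would prove by showing every irreducible word $w=(x_n,\ldots,x_1)$ equals the normal form of $m:=x_n\cdots x_1$. Induct on $n$: assuming $(x_n,\ldots,x_2)$ is the normal form of $y:=x_n\cdots x_2$, apply Lemma \ref{FactAxiom} to the pair $(y,x_1)$. Its two geodesicity hypotheses hold: $(\eta'(y),x_1)=(x_2,x_1)$ is stable by irreducibility of $w$ (hence geodesic), and $(\overline{y},\overline{\eta'(y)\cdot x_1})=(\overline{y},x_2)=(\overline{y},\eta'(y))$ is geodesic from the splitting of $y$. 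The lemma then yields $\eta'(yx_1)=\eta'(x_2x_1)=x_1$ and $\overline{yx_1}=\overline{y}\cdot\overline{x_2x_1}=\overline{y}\cdot x_2=y$. Hence the normal form of $m$ ends in $x_1$ with preceding segment equal to the normal form of $y$, which by induction is $(x_n,\ldots,x_2)$, so $w$ is exactly $\NF(m)$.

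Part (i) then follows quickly: each rule preserves the $M$-product and (by axioms F2 and F3) has its right-hand side in $\mathcal{E}^*$, so the induced map $\mathcal{E}^*/\!\!\leftrightarrow_{\mathcal{R}}\to M$ is well-defined and surjective because $\mathcal{E}$ generates. For injectivity, the iterative construction of $\NF(m)$ from any representative $w$ of $m$ — splitting off the rightmost generator at each step via $\eta$ — can be realized by an explicit finite sequence of rewriting steps $w\to\cdots\to\NF(m)$, so any two words with equal $M$-product become $\leftrightarrow_{\mathcal{R}}$-equivalent to the common normal form. The main obstacle throughout is the inductive step of (iii): carefully verifying the two geodesicity hypotheses so that Lemma \ref{FactAxiom} can be invoked and the formulas for $\overline{yx_1}$ and $\eta'(yx_1)$ extracted; everything else is bookkeeping with the factorability axioms.
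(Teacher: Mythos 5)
Your parts (ii) and (iii) are fine: the strong-minimality checks are exactly the right routine verifications, and your induction showing that every irreducible (i.e.\ everywhere stable) word $(x_n,\ldots,x_1)$ is the $\eta$-normal form of its product is correct — the two geodesicity hypotheses of Lemma \ref{FactAxiom} are verified precisely as needed (stability of $(x_2,x_1)$ gives the first, axiom (F2) applied to $y=x_n\cdots x_2$ gives the second), and this is in fact more self-contained than the paper, which obtains the whole lemma by citing Rodenhausen's Theorem \ref{PhiFaktorabilitaetEtaFaktorabilitaet} together with Lemma \ref{EtaNormalFormStabil}.

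The genuine gap is in part (i), and it also undercuts the "existence" half of (iii): you assert that "the iterative construction of $\NF(m)$ from any representative $w$ of $m$ — splitting off the rightmost generator at each step via $\eta$ — can be realized by an explicit finite sequence of rewriting steps." At the level of words this is not a description of anything the rules can do: the rules only transform an adjacent \emph{unstable pair of generators}, whereas splitting off $\eta'(m)$ from an arbitrary word for $m$ is a global operation on the monoid element. What you actually need is the statement that every $w\in\mathcal{E}^*$ rewrites to the normal form of the element it represents, and proving it is the delicate point of a self-contained argument: one rewrites the last unstable pair, then must propagate leftwards, and the correctness of this cascade again requires Lemma \ref{FactAxiom} (via Lemma \ref{FaktorabilitaetImpliziertSchwacheFakt}), \emph{including} the case where an intermediate pair such as $(\overline{y},\overline{\eta'(y)a})$ fails to be geodesic, so that the word length drops and a separate induction hypothesis must take over. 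This is not "bookkeeping"; it is exactly the content that the paper outsources to the local-factorability machinery (the presentation condition and the word-level normal form procedure of Definition \ref{Phifaktorabilitaet}, whose output is identified with the $\eta$-normal form by Theorem \ref{PhiFaktorabilitaetEtaFaktorabilitaet}). Both the injectivity of $\mathcal{E}^*/\!\leftrightarrow_{\mathcal{R}}\to M$ and the existence of an irreducible word \emph{in each equivalence class} hinge on it. The gap is repairable — either cite Theorem \ref{PhiFaktorabilitaetEtaFaktorabilitaet} as the paper does, or run a strong induction on word length: reduce the length-$(k-1)$ prefix to $\NF(y)$, treat the stable case by your part (iii) computation, and in the unstable case apply one rule and use Lemma \ref{FactAxiom} when the relevant pair is geodesic, falling back on the induction hypothesis (the word has become strictly shorter) when it is not — but as written the key step is asserted, not proved.
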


\begin{pf}
  First, by Proposition \ref{PhiFaktorabilitaetEtaFaktorabilitaet} of M.~Rodenhausen, we know that this rewriting system defines exactly the monoid $M$ we started with. Since the right-hand side of each rewriting rule is always a stable pair or a single element of $\mathcal{E}$ or $1$, we know that right-hand sides of our rewriting rules are irreducible. Furthermore, the elements of $\mathcal{E}$ are irreducible, and left-hand sides are irreducible if we remove the rule containing them. Thus this rewriting system is strongly minimal. 

  Moreover, each representative of an element of the monoid can be brought into its normal form with the normal form procedure described in Definition \ref{Phifaktorabilitaet}, which is obviously a chain of applications of the rewriting rules above. Furthermore, Lemma \ref{EtaNormalFormStabil} (in presence of Theorem \ref{PhiFaktorabilitaetEtaFaktorabilitaet}) states that the obtained normal form is totally stable, which translates exactly into irreducible in the language of rewriting systems. This implies there is exactly one irreducible element in each equivalence class of words in the free monoid on $\mathcal{E}$ (under the equivalence relation generated by above rules).
\end{pf}

Unfortunately, this rewriting system is not always noetherian, even if $\mathcal{E}$ is finite and the resulting monoid is right-cancellative. See appendix for an  example of a factorable monoid where the associated rewriting system is not noetherian.

\section{Main Lemma for Noetherianity}
\label{Finiteness of Qn'}

Our aim is to prove Theorem \ref{AnBnCn}, which reveals some of the structure of the monoid $Q_n'$, defined below. It is closely connected to the proof that the rewriting system defined by a factorability structure is noetherian in some cases. We will introduce some notation which is related to handling the applications of rewriting rules later. 

\begin{Definition}[\cite{AlexThesis}] \label{DefinitionMonoidQn}
Let $F_n$ be a free monoid on letters $1,2,\ldots, n$ (with the empty string as a neutral element). The elements of $F_n$ will be denoted either like $(1\,2\,3\,4)$ or like $(1,2,3,4)$ to increase the readability; sometimes, we also omit the brackets. Let $\sim_{P}$ be the congruence generated by 
\begin{eqnarray*}
ab\sim_P ba \mbox{ for } \abs{a-b}\geq 2\mbox{ and }\\
a^2\sim_P a \mbox{ for } 1\leq a \leq n.
\end{eqnarray*}
Recall that a congruence is a left and right invariant equivalence relation. 
Let $P_n$ be the quotient of $F_n$ by this congruence. Define now a congruence $\sim_{Q}$ on $P_n$ generated by 
\begin{eqnarray*}
(k \enspace k+1 \enspace k \enspace k+1)\sim_Q (k+1 \enspace k \enspace k+1) \mbox{ and }\\
(k+1 \enspace k \enspace k+1 \enspace k)\sim_Q (k+1 \enspace k \enspace k+1).
\end{eqnarray*}
Let $Q_n$ be the quotient of $P_n$ by this congruence. Last, define a congruence $\sim$ on $Q_n$ generated by the following relation: If for $I, J\in F_n$ the relation $kIJ\sim_Q IJ$ holds, $kI\sim_P Ik$ and $k$ does not occur in $I$, then we set $kJ\sim J$. Define the quotient monoid of this congruence to be $Q_n'$. 
\end{Definition}

The following evaluation lemma should motivate the definition of $Q_n$. Recall that in a monoid with chosen generating system $\mathcal{E}$, we denote by $N_{\mathcal{E}}$ the word-length with respect to $\mathcal{E}$. 
\begin{Lemma}[\cite{AlexThesis}, Evaluation Lemma]\label{EvaluationLemmaAlex}
  Let $(M, \mathcal{E}, \eta)$ be a factorable monoid. For any sequence $I=(i_s, \ldots, i_1) \in F_n$, we define $f_I\colon M^{n+1} \to M^{n+1}$ to be the composition $f_{i_s}\circ f_{i_{s-1}}\circ \ldots \circ f_{i_1}$. If $I \sim_Q J$, then the maps $f_I, f_J\colon M^{n+1} \to M^{n+1}$
are equal in the graded sense, i.e., for any tuple $(m_{n+1}, \ldots, m_1)$, we have either $f_I(m_{n+1}, \ldots, m_1)=f_J(m_{n+1}, \ldots, m_1)$ or both tuples have strictly smaller word-length: if we write
\begin{eqnarray*} 
f_I(m_{n+1}, \ldots, m_1) &=&(y_{n+1}, \ldots, y_1)  \mbox{ and}\\
 f_J(m_{n+1}, \ldots, m_1)&=&(z_{n+1}, \ldots, z_1),
 \end{eqnarray*}
  then the condition is
\begin{eqnarray*}
N_{\E}(m_{n+1})+\ldots + N_{\E}(m_1)>N_{\E}(y_{n+1})+\ldots + N_{\E}(y_1)
\end{eqnarray*}
and 
\begin{eqnarray*}
N_{\E}(m_{n+1})+\ldots + N_{\E}(m_1)>N_{\E}(z_{n+1})+\ldots + N_{\E}(z_1).
\end{eqnarray*}
\end{Lemma}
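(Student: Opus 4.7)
The plan is to reduce the claim to the defining relations of $\sim_Q$ and then propagate the graded equality through the congruence closure. I will write $\alpha \equiv \beta$ for graded equality of two maps $M^{n+1}\to M^{n+1}$, meaning that on every input $x$ either $\alpha(x)=\beta(x)$ or both $\alpha(x)$ and $\beta(x)$ have strictly smaller total norm than $x$. The key structural fact I would use throughout is the one already observed in the proof of Lemma \ref{WeakFactRecognImpliesFact}: since each $f_i$ is norm non-increasing, graded equality is preserved by arbitrary right precomposition and by left postcomposition with any composition of $f_k$'s. A small case analysis (inputs where both sides agree vs.\ inputs where one drops norm) also shows that $\equiv$ is transitive, so $\equiv$ is an equivalence relation compatible with composition. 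This reduces the task to checking $f_I \equiv f_J$ on each generator of the congruence defining $Q_n$.

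Next, I verify the three families of defining relations. For the \emph{far commutation} relation $ij \sim_P ji$ with $\lvert i-j\rvert \geq 2$: the maps $f_i$ and $f_j$ operate on disjoint pairs of consecutive positions in the tuple, so $f_i f_j = f_j f_i$ strictly (no appeal to graded equality is needed). For the \emph{idempotency} relation $a^2 \sim_P a$: write $f_i = \eta_i \mu_i$; by axiom (F1), $\mu_i \eta_i = \id$, and hence
\begin{equation*}
f_i \circ f_i \;=\; \eta_i \mu_i \eta_i \mu_i \;=\; \eta_i \mu_i \;=\; f_i.
\end{equation*}
For the two \emph{braid-type} relations relating $(k,k{+}1,k,k{+}1)$, $(k{+}1,k,k{+}1)$ and $(k{+}1,k,k{+}1,k)$: these are precisely the defining relations of a factorability structure given in Definition \ref{FactDefMonoid}, applied to the pair of coordinates in positions $k,k{+}1$; on the other coordinates all four compositions act as the identity, so the graded equality for $M^3$ immediately lifts to a graded equality on $M^{n+1}$.

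Finally, I close under the congruence generated by these relations. If $I = I_1 R I_2$ and $J = I_1 R' I_2$ where $R \sim_Q R'$ is one of the defining relations, then
\begin{equation*}
f_I \;=\; f_{I_1} \circ f_R \circ f_{I_2}, \qquad f_J \;=\; f_{I_1} \circ f_{R'} \circ f_{I_2}.
\end{equation*}
Having established $f_R \equiv f_{R'}$, the compatibility of $\equiv$ with pre- and postcomposition by $f_{I_2}$ and $f_{I_1}$ (using that each $f_i$ is norm non-increasing, so a norm drop at $f_{I_2}(x)$ forces a norm drop at $x$) yields $f_I \equiv f_J$. Iterating and using transitivity of $\equiv$ extends the conclusion to arbitrary $I \sim_Q J$.

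The main obstacle, and the reason I expect the bookkeeping to be the only delicate point, is keeping the graded condition sharp under compositions: one must consistently compare norms against the \emph{original} input $x$ rather than against an intermediate tuple. This is where the non-increase of norms under each $f_i$ is essential, and it is precisely the observation already isolated in the proof of Lemma \ref{WeakFactRecognImpliesFact}. With that observation in hand, the rest of the argument is a mechanical verification on the three families of generating relations of $\sim_Q$.
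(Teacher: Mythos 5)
Your argument is correct and is essentially the intended one: the paper itself gives no proof here (it cites \cite{AlexThesis}), but the reduction to the generating relations of $\sim_Q$ — exact commutation for $\abs{i-j}\geq 2$, $f_i^2=f_i$ from $\mu_i\eta_i=\id$, and the factorability axiom of Definition \ref{FactDefMonoid} for the braid-type relations — combined with compatibility of graded equality with pre-/postcomposition by norm non-increasing maps is precisely the calculus the paper uses in Lemmas \ref{WeakFactRecognImpliesFact} and \ref{EvaluationLemmaAlexUngraduiert}. Your closing caveat (always comparing norms to the original input, using that each $f_i$ is norm non-increasing) is exactly the point that makes the congruence-closure step go through.
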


Thus, instead of proving graded identities for the $f_i$'s, we may often prove the corresponding identities in $Q_n$. We will also show later that a similar, but weaker evaluation lemma holds for $Q_n'$. 

\begin{remark}
Note that application of relations $\sim_P$ and $\sim_Q$ never changes the set of letters occuring in a sequence. This implies that the inclusion of generators induces monoid inclusions $P_n \subset P_{n+1}$ and $Q_n\subset Q_{n+1}$. Moreover, the relation $\sim$ also does not change the set of letters occuring in a sequence: The definition of $\sim$ requires that $kIJ\sim_Q IJ$ and $k$ does not occur in $I$, but since $\sim_Q$ preserves the set of letters, $k$ has to occur in $J$, thus the claim. This also implies $Q_n'$ is a submonoid of $Q_{n+1}'$. 
\end{remark}

For later use, we fix some notation for these monoids and collect some facts about them.

\begin{Notation}\label{NotationLeftMost}
We denote by $\sh_k\colon F_{n-k} \to F_n$ the shift homomorphism induced by $i \mapsto i+k$. 

For $I, J \in F_n$, we write $I\subset J$ if $I$ is a (possibly disconnected) subsequence of $J$. 

Denote by $I_{a}^{b}$ the sequence $( a \mbox{ } a+1 \ldots \mbox{ } b-1 \mbox{ } b)$. 

Denote by $D_k$ the sequence $I_k^{k} I_{k-1}^k \ldots I_2^k I_1^k$. 
\end{Notation}

The elements $D_n$ play a very special role in the monoids $Q_n$.

\begin{Theorem}[\cite{AlexThesis}, Section 2.3]\label{Dn}
The element represented by $D_n$ in $Q_n$ is an absorbing element, i.e., for any $I\in F_n$, we have $ID_n\sim_Q D_n\sim_Q D_nI$.
\end{Theorem}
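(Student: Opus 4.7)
I would prove this by induction on $n$. The base case $n = 1$ is immediate: $D_1 = (1)$, $F_1$ consists of strings of $1$s, and the idempotent relation $a^2 \sim_P a$ gives $I \cdot D_1 \sim_P (1) \sim_P D_1 \cdot I$ for any $I \in F_1$.

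For the inductive step, the key observation is the decomposition $D_n = \sh_1(D_{n-1}) \cdot I_1^n$, where $\sh_1(D_{n-1}) = I_n^n I_{n-1}^n \cdots I_2^n$ lies in the submonoid of $Q_n$ generated by $\{2, \ldots, n\}$. This submonoid is isomorphic to $Q_{n-1}$ via $\sh_1$, and by induction $\sh_1(D_{n-1})$ is absorbing within it. To establish that $D_n$ is absorbing in $Q_n$, it suffices (by iteration over letters) to verify $i \cdot D_n \sim_Q D_n \sim_Q D_n \cdot i$ for each generator $i \in \{1, \ldots, n\}$. For left-absorption with $i \in \{2, \ldots, n\}$, one directly applies the inductive hypothesis to the left factor:
\[
 i \cdot D_n \;=\; (i \cdot \sh_1(D_{n-1})) \cdot I_1^n \;\sim_Q\; \sh_1(D_{n-1}) \cdot I_1^n \;=\; D_n.
\]
For right-absorption with $i = n$, the relation $a^2 \sim_P a$ applied to the trailing $n$ of $I_1^n$ suffices.

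The two remaining cases---left-absorption by $i = 1$ and right-absorption by $i \in \{1, \ldots, n-1\}$---are the ``new'' ones not covered inductively, and these are where the work lies. For $1 \cdot D_n$, the letter $2$ occurs in $\sh_1(D_{n-1})$ only once, namely at the start of its final block $I_2^n$; hence the leading $1$ commutes past all earlier letters by $P$-moves, giving
\[
 1 \cdot D_n \;\sim_P\; I_n^n \cdots I_3^n \cdot (1 \cdot I_2^n) \cdot I_1^n \;=\; I_n^n \cdots I_3^n \cdot I_1^n \cdot I_1^n.
\]
It then remains to show this equals $D_n$ in $Q_n$, i.e.\ that the prefix $I_n^n \cdots I_3^n$ absorbs the difference between $I_1^n \cdot I_1^n$ and $I_2^n \cdot I_1^n$. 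For right-absorption with $i < n$, one similarly commutes $i$ leftward through $I_1^n$ until it is blocked by $i+1$, and then uses the adjacent block $I_{i+1}^n$ to trigger a $Q$-reduction. The principal tool throughout is the pair of $Q$-relations $(k, k+1, k, k+1) \sim_Q (k+1, k, k+1)$ and $(k+1, k, k+1, k) \sim_Q (k+1, k, k+1)$, applied iteratively to collapse redundant letters.

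The main obstacle will be carrying out these final $Q$-reductions explicitly. In each case the needed identity has the form ``prefix $\cdot$ expanded block $\cdot$ suffix'' $\sim_Q$ ``prefix $\cdot$ original block $\cdot$ suffix'', and one must find the correct sequence of $P$-moves to bring the relevant $(k, k+1, k, k+1)$ (or $(k+1, k, k+1, k)$) patterns into adjacency before applying the $Q$-relation. The bookkeeping---keeping track of which occurrences of each letter are available to participate in each reduction step---is delicate, but the nested telescoping structure of the $I_a^n$'s inside $D_n$ should make the reduction feasible, eventually matching the two sides letter for letter in $Q_n$.
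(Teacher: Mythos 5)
Your setup is fine as far as it goes: $D_n=\sh_1(D_{n-1})\,I_1^n$ is correct, left-absorption of the letters $2,\dots,n$ does follow from the inductive hypothesis via the shift (which clearly sends $\sim_Q$-equivalent words to $\sim_Q$-equivalent words), and right-absorption of $n$ is just idempotency. But the two cases you defer --- $1\cdot D_n\sim_Q D_n$ and $D_n\cdot i\sim_Q D_n$ for $i<n$ --- are the entire content of the theorem, and the finishing mechanism you propose for them (use $P$-moves to bring a pattern $(k,k+1,k,k+1)$ or $(k+1,k,k+1,k)$ into adjacency, then contract) provably cannot work. Already for $n=3$, $i=1$: the commutation class of $D_3\cdot 1=(3,2,3,1,2,3,1)$ consists of the four words $3231231,\ 3213231,\ 3231213,\ 3213213$, and none of them contains two adjacent equal letters or a contiguous subword of the form $(k,k+1,k,k+1)$ or $(k+1,k,k+1,k)$; the same happens for $1\cdot D_3\sim_P (3,1,2,3,1,2,3)$. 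Hence no length-decreasing relation is applicable anywhere in the commutation class, and every derivation must first pass through \emph{longer} words, i.e.\ must use the defining relations of $Q_n$ in the expanding direction. For instance, a correct derivation of $D_3\cdot 1\sim_Q D_3$ is $323123\sim_Q 3232123\sim_Q 32321213\sim_P 32321231\sim_Q 3231231$, where the first two steps apply $(3,2,3,2)\sim_Q(3,2,3)$ and $(2,1,2,1)\sim_Q(2,1,2)$ backwards. Your outline gives no indication of how such expansion steps are to be organized for general $n$ and general $i$, which is precisely where the difficulty lies; as written, the two essential cases are asserted to be ``feasible bookkeeping'' rather than proved, and the specific local mechanism you describe fails at the very first nontrivial instance.

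Note also that the present paper does not prove this theorem; it quotes it from the first author's thesis (Section 2.3) and points to Krammer's independent proof, so there is no in-paper argument to compare routes with. If you want a self-contained proof along your inductive lines, you will need to strengthen the induction so that it supplies the required expansions --- for example, by proving simultaneously an auxiliary family of identities (absorption of suitable shifted blocks, or an identity identifying $D_n$ with its reversal, using that word reversal is an anti-automorphism of $Q_n$) --- rather than relying on a single commute-and-contract step at the end.
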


\begin{remark}
The monoids $Q_n$ were considered independently by D.~Krammer (\cite{KrammerArtin}). In \cite{KrammerArtin}, Proposition 67 shows that $D_n$ is an absorbing element in $Q_n$. 
\end{remark}

In particular, the Evaluation Lemma implies that $f_{D_n}(m_{n+1}, \ldots, m_1)$ is everywhere stable if $f_{D_n}$ does not drop the norm. An immediate consequence of this is the following proposition. 

\begin{Prop}[\cite{AlexThesis}, Section 2.3]
 Let $(M, \mathcal{E}, \eta)$ be a factorable monoid. Let $m$ be an element of $M$ and let $m_n\ldots m_1$ be a minimal word in $\mathcal{E}$ representing $m$. Then $f_{D_{n-1}}(m_n, \ldots, m_1)$ is the normal form of $m$. 
\end{Prop}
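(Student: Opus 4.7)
The plan is to combine the Evaluation Lemma \ref{EvaluationLemmaAlex} with the absorbing property of $D_{n-1}$ in $Q_{n-1}$ (Theorem \ref{Dn}) to show that $f_{D_{n-1}}(m_n,\ldots,m_1)$ is everywhere stable, and then to appeal to the uniqueness of irreducibles in the rewriting system of Lemma \ref{FactorabilityStructureInducesRewriting}.

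First I verify that $f_{D_{n-1}}$ is norm-preserving on the given input. Since $m_n\cdots m_1$ is a minimal expression for $m$, the input tuple has total norm $n=N(m)$. Every map $f_j$ preserves the product of the entries and is non-increasing on the total norm, so for any word $I$ the tuple $f_I(m_n,\ldots,m_1)$ has product $m$ and therefore total norm at least $N(m)=n$; combined with non-increase, this forces $f_{D_{n-1}}$ (and indeed every intermediate step of it) to be norm-preserving on $(m_n,\ldots,m_1)$.

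Next I exploit absorbency: by Theorem \ref{Dn}, for every letter $k\in\{1,\ldots,n-1\}$ one has $k D_{n-1}\sim_Q D_{n-1}$. The Evaluation Lemma therefore asserts that $f_{k D_{n-1}}$ and $f_{D_{n-1}}$ agree in graded sense on $(m_n,\ldots,m_1)$. Because the right-hand side does not drop the norm, this graded equality upgrades to strict equality:
\begin{equation*}
f_k\bigl(f_{D_{n-1}}(m_n,\ldots,m_1)\bigr)=f_{D_{n-1}}(m_n,\ldots,m_1) \qquad \text{for every } k\in\{1,\ldots,n-1\}.
\end{equation*}
Setting $(y_n,\ldots,y_1):=f_{D_{n-1}}(m_n,\ldots,m_1)$, this precisely says that $(y_n,\ldots,y_1)$ is stable at every position. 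Furthermore, since the norm $n$ is preserved throughout the computation, each intermediate $f_j$ acts on a geodesic pair in $\mathcal{E}\times\mathcal{E}$, and applying $\eta$ to such a pair produces two elements of norm $1$, hence again in $\mathcal{E}$; inductively, $(y_n,\ldots,y_1)\in\mathcal{E}^n$.

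To conclude, I compare with the normal form. By Lemma \ref{EtaNormalFormStabil} the normal form of $m$ is an everywhere stable tuple in $\mathcal{E}^n$ with product $m$; and Lemma \ref{FactorabilityStructureInducesRewriting} shows that everywhere stable tuples are exactly the irreducible words of the associated rewriting system, with each equivalence class containing a unique irreducible. Since both $f_{D_{n-1}}(m_n,\ldots,m_1)$ and the normal form are irreducible representatives of $m$, they coincide. I expect the only delicate point in the argument to be the bookkeeping that turns graded equality into genuine equality, which hinges on observing that norm preservation at the output level propagates backwards to each intermediate step, keeping every invocation of $f_j$ on a geodesic pair inside $\mathcal{E}\times\mathcal{E}$.
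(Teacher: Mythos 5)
Your proof is correct and follows essentially the same route as the paper: the paper presents this proposition as an immediate consequence of the absorbing property of $D_{n-1}$ (Theorem \ref{Dn}) together with the Evaluation Lemma \ref{EvaluationLemmaAlex}, using exactly the observation that on a minimal word the norm cannot drop, so the graded equalities become genuine equalities and the output is everywhere stable. Your additional bookkeeping (entries staying in $\mathcal{E}$, identification with the normal form via uniqueness of irreducibles in Lemma \ref{FactorabilityStructureInducesRewriting}) just makes explicit what the paper leaves implicit.
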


The following definitions introduce particularly nice representatives of elements in $P_n$. These are going to make it easier to track down applications of some $f_I$ to tuples of monoid elements. 

\begin{Definition}[\cite{AlexThesis}]\label{DefinitionLeftMost}
 A sequence $(i_s, \ldots, i_1)\in F_n$ is called \textbf{left-most} if for every $s>t\geq 1$, the following holds: if $\abs{i_{t+1}-i_t}\geq 2$, then $i_{t+1}>i_t$. In other words, $i_t$ exceeds $i_{t+1}$ at most by $1$.
A sequence $(i_s, \ldots, i_1)$ is called \textbf{reduced} if it contains no subsequent equal entries.
\end{Definition}

\begin{example}
 The sequence $(4,2,1,2,3)\in F_4$ is left-most. In contrast, the sequence $(2,4,1,2,3)$ is not left-most as $\abs{2-4}\geq 2$, but $2<4$. 
\end{example}

The following criterion provides an equivalent description of left-most sequences. 

\begin{Lemma}[\cite{AlexThesis}, Section 2.3]\label{LeftMostCharakterisierungUntersequenzen}
 A sequence $I=(i_s, \ldots, i_1)\in F_n$ is left-most if and only if for every connected subsequence $J$ holds: If $a<b$ and $(a,b)\subset J$, then $I_a^b\subset J$. 
\end{Lemma}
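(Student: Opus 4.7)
The strategy is to prove both directions by exploiting an equivalent reformulation of the left-most condition: reading $I$ from left to right (that is, passing from index $t+1$ to index $t$), the value may stay the same, decrease by any amount, or increase by exactly $1$; the defining implication just rules out increases by $2$ or more. Keeping this reformulation in mind, both implications become elementary.

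For the forward direction, let $J = (j_r, \ldots, j_1)$ be a connected subsequence of $I$ and suppose $(a,b) \subset J$ is witnessed by positions $p > q$ in $J$ with $j_p = a$, $j_q = b$, and $a < b$. I plan to construct by induction on $b - a$ a descending chain $p = p_0 > p_1 > \cdots > p_{b-a} \geq q$ inside $J$ with $j_{p_k} = a + k$, which is exactly the inclusion $I_a^b \subset J$. The base case $b = a + 1$ is immediate. For the inductive step, let $r$ be the largest index in the range $[q, p]$ with $j_r \geq a + 1$; this exists since $q$ itself works, and $r < p$ because $j_p = a$. Maximality of $r$ forces $j_{r+1} \leq a$, and since the transition from $j_{r+1}$ to $j_r$ is strictly upward, the reformulation pins its size down to exactly $1$, giving $j_r = a + 1$. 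Applying the inductive hypothesis to the pair of positions $r > q$ with values $a + 1 < b$ then produces the remaining chain elements.

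For the converse I argue contrapositively. If $I$ is not left-most, then some consecutive pair $(i_{t+1}, i_t)$ violates the condition, so $a := i_{t+1} < i_t =: b$ with $b - a \geq 2$. Taking $J$ to be the two-entry connected subsequence $(i_{t+1}, i_t)$ of $I$, we have $(a,b) \subset J$ trivially, while $I_a^b$ has $b - a + 1 \geq 3$ entries and so cannot embed as a subsequence of a length-two string. The main obstacle is really just the forward direction, in particular the observation that the left-most condition translates to ``upward steps have size exactly one''; once this is in hand, the inductive extraction of the chain is routine bookkeeping, and the only care needed is to keep all chosen positions inside the interval $[q, p]$.
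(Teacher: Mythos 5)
Your argument is correct: the reformulation of left-mostness as ``each step to the right increases the value by at most one'' is exactly the defining condition, the induction on $b-a$ (taking the leftmost position in the window $[q,p]$ with value at least $a+1$, so that the step into it is strictly upward and hence of size one) correctly extracts $I_a^b$ inside the window, and the length-two counterexample disposes of the converse. The paper states this lemma without proof, citing Section 2.3 of the first author's thesis, so there is no in-text argument to compare against; your proof is the natural one and is self-contained. The only points left implicit --- that consecutive entries of a connected subsequence $J$ are consecutive in $I$, so the step-size bound applies to $(j_{r+1},j_r)$, and that $r>q$ in the inductive step because $j_r=a+1\neq b$ --- are immediate but deserve a clause each in a written version.
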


The first author showed in his thesis that we can always find left-most, reduced representatives for elements in $P_n$: 

\begin{Prop}[\cite{AlexThesis}, Section 2.3] \label{AllLeftMost}
 Every sequence $I\in F_n$ is $\sim_P$-equivalent to a unique left-most, reduced one. 
\end{Prop}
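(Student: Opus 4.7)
The strategy is to treat $\sim_P$ as a confluent and terminating rewriting system on $F_n$: the unique normal form of each $\sim_P$-class will turn out to be its unique left-most reduced representative. I would orient the defining relations of $P_n$ into two kinds of rewrites: (R1) collapse any adjacent duplicate pair $aa$ to a single $a$, using $a^2 \sim_P a$; (R2) swap an adjacent pair $(i_{t+1}, i_t)$ whenever $i_{t+1} < i_t$ and $|i_{t+1} - i_t| \geq 2$, using the commutation relation. Since every sequence is $\sim_P$-equivalent to any of its reducts, and since a sequence is a joint fixed point of both rewrites if and only if it is reduced (no adjacent duplicates) and left-most (no bad adjacent pair), the normal forms of the system are exactly the reduced, left-most sequences.

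For existence, it suffices to prove termination. Assign to each $I \in F_n$ the pair $\mu(I) = (s, w)$, where $s$ is the length of $I$ and $w$ is the word obtained by reading its entries left to right. Order these pairs so that smaller $s$ comes first, and within equal $s$, lexicographically-larger $w$ comes first; this is a well-founded order on the image of $\mu$. An (R1)-rewrite strictly decreases $s$ and hence $\mu$. An (R2)-rewrite preserves $s$ but moves a strictly larger letter to an earlier position in $w$, hence strictly increases $w$ in the lex order and again strictly decreases $\mu$. Every rewriting sequence therefore halts at a reduced, left-most representative.

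For uniqueness, I would invoke Newman's lemma: a terminating rewriting system is confluent if and only if it is locally confluent, that is, any two single rewrites applied to the same sequence can be completed to a common further reduct. Rewrites at disjoint positions commute trivially, so only overlaps on triples of consecutive positions need attention. A direct enumeration gives four essential cases: two (R1)'s on $(a,a,a)$, both yielding $(a)$; two (R2)'s on $(x,y,z)$ with $x<y<z$ and both consecutive gaps at least $2$, where each initial swap followed by two further swaps reaches $(z,y,x)$; an (R1)/(R2) overlap on $(a,a,z)$ with $a<z$ and $z-a\geq 2$, where both paths reach $(z,a)$ after one further rewrite; and the symmetric (R2)/(R1) overlap on $(x,y,y)$ with $x<y$ and $y-x\geq 2$, where both paths reach $(y,x)$. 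Each diamond closes by direct application of the defining relations.

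The main obstacle I anticipate is not any single diamond — each is a short calculation — but rather ensuring that the enumeration of overlapping rewrite patterns is genuinely exhaustive, so that no interaction between (R1) and (R2) on a triple is missed. Once local confluence is verified for every overlap triple, Newman's lemma yields global confluence, so that the terminal form of any sequence $I$ is an invariant of its $\sim_P$-class; this terminal form is the unique reduced, left-most representative asserted by the proposition.
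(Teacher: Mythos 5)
Your proof is correct, and it is essentially the approach the paper points to: the paper gives no proof of Proposition \ref{AllLeftMost} itself (it cites \cite{AlexThesis}), but the remark immediately following notes that an analogous statement is proven by Krammer \cite{KrammerArtin} via an appropriate rewriting system, which is exactly your route — orient $a^2\to a$ and $ab\to ba$ (for $a<b$, $b-a\geq 2$), prove termination with the (length, reverse-lexicographic) measure, check the four critical triples, and conclude by Newman that each $\sim_P$-class has a unique normal form, namely its left-most reduced representative. One cosmetic slip: in the $(a,a,z)$ overlap the branch that swaps first needs two further rewrites, $(a,z,a)\to(z,a,a)\to(z,a)$, not one, but a common reduct is all local confluence requires, so the argument stands.
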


\begin{remark}
 An analogous statement is proven by D.~Krammer(\cite{KrammerArtin}) via an appropriate rewriting system. 
\end{remark}

The following lemma shows that choosing such a representative preserves existence of certain subsequences:

\begin{Lemma} \label{LeftMostSubsequence1n}
 Let $J\in F_n$ be a sequence with $I_{1}^n\subset J$. Let furthermore $J'$ be a left-most, reduced sequence with $J'\sim_P J$. Then $I_{1}^{n} \subset J'$ holds. 
\end{Lemma}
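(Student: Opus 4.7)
The plan is to prove the stronger statement that for every sequence $K \in F_n$ with $J \sim_P K$, the containment $I_1^n \subset K$ still holds. Since left-most reduced representatives are a special case, this suffices, and it has the advantage of being provable by a direct induction on the length of a chain of elementary $\sim_P$-moves connecting $J$ to $K$.

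First I would fix an embedding of $I_1^n$ into $J$, i.e.\ indices $p_1 < p_2 < \cdots < p_n$ in $J$ with the $k$-th letter of the subsequence equal to $k$, and then show that any single generating move for $\sim_P$ can be accompanied by a corresponding update of these indices producing a valid embedding in the new sequence. The two types of generating moves (together with their inverses) are the braid-like commutations $ab \leftrightarrow ba$ for $\lvert a-b \rvert \geq 2$ and the idempotency moves $a \leftrightarrow a^2$. The inverse directions (undoing a commutation, or inserting a duplicate) obviously preserve any embedding, so only the two ``forward'' directions need attention.

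For the commutation step, I would argue that at most one of the two swapped adjacent positions can belong to $\{p_1,\ldots,p_n\}$: if both were embedded they would sit in consecutive positions of $J$, so they would have to be $p_i$ and $p_{i+1}$, carrying the letters $i$ and $i+1$, which violates $\lvert a-b\rvert \geq 2$. Hence swapping moves at most one $p_i$ by one position, and the strict inequalities $p_{i-1} < p_i < p_{i+1}$ are preserved because the other neighbour of the swapped pair is not among the $p_j$. For the $aa \to a$ deletion step, both positions carry the same letter; since our embedded letters $1,\ldots,n$ are pairwise distinct, at most one of the two positions is among the $p_j$, so deletion keeps an embedding intact (after the obvious index shift by $-1$ for positions to the right of the deleted one).

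The main step is really the commutation bookkeeping outlined above; I expect no subtlety, since the only thing that could go wrong is a collision between two embedded positions during a move, and both cases are excluded by the numerical constraints ($\lvert a-b\rvert \geq 2$ versus consecutive embedded letters, and equal letters versus distinct embedded letters). Combining these observations by induction on the number of elementary moves in a derivation $J \sim_P J'$ yields $I_1^n \subset J'$, which is the claim.
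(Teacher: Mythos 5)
Your proof is correct and follows essentially the same idea as the paper's (one-line) argument: the relations of $P_n$ can never interchange the relative order of an embedded $i$ and $i+1$, and idempotency/commutation moves can always be accompanied by an update of the chosen embedding. Your write-up just makes the bookkeeping of the embedding explicit, which the paper leaves implicit.
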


\begin{pf}
This follows directly as one observes that whenever $(i, i+1)$ is a (possibly disconnected) subsequence of some word in $F_n$, those entries can never be interchanged using relations in $P_n$.
\end{pf}

This finishes for the present the list of general properties of $P_n$ and $Q_n$ which will be used later. 

Now we exhibit three statements which will be important for the main lemma mentioned above.

\begin{enumerate}[label=\textbf{\textit{{\Alph*(n)}}}]
\item Let $I$ be a sequence in $F_n$ with $I=1\wt{X}n$ and $\wt{X}=\sh_1(X)$ for some $X\in F_{n-2}$ (in other words, $1$ and $n$ do not occur in $\wt{X}$). Furthermore, we assume $I_1^n\subset I$. Then there are sequences $J, K\in F_{n-2}$ such that $I \sim \sh_2(J) I_1^n \sh_1(K)$. 
\end{enumerate}

This can be rephrased as follows: In $Q_n'$, if a representing sequence $I$ starts with $1$, ends with $n$, contains all other letters in increasing order in between (not necessarily as a connected subsequence), and no $1$'s and $n$'s occur in between, then $I$ is equivalent to another representative which contains $I_1^n$ as a connected subsequence, only entries greater than $2$ left to $I_1^n$ and entries between $2$ and $n-1$ on the right of it.

\begin{enumerate}[label=\textbf{\textit{{\Alph*(n)}}}, resume]
\item Let $Z$ be a sequence in $F_{n-1}$ and set $\wt{Z}=\sh_1{Z}$. Then we have $1\wt{Z}I_1^{n} \sim \wt{Z}I_1^{n}$. 
 \item Let $L$ be any sequence in $F_n$ and let $M$ be a sequence obtained from $L$ by deleting all $1$'s in $L$. Then $LI_1^{n} \sim MI_1^{n}$. 
\end{enumerate}

Intuitively, $C(n)$ tells us that the sequence $I_1^n$ ``swallows'' all the $1$'s left from it when considered as an element of $Q_n'$. This is a powerful statement for finding nice representatives of elements in $Q_n'$.

Now the main lemma for noetherianity is as follows:  
\begin{Theorem} \label{AnBnCn}
For all $n$, the statements $A(n)$, $B(n)$, $C(n)$ hold.
\end{Theorem}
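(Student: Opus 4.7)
The plan is to establish $A(n)$, $B(n)$, and $C(n)$ simultaneously by induction on $n$, with base case $n=2$ handled by a direct calculation: for $n=2$ the statement $A(2)$ is vacuous (since $X\in F_0$ forces $\tilde X$ empty and $I=I_1^2$), while $B(2)$ and $C(2)$ reduce to the $\sim_Q$-identity $(1,2,1,2)\sim_Q(2,1,2)$ together with the idempotency relation $1\cdot 1\sim_P 1$. In the inductive step I proceed in the logical order $A(n)\Rightarrow B(n)\Rightarrow C(n)$.

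For $A(n)$, I replace $I$ by its unique left-most reduced representative via Proposition~\ref{AllLeftMost}; by Lemma~\ref{LeftMostSubsequence1n} (and since $1$, respectively $n$, has only a single $P_n$-commuting neighbour) this representative is still of the form $1\tilde X n$ with $I_1^n\subset I$. Fixing the leftmost embedding of the skeleton $I_2^{n-1}$ inside $\tilde X$, the characterisation of left-most sequences (Lemma~\ref{LeftMostCharakterisierungUntersequenzen}) forces any letter strictly to the left of the leftmost $2$ of this embedding to be $\geq 4$, hence $P_n$-commuting with the initial $1$; symmetrically, any letter strictly to the right of the last $n-1$ is $\leq n-2$ and commutes with the terminal $n$. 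The ``excess'' letters squeezed between pieces of the skeleton are handled by inductive application of $A(n-1)$, $B(n-1)$ and $C(n-1)$ to the suitably shifted restriction of $\tilde X$ to letters in $\{2,\dots,n-1\}$, producing the required decomposition $I\sim\sh_2(J)I_1^n\sh_1(K)$.

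To derive $B(n)$, set $J=\tilde Z I_1^n$ and invoke the defining rule of $\sim$ with $k=1$: I need some $I\in F_n$ containing neither $1$ nor $2$ (so that $1I\sim_P I\cdot 1$ is automatic and $1\notin I$ holds) such that $1\cdot I\cdot J\sim_Q I\cdot J$. Using $A(n)$, applied after prepending a suitable prefix of letters in $\{3,\dots,n\}$, one rearranges $I\tilde Z I_1^n$ so that it contains a consecutive sub-interval $\sim_Q$-equivalent to $D_n$; the absorbing property of $D_n$ in $Q_n$ (Theorem~\ref{Dn}) then forces $1\cdot I\cdot J\sim_Q I\cdot J$, so the defining rule of $\sim$ yields $1J\sim J$, which is precisely $B(n)$.

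Finally, $C(n)$ follows from $B(n)$ by induction on the number of $1$'s occurring in $L$: writing $L=L_2\cdot 1\cdot L_1$ with $L_1$ containing no $1$, $B(n)$ gives $1\cdot L_1\cdot I_1^n\sim L_1\cdot I_1^n$, and left-multiplication by $L_2$ (using that $\sim$ is a congruence) reduces the count of $1$'s by one; iterating removes them all. The principal obstacle is the combinatorial heart of $A(n)$: one must show that in a left-most reduced representative, every letter of $\tilde X$ not participating in a fixed extraction of $I_1^n$ can be migrated entirely to one side of that extraction, which requires a delicate bookkeeping of the interplay between $P_n$-commutations, the $\sim_Q$-relations, and the absorption relations defining $\sim$, orchestrated via $A$, $B$, $C$ at smaller parameters.
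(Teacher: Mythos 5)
Your scaffold (simultaneous induction, deducing $C(n)$ from $B(n)$ by stripping the $1$'s one at a time, and invoking the defining rule of $\sim$ with a witness word commuting with $1$ plus the absorption property of the $D$-elements) is exactly the architecture of the paper's proof, and that part is sound. But the two load-bearing steps are not actually carried out, and the reductions you do state contain errors. For $A(n)$: it is false that the left-most reduced representative of $I$ is still of the form $1\wt{X}n$ — already $I=(1,3,2,3,4)$ has left-most representative $(3,1,2,3,4)$, since a left-most word cannot have $1$ immediately followed by $3$; the correct move (the one the paper makes) is to normalize only the inner word $X$, keeping the frame $1\cdot\wt{X}\cdot n$ fixed. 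Your subsequent claim that all letters left of the leftmost $2$ of the chosen embedding are $\geq 4$ is likewise unjustified, and the rest of the argument (``excess letters \ldots are handled by inductive application of $A(n-1)$, $B(n-1)$, $C(n-1)$'') is an assertion of the conclusion rather than a proof; you concede as much in your final sentence. This is precisely the combinatorial heart of the theorem: the paper has to decompose $X$ along its occurrences of $1$, locate the first block containing $n-3$, commute the terminal $n$ leftwards, apply $A(n-1)$ to the block $1\wt{X_k}(n-1)$, and then use a shifted $C(n-1)$ to absorb the intervening $2$'s. None of this is present.

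For $B(n)$, the proposed mechanism — choose a witness $I$ over $\{3,\dots,n\}$ and rearrange $I\,\wt{Z}\,I_1^n$ until it contains a factor $\sim_Q$-equivalent to $D_n$ — is too strong and fails in general. Take $n=3$ and $\wt{Z}=(2)$: any admissible witness reduces by idempotency to $(3)$ or the empty word, and in $(3,2,1,2,3)$ the letters $2$ and $3$ can never cross under $\sim_P$, while the only available $\sim_Q$-moves involve the pattern $(2,1,2)$; hence no pattern $(3,2,3)$ or $(2,3,2)$ ever arises and no factor $\sim_Q D_3$ can be produced, even though $B(3)$ itself is true (here directly from $(1,2,1,2)\sim_Q(2,1,2)$). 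This is why the paper's proof of $B(n)$ does not aim for the full $D_n$: it isolates the maximal letter $r\leq n-1$ of the relevant block of $Z$, uses $A(r)$ (a strictly smaller instance, so available without first proving $A(n)$) to create $I_1^{r}$ there, absorbs only into $D_{r+1}$, and carries the tail $I_{r+2}^n$ along unchanged on both sides of the $\sim_Q$-identity fed into the defining rule of $\sim$. So beyond the missing details, the specific route you propose for $B(n)$ would have to be replaced, not merely filled in. (A small further point: the paper's induction step for $A(n)$ needs $n\geq 4$ so that $n$ commutes with $2$, which is why it checks $A(n)$ for $n\leq 3$ by hand; a base case stopping at $n=2$ will likely leave the same gap in your version.)
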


We subdivide the proof in several lemmas. Basically, this is a somewhat involved induction argument. 

\begin{Lemma}
The statement $B(n)$ implies $C(n)$. 
\end{Lemma}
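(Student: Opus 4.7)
The plan is a straightforward induction on the number of occurrences of the letter $1$ in the sequence $L$.

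In the base case, $L$ contains no $1$'s, so $M = L$ and the claim $LI_1^n \sim MI_1^n$ holds trivially (in fact with equality in $F_n$).

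For the inductive step, suppose $L$ contains at least one $1$ and that $C(n)$ holds for sequences containing strictly fewer $1$'s. Single out the \emph{rightmost} occurrence of $1$ in $L$ and write
\[
L \;=\; L_2 \cdot 1 \cdot L_1,
\]
where by choice $L_1 \in F_n$ contains no $1$'s. Since all entries of $L_1$ lie in $\{2,3,\ldots,n\}$, we can set $Z \in F_{n-1}$ so that $L_1 = \sh_1(Z) = \wt{Z}$ in the notation of $B(n)$. Applying $B(n)$ to the subword $1 \cdot L_1 \cdot I_1^n = 1\wt{Z}I_1^n$, and using that $\sim$ is a congruence so it may be applied in the context $L_2 \cdot (\,\cdot\,)$, we obtain
\[
L \cdot I_1^n \;=\; L_2 \cdot 1 \cdot \wt{Z} \cdot I_1^n \;\sim\; L_2 \cdot \wt{Z} \cdot I_1^n \;=\; L_2 L_1 \cdot I_1^n.
\]

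Now the sequence $L_2 L_1$ has exactly one fewer $1$ than $L$, and moreover deleting all $1$'s from $L_2 L_1$ produces the same sequence $M$ as deleting all $1$'s from $L = L_2 \cdot 1 \cdot L_1$, because $L_1$ contributes no $1$'s in either case. Thus the inductive hypothesis gives $L_2 L_1 \cdot I_1^n \sim M \cdot I_1^n$, and combining the two equivalences yields $LI_1^n \sim MI_1^n$, which is $C(n)$.

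The only thing to verify carefully is the applicability of $B(n)$, which is why we choose the \emph{rightmost} $1$: this guarantees the chunk to its right has no $1$'s and hence lies in the image of $\sh_1$, fitting the hypothesis of $B(n)$ exactly. I do not anticipate any real obstacle here; the argument is a one-step reduction plus induction.
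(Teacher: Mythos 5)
Your proof is correct and is essentially the paper's argument: the paper also writes $L$ as blocks in $\sh_1(F_{n-1})$ separated by $1$'s and deletes the $1$'s from the right by iterated application of $B(n)$ (with $Z$ the concatenation of the blocks to the right), which is exactly your induction on the number of $1$'s combined with the congruence property of $\sim$.
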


\begin{pf}
We can write $L$ from $C(n)$ as 
\begin{eqnarray*}
L=\sh_1(L_m)1\sh_1(L_{m-1})1\ldots \sh_1(L_1)1\sh_1(L_0)
\end{eqnarray*}
with $L_i\in F_{n-1}$. Now we can iteratively apply $B(n)$ with $Z=L_iL_{i-1}\ldots L_1L_0$ to delete the $1$'s.
\end{pf}

\begin{Lemma}
The statements $A(n)$ are true for $1\leq n\leq 3$. The statement $B(1)$ is also true.
\end{Lemma}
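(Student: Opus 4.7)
The plan is to verify these base cases by direct inspection, exploiting the fact that for such small $n$ the alphabet available for $\wt{X}$ (respectively $\wt{Z}$) is extremely restricted, so that the claimed equivalences collapse already modulo $\sim_P$ alone.

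First I would dispatch $A(1)$ and $A(2)$ simultaneously. Since $\wt{X} = \sh_1(X)$ with $X \in F_{n-2}$, the word $\wt{X}$ uses only letters in $\{2, \ldots, n-1\}$. For $n \leq 2$ this letter set is empty, forcing $\wt{X}$ to be the empty word, so $I = 1 \cdot n$. For $n = 1$ this reads $I = (1,1)$, which is $\sim_P$-equivalent to $(1) = I_1^1$ via the relation $1^2 \sim_P 1$; for $n = 2$ we have $I = (1,2) = I_1^2$ on the nose. In either case the conclusion holds by taking $J$ and $K$ to be the empty word, so that $\sh_2(J)$ and $\sh_1(K)$ contribute nothing and $I \sim I_1^n$.

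Next, for $A(3)$ the alphabet for $\wt{X}$ shrinks to the singleton $\{2\}$, so $\wt{X} = (2, 2, \ldots, 2)$. The hypothesis $I_1^3 \subset I$ forces $\wt{X}$ to contain at least one $2$ (the unique occurrences of $1$ and $3$ in $I$ are outside $\wt{X}$), and repeated application of $2^2 \sim_P 2$ reduces $\wt{X}$ to $(2)$. Hence $I \sim_P (1,2,3) = I_1^3$, and once more the empty choices of $J, K \in F_1$ do the job. For $B(1)$ the hypothesis $Z \in F_0$ means $Z$ (and thus $\wt{Z}$) is empty, so the asserted equivalence $1 \wt{Z} I_1^1 \sim \wt{Z} I_1^1$ collapses to $(1,1) \sim (1)$, which is again immediate from $1^2 \sim_P 1$.

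I do not anticipate any real obstacle at this base stage; each case is a one-line observation after unwinding the definitions of $\wt{X}$, $\wt{Z}$ and of $\sim_P$. The genuine work of the theorem will lie in the inductive steps relating $A(n)$, $B(n)$ and $C(n)$ for larger $n$, where one must manipulate the $\sim_Q$- and $\sim$-relations in an alphabet rich enough for $I_a^b$ to interact nontrivially with the surrounding letters; the base case merely confirms that no hidden obstruction appears before the inductive machinery has anything to bite on.
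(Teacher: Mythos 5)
Your proof is correct and follows essentially the same route as the paper: in each base case the alphabet available to $\wt{X}$ (resp.\ $\wt{Z}$) is empty or a single letter, so after applying the idempotency relation in $P_n$ the sequence collapses to $I_1^n$ (resp.\ the claim reduces to $(1,1)\sim_P(1)$), and taking $J,K$ empty finishes. Your write-up is just a more detailed unwinding of the paper's one-line verification.
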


\begin{proof}
For $n=1$ and $n=2$, $X$ in $A(1)$ and $A(2)$ has to be the empty sequence, so $I=I_1^n$ and $A(1)$ and $A(2)$ hold. Similarly, for $n=1$, the string $Z$ in $B(1)$ is empty, so $B(1)$ holds. For $n=3$, the only non-empty case of $A(3)$ is $X=1$ (since we can apply idempotency relation in $P_1$). Then $I=(1\, 2\, 3)$, and the statement $A(3)$ obviously holds.
\end{proof}

\begin{Lemma}
Assume $A(k)$ and $B(k)$ hold for all $1\leq k\leq n-1$. Then the statement $B(n)$ holds.
\end{Lemma}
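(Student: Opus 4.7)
The plan is to apply the defining rule of the congruence $\sim$ on $Q_n'$ with a specific witness in order to delete the leading $1$ from $1\cdot \sh_1(Z)\cdot I_1^n$. I propose to use the witness $I:=I_3^n=(3\enspace 4\enspace \ldots\enspace n)$: every letter of $I$ commutes with $1$ in $P_n$, so $1\cdot I\sim_P I\cdot 1$ and $1\notin I$ are both automatic. The conclusion $1\cdot \sh_1(Z)\cdot I_1^n\sim \sh_1(Z)\cdot I_1^n$ will then follow from the $Q_n$-equivalence
\[ 1\cdot I_3^n\cdot \sh_1(Z)\cdot I_1^n\;\sim_Q\; I_3^n\cdot \sh_1(Z)\cdot I_1^n, \]
which, by commuting the initial $1$ past $I_3^n$ with $\sim_P$, amounts to showing $I_3^n\cdot 1\cdot \sh_1(Z)\cdot I_1^n\sim_Q I_3^n\cdot \sh_1(Z)\cdot I_1^n$ in $Q_n$.

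To verify this $Q_n$-identity I would first push the interior $1$ as far right as possible using $\sim_P$-commutations with the letters $\ge 3$ that appear at the beginning of $\sh_1(Z)$. If no letter $2$ intervenes, the $1$ reaches the start of $I_1^n$ and the two successive $1$'s collapse by idempotency, producing the right-hand side. If a letter $2$ blocks the $1$, the key input is the auxiliary $Q_n$-identity
\[ I_1^n\cdot I_1^n\;\sim_Q\; I_2^n\cdot I_1^n, \]
which I plan to prove by induction on $n$: writing $I_1^n\cdot I_1^n=I_1^{n-1}\cdot n\cdot I_1^{n-1}\cdot n$, commuting the interior $n$ past $1,\ldots,n-2$, applying the $\sim_Q$-relation $(n-1\enspace n\enspace n-1\enspace n)\sim_Q(n\enspace n-1\enspace n)$ to merge the two $n$'s, and invoking the induction hypothesis $I_1^{n-1}\cdot I_1^{n-1}\sim_Q I_2^{n-1}\cdot I_1^{n-1}$. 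This auxiliary identity lets a $(1,2,\ldots)$-prefix sitting in front of an $I_1^n$-suffix be absorbed into the suffix.

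The main obstacle is combining these rewritings when $\sh_1(Z)$ contains letters (most notably $3$) that commute neither with $1$ nor with $2$, so that neither $\sim_P$-commutation nor the direct application of the auxiliary identity is immediately available. I would address this by a secondary induction on the number of occurrences of $3$ in $\sh_1(Z)$, combined with the inductive hypothesis $A(n-1)$ applied to a suitable subword in $F_{n-1}$ so as to bring $I_1^{n-1}$ into a connected position. After such a rearrangement, the hypothesis $B(n-1)$, used inside the submonoid $Q_{n-1}'\subset Q_n'$, takes care of the local absorption, and the outer $Q_n$-equivalence is obtained by piecing the local moves together while carefully tracking that the witness $I_3^n$ remains intact throughout. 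The bookkeeping of this interleaving between $\sim_P$-commutations, the auxiliary $Q_n$-identity, and the inductive applications of $A(n-1)$ and $B(n-1)$ is the delicate part of the argument.
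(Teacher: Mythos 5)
Your witness $I_3^n$ is admissible in form (it commutes with $1$ under $\sim_P$ and does not contain $1$), and your treatment of the case where $Z$ contains no $1$ agrees with the paper; the auxiliary identity $I_1^nI_1^n\sim_Q I_2^nI_1^n$ can indeed be proved in $Q_n$ along the lines you sketch. But the heart of the matter is left open, and the way you propose to close it does not work. The defining relation of $\sim$ requires its hypothesis to be a genuine $\sim_Q$-equivalence in $Q_n$: you must actually prove $1\cdot I_3^n\cdot\sh_1(Z)\cdot I_1^n\sim_Q I_3^n\cdot\sh_1(Z)\cdot I_1^n$. Your plan is to do this "by induction on the occurrences of $3$", invoking $A(n-1)$ and $B(n-1)$ to rearrange and absorb locally. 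That is a category error bordering on circularity: $A(k)$, $B(k)$, $C(k)$ are statements about the coarser congruence $\sim$ on $Q_k'$, and equivalences modulo $\sim$ may not be used inside a derivation that has to end in a $\sim_Q$-statement. (It is legitimate to pre-process the word with $\sim$-rewritings, since the final claim $1\wt{Z}I_1^n\sim\wt{Z}I_1^n$ lives at the level of $\sim$ and $\sim$ is a congruence; but whatever word you then feed to the witness mechanism must be handled purely inside $Q_n$.) Your direct $\sim_Q$-tools — commuting the $1$ past letters $\geq 3$ and the identity $I_1^nI_1^n\sim_Q I_2^nI_1^n$ — only cover the cases where $\sh_1(Z)$ commutes with $1$ or where the letters between the $1$ and the suffix literally assemble into a second copy of $I_1^n$; already for $\sh_1(Z)=(2\ 3\ 2)$ no mechanism in your proposal produces the required $Q_n$-identity.

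This is exactly the difficulty the paper's choice of witness is designed to dissolve. There, one first locates the maximal letter $r$ of the block of $Z$ following its first $1$, uses $A(r)$ and $C(r)$ (legitimately, at the level of $\sim$) to bring the word into the shape $\sh_2(U')\sh_3(J)I_2^{r+1}\sh_2(K)\sh_1(W)I_1^{r+1}I_{r+2}^n$, and then takes as witness the shifted absorbing element $\sh_2(D_{r-1})$. Because of the absorption property of Theorem \ref{Dn}, both the word with the leading $1$ and the word without it collapse under $\sim_Q$ to $D_{r+1}I_{r+2}^n$ (the $1$ being swallowed by $D_{r+1}$ inside $Q_{r+1}$), so the $\sim_Q$-hypothesis of the defining relation of $\sim$ is verified essentially for free. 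With $I_3^n$ as witness there is no absorbing element in play, and nothing in your argument replaces it; to repair the proof you would need either to switch to such a witness or to supply an independent proof of your $\sim_Q$-claim for arbitrary $Z$, which your current tools do not give.
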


\begin{pf}

Let $Z$ be in $F_{n-1}$ and set $\wt{Z}=\sh_1{Z}$. We would like to show $1\wt{Z}I_1^{n} \sim \wt{Z}I_1^{n}$.

Without loss of generality, we may assume that $Z$ is left-most due to Proposition \ref{AllLeftMost}. 

 If $Z$ does not contain a $1$, all entries of $\wt{Z}$ are at least $3$ and $1\wt{Z} \sim_{P}\wt{Z}1$, so we conclude 
\begin{eqnarray*}
 1\wt{Z}I_1^{n} \sim_P \wt{Z}1I_1^{n} \sim_P \wt{Z}I_1^{n}
\end{eqnarray*}
as $I_1^{n}$ starts with a $1$. 
Now assume that $Z$ does contain a $1$. Then we can write $Z=Z'Z''$ with $Z'\in F_{n-1}$ not containing a $1$ and $Z''\in F_{n-1}$ starting with a $1$. If $\wt{Z'}=\sh_1(Z')$ and $\wt{Z''}=\sh_1(Z'')$, then $\wt{Z'}$ again contains only entries commuting with $1$, so it is enough to show that $1\wt{Z''}I_1^{n} \sim \wt{Z''}I_1^n$ since then we have
\begin{eqnarray*}
 1\wt{Z}I_1^{n}=1\wt{Z'}\wt{Z''}I_1^{n} \sim_P \wt{Z'}1\wt{Z''}I_1^{n} \sim \wt{Z'}\wt{Z''}I_1^{n}=\wt{Z}I_1^{n}.
\end{eqnarray*}
Let $r$ be the maximal entry occurring in $Z''$; by assumptions $r\leq n-1$. By the definition of $r$, we know that $Z'' \in F_{r}$. There must be a connected subsequence of $Z''$ starting with $1$, ending with $r$ and not containing $1$ or $r$ in between: Find the first occurrence of $r$ in $Z''$ starting from the left, then there is at least one $1$ left from it since $Z''$ starts with $1$; from all the $1$'s left to this $r$ take the one on the very right. So we may write $Z''$ as $Z''=U 1 \wt{V} r W$ with $U, W\in F_{r}$, $\wt{V}=\sh_1(V)$ with $V\in F_{r-2}$. Since $1\wt{V}r$ is a connected subsequence of $Z''$ and $Z''$ is a connected subsequence of the left-most sequence $Z$, we conclude that $1\wt{V}r$ is left-most. This implies $I_1^{r} \subset 1\wt{V}r$ by Lemma \ref{LeftMostCharakterisierungUntersequenzen}.
 Then we may apply $A(r)$ and get two sequences $J, K \in F_{r-2}$ such that $1\wt{V}r\sim \sh_2(J) I_1^{r} \sh_1(K)$. Therefore, $Z'' \sim U \sh_2(J) I_1^{r} \sh_1(K) W$. 

Observe that all entries of $\sh_2(J)$ are at least $3$. By $C(r)$, which holds since we assumed $B(r)$, we may also change $U$ to $\wt{U'}$ by deleting all $1$'s in $U$ so that $\wt{U'}=\sh_1(U')$ for some $U'\in F_{r-1}$. 

Now we put together what we have done so far: 
\begin{eqnarray*}
 1\sh_1(Z'')I_1^{n}\sim 1\sh_2(U')\sh_3(J) I_2^{r+1}\sh_2(K)\sh_1(W) I_1^ {r+1} I_{r+2}^{n}.
\end{eqnarray*}
Consider now the product of $\sh_2(D_{r-1})$ with the left-hand side. It is obvious that $\sh_2(D_{r-1})$ commutes with $1$ by $\sim_P$ and the $1$ does not occur in $\sh_2(D_{r-1})$. It is enough to show that 
\begin{eqnarray*}
\sh_2(D_{r-1})1\sh_2(U')\sh_3(J) I_2^{r+1}\sh_2(K)\sh_1(W) I_1^ {r+1} I_{r+2}^{n} \sim_Q \\
\sh_2(D_{r-1})\sh_2(U')\sh_3(J) I_2^{r+1}\sh_2(K)\sh_1(W) I_1^ {r+1} I_{r+2}^{n}
\end{eqnarray*}
since that by definition of $\sim$ in Definition \ref{DefinitionMonoidQn} implies 
\begin{eqnarray*}
1\sh_2(U')\sh_3(J) I_2^{r+1}\sh_2(K)\sh_1(W) I_1^ {r+1} I_{r+2}^{n} \sim \\
\sh_2(U')\sh_3(J) I_2^{r+1}\sh_2(K)\sh_1(W) I_1^ {r+1} I_{r+2}^{n}.
\end{eqnarray*}
Now we have 
\begin{eqnarray*}
\sh_2(D_{r-1})1\sh_2(U')\sh_3(J) I_2^{r+1}\sh_2(K)\sh_1(W) I_1^ {r+1} I_{r+2}^{n} \sim_P \\
1\sh_2(D_{r-1}U'\sh_1(J)) I_2^{r+1}\sh_2(K)\sh_1(W) I_1^ {r+1} I_{r+2}^{n} \sim_Q\\
1\sh_2(D_{r-1})I_2^{r+1}\sh_2(K)\sh_1(W)I_1^ {r+1} I_{r+2}^{n},
\end{eqnarray*}
where the first equivalence follows from the fact that $\sh_2(D_{r-1})$ has entries greater or equal to $3$, and the second one from the absorption property \ref{Dn} of $D_{r-1}$ in $Q_{r-1}$, where $U'$ and $\sh_1(J)$ define elements in. 

Observe now that we can write $\sh_2(D_{r-1})I_2^{r+1}$ as $\sh_1(D_r)$. Again by Theorem \ref{Dn}, we have 
\begin{eqnarray*}
\sh_1(D_r\sh_1(K) W) \sim_Q \sh_1(D_r)
\end{eqnarray*}
since $\sh_1(K), W\in F_r$ represent elements in $Q_r$. Together this yields
\begin{eqnarray*}
1\sh_2(D_{r-1})I_2^{r+1}\sh_2(K)\sh_1(W)I_1^ {r+1} I_{r+2}^{n} & \sim_Q  & 1\sh_1(D_r)I_1^ {r+1} I_{r+2}^{n}\\
 & = & 1D_{r+1} I_{r+2}^n \sim_Q D_{r+1} I_{r+2}^n,
\end{eqnarray*}
where we again used the absorption property in the last step. 

Using the same arguments, one observes that also 
\begin{eqnarray*}
\sh_2(D_{r-1})\sh_2(U')\sh_3(J) I_2^{r+1}\sh_2(K)\sh_1(W) I_1^ {r+1} I_{r+2}^{n}\sim_Q D_{r+1} I_{r+2}^n
\end{eqnarray*}
 implying the claim $B(n)$. 
 
 \end{pf}
 
\begin{Lemma}
Assume $A(k)$ and $B(k)$ hold for all $1\leq k\leq n-1$. Then the statement $A(n)$ holds.
\end{Lemma}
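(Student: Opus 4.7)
The plan is to prove $A(n)$ by an outer induction on $n$, using the hypotheses $A(k), B(k)$ (hence $C(k)$) for all $1 \le k \le n-1$, combined with an inner induction on the length $|\wt X|$. By Proposition \ref{AllLeftMost} and Lemma \ref{LeftMostSubsequence1n} we may assume $I$, and thus $\wt X$, is left-most and reduced. The base case $|\wt X| = n-2$ forces $\wt X = I_2^{n-1}$ and $I = I_1^n$, handled by $J = K = ()$.

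For the inductive step I distinguish cases by the endpoints of $\wt X$. If $\wt X$ does not begin with $2$, let $L'$ be the prefix preceding its first $2$; its letters lie in $\{3, \ldots, n-1\}$ and commute with $1$, giving $I \sim_P L' \cdot 1 \cdot 2 \cdot V \cdot n$ with $\wt X = L' \cdot 2 \cdot V$. Since the leading $2$ may always be chosen as the ``$2$'' in the subsequence $I_1^n \subset I$, one has $(3, \ldots, n-1) \subset V$, so $1 \cdot 2 \cdot V \cdot n$ still satisfies the $A(n)$ hypothesis with strictly shorter middle. The inner induction then yields $1 \cdot 2 \cdot V \cdot n \sim \sh_2(J_0)\, I_1^n\, \sh_1(K_0)$, and writing $L' = \sh_2(L'')$ with $L'' \in F_{n-3}$ absorbs $L'$ into the left factor. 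The analogous argument treats the case that $\wt X$ does not end with $n-1$: the suffix after its last $n-1$ has letters $\leq n-2$, commutes with $n$, and gets absorbed into the right factor $\sh_1(K)$.

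This leaves the extremal case $\wt X = 2 \cdot M \cdot (n-1)$. Suppose first that $M$ contains no $2$. Then $2 \cdot M \cdot (n-1) \cdot n$ is the $\sh_1$-image in $F_n$ of a sequence in $F_{n-1}$ fulfilling the hypothesis of $A(n-1)$, so applying $A(n-1)$ (shifted by $1$) yields $J', K' \in F_{n-3}$ with
\[
2 \cdot M \cdot (n-1) \cdot n \;\sim\; \sh_3(J') \cdot I_2^n \cdot \sh_2(K').
\]
Since $\sh_3(J')$ has letters $\geq 4$, it commutes with the leading $1$, producing $I \sim \sh_2(\sh_1(J')) \cdot I_1^n \cdot \sh_1(\sh_1(K'))$, as required.

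The main obstacle is the remaining sub-case, where $M$ contains additional $2$'s beyond the leading one. The plan here is to mimic the strategy of the proof of $B(n)$: left-multiply the target equivalence by an appropriate shifted absorbing element $\sh_2(D_{r-1})$, so that the equivalence reduces (via Theorem \ref{Dn} and the inductively available $A(r), B(r), C(r)$ for $r < n$) to an equivalence in $Q_r$ under $\sim_Q$; the conclusion is then transferred back to $Q_n'$ via the definition of $\sim$ from $\sim_Q$, exploiting that the leading $1$ commutes with the absorbing prefix (whose letters are $\geq 3$). The delicate bookkeeping needed to eliminate each extra $2$ without introducing new obstructions parallels the corresponding step in $B(n)$ and should go through, since every ingredient is already available from the inductive hypotheses.
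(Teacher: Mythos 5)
Your reductions are fine as far as they go: normalizing to a left-most, reduced representative, the base case $\wt X=I_2^{n-1}$, and the two boundary cases (a prefix of letters $\geq 3$ before the first $2$, a suffix of letters $\leq n-2$ after the last $n-1$) are all correct, and the sub-case where $M$ contains no further $2$ is handled exactly as the paper would (unshift, apply $A(n-1)$, commute the letters $\geq 3$ past the leading $1$). The problem is that the remaining case --- $\wt X$ beginning with $2$, ending with $n-1$, and containing additional $2$'s in between --- is not a degenerate leftover but \emph{the} case, and for it you offer only a plan (``mimic the proof of $B(n)$, left-multiply by $\sh_2(D_{r-1})$, the bookkeeping should go through''), not an argument. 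That plan is not obviously adequate. The absorbing-element trick is tailored to the shape of $B(n)$: there the goal is merely to \emph{delete one letter} $1$, and the defining relation of $\sim$ (delete $k$ from $kJ$ provided $kIJ\sim_Q IJ$, $kI\sim_P Ik$, $k\notin I$) is precisely an instrument for deleting letters, with $\sh_2(D_{r-1})$ playing the role of $I$. The statement $A(n)$ demands something structurally different: you must \emph{produce} a representative $\sh_2(J)\,I_1^n\,\sh_1(K)$, i.e.\ exhibit $I_1^n$ as a connected subsequence with controlled letters on each side, and multiplying by an absorbing element destroys exactly the structure you need to exhibit. Note also that ``eliminating each extra $2$'' is not what has to happen: in the correct argument the $2$'s (indeed whole blocks) to the \emph{right} of the relevant block are never removed --- they are simply absorbed into $\sh_1(K)$, which may contain the letter $2$; only the $2$'s to the \emph{left} are deleted, and the tool for that is $\sh_1(C(n-1))$, not an absorbing element.

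Concretely, the missing step in your hard case is the paper's core argument: write $\wt X=\wt{\wt{X_t}}\,2\,\wt{\wt{X_{t-1}}}\,2\cdots 2\,\wt{\wt{X_1}}$ with $\wt{\wt{X_i}}=\sh_2(X_i)$, choose $k$ minimal with $n-1$ occurring in $\wt{\wt{X_k}}$, commute the terminal $n$ leftwards past the blocks to the right of it (their letters are $\leq n-2$), apply $A(n-1)$, shifted by one, to $2\,\wt{\wt{X_k}}\,n$ --- this is where left-mostness is actually used, via Lemma \ref{LeftMostCharakterisierungUntersequenzen}, to guarantee $I_2^{n-1}\subset 2\,\wt{\wt{X_k}}$ --- and then invoke $\sh_1(C(n-1))$ to delete the $2$'s to the left of the resulting $I_2^n$ before commuting the remaining letters $\geq 3$ past the initial $1$; the blocks to the right go into $K$ untouched. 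Without this (or a genuinely worked-out substitute), your proof is incomplete exactly where the lemma is hard.
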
 
 
 \begin{pf}
 Suppose we have $X\in F_{n-2}$ such that $I=1\wt{X}n$ contains $I_1^{n}$ as a (possibly) disconnected subsequence with $\wt{X}=\sh_1(X)$. Without loss of generality, we may assume $X$ to be left-most; due to Lemma \ref{LeftMostSubsequence1n}, the property $I_1^{n-2}\subset X$ is preserved. Write $X$ as $X=\wt{X_t} 1 \wt{X_{t-1}} 1 \ldots 1\wt{X_2} 1\wt{X_1}$ where $\wt{X_i}=\sh_1(X_i)$, with $X_i\in F_{n-3}$. The sequences $X_t$ and $X_1$ are possibly empty, whereas $X_i$ can be assumed non-empty for $2\leq i\leq t-1$. (Observe that $X$ has to contain a $1$ since $I_1^{n}\subset I=1\wt{X}n$, and that the $X_i$ are left-most again.) Set $\wt{\wt{X_i}}=\sh_2(X_i)$. Then 
 \begin{eqnarray*}
 I=1\wt{\wt{X_t}}2\wt{\wt{X_{t-1}}}2\ldots 2\wt{\wt{X_2}}2\wt{\wt{X_1}}n.
 \end{eqnarray*} 
  Since $I_1^{n}\subset I$, there must be a smallest $k$ such that $n-1\in \wt{\wt{X_k}}$, i.e., $n-3\in X_{k}$. As we already showed $A(n)$ for $n\leq 3$, we may assume $n\geq 4$. Thus, $n$ commutes with $2$, and 
  \begin{eqnarray*}
  I\sim_P 1\wt{\wt{X_t}}2\wt{\wt{X_{t-1}}}2\ldots 2\wt{\wt{X_{k+1}}}2\wt{\wt{X_k}}n 2\wt{\wt{X_{k-1}}}2\wt{\wt{X_{k-2}}}2\ldots 2\wt{\wt{X_1}}.
  \end{eqnarray*}
Note that $k<t$ since $I_1^n\subset 1\wt{X} n$, so there must be an entry $n-1$ after the first appearance of $2$. Now we have $2\wt{\wt{X_k}}n=\sh_1(1\wt{X_k} (n-1))$, where $X_k\in F_{(n-1)-2}$. Moreover, $2\wt{\wt{X_k}}$ is left-most and contains $(2,n-1)$ by assumptions, thus it contains $I_{2}^{n-1}$ by Lemma \ref{LeftMostCharakterisierungUntersequenzen} and therefore $I_{1}^{n-1}\subset 1\wt{X_k} (n-1)$. So we may use $A(n-1)$ and obtain sequences $J', K' \in F_{n-3}$ such that $1\wt{X_k} (n-1)\sim \sh_2(J') I_{1}^{n-1} \sh_1(K')$. 
Putting this into the formula above leads to
\begin{eqnarray*}
 I\sim 1\wt{\wt{X_t}}2\wt{\wt{X_{t-1}}}2\ldots 2\wt{\wt{X_{k+1}}}\sh_3(J') I_{2}^{n} \sh_2(K')2\wt{\wt{X_{k-1}}}2\wt{\wt{X_{k-2}}}2\ldots 2\wt{\wt{X_1}}.
\end{eqnarray*}
Use now $\sh_1(C(n-1))$ to see 
\begin{eqnarray*}
  I\sim 1\wt{\wt{X_t}}\wt{\wt{X_{t-1}}}\ldots \wt{\wt{X_{k+1}}}\sh_3(J') I_{2}^{n} \sh_2(K')2\wt{\wt{X_{k-1}}}2\wt{\wt{X_{k-2}}}2\ldots 2\wt{\wt{X_1}}.
\end{eqnarray*}
Since all entries in $\wt{\wt{X_t}}\wt{\wt{X_{t-1}}}\ldots \wt{\wt{X_{k+1}}}\sh_3(J')$ are at least $3$, this term commutes with $1$ and we obtain
\begin{eqnarray*}
 I\sim \sh_2(X_tX_{t-1}\ldots X_{k+1}\sh_1(J')) I_{1}^{n} \sh_1(\sh_1(K')1\wt{X_{k-1}}1\wt{X_{k-2}}1\ldots 1\wt{X_1}).
\end{eqnarray*}
Since $J:=X_tX_{t-1}\ldots X_{k+1}\sh_1(J')$ is in $F_{n-2}$ as well as 
\begin{eqnarray*}
 K:=\sh_1(K')1\wt{X_{k-1}}1\wt{X_{k-2}}1\ldots 1\wt{X_1},
\end{eqnarray*}
 this completes the proof of the lemma.
\end{pf}

\begin{proof}[Proof of Theorem \ref{AnBnCn}]
 follows now by induction assembling the lemmas above.
\end{proof}

This leads us to the following corollary.

\begin{Cor} \label{AnwendungenI1nNormalForm}
 In $Q_n'$, the equality $I_1^nI_1^n=I_2^nI_1^n$ follows from $C(n)$ with $L=I_{1}^{n}$. Applying the statement $C(n)$ iteratively, we obtain furthermore
\begin{eqnarray*}
 (I_1^n)^n \sim I_n^nI_{n-1}^n\ldots I_{2}^nI_1^n=D_n.
\end{eqnarray*}

\end{Cor}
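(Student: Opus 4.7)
The first identity is a one-line application of $C(n)$: take $L=I_1^n=(1,2,\ldots,n)$; since $L$ contains exactly one $1$ (at its first position), the sequence $M$ obtained from $L$ by deleting all $1$'s equals $(2,3,\ldots,n)=I_2^n$, and $C(n)$ then gives $I_1^nI_1^n=LI_1^n\sim MI_1^n=I_2^nI_1^n$.

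For the iterated statement $(I_1^n)^n\sim D_n$ my plan is to argue by induction on $n$. The base case $n=1$ is immediate since $(I_1^1)^1=(1)=D_1$. For the inductive step I first apply $C(n)$ to the whole word, choosing $L=(I_1^n)^{n-1}$: each of the $n-1$ copies of $I_1^n$ starts with a $1$ and contains no other $1$, so removing every $1$ from $L$ turns each $I_1^n$ into $I_2^n$ and yields $M=(I_2^n)^{n-1}$. Hence
\begin{eqnarray*}
(I_1^n)^n \;=\; L\cdot I_1^n \;\sim\; M\cdot I_1^n \;=\; (I_2^n)^{n-1}I_1^n.
\end{eqnarray*}
Next I invoke the inductive hypothesis $(I_1^{n-1})^{n-1}\sim D_{n-1}$ in $Q_{n-1}'$ and transport it along the shift $\sh_1$ into $Q_n'$. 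This gives $(I_2^n)^{n-1}\sim\sh_1(D_{n-1})=I_n^nI_{n-1}^n\ldots I_2^n$, and concatenating $I_1^n$ on the right closes the induction: $(I_1^n)^n\sim I_n^nI_{n-1}^n\ldots I_2^nI_1^n=D_n$.

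The one point that needs brief justification is that the shift $\sh_1$ induces a monoid embedding $Q_{n-1}'\hookrightarrow Q_n'$; the excerpt records only the unshifted inclusion $Q_n'\subset Q_{n+1}'$, but the shifted version follows by the same bookkeeping, once one observes that every defining relation — the commutations $ab\sim_P ba$ for $\abs{a-b}\geq 2$, idempotency $a^2\sim_P a$, the braid relations $\sim_Q$ between $k$ and $k+1$, and the relation $\sim$ derived from them — depends only on differences of letters and relative positions, hence is invariant under a uniform shift of all entries. This is the only subtle point; everything else is a direct reading of $C(n)$. If one prefers to avoid induction, the same identity is produced by applying $C(n)$ once to reduce $(I_1^n)^n$ to $(I_2^n)^{n-1}I_1^n$, then applying the shifted version $\sh_1(C(n-1))$ to reduce $(I_2^n)^{n-1}I_1^n$ to $(I_3^n)^{n-2}I_2^nI_1^n$, then $\sh_2(C(n-2))$, and so on, peeling off one right-hand block $I_k^n$ at each stage until one arrives at $I_n^n I_{n-1}^n\ldots I_2^n I_1^n=D_n$; this is exactly the ``iterative'' application of $C(n)$ announced in the statement.
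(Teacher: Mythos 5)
Your proof is correct and follows exactly the route the corollary itself indicates: the first identity is the direct application of $C(n)$ with $L=I_1^n$, and the iterated claim is obtained by peeling off the blocks $I_k^n$ one at a time via the shifted statements $\sh_k(C(n-k))$ (equivalently, your induction using $\sh_1(D_{n-1})$), which is the same use of shifted $C$-statements the paper already makes when it invokes $\sh_1(C(n-1))$ in the proof of $A(n)$. One small remark: for transporting $(I_1^{n-1})^{n-1}\sim D_{n-1}$ into $Q_n'$ you only need that $\sh_1$ induces a well-defined monoid homomorphism $Q_{n-1}'\to Q_n'$ (i.e., that all defining relations, including the extra relation $\sim$, are preserved under a uniform shift, as you verify), not that it is an embedding.
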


\begin{remark}
 One can use the theorem above to show that $Q_n'$ is finite for all $n$; for details, we refer to \cite{MyThesis}. Contrary to this, the monoid $Q_n$ is infinite for $n\geq 3$, cf.\ \cite{AlexThesis} or \cite{KrammerArtin}. 
\end{remark}

\section{Factorable Monoids with Complete Rewriting Systems}
\label{Factorable Monoids with Complete Rewriting Systems}

Recall that for the monoid $Q_n$, the Evaluation Lemma \ref{EvaluationLemmaAlex} holds, i.e., for any tuples $I, J\in F_n$ with $I \sim_Q J$, we know that we have the equality $f_I\equiv f_J$ in the graded sense. Unfortunately, only weak analogs are true for the monoid $Q_n'$. However, they are sufficient to provide some examples of complete rewriting systems. So we will show a suitable Evaluation Lemma for $Q_n'$. First, we will prove a variant of the Evaluation Lemma \ref{EvaluationLemmaAlex} for the original monoid $Q_n$. 

\begin{Lemma}\label{EvaluationLemmaAlexUngraduiert}
  Let $(M, \mathcal{E}, \eta)$ be a factorable monoid satisfying the stronger conditions $(xs)'=(x's)'$ and $\overline{xs}=\overline{x}\cdot \overline{x's}$ for any $x\in M$ and $s\in \E$. Then 
  \begin{eqnarray*}
  f_1f_2f_1f_2=f_2f_1f_2=f_2f_1f_2f_1
  \end{eqnarray*}
   holds for all triples in this monoid. Recall that for a sequence $I=(i_s, \ldots, i_1) \in F_n$, we defined $f_I\colon M^{n+1} \to M^{n+1}$ to be the composition $f_{i_s}\circ f_{i_{s-1}}\circ \ldots \circ f_{i_1}$. If $I \sim_Q J$ and $(x_{n+1},x_n, \ldots, x_1)$ is a tuple of elements $x_i \in \mathcal{E}$, then 
\begin{eqnarray*}
 f_I(x_{n+1}, \ldots, x_1)=f_J(x_{n+1}, \ldots, x_1).
\end{eqnarray*}
Note that this equality holds not only in the graded sense.
\end{Lemma}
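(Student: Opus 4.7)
The plan is to upgrade the graded arguments of Lemma \ref{WeakFactRecognImpliesFact} to strict equalities using the strengthened hypotheses, and then to propagate this through the rewriting moves generating $\sim_Q$. The strengthened conditions $(xs)' = (x's)'$ and $\overline{xs} = \overline{x} \cdot \overline{x's}$ on $M \times \mathcal{E}$ are exactly strict commutativity of the weak-factorability diagram \ref{eqWF}. My first step is to extend this strict commutativity from $M \times \mathcal{E}$ to all of $M \times M$, by induction on $N(y)$ for the second argument. In the inductive step I write $y = \overline{y} y'$ with $y' \in \mathcal{E}$ and $N(\overline{y}) = N(y) - 1$; applying the strengthened conditions to $(x\overline{y}, y')$, then the inductive hypothesis to $(x, \overline{y})$, then the strengthened conditions again to $(x'\overline{y}, y')$, the desired equalities $\overline{xy} = \overline{x} \cdot \overline{x'y}$ and $(xy)' = (x'y)'$ fall out after using associativity and $\overline{y}y' = y$.

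Once strict WF holds on $M \times M$, the proof of Lemma \ref{WeakFactRecognImpliesFact} goes through verbatim with every $\equiv$ replaced by $=$: the identity $\eta_1 \mu_1 = \mu_2 \eta_1 \mu_1 \eta_2$ is now strict, so $f_2 f_1 f_2 = \eta_2 \mu_2 \eta_1 \mu_1 \eta_2 \mu_2 = \eta_2 \eta_1 \mu_1 \mu_2$ and $f_2 f_1 f_2 f_1 = \eta_2 \eta_1 \mu_1 \mu_1$; associativity $\mu_1 \mu_1 = \mu_1 \mu_2$ then yields $f_2 f_1 f_2 = f_2 f_1 f_2 f_1$ strictly on $M^3$. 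For the remaining identity $f_1 f_2 f_1 f_2 = f_2 f_1 f_2$, I evaluate both sides on a triple $(x_3, x_2, x_1)$, observe that the common value of $f_2 f_1 f_2$ is $(\overline{\overline{x}}, (\overline{x})', x')$ with $x = x_3 x_2 x_1$, and invoke the Recognition Principle (Theorem \ref{CharacterizationRecognitionPrinciple}, available since $M$ is factorable): the pair $(\overline{x}, x')$ is stable by construction of $\eta$, so $((\overline{x})', x')$ is stable, and $f_1$ fixes the triple.

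The extension to $I \sim_Q J$ on $\mathcal{E}$-tuples then proceeds by induction on the length of a derivation from $I$ to $J$ via the generators of $\sim_Q$. The $\sim_P$-commutation moves $ab \leftrightarrow ba$ with $\abs{a - b} \geq 2$ correspond to $f_a f_b = f_b f_a$, which holds strictly since the two maps operate on disjoint coordinate pairs. The idempotency move $a^2 \leftrightarrow a$ corresponds to $f_a f_a = f_a$; this holds because, after a single application of $f_a$, the entries at positions $a+1$ and $a$ form a pair $(\overline{z}, z')$ that is $\eta$-stable by definition. The two triple moves $k(k+1)k(k+1) \leftrightarrow (k+1)k(k+1) \leftrightarrow (k+1)k(k+1)k$ correspond exactly to the strict triple identities established in the previous paragraph, applied at positions $k$ and $k+1$. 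Functoriality of composition promotes the local strict equalities to the global identity $f_I = f_J$. The main obstacle in the whole argument is the first paragraph: promoting strict WF from $M \times \mathcal{E}$ to $M \times M$, since Visy originally obtained such an extension only in the graded sense; everything downstream is then bookkeeping.
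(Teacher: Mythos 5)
Your proof is correct and follows essentially the same route as the paper's: the paper's proof consists precisely of rerunning the graded arguments (the extension of the identity \ref{eqWF} from $M\times\E$ to $M\times M$ and the argument of Lemma \ref{WeakFactRecognImpliesFact}, including the appeal to the Recognition Principle for $f_1f_2f_1f_2=f_2f_1f_2$) with strict equalities, and then propagating the strict triple identities through the generators of $\sim_P$ and $\sim_Q$ as in the graded Evaluation Lemma. Your explicit norm-induction for the passage from $M\times\E$ to $M\times M$ and your generator-by-generator check are exactly the details the paper delegates to the cited proofs.
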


\begin{pf}
 Note that we can rewrite the assumption $(xs)'=(x's)'$ and $\overline{xs}=\overline{x}\cdot \overline{x's}$ for any $x\in M$ and $s\in \E$ as $\eta_1d_1=d_2\eta_1d_1\eta_2\colon M\times \E\to M\times M$, where we use Notation \ref{NotationAlphaI}. For the proof, we have first to show that $\eta_1d_1=d_2\eta_1d_1\eta_2$ also holds for all pairs in $M\times M$. This works exactly as the proof of the graded equality in Lemma \ref{WeakFactRecognImpliesFact} by deleting the word ``graded''. The second part of the proof works by applying the same method to the proof of Lemma 2.2.5 in \cite{AlexThesis}. 
\end{pf}

Now we are ready to prove the Evaluation Lemma for $Q_n'$.

\begin{Lemma} \label{EvaluationLemmaQnStrich}
  Let $(M, \mathcal{E}, \eta)$ be a factorable monoid satisfying the stronger conditions $(xs)'=(x's)'$ and $\overline{xs}=\overline{x}\cdot \overline{x's}$ for any $x\in M$ and $s\in \E$. If $I \sim J$ and $(x_{n+1},x_n, \ldots, x_1)$ is a tuple of elements $x_i \in \mathcal{E}$, then 
\begin{eqnarray*}
 f_I(x_{n+1}, \ldots, x_1)=f_J(x_{n+1}, \ldots, x_1).
\end{eqnarray*}
\end{Lemma}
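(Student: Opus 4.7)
The plan is to argue inductively along a derivation witnessing $I \sim J$. The congruence $\sim$ on $Q_n$ is generated by $\sim_Q$ together with the extra relations ``$k J_0 \sim J_0$ whenever there exists $I_0 \in F_n$ with $k I_0 J_0 \sim_Q I_0 J_0$, $k I_0 \sim_P I_0 k$, and $k \notin I_0$''. Since $\sim$ is a congruence, any chain from $I$ to $J$ decomposes into finitely many elementary rewrites of substrings, and it suffices to establish, at each step and for an arbitrary $\vec y \in M^{n+1}$, the identity $f_A(\vec y) = f_B(\vec y)$ whenever $A \to B$ is a single generating move. Taking $\vec y = f_V(\vec x)$ for an appropriate $V \in F_n$ and $\vec x \in \mathcal{E}^{n+1}$ then propagates the equality along the derivation back to the original tuple of generators.

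For rewrites of $\sim_Q$-type, the equality is Lemma \ref{EvaluationLemmaAlexUngraduiert}. Although the statement restricts the input tuple to $\mathcal{E}^{n+1}$, its proof proceeds by iterating the pointwise identities $f_1 f_2 f_1 f_2 = f_2 f_1 f_2 = f_2 f_1 f_2 f_1$, which under the strengthened hypotheses $(xs)' = (x's)'$ and $\overline{xs} = \overline{x} \cdot \overline{x's}$ hold on all of $M^3$; the same iteration therefore yields $f_A(\vec y) = f_B(\vec y)$ for every $\vec y \in M^{n+1}$.

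For an extra rewrite $k J_0 \leftrightarrow J_0$ witnessed by $I_0$, I apply the strengthened $\sim_Q$-evaluation to $k I_0 J_0 \sim_Q I_0 J_0$, obtaining $f_k\bigl( f_{I_0}(f_{J_0}(\vec y)) \bigr) = f_{I_0}(f_{J_0}(\vec y))$; equivalently, the pair of entries at positions $k, k+1$ of $f_{I_0}(f_{J_0}(\vec y))$ is $\eta$-stable. The side conditions $k I_0 \sim_P I_0 k$ and $k \notin I_0$ force every letter $i$ occurring in $I_0$ to satisfy $|i - k| \geq 2$: of the two generating relations for $\sim_P$, only the commutation $ab \sim_P ba$ with $|a-b| \geq 2$ can transport a $k$ past a distinct letter, while idempotency $a^2 \sim_P a$ is powerless here since $k$ itself does not appear in $I_0$ (a formal witness is the subword of a representative on $\{k-1,k,k+1\}$, which is stable under $\sim_P$ up to collapsing duplicates). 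Consequently each factor $f_i$ of $f_{I_0}$ operates on coordinates $\{i, i+1\}$ disjoint from $\{k, k+1\}$, so $f_{I_0}$ leaves positions $k, k+1$ untouched. Thus the pair at positions $k, k+1$ of $f_{J_0}(\vec y)$ equals the already-$\eta$-stable pair at those positions of $f_{I_0}(f_{J_0}(\vec y))$, giving $f_k(f_{J_0}(\vec y)) = f_{J_0}(\vec y)$ as required.

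The main obstacle is the need to apply the elementary identities at an arbitrary intermediate stage of a derivation, where the tuple $\vec y$ is no longer in $\mathcal{E}^{n+1}$. This is precisely why the strengthened form of Lemma \ref{EvaluationLemmaAlexUngraduiert}, valid on all of $M^{n+1}$, is essential, and also what makes the stronger hypotheses of the present lemma indispensable: they are what turns the graded identities for $f_1, f_2$ into genuine equalities on the whole of $M^3$, from which the $\sim_Q$-evaluation on $M^{n+1}$ and then the extra-move argument above follow in turn.
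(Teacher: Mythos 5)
Your proof is correct and follows essentially the same route as the paper's: reduce to the defining relation of $\sim$, note that $kI\sim_P Ik$ together with $k\notin I$ forces every letter of $I$ to commute with $k$, so $f_I$ leaves positions $k$ and $k+1$ untouched, and then apply the ungraded Evaluation Lemma \ref{EvaluationLemmaAlexUngraduiert} to $kIJ\sim_Q IJ$ to transfer stability at position $k$ from $f_If_J(\underline{x})$ to $f_J(\underline{x})$. Your additional care in evaluating elementary moves at intermediate tuples in $M^{n+1}$ (not only at $\E$-tuples) is a sound refinement of the paper's terser ``it is enough to check the defining relation'' reduction, justified exactly as you say because the pointwise identities underlying Lemma \ref{EvaluationLemmaAlexUngraduiert} hold on all of $M^3$.
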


\begin{pf}
 It is enough to show this for the defining relation of $\sim$. So let $U,V \in F_n$ be two words in letters $1,\ldots, n$ and $1\leq k\leq n$ such that $kU\sim_P Uk$, the letter $k$ does not occur in $U$ and such that $kUV\sim_Q UV$. We want to show that $f_kf_V$ and $f_V$ are equal evaluated on each $(x_{n+1}, x_n,\ldots, x_1)$.  We have only to show that under these conditions, $f_V(x_{n+1}, x_n,\ldots, x_1)$ is stable at the position $k$. Note that since $k$ does not occur in $U$, the only possibility for $kU\sim_P Uk$ to be true is that neither $k+1$ nor $k-1$ occur in $U$. So applying $f_U$ does not affect the letters in the $k$-th and the $k+1$-st place. Hence, at these places, $f_Uf_V(x_{n+1}, x_n,\ldots, x_1)$ and $f_V(x_{n+1}, x_n,\ldots, x_1)$ have equal entries. Thus, the same holds for $f_kf_Uf_V(x_{n+1}, x_n,\ldots, x_1)$ and  $f_kf_V(x_{n+1}, x_n,\ldots, x_1)$. But $kUV\sim_Q UV$, so we can apply the Evaluation Lemma \ref{EvaluationLemmaAlexUngraduiert}, which now holds in the proper and not 
only in the graded sense.
\end{pf}

This gives us a (quite restrictive) sufficient condition for the completeness of the associated rewriting system. 

\begin{Theorem}\label{CompleteRewritingSystemWeak}
 Let $(M, \mathcal{E}, \eta)$ be a factorable monoid satisfying the stronger conditions $(xs)'=(x's)'$ and $\overline{xs}=\overline{x}\cdot \overline{x's}$ for any $x\in M$ and $s\in \E$. Then the associated string rewriting system is complete. 
\end{Theorem}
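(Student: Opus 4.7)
My strategy is to reduce the completeness to noetherianity, since Lemma \ref{FactorabilityStructureInducesRewriting} already yields strong minimality together with the existence of a unique irreducible representative in each equivalence class. Each rewriting $(x,y) \to \eta(xy)$ either shortens the word by one (when $\overline{xy} = 1$, in which case the rule collapses to $(x,y) \to xy \in \mathcal{E}$) or preserves its length (when $\overline{xy} \neq 1$, so that both entries of $(\overline{xy}, (xy)')$ lie in $\mathcal{E}$). Word length being bounded below, any rewriting chain can contain only finitely many length-decreasing steps; hence I am reduced to showing that every chain of length-preserving rewritings on a fixed tuple $w_0 \in \mathcal{E}^{n+1}$ is finite.

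To handle the length-preserving case I would invoke the Evaluation Lemma \ref{EvaluationLemmaQnStrich}, whose hypotheses are exactly the strong conditions $(xs)' = (x's)'$ and $\overline{xs} = \overline{x} \cdot \overline{x's}$ assumed in the theorem. Writing $w_t = f_{I_t}(w_0)$ with $I_t = (i_t, \ldots, i_1) \in F_n$, this lemma ensures that $w_t$ depends on $I_t$ only through its image in $Q_n'$. Since $Q_n'$ is finite (as remarked after Corollary \ref{AnwendungenI1nNormalForm}), the orbit of $w_0$ under the $F_n$-action is a finite subset of $\mathcal{E}^{n+1}$. Moreover, by Corollary \ref{AnwendungenI1nNormalForm} we have $(I_1^n)^n \sim D_n$ in $Q_n'$, and the two-sided absorbing property of $D_n$ implies that $f_{D_n}(w_0)$ is the everywhere stable extended normal form of $w_0$, on which no rewriting can act.

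It then remains to rule out an infinite length-preserving chain. Since each rewriting strictly changes the tuple (the rewritten pair passes from unstable to a stable pair in the image of $\eta$, which therefore cannot coincide with its unstable pre-image), and the orbit is finite, the only possible obstruction is a cycle $v = w_s \to w_{s+1} \to \cdots \to w_t = v$ in the rewriting relation. I expect excluding such cycles to be the main obstacle. My plan is to exploit the local nature of each rewriting together with the strong conditions: the first rewritten position becomes stable after step $s{+}1$, so in order to return to $v$ this position has to be destabilised by an adjacent rewriting later in the cycle, and one then traces the effect of that destabilisation through the remaining steps. Combined with the strong conditions, which control how $\eta'$ propagates from right to left in the tuple, this should yield a well-founded measure --- for instance a lexicographic order on the sequence of norms $(N(x_i \cdots x_1))_{i=1}^{n+1}$ or on the $\eta'$-values of the right-hand suffixes --- that strictly decreases under every length-preserving rewriting, contradicting the existence of a cycle. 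Once cycles are excluded, the finite orbit bounds the length of any chain, and combined with the first reduction this yields noetherianity and hence completeness of the rewriting system.
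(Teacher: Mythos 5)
Your reductions at the start are fine: strong minimality and uniqueness of irreducible representatives come from Lemma \ref{FactorabilityStructureInducesRewriting}, an infinite chain must have a tail consisting of norm-preserving rewritings on a fixed tuple in $\E^{n+1}$, and (granting the finiteness of $Q_n'$, which this paper only quotes from elsewhere) the Evaluation Lemma \ref{EvaluationLemmaQnStrich} does show that the tuples reachable from $w_0$ form a finite set. But this only translates ``no infinite chain'' into ``no cycle of norm-preserving rewritings'', and the exclusion of such cycles is precisely the hard content of the theorem --- it is the step you leave as ``I expect excluding such cycles to be the main obstacle \dots this should yield a well-founded measure''. That is a genuine gap, not a routine verification: the appendix exhibits a factorable monoid (failing only the stronger hypotheses) in which exactly such a cycle of norm-preserving rewritings on a $4$-tuple occurs, so any measure argument must use $(xs)'=(x's)'$ and $\overline{xs}=\overline{x}\cdot\overline{x's}$ in an essential, specified way. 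Your candidate measures do not do this: any quantity computed from the tuple alone (norms of suffix products, $\eta'$-values of suffixes) is automatically constant around a cycle, so the whole burden is to prove strict decrease under every norm-preserving rewriting, and no argument for that is given; moreover in the appendix example all entries and the relevant suffix data stay of norm one, which shows that norm-type measures cannot detect the obstruction.

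For comparison, the paper's proof does not argue via cycles or via finiteness of $Q_n'$ at all. It bounds, by induction on $n$, the length of any ``effective'' sequence of rewritings applied to a fixed tuple: writing the sequence as an element of $F_n$ and locating the occurrences of the letters $n$ and $1$, it uses statements $A(n)$ and $C(n)$ of Theorem \ref{AnBnCn} together with the Evaluation Lemma \ref{EvaluationLemmaQnStrich} (this is where the stronger hypotheses enter) to show that certain patterns would force a position to be stable, so a further rewriting there could not change the tuple, contradicting effectiveness; this yields the recursion $c(n)=3c(n-1)+c(n-2)+3$. If you want to complete your route, you would need an argument of comparable combinatorial substance to rule out cycles; as written, the proposal stops exactly where the proof has to start.
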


\begin{pf}
We only have to prove the noetherianity of the rewriting system. Call a sequence of rewritings \textit{effective} with respect to a tuple $\underline{x}=(x_{n+1}, x_n, \ldots, x_1)$ in $\E^{n+1}$ if every application of a rewriting rule in this sequence changes the tuple. Recall furthermore that a finite sequence of rewritings corresponds uniquely to a tuple in $F_n$.

We will show the following statement: For any $n \in \mathbb{N}$, the length of tuples in $F_n$ which are effective for a fixed $\underline{x}$ is bounded by a number $c(n)$. This will yield the claim of the lemma.

Note that any effective sequence has to be reduced. In particular, we see that $c(1)=1$ is a desired upper bound. For $n=2$, we know that both sequences $(1 \, 2 \, 1\, 2)$ and $(2 \, 1 \, 2 \, 1)$ yield the normal form, so that the effective sequences are bounded by $c(2)=4$.  

Now assume the statement is proven for all natural numbers smaller than $n$, and let $\underline{x}=(x_{n+1}, x_n, \ldots, x_1)\in \E^{n+1}$ be any tuple of length $n+1$.  

Let $I \in F_n$ be effective with respect to $\underline{x}$. If $I$ does not contain the entry $n$, we are done by induction hypothesis. Otherwise, we can find the first occurence of $n$ in $I$ from the right, and so we have $I=J_1 \; n \; J_0$ for some $J_0 \in F_{n-1}$. Note that $J_0$ is effective with respect to $(x_n, \ldots, x_1)$, so by induction hypothesis, its length is bounded by $c(n-1)$. If $J_1$ does not contain a $1$, then it is in $\sh_1(F_{n-1})$, and the corresponding sequence in $F_{n-1}$ is effective with respect to the left $n$-subtuple of $f_nf_{J_0}(\underline{x})$. Thus, if $J_1$ does not contain a $1$, the length of $I$ is bounded by $2c(n-1)+1$. 

Now assume $J_1$ does contain a $1$. Then we can write $I$ either as 
\begin{eqnarray*}
K_2 \; 1 \; K_1 \; n \; K_0 \; n \; J_0
\end{eqnarray*}
or as
\begin{eqnarray*}
K_2 \; 1 \; K_1 \; n \; J_0
\end{eqnarray*}
with $K_1 \in \sh_1(F_{n-2})$ and $K_0 \in \sh_1(F_{n-1})$, depending on whether the right-most occurence of a $1$ in $J_1$ is separated from the already found $n$ by another occurence of $n$ or not.

We claim that in either case, the sequence $K_2$ cannot contain a $1$. Set $\underline{y}=f_{K_0}f_nf_{J_0}(\underline{x})$ in the first and $\underline{y}=f_{J_0}(\underline{x})$ in the second case. Then the sequence $K_2 \; 1 \; K_1 \; n$ is effective with respect to $\underline{y}$. Moreover, by statement $A(n)$ of Theorem \ref{AnBnCn}, we know that there are sequences $L, M \in F_{n-2}$ so that 
\begin{eqnarray*}
1 \; K_1 \; n \sim \sh_2(L) I_1^n \sh_1(M).
\end{eqnarray*}
By Evaluation Lemma \ref{EvaluationLemmaQnStrich}, this implies under the assumptions of the theorem that 
\begin{eqnarray*}
f_1f_{K_1}f_n(\underline{y})=f_{\sh_2(L)} f_{I_1^n} f_{\sh_1(M)}(\underline{y}).
\end{eqnarray*} 
Assume for contradiction that $K_2$ contains a $1$, so we can write $K_2= Z_2 \; 1 \; Z_1$ with $Z_2\in F_n$ and $Z_1 \in \sh_1(F_{n-1})$ (i.e., we consider the first occurence of $1$ from the right in $K_2$). Then by statement $C(n)$ of Theorem \ref{AnBnCn}, we can conclude 
\begin{eqnarray*}
1\; Z_1 \sh_2(L) I_1^n \sh_1(M) \sim  Z_1 \sh_2(L) I_1^n \sh_1(M). 
\end{eqnarray*} 
Thus, using Evaluation Lemma \ref{EvaluationLemmaQnStrich} again, we obtain
\begin{eqnarray*}
f_1f_{Z_1}f_{\sh_2(L)} f_{I_1^n} f_{\sh_1(M)}(\underline{y})=f_{Z_1}f_{\sh_2(L)} f_{I_1^n} f_{\sh_1(M)}(\underline{y}),
\end{eqnarray*}
contradicting the effectiveness of the sequence $I$. 

So we have proven that $K_2$ is contained in $\sh_1(F_{n-1})$. Again by arguing with appropriate subsequences of $I$ and tuples obtained from $\underline{x}$, we see that the sequences $K_2, K_1, K_0$ are effective with respect to some tuples, so that their lengths are bounded by $c(n-1), c(n-2), c(n-1)$, respectively.  

All in all, we have shown that the length of $I$ is bounded by 
\begin{eqnarray*}
c(n)=3c(n-1)+c(n-2)+3. 
\end{eqnarray*}
This completes the proof of the theorem. 
\end{pf}

\begin{remark}
Proposition 3.4.4 of \cite{AlexThesis}, stating a stronger version of our Lemma \ref{EvaluationLemmaQnStrich}, is unfortunately wrong. It cannot be proved without further assumptions, as the counterexample in the appendix shows. Thus, the proof of noetherianity for the rewriting system associated with a factorable monoid cannot be fixed in general. However, it can be fixed in special cases, e.g. as in Theorem \ref{CompleteRewritingSystemWeak} above. We will present more concrete examples later on.  
\end{remark}

\section{Discrete Morse Theory Basics}
\label{sec:discrete morse theory}

We want to show that even though the rewriting system associated with a factorability structure fails to be complete in general, an analogon of Brown's Theorem \ref{TheoremBrown} is true for any factorable monoid. To do so, we will need techniques from discrete Morse theory. We introduce an algebraic version here. Again, we follow the exposition in \cite{AlexThesis}.

	A \textbf{based chain complex} is a non-negatively graded chain complex $( {C}_*, \partial)$, where each $ {C}_n$ is a free $\Z$-module, together with a choice of basis $\Omega_n$ for each $ {C}_n$.
	In what follows, $( {C}_*, \Omega_*, \partial)$ will always be a based chain complex.

	We equip each $ {C}_n$ with the inner product $\langle \variable , \variable \rangle\colon  {C}_n \times  {C}_n \to \Z$ obtained by regarding $\Omega_n$ as an orthonormal basis for ${C}_n$.
	For elements $x \in  {C}_{n}$ and $y \in  {C}_{n-1}$, we say that $\langle \partial x, y \rangle$ is their \textbf{incidence number}.
	If $x$, $y$ have the ``wrong'' dimensions, i.e., if $x\in C_n$, but $y\notin C_{n-1}$, then we set their incidence number $\langle \partial x, y \rangle$ to be zero.

	\begin{Definition}
		A $\mathbb{Z}$-compatible \textbf{matching} on a based chain complex \linebreak $( {C}_*, \Omega_*, \partial)$ is an involution $\mu\colon \Omega_* \to \Omega_*$ satisfying the following property: For every $x \in \Omega_*$ which is not a fixed point of $\mu$, we have $\langle \partial x, \mu(x) \rangle = \pm 1$ or $\langle \partial \mu(x), x \rangle = \pm 1$. (This last condition is called $\mathbb{Z}$-compatibility.)
		
		The fixed points of a matching $\mu\colon \Omega_* \to \Omega_*$ are called \textbf{essential}. If $x \in \Omega_n$ is not a fixed point, then $\mu(x) \in \Omega_{n-1} \cup \Omega_{n+1}$. We say that $x$ is \textbf{collapsible} if $\mu(x) \in \Omega_{n-1}$, and it is called \textbf{redundant} if $\mu(x) \in \Omega_{n+1}$.
	\end{Definition}

	\begin{remark}\label{Zcompatibility}
	 Note that it is enough to check $\Mf{\partial\mu(x), x}=\pm 1$ for redundant cells in order to check that an involution $\mu\colon \Omega_* \to \Omega_*$ is $\mathbb{Z}$-compatible if we know that all non-fixed points are either collapsible or redundant. Indeed, let $x \in \Omega_n$ be a non-fixed point of an involution $\mu$ as above. We have to show that $\Mf{\mu(x), \partial x}=\pm 1$ for the case that $x$ is collapsible. In this case, the image $\mu(x)\in \Omega_{n-1}$ is redundant since $\mu(\mu(x))=x$ is in $\Omega_n$. So we know that for $y=\mu(x)$, we have $\Mf{\partial\mu(y), y}=\pm 1$. Inserting $y=\mu(x)$, we obtain $\Mf{\mu(x), \partial x}=\pm 1$. 
	\end{remark}

	Let $\mu$ be a matching on $( {C}_*, \Omega_*, \partial)$. For two redundant basis elements $x$, $z \in \Omega_*$ set $\mathbf{x \vdash z}$ to be the relation ``$z$ occurs in the boundary of the collapsible partner of $x$'', i.e., $\Mf{\partial \mu(x), z} \neq 0$.

	\begin{Definition}\label{AlgebraicMatchingNoetherian}
		A matching on a based chain complex is called \textbf{noetherian} if every infinite chain $x_1 \vdash x_2 \vdash x_3 \vdash \ldots$ eventually stabilizes.
	\end{Definition}
	
	Given a noetherian matching $\mu$ on $( {C}_*, \Omega_*, \partial)$, we define a linear map $\theta^\infty\colon  {C}_* \to  {C}_*$ as follows. Let $x \in \Omega_*$. If $x$ is essential, we set $\theta(x) = x$. If $x$ is collapsible, we set $\theta(x) = 0$, and if $x$ is redundant we set $\theta(x) = x - \varepsilon \cdot \partial\mu(x)$, where $\varepsilon = \Mf{\partial \mu(x),x}$.
	
	Note that if $x$ is redundant, then $\langle x, \theta(x) \rangle = 0$. It is now not hard to check that for every $x \in \Omega_*$ the sequence $\theta(x), \theta^2(x), \theta^3(x), \ldots$ stabilizes for noetherian matchings (cf.\ also \cite{AlexThesis}, Section 1.1), and we define $\theta^\infty(x) := \theta^N(x)$ for $N$ large enough. We linearly extend this map to obtain $\theta^\infty\colon  {C}_* \to  {C}_*$.
	
	We can now state the main theorem of discrete Morse theory.
	
	\begin{Theorem}[Brown, Cohen, Forman]\label{BCF}
		Let $( {C}_*, \Omega_*, \partial)$ be a based chain complex and let $\mu$ be a noetherian matching. Denote by 
		\begin{eqnarray*}
		 {C}_*^\theta = \im(\theta^\infty\colon  {C}_* \to  {C}_*)
\end{eqnarray*}		
 the $\theta$-invariant chains. Then $( {C}_*^\theta, \theta^\infty \circ \partial|_{\im(\theta^{\infty})})$ is a chain complex, and the map
		\begin{align*}
			\theta^\infty\colon ( {C}_*, \partial) \longrightarrow \left( {C}_*^\theta, \theta^\infty \circ \partial|_{\im(\theta^{\infty})}\right)
		\end{align*}
		is a chain homotopy equivalence. A basis of $ {C}_*^{\theta}$ is given by the essential cells. 
	\end{Theorem}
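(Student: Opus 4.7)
My plan is to establish the theorem in four stages: well-definedness of $\theta^{\infty}$, identification of its image, the chain-complex property of the target, and construction of the chain homotopy.

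First, I would verify that $\theta^{\infty}(x)$ is defined for every basis element $x$. For essential $x$, $\theta$ already fixes $x$; for collapsible $x$, $\theta(x)=0$; for redundant $x$, the $\mathbb{Z}$-compatibility forces the $x$-coefficient of $\theta(x)=x-\varepsilon_{x}\,\partial\mu(x)$ to vanish, so $\theta(x)$ is supported on basis elements distinct from $x$. Iterating $\theta$, essential contributions freeze at once, collapsible contributions are killed, and each redundant contribution $y$ spawns exactly the basis cells appearing in $\partial\mu(y)$, i.e., the $\vdash$-successors of $y$. Noetherianity of the matching ensures that no $\vdash$-chain introduces new redundant basis elements indefinitely, so $\theta^{k}(x)$ stabilizes and the limit lies in the free abelian group $E_{*}$ spanned by the essential cells. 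Since $\theta^{\infty}$ is the identity on $E_{*}$, it is a projection onto $E_{*}$, hence $C_{*}^{\theta}=E_{*}$ with the essential cells as basis.

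Second, I would show that $(E_{*},\theta^{\infty}\circ\partial|_{E_{*}})$ is a chain complex by proving the global identity $\theta^{\infty}\partial\theta^{\infty}=\theta^{\infty}\partial$ on all of $C_{*}$; then $(\theta^{\infty}\partial)^{2}|_{E_{*}}=\theta^{\infty}\partial\partial=0$. Writing $\theta^{\infty}$ as the stabilized limit of $\theta^{k}$, this reduces to the single claim $\theta^{\infty}(\partial c)=0$ for every collapsible basis element $c$. For such $c$, $\partial c=\pm\mu(c)+R$ where $R$ is supported on the remaining basis; one step of $\theta$ cancels the $\mu(c)$-part against the $\mu(c)$-part of $\theta(\mu(c))=\mu(c)-\varepsilon_{\mu(c)}\partial c$, and noetherianity propagates this cancellation through the iteration so that only essential contributions could survive, but they must cancel too since $\theta^{\infty}$ commutes with $\partial$ on the essential outputs.

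Third, I would construct the chain homotopy. Define $H_{1}\colon C_{n}\to C_{n+1}$ by $H_{1}(x)=-\varepsilon_{x}\,\mu(x)$ for redundant $x$ and $H_{1}(x)=0$ otherwise. A direct computation gives $\id-\theta=\partial H_{1}+H_{1}\partial$ on redundant basis cells, while on essential and collapsible cells the discrepancy lies in the span of non-essential basis elements and is therefore absorbed by the iteration. Set $H^{\infty}:=\sum_{k\geq 0}H_{1}\circ\theta^{k}$, which converges on each basis element by the same noetherian argument as for $\theta^{\infty}$, and verify the identity $\id-\theta^{\infty}=\partial H^{\infty}+H^{\infty}\partial$ by telescoping. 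Combined with the first step, this exhibits the inclusion $E_{*}\hookrightarrow C_{*}$ and the projection $\theta^{\infty}$ as mutually inverse chain homotopy equivalences.

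The main obstacle is the bookkeeping in the second and third stages, where $\theta$ is not literally of the form $\id-\partial H_{1}-H_{1}\partial$ on essential or collapsible cells: one has to show that the residual terms on non-essential cells are exactly those that further iteration of $\theta$ (respectively $H^{\infty}$) removes. Noetherianity enters essentially twice here — once to make the infinite sums $\theta^{\infty}$ and $H^{\infty}$ finite on each basis element, and once more to guarantee that non-essential residual terms are eventually annihilated rather than indefinitely displaced.
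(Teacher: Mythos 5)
Your Stages 1 and 2 are essentially sound. For Stage 1, the one point you elide is why the stabilized value has no redundant (or collapsible) component: a $\theta$-fixed chain whose support contains redundant cells would force, among the finitely many redundant cells in its support, a cycle for the relation $\vdash$ with distinct consecutive entries, contradicting noetherianity; with that inserted, $\im\theta^\infty$ is exactly the span of the essential cells. For Stage 2, the reduction of $\theta^\infty\partial\theta^\infty=\theta^\infty\partial$ (hence of $d'\circ d'=0$ and of $\theta^\infty$ being a chain map) to the single claim $\theta^\infty(\partial c)=0$ for collapsible $c$ is exactly right, but your justification of that claim ("$\theta^\infty$ commutes with $\partial$ on the essential outputs") is circular. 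It can be replaced by a two-line computation: writing $\partial c=\varepsilon\,\mu(c)+R$ with $\varepsilon=\langle\partial c,\mu(c)\rangle$ and using $\theta(\mu(c))=\mu(c)-\varepsilon\,\partial c$, one gets $\theta(\partial c)=\theta(R)-R$, hence $\theta^{k}(\partial c)=\theta^{k}(R)-\theta^{k-1}(R)=0$ for large $k$.

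The genuine gap is Stage 3. The identity $\id-\theta^\infty=\partial H^\infty+H^\infty\partial$ (with the plain inclusion of the essential span as the would-be inverse) cannot hold for any $H^\infty$: pre- and postcomposing it with $\partial$ yields $\partial\theta^\infty=\theta^\infty\partial$ as endomorphisms of $C_*$, which is false in general. Concretely, take an essential $2$-cell $x$ with $\partial x=a$, where $a$ is redundant, $\mu(a)=A$ is collapsible, $\partial A=a-e$ with $e$ essential; then $\partial\theta^\infty x=a$ while $\theta^\infty\partial x=e$. Equivalently, the plain inclusion of the essential cells is not a chain map once the target carries the transferred differential $\theta^\infty\circ\partial$, so it cannot be the homotopy inverse. (Your preliminary identity "$\id-\theta=\partial H_1+H_1\partial$ on redundant cells" is also off: with $H_1(x)=-\varepsilon_x\mu(x)$ the $\partial H_1$ term has the wrong sign, and the term $H_1(\partial x)$ does not vanish on redundant cells.) Repairing this needs an extra idea, not bookkeeping: either construct a twisted inclusion $j$ of the essential span (a stabilized "inclusion plus correction terms along the matching") satisfying $\partial\circ j=j\circ(\theta^\infty\partial)$ and $\theta^\infty\circ j=\id$, together with a homotopy built from $\mu$ witnessing $j\circ\theta^\infty\simeq\id_{C_*}$; or argue as in Forman (the reference the paper points to), whose flow $\id+\partial V+V\partial$ is \emph{not} the operator $\theta$ used here -- for that flow the invariant complex carries the restricted differential $\partial$, the naive inclusion is a chain map, and one then identifies the invariant chains with the essential span. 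Your $H^\infty$ is modeled on the Forman-type homotopy but applied to $\theta$, and that is precisely where the argument breaks.
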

	
	For a proof see e.g. \cite{Forman}.

 \begin{remark} \label{RemarkBrownTheorem}
  Discrete Morse theory is the key tool for Brown's proof of Theorem \ref{TheoremBrown}: On a monoid $M$ given by a complete rewriting system $(S, \mathcal{R})$, he constructs a noetherian matching on the bar complex with essential cells as described in Theorem \ref{TheoremBrown}. 
 \end{remark}

The differential of the Morse complex can be described more explicitly. Given an essential cell $x \in \Omega_n$, we want to determine the coefficient of an essential cell $y\in \Omega_{n-1}$ in the boundary of $x$ (in the Morse complex). For this, consider the set $Z(x,y)$ of all chains  
\begin{eqnarray*}
z_r \vdash z_{r-1}\vdash \ldots \vdash z_1
\end{eqnarray*} of redundant cells in $\Omega_{n-1}$ so that $\Mf{\partial x, z_r}\neq 0$ and also $\Mf{\partial\mu(z_1), y}\neq 0$. So we can formulate the following theorem: 

\begin{Theorem}[\cite{Kozlov}, Section 11.3]\label{MorseKomplexExplizit}
Let $( {C}_*, \Omega_*, \partial)$ be a based chain complex and let $\mu$ be a noetherian matching on it. Then $C_*$ is chain homotopy equivalent to a chain complex with bases given by $\mu$-essential cells and with differentials $\partial^{\mu}(x)$ for $x$ essential equal to
\begin{eqnarray*}
\sum_{y\in \Omega_n \mbox{\footnotesize{ess.}}}\sum_{\substack{(z_r,\ldots, z_1)\in \\  Z(x,y)}} (-1)^r\frac{\Mf{\partial x, z_r} \Mf{\partial \mu(z_r), z_{r-1}} \Mf{\partial \mu(z_{r-1}), z_{r-2}}\ldots \Mf{\partial \mu(z_1), y}}{\Mf{\partial \mu(z_r), z_{r}}\Mf{\partial \mu(z_{r-1}), z_{r-1}}\ldots \Mf{\partial \mu(z_1), z_{1}}}y.
\end{eqnarray*} 
\end{Theorem}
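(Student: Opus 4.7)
The proof is essentially an explicit computation of $\theta^\infty(\partial x)$ for an essential cell $x$, based on Theorem \ref{BCF}. Recall that Theorem \ref{BCF} identifies the Morse complex with $(C_*^\theta, \theta^\infty\circ\partial|_{C_*^\theta})$, where for a redundant cell $z$ one has $\theta(z)=z-\varepsilon\,\partial\mu(z)$ with $\varepsilon=\langle\partial\mu(z),z\rangle=\pm 1$ (by $\mathbb{Z}$-compatibility), while essential cells are fixed and collapsible cells are sent to zero. Since $\varepsilon^2=1$, the coefficient of $z$ in $\theta(z)$ is $1-\varepsilon\cdot\varepsilon=0$, so $\theta$ completely replaces a redundant cell $z$ by $-\varepsilon\,\partial\mu(z)$.

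The plan is to evaluate $\theta^\infty(\partial x)$ iteratively. First, I would write $\partial x=\sum_v\langle\partial x,v\rangle\,v$ and split the sum according to whether $v$ is essential, collapsible or redundant. Collapsible basis elements are annihilated by $\theta$, essential ones contribute the term $\langle\partial x,y\rangle\,y$ directly (this accounts for the empty chain, $r=0$, in the formula), and each redundant $z_r$ with $\langle\partial x,z_r\rangle\neq 0$ is replaced under $\theta$ by $-\varepsilon_r\,\partial\mu(z_r)$, where $\varepsilon_r=\langle\partial\mu(z_r),z_r\rangle$. Expanding $\partial\mu(z_r)$ in the basis and again splitting into essential / collapsible / redundant parts propagates the computation one step further: a new redundant cell $z_{r-1}$ appears with coefficient $-\varepsilon_r\langle\partial x,z_r\rangle\langle\partial\mu(z_r),z_{r-1}\rangle$, and $z_r\vdash z_{r-1}$ in the sense of Definition \ref{AlgebraicMatchingNoetherian}.

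Next, I would iterate this procedure. By induction on the length of the chain, after $r$ applications of $\theta$ the contribution to an essential cell $y$ coming from the chain $z_r\vdash z_{r-1}\vdash\cdots\vdash z_1$ equals
\begin{equation*}
(-1)^{r}\,\varepsilon_r\varepsilon_{r-1}\cdots\varepsilon_1\cdot\langle\partial x,z_r\rangle\langle\partial\mu(z_r),z_{r-1}\rangle\cdots\langle\partial\mu(z_1),y\rangle\cdot y.
\end{equation*}
Since each $\varepsilon_i=\pm 1$, we may rewrite $\varepsilon_r\cdots\varepsilon_1=1/(\varepsilon_r\cdots\varepsilon_1)$, producing exactly the quotient appearing in the statement. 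Summing over all such chains in $Z(x,y)$ and over all essential $y$ yields the claimed formula, provided the process terminates.

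The main technical point is convergence of the iteration and the well-definedness of $\theta^\infty$. Here noetherianity of $\mu$ is crucial: any sequence $z_r\vdash z_{r-1}\vdash\cdots$ of redundant cells that can arise stabilizes, and consequently for any basis element the sequence $\theta^k(x)$ is eventually constant (this is sketched in \cite{AlexThesis}, Section 1.1, and used in the proof of Theorem \ref{BCF}). In particular, only finitely many chains of bounded length contribute to the coefficient of a given essential $y$, so the sum in the formula is finite and equals $\langle\theta^\infty(\partial x),y\rangle$. Thus the main obstacle is an organizational one: bookkeeping of signs and of which basis elements survive under successive applications of $\theta$, combined with an induction on the stabilization length guaranteed by noetherianity. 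Once the inductive formula above is established, the theorem follows immediately by reading off the coefficient of $y$ in $\theta^\infty(\partial x)$.
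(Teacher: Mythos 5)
Your argument is correct, but note that the paper itself gives no proof of this statement: Theorem \ref{MorseKomplexExplizit} is quoted from Kozlov (Section 11.3), and the paper merely remarks afterwards that this complex and the one of Theorem \ref{BCF} coincide ``by an analogous argument as in Forman, Section 8.'' What you do instead is derive the path-sum formula directly from Theorem \ref{BCF}: you observe that $\theta$ kills collapsible cells, fixes essential ones, and replaces a redundant $z$ by $-\varepsilon_z\,\partial\mu(z)$ with the $z$-component cancelled (since $\varepsilon_z^2=1$), so that iterating $\theta$ on $\partial x$ produces exactly one summand $(-1)^r\varepsilon_{z_r}\cdots\varepsilon_{z_1}\Mf{\partial x,z_r}\Mf{\partial\mu(z_r),z_{r-1}}\cdots\Mf{\partial\mu(z_1),y}$ per zigzag chain, and $\varepsilon_{z_r}\cdots\varepsilon_{z_1}=\bigl(\Mf{\partial\mu(z_r),z_r}\cdots\Mf{\partial\mu(z_1),z_1}\bigr)^{-1}$ gives the quotient in the statement; noetherianity (plus finiteness of each boundary expansion, i.e.\ a K\"onig-type argument) guarantees that the iteration terminates and the sum is finite. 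This is a legitimate alternative route: it keeps everything inside the paper's own framework, in effect proving the coincidence remark rather than citing it, and it also quietly settles two conventions that the paper's formulation leaves implicit — that the chains in $Z(x,y)$ must have distinct consecutive entries (otherwise, since $z\vdash z$ always holds, the sum would be infinite; your iteration excludes repeats automatically because the $z$-coefficient of $\theta(z)$ vanishes) and that the empty chain $r=0$ must be admitted so as to recover the direct term $\Mf{\partial x,y}$. What the citation-based route buys the paper is brevity and an identification with the standard formulation in the literature; what your computation buys is a self-contained verification, at the cost of the sign/termination bookkeeping you sketch, which is carried out correctly.
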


\begin{remark}
Both Morse complexes described above can be seen to coincide by an analogous argument as in \cite{Forman}, Section 8. 
\end{remark}

\section{Matching for Factorable Monoids}
\label{Matching for Factorable Monoids}                                      
	
We are now going to construct a matching on the reduced, inhomogeneous bar complex of a factorable monoid. The idea is to make a similar construction as in Theorem \ref{TheoremBrown} and Remark \ref{RemarkBrownTheorem}. Although the rewriting system defined by the factorability structure is in general not noetherian, the matching defined by the same method turns out to be noetherian nevertheless. 

We will need to analyze some elements of $Q_n$ (cf.\ Definition \ref{DefinitionMonoidQn}) in order to prove the noetherianity. We will do this first.
 
\begin{Definition}
An element $I$ of $F_n$ is called \textbf{small} if it is not equivalent in $Q_n$ to an element of the form
\begin{eqnarray*}
(J \; k+1 \; k \; k+1 \; K)
\end{eqnarray*} 
for any $1\leq k\leq n$ and any $J, K\in F_n$. This is the same as to say that $I$ is not equivalent to an element of this form in $P_n$.
\end{Definition}

The following finiteness result will be crucial for the noetherianity.

\begin{Lemma}\label{FinitenessSmallSequences}
In $F_n$, there are only finitely many reduced, right-most, small sequences. (Right-most is defined dually to left-most, cf.\ Definition \ref{DefinitionLeftMost})
\end{Lemma}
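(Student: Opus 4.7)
The plan is to argue by induction on $n$. For $n \leq 1$ the only reduced sequences in $F_n$ have length $\leq 1$, so there is nothing to prove. Fix $n$, assume the claim for $F_{n-1}$, and let $I = (a_1, \ldots, a_s)$ (reading left to right) be a reduced, right-most, small sequence in $F_n$. I would first invoke the right-left dual of Proposition \ref{AllLeftMost} to see that each $\sim_P$-equivalence class contains a unique right-most reduced representative; so it suffices to bound $|I|$ in terms of $n$.

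The key structural observation is that right-mostness (together with reducedness) forces every consecutive pair $a_j, a_{j+1}$ to satisfy either $a_{j+1} > a_j$ (an unrestricted ascent) or $a_{j+1} = a_j - 1$ (a descent by exactly one): the remaining options are forbidden by reducedness or by right-mostness. As a consequence, the suffix of $I$ following the rightmost occurrence of $n$ (if any) is a decreasing run $(n, n-1, n-2, \ldots, j)$ of length at most $n$. Moreover, the segments of $I$ lying strictly between consecutive occurrences of $n$, as well as the prefix preceding the first occurrence of $n$, are themselves reduced, right-most, and small sequences in $F_{n-1}$, because the three properties pass to connected subsequences (any forbidden pattern appearing in the $\sim_P$-class of a subsequence directly produces one in the class of $I$). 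The induction hypothesis therefore bounds the length of each such segment by a constant $C(n-1)$.

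It remains to bound the number of occurrences of $n$ in $I$. For this I would use smallness as a forcing principle: given two consecutive $n$'s in $I$ surrounding a block $B$, the structural description forces $B$ to begin with $n-1$. If the remainder of $B$ contained no $n-2$, then every other entry of $B$ would lie in $\{1, \ldots, n-3\} \cup \{n-1\}$, and could be moved past the initial $n-1$ of $B$ using either the commutation relation (entries $\leq n-3$) or the idempotency $(n-1)^2 \sim_P n-1$; dually, all entries $\leq n-2$ commute past $n$. Combining these moves produces a $\sim_P$-equivalent form of $I$ in which $(n, n-1, n)$ appears as a connected subword, contradicting smallness. Iterating this forcing argument downward through the letters $n-1, n-2, \ldots$ bounds the number of $n$'s in $I$ by a constant depending only on $n$. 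Putting together the bounds on each segment and on the number of $n$'s then yields a uniform bound $|I| \leq C(n)$, establishing finiteness.

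The main obstacle will be controlling the cascade of forced blockers in the iterated forcing step: each forced occurrence of a letter $n-k$ might in principle interact with neighbouring letters to produce yet another forbidden pattern at a lower level, and one must verify that the right-most normal form constrains these interactions tightly enough that the recursion closes. The careful combinatorial bookkeeping — probably best organized by an auxiliary induction on the maximal letter of $I$, together with tracking of consecutive descents — is the technical heart of the argument.
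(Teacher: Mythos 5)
Your setup is sound: the characterization of right-most reduced sequences as descending-by-one runs separated by ascents, the fact that reducedness, right-mostness and smallness pass to connected subsequences, and hence that the blocks between consecutive occurrences of $n$ (and the prefix) are reduced, right-most, small sequences in $F_{n-1}$, bounded by induction. The genuine gap is the step you yourself flag as the ``technical heart'': bounding the \emph{number} of occurrences of $n$. Your forcing argument shows that a block $B$ between two consecutive $n$'s starts with $n-1$ and that the remainder of $B$ must contain an $n-2$ (otherwise every letter of the remainder is $\leq n-3$ or equal to $n-1$, can be absorbed or commuted away, and one produces a connected $(n\;n\!-\!1\;n)$, contradicting smallness). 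But this only constrains the \emph{contents} of each block; it gives no control whatsoever on how many such blocks, i.e.\ how many $n$'s, can occur, and the proposed ``iteration downward through $n-1, n-2, \ldots$'' is not shown to terminate in any bound. Knowing that every inter-$n$ block must contain $n-1$, $n-2$, and so on merely forces the blocks to be long, which works against, not towards, a length bound.

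The missing idea is to strengthen the induction: prove that a reduced, right-most, small sequence has a \emph{unique} maximal entry, by induction on $n$. If $n$ occurred twice, pick a connected subsequence $(n \; I \; n)$ with no $n$ inside; right-mostness forces $I = (n\!-\!1,\; I')$, and the induction hypothesis applied to $I$ itself (it is reduced, right-most, small, with maximal entry $n-1$) says that $n-1$ occurs in $I$ only at that first spot. Hence all letters of $I'$ are $\leq n-2$, so $I'$ commutes with the following $n$ under $\sim_P$, and $(n \; I \; n) \sim_P (n \; n\!-\!1 \; n \; I')$, contradicting smallness. This is exactly where your sketch stalls: instead of an open-ended cascade of forced letters, one single appeal to the inductively known uniqueness of the maximal entry of the block closes the argument, showing the maximal letter occurs at most once. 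Finiteness is then immediate: a reduced, right-most, small sequence in $F_n$ either lies in $F_{n-1}$ or has the form $(I \; n \; J)$ with $I, J$ reduced, right-most, small in $F_{n-1}$, giving the recursion $c_n \leq 2c_{n-1}+1$. So your counting scheme is salvageable, but only after replacing the unproved ``bounded number of $n$'s'' by the stronger and provable statement that the maximal entry occurs exactly once.
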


\begin{proof}
Observe that connected subsequences of small sequences are small again.

First, we are going to prove that a reduced, right-most, small sequence has a unique maximal entry. This is clear for $n=1$. We proceed by induction on $n$. Assume we have proven the claim for all natural numbers smaller or equal to $n$, and consider a sequence as above with maximal entry $n+1$ (if the maximal entry is smaller, the sequence is contained in $F_n \subset F_{n+1}$ and the conclusion holds by induction hypothesis). Recall that right-most is a condition depending on consecutive pairs of entries, so any connected subsequence of a right-most sequence is right-most again. Assume for contradiction that $n+1$ occurs more that once. Then we can choose a connected subsequence of the form 
\begin{eqnarray*}
(n+1 \; I \; n+1),
\end{eqnarray*}
where $I$ does not contain $n+1$. Note that both this subsequence and $I$ are reduced, right-most and small. Since the sequence $(n+1 \; I \; n+1)$ is right-most and $I$ does not contain the maximal entry $n+1$, we can conclude that $I=(n, I')$ for some sequence $I'$. Furthermore, $n$ has to be the maximal entry of $I$ and thus it does not occur in $I'$ by induction hypothesis.  But then all entries in $I'$ are smaller or equal to $n-1$, thus $I'$ and $n+1$ commute in $P_{n+1}$ and we have
\begin{eqnarray*}
(n+1 \; I \; n+1) \simeq_{Q_{n+1}} (n+1 \; n \; n+1 \; I'),
\end{eqnarray*}
contradicting the smallness. This completes the induction step. 

Thus, any reduced, right-most, small sequence has a unique maximal entry. Now we prove the claim of the lemma by induction. For $n=1$, the claim is obvious. Assume now that we have proven the claim for all natural numbers $\leq n$, so in particular, there are constants $c_i$ for $1\leq i\leq n$ bounding the length of reduced, right-most, small sequences in $F_i$. Now consider a reduced, right-most, small sequence in $F_{n+1}$. Either it does not contain the entry $n+1$, in which case it is contained in $F_n\subset F_{n+1}$ and has length at most $c_n$, or it contains exactly one entry $n+1$, in which case it is of the form $(I \; n+1 \; J)$ with $I$, $J$ reduced, right-most, small sequences which lie in $F_n$. So the total length of such a sequence as we started with in $F_{n+1}$ is at most $2c_n+1$. This shows that there are only finitely many of these and completes the proof of the lemma.
\end{proof}

We will need the following criterion for smallness. We write $J_a^b$ for the sequence $(b\; b-1\; \ldots \;  a)$ for natural numbers $b\geq a$ (cf.\ the ``dual'' notation \ref{NotationLeftMost} for the left-most case). Observe that we can write any right-most reduced sequence in a form $J_{\sms{a_l}}^{\sms{b_l}} J_{\sms{a_{l-1}}}^{\sms{b_{l-1}}}\ldots J_{\sms{a_1}}^{\sms{b_1}}$ with $a_r<b_{r-1}$.

\begin{Lemma}\label{CriterionSmallness}
Let $J_{\sms{a_l}}^{\sms{b_l}} J_{\sms{a_{l-1}}}^{\sms{b_{l-1}}}\ldots J_{\sms{a_1}}^{\sms{b_1}}$ with $a_r<b_{r-1}$ be a right-most, reduced, small sequence.
\begin{enumerate}
\item Let $j<b_l$. Then $j J_{\sms{a_l}}^{\sms{b_l}} J_{\sms{a_{l-1}}}^{\sms{b_{l-1}}}\ldots J_{\sms{a_1}}^{\sms{b_1}}$ is also right-most, reduced and small.
\item Let $j=b_l+1$ and let either $l=1$ or $b_l+1<b_{l-1}$. Then $j J_{\sms{a_l}}^{\sms{b_l}} J_{\sms{a_{l-1}}}^{\sms{b_{l-1}}}\ldots J_{\sms{a_1}}^{\sms{b_1}}$ is also right-most, reduced and small.
\item If $a_{l+1} \leq b_{l+1}<b_l$, then $J_{\sms{a_{l+1}}}^{\sms{b_{l+1}}}J_{\sms{a_l}}^{\sms{b_l}} J_{\sms{a_{l-1}}}^{\sms{b_{l-1}}}\ldots J_{\sms{a_1}}^{\sms{b_1}}$ is also right-most, reduced and small.
\end{enumerate}

\end{Lemma}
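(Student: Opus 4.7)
My plan is to verify right-most-ness, reducedness, and smallness separately, with the latter being the only nontrivial point. Right-most-ness is checked at the newly-created adjacent pair at the prepending boundary: in Part (1) the pair $(j,b_l)$ satisfies $j<b_l$; in Part (2) the pair $(b_l+1,b_l)$ has magnitude one; in Part (3) the new boundary pair is $(a_{l+1},b_l)$ with $a_{l+1}<b_l$. Reducedness follows from $j\neq b_l$ (resp.\ $a_{l+1}\neq b_l$). I then obtain Part (3) from iterated application of Parts (1) and (2): starting from $S$, I prepend the letters $a_{l+1},a_{l+1}+1,\ldots,b_{l+1}$ one at a time. The first letter is added by Part (1) because $a_{l+1}\leq b_{l+1}<b_l$, and each subsequent letter $a_{l+1}+m$ is added by Part (2), whose inequality $j<b_l$ holds because $a_{l+1}+m\leq b_{l+1}<b_l$.

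For the smallness assertions in Parts (1) and (2), I plan an argument by contradiction: suppose the new sequence is $P_n$-equivalent to a sequence containing the forbidden pattern $(k+1,k,k+1)$ as a connected subword. Since commutation preserves the multiset of letters and the new sequence is reduced, the pattern must already appear as an ordered subsequence at three positions $p_1<p_2<p_3$ of the new sequence itself. The proof then splits on whether the prepended letter is one of these three. If it is not, the three pattern letters lie inside $S$; the $P_n$-chain that brings them adjacent in the full sequence restricts to a $P_n$-chain on $S$ alone, by skipping any move that involves the prepended letter (such a move only swaps the prepended letter with an immediate neighbour and does not perturb the positions of three letters lying to its right), so $S$ itself is $P_n$-equivalent to a sequence with the forbidden pattern connected, contradicting the smallness of $S$.

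If the prepended letter does participate, it must play the role of the first $k+1$ of the pattern since it sits to the left of all of $S$; this gives $j=k+1$ in Part (1) and $b_l+1=k+1$ in Part (2). In Part (1), $S$ must then contain $(j-1,j)$ as an ordered subsequence, and as each block of $S$ is a strictly decreasing run this can happen only across two blocks; I will then push the prepended $j$ rightward through the first block of $S$, observe that it gets stuck against a $j+1$ producing the local subword $(j,j+1,j)$---an instance of $(k,k+1,k)$ rather than the forbidden pattern---and show that any further $P_n$-manoeuvre assembling the forbidden $(j,j-1,j)$ would transfer to a parallel $P_n$-chain on $S$ alone, contradicting its smallness. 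In Part (2) the analogous analysis requires $S$ to supply a $b_l$ and a second $b_l+1$, and the hypothesis $l=1$ or $b_l+1<b_{l-1}$ is exactly what prevents the leftmost entry of $J_{a_{l-1}}^{b_{l-1}}$ from being equal to $b_l+1$ and combining with the prepended letter and the subsequent $b_l$ to realize the forbidden pattern. The hard part will be making this participating case airtight across all positions where the middle $k$ and the second $k+1$ might lie inside $S$; this is where the right-most block structure and the numerical hypotheses of Parts (1) and (2) play a decisive role.
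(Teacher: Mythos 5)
Your reduction of Part (3) to iterated use of Parts (1) and (2), and your checks of right-most-ness and reducedness, coincide with the paper and are fine. The gap is in the smallness claims of Parts (1) and (2), which are the substance of the lemma. Your opening reduction -- ``since commutation preserves the multiset of letters and the new sequence is reduced, the pattern must already appear as an ordered subsequence of the new sequence itself'' -- is not justified: $\sim_P$ is generated not only by the commutations $ab\sim ba$ ($\abs{a-b}\geq 2$) but also by the idempotency relations $a^2\sim a$, and in a chain of $\sim_P$-moves these may be applied in the duplicating direction $a\to a^2$ at intermediate (non-reduced) stages, so the multiset of letters is not preserved. The same issue undermines the ``restrict the $P_n$-chain to $S$ by skipping moves involving the prepended letter'' step: the prepended letter can be merged with an equal neighbour from $S$ or duplicated, so a move involving it is not necessarily a mere transposition, and skipping it does not obviously leave a valid derivation on the remaining letters; likewise the claim in your participating case that any manoeuvre assembling $(j,j-1,j)$ ``would transfer to a parallel $P_n$-chain on $S$ alone'' is exactly what needs proof. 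Finally, you explicitly leave the participating case open, so the argument is not complete even modulo these points.

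The missing idea, which is how the paper closes the argument, is a property of sequences that is invariant under \emph{all three} types of $P_n$-moves (both directions of idempotency and commutation): whenever two equal entries $m$ occur, the letters strictly between them either all commute with $m$ or include an entry $m+1$. One checks that $jJ_{\sms{a_l}}^{\sms{b_l}}\ldots J_{\sms{a_1}}^{\sms{b_1}}$ satisfies this property under the hypotheses of Part (1) (using the block form with $b_r<b_{r-1}$, which follows from the unique-maximal-entry fact), that the property is preserved by every elementary $P_n$-move, and that a sequence containing a connected $(k+1,k,k+1)$ violates it; Part (2) is handled by verifying the same invariant for the new sequence, and Part (3) then follows by the iteration you describe. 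Without such an invariant (or a genuinely careful tracking argument that handles duplication and merging), your case analysis cannot be made airtight.
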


\begin{proof}
The new sequence is clearly reduced and right-most, so we only have to prove it is also small.

 If the right-most reduced sequence $J_{\sms{a_l}}^{\sms{b_l}} J_{\sms{a_{l-1}}}^{\sms{b_{l-1}}}\ldots J_{\sms{a_1}}^{\sms{b_1}}$ is in addition small, then any connected subsequence of it is also small, and a small, right-most, reduced sequence has a unique maximal entry, as we have seen in the proof of the last lemma. This is in particular true for each subsequence of the form $J^b_aJ^d_c$; if we had $b\geq d$, then $J^d_a$ were a connected subsequence of  $J^b_a$, and the subsequence $J^d_aJ^d_c$ of $J^b_aJ^d_c$ has the maximal value $d$ occurring twice.  

Thus, a right-most, reduced, small sequence can be written in form 
\begin{eqnarray*}
J_{\sms{a_l}}^{\sms{b_l}} J_{\sms{a_{l-1}}}^{\sms{b_{l-1}}}\ldots J_{\sms{a_1}}^{\sms{b_1}}
\end{eqnarray*}
  with $a_r<b_{r-1}$ and $b_r<b_{r-1}$ for all $2\leq r\leq l$. Now consider the sequence 
\begin{eqnarray*}
jJ_{a_l}^{b_l} J_{a_{l-1}}^{b_{l-1}}\ldots J_{a_1}^{b_1}.
\end{eqnarray*}
with $j<b_l$. Assume for contradiction that it is not small, i.e., there exists another representative of this element in $Q_n$ of the form $(I \, k+1 \, k \, k+1 \, J)$. Observe that such a representative of this element in $Q_{n}$ must be possible to obtain by applying relations in $P_{n}$ only, since for applying any additional relation of $Q_{n}$, we already have to have  a connected subsequence of the form $(k+1, k, k+1)$. So we want to show that applying relations in $P_n$ only, we will not produce a sequence with a connected subsequence of the form $(k+1, k, k+1)$.

The sequence we started with has the following property: Whenever there are two equal entries in different spots of this sequence, forming a connected subsequence of the form $m \, I \, m$, then either $I \, m \sim_P m \,   I$ (this actually does not occur in the sequence we started with, but we need to allow it in a moment) or $I$ contains the entry $m+1$. We want to show that this property is preserved under applying relations in $P_n$. This will complete the proof of the first part of the lemma. 

We have three types of applications of the relations in $P_n$: 
\begin{eqnarray*}
I\; a^2\; J &\to & I \; a \;  J\\
I \; a \;  J&\to & I\; a^2\; J\\
I \; a\;b \; J &\to & I \; b \; a \; J \mbox{ for } \abs{a-b}\geq 2.
\end{eqnarray*}
These applications of relations surely do not affect the property for pairs of equal entries which lie both in $I$ or both in $J$. Moreover, if one of the equal entries lies in $I$ and the other one in $J$, then the set of letters separating them does not change and so the property holds. Moreover, it is clear that the property holds for all appearances of $a$ in $I$ or $J$ after applying one of the idempotency relations; and it also holds for the new equal pairs possibly created by the relations. Last, we have to analyze the case where $a$ or $b$ appear in $I$ (the case for $J$ will follow symmetrically) and we apply the last relation. If $a$ appears in $I$, we just add a letter commuting with $a$ to the set separating both appearances of $a$, so it still has the property above. If $b$ appears in $I$, then we delete one letter commuting with $b$ from the set of letters separating both appearances of $b$, in particular, the deleted entry is not $b+1$; so the property holds. This completes the case 
distinction and also the proof of the first part of the lemma.

For the second part, observe that in that case, the $P$-invariant property above is satisfied again.    

The third part follows immediately from applying the first part to $j=a_{l+1}$ and then repeated application of the second part.
\end{proof}

An immediate consequence of the preceding lemma is the following.
\begin{Cor}\label{SmallnessCriterionJ}
Let $J_{\sms{a_l}}^{\sms{b_l}} J_{\sms{a_{l-1}}}^{\sms{b_{l-1}}}\ldots J_{\sms{a_1}}^{\sms{b_1}}$ with $a_r<b_{r-1}$ be a right-most, reduced sequence. Then it is small if and only if $b_{r+1}<b_r$ holds for all $1\leq r \leq l-1$. 
\end{Cor}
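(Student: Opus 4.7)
The plan is to deduce both directions of the equivalence from Lemma \ref{CriterionSmallness}. The ``only if'' direction is already contained inside the proof of that lemma: it was observed there that any right-most, reduced, small sequence admits a decomposition $J_{a_l}^{b_l}\ldots J_{a_1}^{b_1}$ with $a_r < b_{r-1}$ and $b_r < b_{r-1}$ for all $2 \leq r \leq l$, which is exactly the claimed condition after reindexing to $b_{r+1} < b_r$ for $1 \leq r \leq l-1$. So nothing new is needed in this direction.

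For the ``if'' direction, I would induct on $l$. The base case $l = 1$ concerns a single block $J_{a_1}^{b_1} = (b_1, b_1-1, \ldots, a_1)$ with pairwise distinct entries. The key observation from the proof of Lemma \ref{CriterionSmallness} is that the property ``every connected subsequence of the form $mIm$ satisfies either $Im \sim_P mI$, or $I$ contains $m+1$'' is preserved under applications of $\sim_P$-relations. Since our block has no repeated entries, this property holds vacuously and hence persists across the entire $\sim_P$-class. Any occurrence of a connected subsequence $(k+1, k, k+1)$ would violate the property (take $m = k+1$, $I = (k)$), so no $\sim_P$-equivalent sequence contains such a pattern. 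Consequently no $\sim_Q$-generator is ever applicable within this equivalence class, the $\sim_Q$-class coincides with the $\sim_P$-class, and smallness of $J_{a_1}^{b_1}$ follows.

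For the inductive step, assume that $J_{a_l}^{b_l} \ldots J_{a_1}^{b_1}$ with $b_{r+1} < b_r$ for $1 \leq r \leq l-1$ is small, and consider prepending a new block $J_{a_{l+1}}^{b_{l+1}}$ with $b_{l+1} < b_l$. The inequality $a_{l+1} \leq b_{l+1}$ is automatic from the definition of a block, so the combined hypothesis $a_{l+1} \leq b_{l+1} < b_l$ of part~(3) of Lemma \ref{CriterionSmallness} is satisfied. That part of the lemma then directly yields smallness of $J_{a_{l+1}}^{b_{l+1}} J_{a_l}^{b_l} \ldots J_{a_1}^{b_1}$, closing the induction. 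The only mildly delicate point is the base case: parts (1) and (2) of Lemma \ref{CriterionSmallness} prepend single letters rather than whole blocks and do not obviously produce a fresh $J_{a_1}^{b_1}$ from triviality, which is why we bypass them by invoking the invariant-property argument directly.
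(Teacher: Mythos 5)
Your argument is correct and follows the route the paper intends: the ``only if'' direction is exactly the decomposition with strictly decreasing block maxima established inside the proof of Lemma \ref{CriterionSmallness}, and the ``if'' direction is the evident induction on the number of blocks using part (3) of that lemma, which is why the paper records the corollary as an immediate consequence. Your extra care with the base case is fine (and could alternatively be handled by starting from a single letter, which is trivially small since $\sim_P$ preserves the set of letters, and building the first block by repeated use of part (2) with $l=1$), but it does not change the substance of the argument.
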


Now we can proceed to define the promised matching.

Let $(M, \mathcal{E}, \eta)$ be a factorable monoid. Let $(\mathbb{B}_*M, d)$ denote the reduced, inhomogeneous bar complex of $M$. Recall that it can be seen as a based chain complex with basis 
\begin{eqnarray*}
 \Omega_n=\left\{[m_n|\ldots|m_1]\,\right |\left.\, m_i\in M, m_i\neq 1\right\}
\end{eqnarray*}
in degree $n$, and the differential is given by 
\begin{eqnarray*}
 d([m_n|\ldots|m_1])=\sum_{i=0}^n (-1)^{i} d_i([m_n|\ldots|m_1]),
\end{eqnarray*}
where $d_i$ are given by 
\begin{eqnarray*}
  d_i([m_n|\ldots|m_1])=\begin{cases}
                         [m_n|\ldots |m_2], \mbox{ if } i=0,\\
[m_n| \ldots | m_{i+1}m_i | \ldots | m_1], \mbox{ if } 0<i<n,\\
[m_{n-1}| \ldots | m_1], \mbox{ if } i=n.
                        \end{cases}
\end{eqnarray*}
(Here, we define $d_i([m_n|\ldots|m_1])$ to be $0$ whenever it would contain $1$ as an entry otherwise.)

We want to specify a matching $\mu\colon \Omega_*\to \Omega_*$. Let $\NF\colon M \to \mathcal{E}^*$ denote again the normal form in the factorable monoid. Recall that by Lemma \ref{FactorabilityStructureInducesRewriting}, we have a rewriting system associated with the factorability structure $\eta$. In particular, the notion of reducible/irreducible words in $\mathcal{E}$ is well-defined. 

Let $[m_n|\ldots|m_1]$ be \textit{essential}, i.e., a fixed point of the matching $\mu$, if and only if the following conditions hold: 
\begin{enumerate}
 \item $m_1\in \mathcal{E}$,
 \item The word $(\NF(m_{i+1}), \NF(m_i))$ is reducible for every $1\leq i\leq n-1$,
 \item  For every $1\leq i\leq n-1$, any proper (right) prefix of\\
 $(\NF(m_{i+1}), \NF(m_i))$ is irreducible. 
\end{enumerate}

These conditions are analogous to those in Theorem \ref{TheoremBrown}. 

Observe that since all rewriting rules are of the form $(x,y)\to \eta(xy)$ for $x,y\in \mathcal{E}$, these conditions are equivalent to:
\begin{enumerate}
 \item $m_i\in \mathcal{E}$ for $1\leq i\leq n$,
 \item The pairs $(m_{i+1}, m_i)$ are unstable for all $1\leq i\leq n-1$. 
\end{enumerate}

Next, we want to describe the redundant and collapsible cells in $\Omega_*$. Define first the height of a cell as the length of the maximal ``right subcell'' which is essential, more precisely,
\begin{eqnarray*}
 \hgt([m_n|\ldots|m_1]):=\max\{k\in \mathbb{N}| [m_k|\ldots |m_1] \mbox{ is essential}\}.
\end{eqnarray*}
Note that the height of $[m_n|\ldots |m_1]$ is an integer between $0$ and $n$. Such a cell is of height $n$ if and only if it is essential. Consider now an $n$-cell $[m_n|\ldots |m_1]$ of height $h<n$. We call such a cell \textit{collapsible} if $(m_{h+1}, m_h)$ is stable. Otherwise, we call the cell $[m_n|\ldots |m_1]$
 \textit{redundant}. Observe that in this case, $m_{h+1}$ is not in $\mathcal{E}$, so it is of $\E$-length at least $2$. Note also that the cell is always redundant if it is of height $0$. Now define for a cell $\underline{m}:=[m_n|\ldots | m_1]\in \Omega_n$ the map $\mu$ as follows:
\begin{eqnarray*}
 \mu(\underline{m})=\begin{cases}
                          [m_n|\ldots |m_{h+1}m_h|m_{h-1}|\ldots |m_1], \mbox{ for } \underline{m} \mbox{ collapsible of height } h,\\
			  [m_n|\ldots |\overline{\eta}(m_{h+1})| \eta'(m_{h+1})|m_{h}|\ldots |m_1], \mbox{ for } \underline{m} \mbox{ redundant of height } h.
                         \end{cases}
\end{eqnarray*}

\begin{Prop}\label{MatchingofFactorableMonoid}
 Let $(M, \mathcal{E}, \eta)$ be a factorable monoid. Then the map $\mu\colon \Omega_*\to \Omega_*$ as above is a noetherian, $\mathbb{Z}$-compatible matching on the bar complex of $M$.  
\end{Prop}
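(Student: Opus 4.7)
The plan is to verify, in turn, that $\mu$ is a well-defined involution on $\Omega_*$, that it is $\mathbb{Z}$-compatible, and that it is noetherian.

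For the involution property, I would start with a collapsible cell $\underline{m}=[m_n|\ldots|m_1]$ of height $h$. Stability of $(m_{h+1},m_h)$ together with (F2) gives $N(m_{h+1}m_h)=N(m_{h+1})+N(m_h)\geq 2$, so the product is nonidentity and not in $\mathcal{E}$; hence $\mu(\underline{m})$ is well-defined of length $n-1$. I would then verify that its height is exactly $h-1$, and that the pair $(m_{h+1}m_h,m_{h-1})$ is unstable — if it were stable, the recognition principle combined with stability of $(m_{h+1},m_h)$ (which gives $\eta'(m_{h+1}m_h)=m_h$) would force $(m_h,m_{h-1})$ stable, contradicting essentiality of $[m_h|\ldots|m_1]$. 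So $\mu(\underline{m})$ is redundant. The reverse case, for $\underline{m}$ redundant of height $h$, is symmetric: since $m_{h+1}\notin\mathcal{E}$, both factors $\overline{\eta}(m_{h+1})$ and $\eta'(m_{h+1})$ are nonidentity, the pair $(\overline{\eta}(m_{h+1}),\eta'(m_{h+1}))=\eta(m_{h+1})$ is automatically stable while $(\eta'(m_{h+1}),m_h)$ is unstable by the recognition principle, so $\mu(\underline{m})$ is collapsible of height exactly $h+1$. The identity $\mu^2=\id$ then drops out of the defining formulas.

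For $\mathbb{Z}$-compatibility, by Remark \ref{Zcompatibility} it suffices to show $\Mf{\partial\mu(x),x}=\pm 1$ for redundant $x$ of height $h$. The face $d_{h+1}(\mu(x))$ recombines $\overline{\eta}(m_{h+1})\cdot\eta'(m_{h+1})=m_{h+1}$ and equals $x$, with coefficient $(-1)^{h+1}$. For any other index $i$ I would inspect the entries of $d_i(\mu(x))$: at least one of $\overline{\eta}(m_{h+1})$ or $\eta'(m_{h+1})$ survives as an isolated entry at a position where $x$ has $m_{h+1}$, and neither coincides with $m_{h+1}$ (either equality would force the complementary factor to be $1$), so $d_i(\mu(x))\neq x$.

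The main obstacle will be noetherianity. Given a chain $x_1\vdash x_2\vdash\ldots$ of redundant cells, all of fixed length $n$, each transition arises from a face $d_i(\mu(x_j))$ with $i\neq h_j+1$ yielding a nonzero redundant cell. The faces $d_0$ and $d_{n+1}$ of $\mu(x_j)$ strictly decrease the total norm $\sum_k N(m_k^{(j)})$ (they delete an entry of $\mathcal{E}$-length at least $1$), and any $d_i$ with $1\leq i\leq n$ that multiplies a non-geodesic pair strictly decreases the norm as well. Thus after finitely many norm-decreasing steps, every remaining transition is a geodesic multiplication at some position $1\leq i\leq n$, $i\neq h_j+1$, preserving the norm. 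In this norm-stable regime I would associate to each $\vdash$-step the corresponding $f_i$-operation on the underlying tuple, using the height of $x_{j+1}$ to record the re-factorization that is absent from the bare multiplication $d_i$; this produces a word in the free monoid $F_n$ acting on the fixed tuple underlying $x_1$. By the Evaluation Lemma \ref{EvaluationLemmaAlex} this word descends to an element of $Q_n$, and the requirement that every intermediate $x_j$ be redundant forbids a subword of the form $(k+1\,k\,k+1)$ in any equivalent representative — so the associated sequence is small. Lemma \ref{FinitenessSmallSequences} then bounds its length, forcing the chain to stabilize. The technical heart of the proof is setting up this correspondence precisely, so that redundancy of cells in the bar complex translates faithfully into smallness of the associated $F_n$-sequence; this bookkeeping is where the $Q_n$-analysis of the preceding sections is ultimately invoked.
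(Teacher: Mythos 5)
Your treatment of well-definedness, of the involution property via the Recognition Principle, and of $\mathbb{Z}$-compatibility (the face $d_{h+1}\mu(x)=x$ with sign $(-1)^{h+1}$, all other faces distinct because a wrong-norm entry survives) is essentially the paper's argument and is sound. The same is true of your reduction to norm-constant chains, which excludes $d_0$, $d_{n+1}$ and non-geodesic multiplications, and of the translation of a chain $x_1\vdash x_2\vdash\ldots$ of redundant cells into a word of operations $f_{i_2},f_{i_3},\ldots$ acting on $\mu(x_1)$, with $i_j-1$ the height of $x_j$.

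The gap is exactly where you stop: the assertion that redundancy of every intermediate cell ``forbids a subword $(k+1\,k\,k+1)$ in any equivalent representative, so the associated sequence is small'' is the substantive content of the noetherianity proof, not bookkeeping. Two things are missing. First, Lemma \ref{FinitenessSmallSequences} bounds only \emph{reduced, right-most}, small sequences, so you must first extract from the face/height case analysis that consecutive indices satisfy $i_{t+1}\leq i_t+1$ and that the word is reduced; smallness alone yields no length bound. Second, smallness does not follow formally from redundancy: one has to show that if the word were not small, then at the first failure Lemma \ref{CriterionSmallness} together with right-mostness forces $i_r=i_{r-1}+1$ and, after $P$-moves (which do not change the evaluated maps), a connected subword $(k+1,k,k+1)$; and then one must actually evaluate on the cell. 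Writing the relevant entries of the collapsible cell as $(a,b,c,d)$, one uses that the preceding face produced a redundant cell of height $k+1$ to see $c\in\E$, and then the weak factorability identities $\overline{ab}\cdot\overline{(ab)'c}=\overline{abc}$ and $((ab)'c)'=(abc)'$ — valid here because all steps are norm-preserving — to conclude that after $d_{k+1}f_kf_{k+1}$ the tuple is stable at position $k$, hence not redundant of height $k$, which is the contradiction. This computation invokes the factorability axiom directly rather than the Evaluation Lemma \ref{EvaluationLemmaAlex} (which only gives graded equalities for $Q_n$-equivalent words, and whose strengthened variants are known to fail in general, as the appendix's counterexample shows), so it cannot be waved through; without it your proof of noetherianity is incomplete.
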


\begin{pf}
 First, we observe that $\mu$ is well-defined: If $\underline{m}:=[m_n|\ldots | m_1]\in \Omega_n$ is collapsible of height $h$, then the pair $(m_{h+1}, m_h)$ is stable, so in particular, $m_{h+1}m_h\neq 1$. If $\underline{m}:=[m_n|\ldots | m_1]\in \Omega_n$ is redundant of height $h$, then $m_{h+1}$ is not in $\mathcal{E}$, so it has $\E$-length at least $2$. Thus, we know that $\eta'(m_{h+1})$ is in $\mathcal{E}$. Furthermore, the length of $\overline{\eta}(m_{h+1})$ is at least $1$, hence this element is also non-trivial. 

Next, we are going to check that $\mu$ is an involution. Let $\underline{m}:=[m_n|\ldots | m_1]\in \Omega_n$ be collapsible of height $h$. Then we know by assumption that the cell $[m_{h-1}|\ldots |m_1]$ is essential, so the cell $\mu(\underline{m})$ is of height at least $h-1$. Moreover, by Recognition Principle, the pair $(m_{h+1}m_h, m_{h-1})$ is unstable if and only if the pair $(m_h, m_{h-1})$ is unstable. The latter statement holds since the original cell was of height $h$. Thus, the cell $\mu(\underline{m})$ is redundant of height $h-1$. To compute $\mu^2(\underline{m})$, recall that we assumed
\begin{eqnarray*}
 \eta(m_{h+1}m_h)=(m_{h+1}, m_h).
\end{eqnarray*}
This implies $\mu^2(\underline{m})=\underline{m}$ for collapsible cells $\underline{m}$.

Consider now a redundant cell $\underline{m}:=[m_n|\ldots | m_1]\in \Omega_n$ of height $h$. Again, the cell $\mu(\underline{m})$ is at least of height $h$. Furthermore, we know that the pair $(m_{h+1},m_h)$ is unstable by assumption. By Theorem \ref{CharacterizationRecognitionPrinciple}, we may use Recognition Principle in $M$. Thus, we know that the pair $(\eta'(m_{h+1}), m_{h})$ is also unstable. Together with the observation that  $\eta'(m_{h+1})$ lies in $\mathcal{E}$ we can conclude that $\mu(\underline{m})$ is at least of height $h+1$. Since $\eta(m_{h+1})$ is a stable pair by definition, we obtain that $\mu(\underline{m})$ is a collapsible cell of height $h+1$, and $\mu^2(\underline{m})=\underline{m}$ follows immediately.

Now we are going to check that $\mu$ is a $\mathbb{Z}$-compatible matching. According to Remark \ref{Zcompatibility}, it is enough to check $\Mf{d\mu(\underline{m}),\underline{m}}=\pm 1$ for redundant cells $\underline{m}:=[m_n|\ldots | m_1]$. Assume $\underline{m}$ has height $h$. By definition, $d_{h+1}\mu(\underline{m})=\underline{m}$. We will show $d_i\mu(\underline{m})\neq \underline{m}$ for $i\neq h+1$, which will prove the $\Z$-compatibility. Furthermore, we will take a closer look at $d_i\mu(\underline{m})$, since it will be helpful for proving that the matching $\mu$ is noetherian. We have to consider several cases. Of course, only the cases with $d_i\mu(\underline{m})\neq 0$ are interesting for our consideration.

\renewcommand{\labelenumi}{Case \arabic{enumi}:}
\begin{enumerate}
 \item Consider $i=0<h+1$. If $h>0$, then 
 \begin{eqnarray*}
  d_i\mu(\underline{m})=[m_n|\ldots |\overline{\eta}(m_{h+1})| \eta'(m_{h+1})|m_{h}|\ldots |m_2].
 \end{eqnarray*}
 This tuple has $\overline{\eta}(m_{h+1})$ in $(h+1)$-st place from the right.  Thus, it cannot equal $\underline{m}$, since $\underline{m}$ has $m_{h+1}$ in this spot and 
\begin{eqnarray*}
 N_{\E}(\overline{\eta}(m_{h+1}))<N_{\E}(m_{h+1}).
\end{eqnarray*}
Note furthermore that $m_j$ for $2\leq j\leq h$ as well as $\eta'(m_{h+1})$ are in $\E$, and all of the pairs $(m_{k+1}, m_k)$ for $2\leq k\leq h-1$ are unstable by assumption. Moreover, the pair $(\eta'(m_{h+1}), m_h)$ is unstable since the pair $(m_{h+1}, m_h)$ is unstable (by Recognition Principle). So the cell  $d_i\mu(\underline{m})$ has at least height $h$. Since the pair $(\overline{\eta}(m_{h+1}), \eta'(m_{h+1}))$ is stable, this cell has height exactly $h$, and it is collapsible. 

If $\underline{m}$ is of height $0$, then $d_i\mu(\underline{m})=[m_n|\ldots |\overline{\eta}(m_{1})]$. Here, we apply the same argument as before to the right-most position of the tuple. Observe that this cell may or may not be redundant, and there are no constraints on its height. 

\item Assume now $0<i<n+1$, and consider the case $i<h$, in particular, $h>0$. Then we have
\begin{eqnarray*}
 d_i\mu(\underline{m})=[m_n|\ldots |\overline{\eta}(m_{h+1})| \eta'(m_{h+1})|m_{h}|\ldots |m_{i+1}m_i|\ldots |m_1].
\end{eqnarray*}
This tuple has $\overline{\eta}(m_{h+1})$ in the $(h+1)$-st spot, and as before, this entry is different from $m_{h+1}$. Note that the height of $d_i\mu(\underline{m})$ is at least $i-1<h$, and it may or may not be redundant. Moreover, if the application of $d_i$ does not lower the norm, then the tuple $ d_i\mu(\underline{m})$ has the height exactly $i-1$. 

\item The case $0<i=h<n+1$ is almost the same: 
\begin{eqnarray*}
 d_h\mu(\underline{m})=[m_n|\ldots |\overline{\eta}(m_{h+1})| \eta'(m_{h+1})m_{h}|m_{h-1}|\ldots |m_1].
\end{eqnarray*}
This tuple has again $\overline{\eta}(m_{h+1})$ in the $(h+1)$-st spot, and as before, this entry is different from $m_{h+1}$. The height of $d_i\mu(\underline{m})$ is at least $h-1<h$, and it may or may not be redundant. The height is exactly $h-1$ if the application of $d_h$ does not lower the norm of the tuple.

\item Next, we deal with the case $0<i=h+2<n+1$.  We have in this case
\begin{eqnarray*}
 d_{h+2}\mu(\underline{m})=[m_n| \ldots | m_{h+3}| m_{h+2}\overline{\eta}(m_{h+1})| \eta'(m_{h+1})|m_{h}|\ldots |m_1].
\end{eqnarray*}
By a similar consideration as above, $d_i\mu(\underline{m})$ has $\eta'(m_{h+1})$ as an entry in the $h+1$-st spot. Since $N(m_{h+1})>1$, the tuple $d_i\mu(\underline{m})$ is different from $\underline{m}$. The cell $d_i\mu(\underline{m})$ may or may not be redundant, and it is of height at least $h+1$. If $d_{h+2}$ is norm-preserving, this cell has height exactly $h+1$ if it is redundant.

\item Now consider the case $0<i<n+1$ and $i> h+2$. We have here
\begin{eqnarray*}
 d_i\mu(\underline{m})=[m_n|\ldots|m_{i+1}m_i| \ldots |\overline{\eta}(m_{h+1})| \eta'(m_{h+1})|m_{h}|\ldots |m_1].
\end{eqnarray*}
By a similar consideration as above, $d_i\mu(\underline{m})$ has $\eta'(m_{h+1})$ as an entry in the $(h+1)$-st spot. Since $N(m_{h+1})>1$, the tuple $d_i\mu(\underline{m})$ is different from $\underline{m}$. Furthermore, in this case, the resulting cell is collapsible of height $h+1$.

\item Last, consider $i=n+1$. Recall that $h<n$, so $h+1<n+1$, and we can conclude again $d_i\mu(\underline{m})\neq \underline{m}$ by looking an the $(h+1)$-st spot.  If $n=h+1$, then the resulting cell is essential, otherwise it is collapsible of height $h+1$. 

\end{enumerate}
So we have proven that $\mu$ is a $\mathbb{Z}$-compatible matching. We still have to show that $\mu$ is noetherian. 

Assume for contradiction that there are redundant cells $\underline{x_1} \vdash \underline{x_2}\vdash \ldots$. First, we consider the sum of the norms of the entries for this sequence of cells. Since applying $\eta_i$ preserves this quantity and $d_i$ either preserves it or drops it, we can assume that this quantity is constant for the considered sequence. Note that this implies in particular that there are no applications of $d_0$ or $d_{n+1}$ in the sequence. 

Next, observe that if $x_j$ is obtained as $d_{i_j}\mu(x_{j-1})$, then the height of $x_j$ is $i_j-1$. This follows from the case distinction above since we assumed that $x_j$ is redundant and that $d_{i_j}$ does not drop the norm. Hence, $\mu(x_j)$ is given by $\eta_{i_j}(x_j)$, so by $\eta_{i_j}(d_{i_j}\mu(x_{j-1})))=f_{i_j}(\mu(x_{j-1}))$. Thus, we obtain the corresponding infinite sequence of collapsible partners of $\underline{x}_i$, which can be described as 
\begin{eqnarray*}
\mu(x_1), f_{i_2}(\mu(x_1)), f_{i_3}(f_{i_2}(\mu(x_1))), \ldots
\end{eqnarray*}

The $m$-th element in this sequence starting from the second one is characterized by the sequence $(i_m, i_{m-1}, \ldots, i_2)$ in $F_{n}$. By assumption, any two consecutive cells in the sequence of collapsible cells as above are distinct, thus the sequences $(i_m, i_{m-1}, \ldots, i_2)$ are all reduced. Furthermore, the foregoing case distinction shows that $i_{t+1} \leq i_t+1$, so that the sequence $(i_m, i_{m-1}, \ldots, i_1)$ is right-most. 

 We will prove next that all sequences $(i_m,  i_{m-1}, \ldots, i_2)$ constructed as above are small. Assume for contradiction that $r$ is the minimal index so that $(i_r,  i_{r-1}, \ldots, i_2)$ is not small.  By Lemma \ref{CriterionSmallness} and the case distinction above, we can conclude that $i_r=i_{r-1}+1$ and that the sequence  $(i_{r-1}, i_{r-2} \ldots, i_2)$ is of the form $J_{a_l}^{b_l} J_{a_{l-1}}^{b_{l-1}}\ldots J_{a_1}^{b_1}$  with $a_r<b_{r-1}$ and $b_r<b_{r-1}$ for all $2\leq r\leq l$ and $l\geq 2$ and $b_{l-1}=i_{r-1}+1=i_r=b_l+1$. So since $a_l \leq b_l-1=(b_l+1)-2$, the (possibly empty) sequence $J^{b_l-1}_{a_l}$ commutes with $b_l+1=b_{l-1}$, so that 
\begin{eqnarray*}
(i_{r-1}, \ldots, i_2) \sim_P (i_{r-1}, i_{r-1}+1)\, J 
\end{eqnarray*}
for some sequence $J$. Recall that $P$-equivalent sequences evaluate to equal sequences of applications of $f_i$ (independently of norm behavior). 

So we may assume the sequence in question (resulting from the collapsible cell sequence) would contain a connected subsequence of the form $(k+1, k, k+1)$. Denote the spots $k+2$, $k+1$, $k$, $k-1$ of the collapsible element of height $h$ the sequence $(k+1, k, k+1)$ is applied to by $(a,b,c,d)$. Consider the application of $f_kf_{k+1}$ which were assumed to be norm-preserving:

\begin{center}
     \begin{tikzpicture}[node distance = 2cm, auto]
 \node[cloud](a1){$a$};
 \node[cloud, right of=a1](b1){$b$};
 \node[cloud, right of=b1](c1){$c$};
  \node[cloud, right of=c1](d1){$d$};
  
 \node[cloud, below of=a1](a2){$\overline{ab}$};
 \node[cloud, below of=b1](b2){$(ab)'$};
  \node[cloud, below of=c1](c2){$c$};
   \node[cloud, below of=d1](d2){$d$};
   
  \node[cloud, below of=a2](a3){$\overline{ab}$};
 \node[cloud, below of=b2](b3){$\overline{(ab)'c}$};
 \node[cloud, below of=c2](c3){$((ab)'c)'$};
  \node[cloud, below of=d2](d3){$d$};


\draw (c1) -- (c2); 
\draw (d1) -- (d2); 
    \draw (a1.south) -- ++(0.0, -0.5) -| ++(1.0, -0.5) -- ++(0.0,-0.0)-- ++(-1.0, 0.)--  (a2.north);
    \draw (b1.south) -- ++(0.0,-0.5) -| ++(-1.0, -0.5) -- ++(0.0,-0.0)-- ++(1.0, 0.)--  (b2.north);
\draw (d2) -- (d3); 
\draw (a2) -- (a3); 
    \draw (b2.south) -- ++(0.0, -0.5) -| ++(1.0, -0.5) -- ++(0.0,-0.0)-- ++(-1.0, 0.)--  (b3.north);
    \draw (c2.south) -- ++(0.0,-0.5) -| ++(-1.0, -0.5) -- ++(0.0,-0.0)-- ++(1.0, 0.)--  (c3.north);
 \end{tikzpicture}
    \end{center}

(Note that for $k=1$, the last column does not exist. This is not going to be a problem.)

Since the application of first $d_{k+1}$ yields a redundant cell of height $k+1$, we know in particular that $c$ lies in $\E$. Furthermore, since factorability implies weak factorability and the applications of $f_i$ were assumed to be norm-preserving, we have
\begin{eqnarray*}
\overline{ab} \cdot\overline{(ab)'c}=\overline{abc},\\
((ab)'c)'=(abc)'.
\end{eqnarray*} 
So after $d_{k+1}f_kf_{k+1}$, the tuple is stable at position $k$, thus it is not redundant of height $k$. 

This is a contradiction, so all the sequences $(i_m,  i_{m-1}, \ldots, i_2)$ in $F_{n+1}$ resulting from the chain of collapsible cells as above are right-most, reduced and small, and thus have bounded length by Lemma \ref{FinitenessSmallSequences}. So we have proved that the matching $\mu$ is noetherian, and this completes the proof of the proposition. 
\end{pf}

\section{Chain Complex for Homology of Factorable Monoids}
\label{Chain Complex for Homology of Factorable Monoids}

In the previous section, we have shown that there exists a noetherian, $\Z$-compatible matching on the bar complex of a factorable monoid $(M, \E, \eta)$. In this section, we would like to describe the resulting complex. Recall that Theorem \ref{BCF} gives us a description of the desired complex.

The basis for each degree is given by the essential cells in this degree. Recall that these are given by tuples of the form $[m_n|m_{n-1}|\ldots|m_1]$ with $m_i\in \E$ for all $1\leq i\leq n$ and with $(m_{i+1}, m_i)$ unstable for all $1\leq i\leq n-1$. The aim of this section is to describe the differentials of the new complex explicitly. Recall that $d_i$ denotes the $i$-th part of the differential in the bar complex.  

First, we obtain the following proposition from Theorem \ref{MorseKomplexExplizit}.

\begin{Prop} \label{VisyDifferentialCoherent}
Let $(M, \E, \eta)$ be a factorable monoid, and let $\mu$ be the matching on $\mathbb{B}_*M$ associated with this factorable monoid as in Section \ref{Matching for Factorable Monoids}.  Then the Morse complex $(\mathbb{V}_*, \partial^{\mathbb{V}})$ of this matching is a free chain complex with basis
\begin{eqnarray*}
\left\lbrace [m_n|\ldots |m_1]\,  |\, m_i \in \E, m_i\neq 1, (m_{i+1}, m_i) \mbox{ unstable}\right\rbrace
\end{eqnarray*}
in degree $n$. 

The differential is given by 
\begin{eqnarray*}
\partial^{\mathbb{V}} ([m_n|\ldots | m_1])= \sum_{(j, i_r, \ldots, i_1)} (-1)^{j+r} d_jf_{i_r}\ldots f_{i_1}([m_n|\ldots | m_1]),
\end{eqnarray*}
where the sum runs over all reduced tuples $(j, i_r, \ldots, i_1)$ so that all \linebreak $d_{i_k}f_{i_{k-1}}\ldots  f_{i_1}([m_n|\ldots | m_1])$ are redundant and $d_jf_{i_r}\ldots f_{i_1}([m_n|\ldots | m_1])$ is essential. 
\end{Prop}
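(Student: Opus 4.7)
The statement follows by applying Kozlov's explicit Morse-differential formula (Theorem \ref{MorseKomplexExplizit}) to the matching $\mu$ of Proposition \ref{MatchingofFactorableMonoid} and translating the matching chains $z_r\vdash\dots\vdash z_1$ into compositions of the operators $f_i$ and $d_j$.

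The critical identification is: whenever $w\in\Omega_n$ is a cell and $d_a(w)\in\Omega_{n-1}$ is redundant, then $\mu(d_a(w))=f_a(w)$. Indeed, the entries of $w$ in positions below $a$ are unaffected by $d_a$, so redundancy of $d_a(w)$ forces its height to be exactly $a-1$ and the product $w_{a+1}w_a$ to lie outside $\E$; hence $\mu$ splits precisely position $a$ via $\eta$, reproducing the definition of $f_a$. A similar case analysis (as in the proof of Proposition \ref{MatchingofFactorableMonoid}) shows that within $\partial\mu(z)$ the original cell $z$ is recovered by a unique $d_b$, namely $b=h+1$ with $h$ the height of $z$, so every self-incidence equals $\pm 1$.

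Unwinding a matching chain inductively with this identification yields $\mu(z_{r-k+1})=f_{i_k}\cdots f_{i_1}(x)$ and $z_{r-k+1}=d_{i_k}(f_{i_{k-1}}\cdots f_{i_1}(x))$, while the essential target is $y=d_j f_{i_r}\cdots f_{i_1}(x)$. Distinctness of consecutive chain members forces the tuple $(j,i_r,\dots,i_1)$ to be reduced: if $i_{k+1}=i_k$ then $d_{i_{k+1}}f_{i_k}(\cdots)$ would recover $z_{r-k+1}$ and no new chain step occurs. Conversely, every reduced tuple satisfying the redundant/essential conditions stated in the proposition corresponds to a legitimate matching chain, giving a bijection. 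To finish, from $\partial=\sum_a(-1)^a d_a$ one reads off $\langle\partial x,z_r\rangle=(-1)^{i_1}$, $\langle\partial\mu(z_{r-k+1}),z_{r-k}\rangle=(-1)^{i_{k+1}}$, $\langle\partial\mu(z_{r-k+1}),z_{r-k+1}\rangle=(-1)^{i_k}$, and $\langle\partial\mu(z_1),y\rangle=(-1)^j$. Substituting into Kozlov's formula, the $(-1)^{i_k}$ factors cancel in pairs between numerator and denominator, and one is left with the prefactor $(-1)^r\cdot(-1)^j=(-1)^{j+r}$, as claimed.

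The main obstacle will be the first step: maintaining enough structural control over the intermediate cells $f_{i_{k-1}}\cdots f_{i_1}(x)$ to justify that redundancy of $d_{i_k}$ applied to them still forces the height to be exactly $i_k-1$ so that the identity $\mu(d_a(w))=f_a(w)$ applies. This uses essentiality of the lower portion of $x$ together with the Recognition Principle (Definition \ref{RecognitionPrinciple}), which guarantees that instability of a pair $(m,a)$ is equivalent to instability of $(\eta'(m),a)$; consequently the essential tail of the intermediate cells is preserved under successive applications of the $f_{i_k}$, and the inductive identification can be pushed through the entire chain.
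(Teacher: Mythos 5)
Your proposal is correct and follows essentially the same route as the paper: both invoke Theorem \ref{MorseKomplexExplizit}, identify matching chains with tuples $(j,i_r,\ldots,i_1)$ through the observation that for a cell $w$ with all entries in $\E$ a redundant face $d_a(w)$ has height exactly $a-1$, so that $\mu(d_a(w))=f_a(w)$ (the paper gets this by citing the case analysis in Proposition \ref{MatchingofFactorableMonoid}), and both evaluate the sign via $\langle d\mu(z),z\rangle=(-1)^{h+1}$ with $h=i_k-1$, yielding $(-1)^{j+r}$. The only cosmetic difference is that you rederive the key identification and the reducedness of the tuples directly, where the paper appeals to its earlier case distinction.
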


\begin{pf}
We apply Theorem \ref{MorseKomplexExplizit}. There, we have to build a sum over all chains $z_r \vdash \ldots \vdash z_1$ of redundant cells in $\Omega_{n-1}$ so that $\Mf{d( [m_n|\ldots | m_1]), z_r }\neq 0$. Thus, in our case $z_r$ is some $d_{i_1}([m_n|\ldots | m_1])$ which is required to be redundant.

We make a general analysis of cases where such a cell is redundant. Consider any cell $[x_s| \ldots | x_1]$ with all $x_i \in \E$. This cell has to be either essential or collapsible. Moreover, if $d_j([x_s| \ldots | x_1])$ is redundant, this implies that $0<j<n$, so we have 
\begin{eqnarray*}
d_j([x_s| \ldots | x_1])=[x_s| \ldots | x_{j+1} x_j| \ldots | x_1].
\end{eqnarray*}
For this to be redundant, it is necessary that $x_{j+1}x_j$ has exactly norm $2$. Thus, its collapsible partner is given by
\begin{eqnarray*}
\mu(d_j([x_s| \ldots | x_1]))=[x_s| \ldots | \overline{\eta}(x_{j+1} x_j) | \eta'(x_{j+1}x_j)| \ldots | x_1].
\end{eqnarray*}
In particular, all the entries of this cell are elements of $\E$. We conclude that in the chain $z_r \vdash \ldots \vdash z_1$, the sum of norms of entries remains constantly $n$. 

Thus, we are in the same situation as in the proof of Proposition \ref{MatchingofFactorableMonoid}, where the relation $z_k \vdash z_{k-1}$ was analyzed. There, we have seen that 
\begin{eqnarray*}
\mu(z_{k-1})=f_{i_{r-k+2}}(\mu(z_k))
\end{eqnarray*}
for some $1\leq i_{r-k+2} \leq i_{r-k+1}+1$. Observe for further use that we have seen that the sequence $(i_{r}, \ldots, i_1)$ is reduced, right-most and small. It is also unique for given $z_r \vdash \ldots \vdash z_1$ as $i_k=\hgt(z_{r-k+1})+1$. Moreover, by the same argument as there, we have
\begin{eqnarray*}
\mu(z_r)=\mu(d_{i_1}([m_n|\ldots | m_1]))=f_{i_1}([m_n|\ldots | m_1]).
\end{eqnarray*}
So the sequences $(i_{r}, \ldots, i_1)$  such that all $d_{i_k}f_{i_{k-1}}\ldots  f_{i_1}([m_n|\ldots | m_1])$ are redundant are in one-to-one correspondence with chains $z_r \vdash \ldots \vdash z_1$ of redundant cells in $\Omega_{n-1}$ so that $\Mf{d( [m_n|\ldots | m_1]), z_r }\neq 0$. 

Last, for the summand $d_jf_{i_r}\ldots f_{i_1}([m_n|\ldots | m_1])$ to occur in the sum defining the differential according to Theorem \ref{MorseKomplexExplizit}, it is in addition necessary that the cell 
\begin{eqnarray*}
d_jf_{i_r}\ldots f_{i_1}([m_n|\ldots | m_1])
\end{eqnarray*}
 is essential, as stated above. So the sum in the statement of the theorem contains exactly the same summands as the one in Theorem \ref{MorseKomplexExplizit}. We still have to check the coefficients in the sum. 

Recall that for the matching $\mu$ defined in the last section, we have 
\begin{eqnarray*}
 \Mf{d\mu(\underline{x}), \underline{x}}=(-1)^{h+1}
\end{eqnarray*}
 for any redundant cell $\underline{x}$ of height $h$. Moreover, from the case distinction of the proof of Proposition \ref{MatchingofFactorableMonoid}, we conclude that each 
\begin{eqnarray*}
 d_{i_k}f_{i_{k-1}}\ldots  f_{i_1}([m_n|\ldots | m_1]) 
\end{eqnarray*}
has height $i_k-1$. So the coefficient of $(j, i_r,\ldots, i_1)$-summand simplifies to 
\begin{eqnarray*}
 &&(-1)^{r}\frac{\Mf{d x, z_r} \Mf{d \mu(z_r), z_{r-1}} \Mf{d \mu(z_{r-1}), z_{r-2}}\ldots \Mf{d \mu(z_1), y}}{\Mf{d \mu(z_r), z_{r}}\Mf{d \mu(z_{r-1}), z_{r-1}}\ldots \Mf{d\mu(z_1), z_{1}}} \\
& =& (-1)^{r}\frac{\Mf{d x, z_r} \Mf{d \mu(z_r), z_{r-1}} \Mf{d \mu(z_{r-1}), z_{r-2}}\ldots \Mf{d \mu(z_1), y}}{(-1)^{i_r+\ldots +i_1}},
\end{eqnarray*}
where we write $x=[m_n|\ldots | m_1]$ and $y=d_jf_{i_r}\ldots f_{i_1}([m_n|\ldots | m_1])$.

Next, we note that $z_r=d_{i_1}x$ implies $\Mf{d x, z_r}=(-1)^{i_1}$. Similarly, 
\begin{eqnarray*}
\Mf{d \mu(z_k), z_{k-1}}= (-1)^{i_{r-k+2}}
\end{eqnarray*}
and
\begin{eqnarray*}
\Mf{d \mu(z_1), y}= (-1)^j.
\end{eqnarray*}
Thus, the desired coefficient is given by $(-1)^{j+r}$. This completes the proof of this proposition. 
\end{pf}

Note that in the proposition above, the indexing set of the sum depends on the cell we apply the differential to. Next, we want to exhibit, as far as possible, a common indexing set for all essential cells. This is going to be the set of all reduced, right-most, small sequences $(i_l, \ldots, i_1)$. 

Let us denote the set of all reduced, right-most, small sequences in $F_{n-1}$ by $\Lambda_{n-1}$. Recall that all tuples occurring in the sum above already lie in $\Lambda_{n-1}$. We want to say that a tuple $( i_l, \ldots, i_1)$ is $\underline{x}$-\textbf{coherent} if all $d_{i_k}f_{i_{k-1}}\ldots  f_{i_1}(\underline{x})$ are redundant, and denote by $F_{\underline{x}}$ the set of all $\underline{x}$-coherent sequences. It follows from the proof of the theorem above that if $\underline{x}$ has only entries in $\mathcal{E}$, then $F_{\underline{x}}\subset \Lambda_{n-1}$. The following lemma characterizing the complement of this inclusion will be crucial.

	\begin{Lemma}\label{not coherent stable}
		Consider $\underline{x}=[m_n|\ldots |m_1] \in \Omega_n$ with all $m_i$ lying in $\E$, and let $(i_s, \ldots, i_1)$ be right-most, reduced and small, but not $\underline{x}$-coherent. Then $f_{i_s}  \ldots  f_{i_1}(\underline{x}) = 0$ or there exists $1\leq t \leq s$ such that $f_{i_t} \ldots f_{i_1}(\underline{x})$ is stable at position $i_{t}-1$.
	\end{Lemma}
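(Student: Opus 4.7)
I would define $\underline{y}^{(j)} := f_{i_j}\cdots f_{i_1}(\underline{x})$ and let $t$ be the smallest index such that $d_{i_t}\underline{y}^{(t-1)}$ fails to be redundant. Such a $t$ exists by the non-coherence hypothesis. The failing cell must then be of one of three types: zero, essential, or collapsible. I would first dispose of the first two cases together: in either case the product $y^{(t-1)}_{i_t+1} y^{(t-1)}_{i_t}$ lies in $\E \cup \{1\}$, so Remark \ref{EtaErzeuger} inserts a $1$-entry into $\underline{y}^{(t)}$, forcing it to vanish in the reduced bar complex; this propagates through all subsequent $f_{i_j}$, giving $f_{i_s}\cdots f_{i_1}(\underline{x}) = 0$.

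Now suppose $d_{i_t}\underline{y}^{(t-1)}$ is collapsible of some height $h'$. I would split into three subcases based on the location of the stable pair. If $h' \geq i_t$, the essential right subcell extending up to position $h'$ forces the product $y^{(t-1)}_{i_t+1} y^{(t-1)}_{i_t}$ to lie in $\E$, and the previous argument again yields vanishing. If $h' = i_t - 1$, the stable pair is exactly $(y^{(t-1)}_{i_t+1} y^{(t-1)}_{i_t},\, y^{(t-1)}_{i_t-1})$; invoking the Recognition Principle (Definition \ref{RecognitionPrinciple}, applicable by Theorem \ref{CharacterizationRecognitionPrinciple}) converts this into stability of $((y^{(t-1)}_{i_t+1} y^{(t-1)}_{i_t})',\, y^{(t-1)}_{i_t-1}) = (y^{(t)}_{i_t},\, y^{(t)}_{i_t-1})$, which is precisely stability of $\underline{y}^{(t)}$ at position $i_t - 1$, as required.

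The remaining subcase $h' < i_t - 1$ is the main obstacle, and I plan to exclude it entirely. To do so, I would prove by induction on $j$ the auxiliary claim that, whenever $\underline{y}^{(j)} \neq 0$ and the first $j$ applications are coherent, we have $h_j = i_j - 1$ and the right subcell $[y^{(j)}_{i_j}|\ldots|y^{(j)}_1]$ of $\underline{y}^{(j)}$ is essential. The key inputs are: only the product entry can be the non-$\E$ entry of $d_{i_j}\underline{y}^{(j-1)}$ (forcing $h_j = i_j - 1$); the Recognition Principle transfers unstability at height $h_j$ of $d_{i_j}\underline{y}^{(j-1)}$ into unstability of the pair at position $i_j - 1$ of $\underline{y}^{(j)}$; and the right-most inequality $i_j \leq i_{j-1} + 1$ guarantees that the previously established essential subcell of length $i_{j-1}$ in $\underline{y}^{(j-1)}$ already covers positions $1,\ldots,i_j - 1$. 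Granted this claim at $j = t - 1$, since $i_t - 1 \leq i_{t-1}$, a hypothetical stable pair at position $h' < i_t - 1$ in $d_{i_t}\underline{y}^{(t-1)}$ sits in the range unchanged by $d_{i_t}$ and therefore is a stable pair at position $h'$ of $\underline{y}^{(t-1)}$; this contradicts the essentiality of the length-$i_{t-1}$ right subcell. So this subcase cannot arise, completing the argument.
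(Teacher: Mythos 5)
Your proposal is correct to the same extent as the paper's own proof and follows essentially the same route — first failure index $t$, the norm-dropping cases giving $0$, the Recognition Principle to transfer the stable pair of the collapsible cell $d_{i_t}(\underline{y}^{(t-1)})$ to $\underline{y}^{(t)}$ at position $i_t-1$, and the right-most inequality $i_t\leq i_{t-1}+1$ to exclude a stable pair below position $i_t-1$ — the only organizational difference being that you re-derive the height bookkeeping along the coherent chain by an explicit induction, where the paper simply invokes the case distinction from the proof of Proposition \ref{MatchingofFactorableMonoid}. Note that for $t=1$ your exclusion of $h'<i_t-1$ (like the paper's appeal to ``the height of $\underline{y}$'') implicitly needs the right subcell of $\underline{x}$ below position $i_1$ to be essential, which does not follow from ``all $m_i\in\E$'' alone but holds in the only application, where $\underline{x}$ is essential.
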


	\begin{proof}
		Let $t$ be the uniquely determined index such that $(i_{t-1}, \ldots, i_1)$ is $\underline{x}$-coherent and $(i_t, \ldots, i_1)$ is not. Set $\underline{y} := f_{i_{t-1}}  \ldots f_{i_1}(\underline{x})$. If $d_{i_t}$ is not norm-preserving for $\underline{y}$, then applying $\eta_{i_t}$ would produce a trivial entry and thus $f_{i_t}(\underline{y}) = 0$. In this case we are finished.
		
		 So assume that $d_{i_t}$ is norm-preserving for $\underline{y}$. We know by assumption that $d_{i_t}(\underline{y})$ is not redundant. We will now show that $f_{i_t}(\underline{y})$ is stable at position $i_t-1$.
		
		We use the case distinction of the proof of Proposition \ref{MatchingofFactorableMonoid}. Note that $1\leq i_t \leq i_{t-1}+1$ since $(i_s, \ldots, i_1)$ is right-most sequence in $F_{n-1}$. This excludes, since the height of $\underline{y}$ is $i_{t-1}-1$, the cases 1, 5 and 6. Thus, we can conclude that height of $d_{i_t}(\underline{y})$ is at least $i_t-1$.

		The situation is as follows: $d_{i_t}(\underline{y})$ has height  at least $i_t-1$ and its $i_t$-th entry has norm $2$. So $d_{i_t}(\underline{y})$ has height exactly $i_t-1$. Furthermore, $d_{i_t}(\underline{y})$ is not redundant. It follows that $d_{i_t}(\underline{y})$ is stable at position $i_t-1$. Let the entries at positions $i_t$ and $i_t-1$ of $d_{i_t}(\underline{y})$ be denoted by $a$ and $b$, respectively. Then $f_{i_t}(\underline{y})$ has the entries $(a',b)$ in the same places. Due to the Recognition Principle, this is again a stable pair. Thus, $f_{i_t} \ldots f_{i_1}(\underline{x})$ is stable at position $i_{t}-1$.
	\end{proof}
	
	The following lemmas give partial answers to what extend insertion and deletion of entries preserve smallness of finite sequences. Recall that any  connected subsequence of a small sequence is again small. In particular, a small sequence stays small when deleting its first or last entry. We investigate what happens if we delete an ``inner'' entry.
	
	\begin{Lemma}\label{LEM:wedge condition delete entry}
		Let $(i_s, \ldots, i_1)$ be a right-most, reduced and small sequence. For every $1\leq k\leq s$, the following holds: If $i_k < i_{k-1}$, then $(i_s, \ldots, \widehat{i_k}, \ldots, i_1)$ is right-most, reduced and small. 
	\end{Lemma}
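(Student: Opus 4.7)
The plan is to exploit the block decomposition of right-most reduced small sequences provided by Corollary \ref{SmallnessCriterionJ}. Write
\[
(i_s,\ldots,i_1) = J_{a_l}^{b_l}\, J_{a_{l-1}}^{b_{l-1}}\cdots J_{a_1}^{b_1}
\]
with $a_r < b_{r-1}$ for $2\leq r\leq l$ and strict decrease $b_{r+1}<b_r$ for $1\leq r\leq l-1$. Since entries strictly decrease reading left to right within any single $J$-block, the hypothesis $i_k < i_{k-1}$ forces $i_k$ and $i_{k-1}$ to lie in two different consecutive blocks: $i_k = a_r$ is the rightmost entry of some block $J_{a_r}^{b_r}$ with $r\geq 2$, and $i_{k-1}=b_{r-1}$ is the leftmost entry of the adjacent block $J_{a_{r-1}}^{b_{r-1}}$. (The boundary case $k=1$ reduces to deleting the last entry, which leaves a connected subsequence to which Corollary \ref{SmallnessCriterionJ} applies directly.)

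I then split into two cases according to the size of block $r$. If $a_r<b_r$, deletion shrinks it to $J_{a_r+1}^{b_r}$, yielding
\[
J_{a_l}^{b_l}\cdots J_{a_{r+1}}^{b_{r+1}}\, J_{a_r+1}^{b_r}\, J_{a_{r-1}}^{b_{r-1}}\cdots J_{a_1}^{b_1}.
\]
If $a_r=b_r$, the entire block $r$ disappears and we obtain
\[
J_{a_l}^{b_l}\cdots J_{a_{r+1}}^{b_{r+1}}\, J_{a_{r-1}}^{b_{r-1}}\cdots J_{a_1}^{b_1}.
\]
In both cases the resulting sequence is still right-most by construction, and it remains a concatenation of $J$-blocks, so by Corollary \ref{SmallnessCriterionJ} it suffices to verify the connectivity conditions $a'_{r'+1}<b'_{r'-1}$ and the smallness conditions $b'_{r'+1}<b'_{r'}$ for the new block data.

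These verifications are mostly bookkeeping: the adjacency condition $a_{r+1}<b_r$ and the smallness condition $b_{r+1}<b_r$ involving untouched blocks carry over unchanged. The genuinely new inequalities to check occur at the location of the edit. In Case~1 the new end-of-block entry is $a_r+1$, and I need $a_r+1<b_{r-1}$; this is where the smallness of the original sequence is essential, because $a_r<b_r$ together with the smallness-given $b_r<b_{r-1}$ yield $a_r+1\leq b_r<b_{r-1}$, which simultaneously guarantees the new block is nondegenerate and prevents a reducibility violation (two consecutive equal entries). In Case~2 the new adjacency and smallness inequalities $a_{r+1}<b_{r-1}$ and $b_{r+1}<b_{r-1}$ follow by chaining the old ones through the deleted block $r$. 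Finally I check the extreme cases $r=l$ (no left neighbour) and $r=2$ (right neighbour is the final block), where the nonexistent conditions simply drop out, so nothing further is needed.

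The main obstacle, though it turns out to be easily resolved, is avoiding the creation of consecutive equal entries in Case~1; this is precisely the point at which I crucially use the strict decrease $b_r<b_{r-1}$ coming from smallness of the original sequence. Everything else reduces to reading off the block structure and invoking Corollary \ref{SmallnessCriterionJ}.
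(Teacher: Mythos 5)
Your proof is correct and follows essentially the same route as the paper: the paper's own argument uses the identical block decomposition into $J_{a_r}^{b_r}$'s, identifies the two cases $a_r<b_r$ and $a_r=b_r$, and concludes by repeated use of the third part of Lemma \ref{CriterionSmallness}, which is exactly what Corollary \ref{SmallnessCriterionJ} packages for you. You merely spell out the boundary inequalities (in particular $a_r+1\leq b_r<b_{r-1}$, where smallness is genuinely needed) that the paper leaves implicit.
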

	
	\begin{proof}
		Assume that $i_k < i_{k-1}$. We will use the third part of  Lemma \ref{CriterionSmallness} repeatedly. Recall that we can write any right-most reduced sequence in a form $J_{\sms{a_l}}^{\sms{b_l}} J_{\sms{a_{l-1}}}^{\sms{b_{l-1}}}\ldots J_{\sms{a_1}}^{\sms{b_1}}$  with $a_r<b_{r-1}$. Since it is small, we concluded furthermore $b_r<b_{r-1}$ for all $2\leq r\leq l$. Since $i_k<i_{k-1}$, there is $2\leq r\leq l$ with $i_k=a_r$, $i_{k-1}=b_{r-1}$. Thus, after deleting $i_k$, the sequence is either of the form 
		\begin{eqnarray*}
		J_{\sms{a_l}}^{\sms{b_l}} J_{\sms{a_{l-1}}}^{\sms{b_{l-1}}}\ldots J_{\sms{a_r+1}}^{\sms{b_r}}\ldots J_{\sms{a_1}}^{\sms{b_1}}
		\end{eqnarray*}
		or of the form
		\begin{eqnarray*}
		J_{\sms{a_l}}^{\sms{b_l}} J_{\sms{a_{l-1}}}^{\sms{b_{l-1}}}\ldots \widehat{J_{\sms{a_r}}^{\sms{b_r}}}\ldots J_{\sms{a_1}}^{\sms{b_1}},
		\end{eqnarray*}
		depending on whether $a_r=b_r$ holds. In both cases, repeated application of the third part of Lemma \ref{CriterionSmallness} yields the claim. 
		 
	\end{proof}
	
	\begin{Lemma}\label{LEM:deleting entries small}
		Let $(i_s, \ldots, i_1)$ be right-most, reduced and small. Let $j$ be such that $(i_s, \ldots, i_k, j, i_{k-1}, \ldots, i_1)$ is right-most and reduced but not small. Then one of the following holds
		\begin{itemize}
			\item	$j > i_{k-1}$ or
			\item	$i_k < j$ and there exists some $r' > k$ such that $i_{r'} = j$.
		\end{itemize}
	\end{Lemma}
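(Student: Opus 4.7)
My approach rests on the block decomposition of right-most reduced sequences from Corollary \ref{SmallnessCriterionJ}. I would write $(i_s, \ldots, i_1) = J^{b_l}_{a_l} J^{b_{l-1}}_{a_{l-1}} \cdots J^{b_1}_{a_1}$ with $a_r < b_{r-1}$; by that corollary, smallness of the original sequence is equivalent to $b_l < b_{l-1} < \cdots < b_1$. The requirements that the augmented sequence be right-most and reduced force $i_k - 1 \leq j \leq i_{k-1} + 1$ together with $j \neq i_k$ and $j \neq i_{k-1}$.

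Next I would locate $i_k$ within the block structure. If $i_k$ is not the right-most entry of its block, then $i_{k-1}$ is the next entry of that descending block and equals $i_k - 1$; the constraints above then force $j \in \{i_k - 1, i_k\} = \{i_{k-1}, i_k\}$, both of which are excluded by reducedness. So no valid $j$ exists and the statement is vacuous in this case. Hence we may assume $i_k = a_r$ and $i_{k-1} = b_{r-1}$ for some $r \geq 2$.

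I would then split on the three remaining possible values of $j$. In the case $j = a_r - 1$, the inserted entry extends block $r$ to $J^{b_r}_{a_r - 1}$, leaving every top $b_*$ unchanged, so by Corollary \ref{SmallnessCriterionJ} the augmented sequence is still small, contradicting the hypothesis. In the case $j = b_{r-1} + 1$, we directly obtain $j > i_{k-1}$, which is the first conclusion. In the remaining case $a_r < j < b_{r-1}$, the entry $j$ forms a singleton block $J^j_j$ wedged between blocks $r$ and $r-1$, so the new list of tops reads $\ldots, b_r, j, b_{r-1}, \ldots$; Corollary \ref{SmallnessCriterionJ} then shows that the augmented sequence fails to be small precisely when $b_r \geq j$. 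Combined with $j > a_r$, this places $j$ inside the original block $J^{b_r}_{a_r}$, namely at position $r' = k + (j - a_r) > k$. Hence $i_k = a_r < j$ and $i_{r'} = j$, delivering the second conclusion.

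The main effort lies in the bookkeeping within the block decomposition; the only slightly delicate point is that some boundary sub-cases (e.g.\ $r = 2$, or a block consisting of a single element) actually fail to produce a non-small sequence at all, so they are automatically absorbed into the hypothesis ``not small'' without further argument. No new ideas beyond Corollary \ref{SmallnessCriterionJ} are required.
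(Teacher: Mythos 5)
Your proposal is correct and takes essentially the same route as the paper's own proof: both pass to the block decomposition $J_{a_l}^{b_l}\cdots J_{a_1}^{b_1}$ of the right-most reduced sequence and apply Corollary \ref{SmallnessCriterionJ}, observing that the inserted $j$ either prolongs the block ending at $i_k$ (which keeps the tops unchanged and hence keeps the sequence small, so this case is excluded), or equals $i_{k-1}+1$ (first alternative), or forms a singleton block whose top must break the increasing-tops criterion, forcing $a_r<j\leq b_r$ and hence an occurrence of $j$ in the block of $i_k$, i.e.\ at some position $r'>k$ (second alternative). The only difference is cosmetic: you spell out that an insertion strictly inside a descending block is impossible for a right-most reduced sequence, which the paper dispatches with a one-line remark.
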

	
	\begin{proof}
		Assume that $j \leq i_{k-1}$, i.e.,\ $j < i_{k-1}$ since the sequence is reduced. We are going to verify the second condition. Write again the sequence $(i_s, \ldots, i_1)$ in the form $J_{\sms{a_l}}^{\sms{b_l}} J_{\sms{a_{l-1}}}^{\sms{b_{l-1}}}\ldots J_{\sms{a_1}}^{\sms{b_1}}$ with $a_r<b_{r-1}$ and $b_r<b_{r-1}$ for all $2\leq r\leq l$. Since the sequence is still right-most and reduced after inserting $j$, it has again a similar presentation. Note that $j$ cannot be inserted inside some connected sequence $J_a^b$. Thus, there is some $1\leq r \leq l$ so that 
		\begin{eqnarray*}
		(i_s, \ldots, i_k)= J_{\sms{a_l}}^{\sms{b_l}}\ldots  J_{\sms{a_{r+1}}}^{\sms{b_{r+1}}}, \\
		(i_{k-1}, \ldots, i_1)=J_{\sms{a_r}}^{\sms{b_r}}\ldots  J_{\sms{a_{1}}}^{\sms{b_{1}}}.
		\end{eqnarray*}
		Note that if $j\leq a_{r+1}=i_k$, then by definition of right-most reduced sequences, we have $j=a_{r+1}-1<b_r$, so the decomposition of the new sequence is given by 
			\begin{eqnarray*}
 J_{\sms{a_l}}^{\sms{b_l}}\ldots  J_{\sms{a_{r+1}-1}}^{\sms{b_{r+1}}} J_{\sms{a_r}}^{\sms{b_r}}\ldots  J_{\sms{a_{1}}}^{\sms{b_{1}}}.
		\end{eqnarray*}
		Since  $b_s<b_{s-1}$ for all $2\leq s\leq l$, this sequence is small by Corollary \ref{SmallnessCriterionJ}, contradicting our assumptions on $j$. 
		
		Hence, we may conclude that $j>a_{r+1}=i_k$. Recall that by assumption, we have $j<i_{k-1}=b_r$. Thus, the decomposition of the new sequence is given by 
		\begin{eqnarray*}
 J_{\sms{a_l}}^{\sms{b_l}}\ldots  J_{\sms{a_{r+1}}}^{\sms{b_{r+1}}}  J_{\sms{j}}^{\sms{j}}  J_{\sms{a_r}}^{\sms{b_r}}\ldots  J_{\sms{a_{1}}}^{\sms{b_{1}}}.
		\end{eqnarray*} 
		Again by Corollary \ref{SmallnessCriterionJ}, there must exist an $r+1\leq t\leq l$ so that $j\leq b_t$, since the new sequence is not small. Since $b_{r+1}>b_t$, we have $a_{r+1}<j \leq b_{r+1}$, so $j$ appears in $ J_{\sms{a_{r+1}}}^{\sms{b_{r+1}}}$. This completes the proof of the lemma. 
	\end{proof}
	
	\begin{Prop}\label{PRP:wedge condition not coherent 0}
		Let $\underline{x} \in \Omega_n$ be essential. Then
		\begin{align*}
			\sum_{I \in \Lambda_{n-1} \sm F_{\underline{x}}} (-1)^{\#I} \cdot f_I(\underline{x}) = 0.
		\end{align*}
	\end{Prop}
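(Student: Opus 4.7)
The plan is to produce a sign-reversing involution $\sigma$ on $\Lambda_{n-1}\sm F_{\underline{x}}$ so that the sum cancels in pairs. On two-element orbits of $\sigma$ I shall have $\#\sigma(I)=\#I\pm 1$ and $f_{\sigma(I)}(\underline{x})=f_I(\underline{x})$, producing immediate cancellation; fixed points will be those $I$ with $f_I(\underline{x})=0$, which contribute nothing.

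For each $I\in\Lambda_{n-1}\sm F_{\underline{x}}$ with $f_I(\underline{x})\neq 0$, Lemma \ref{not coherent stable} supplies the minimal index $t=t(I)$ at which $f_{i_t}\ldots f_{i_1}(\underline{x})$ is stable at position $i_t-1$; write $e:=i_t-1$. Two invariance facts drive the construction. First, $f_e$ acts as the identity on any cell stable at position $e$. Second, any $f_j$ with $j\notin\{e-1,e,e+1\}$ acts only on positions disjoint from $\{i_t-1,i_t\}$ and hence preserves the stability at position $e$. Together these allow us to insert or delete a copy of $f_e$ at a suitable location in the reduction word without altering $f_I(\underline{x})$.

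The involution is defined at the level of the sequence $I$ by insertion or deletion of an entry equal to $e$ at a canonical position just after position $t$. If $i_{t+1}=e$, then $\sigma(I)$ deletes it; Lemma \ref{LEM:wedge condition delete entry} guarantees the shortened sequence stays in $\Lambda_{n-1}$, and clearly the minimal incoherence index and value of $f_I(\underline{x})$ are preserved. Otherwise $\sigma(I)$ inserts $e$ at the unique legal slot past position $t$: in the generic case this is immediately between $i_t$ and $i_{t+1}$, but when $i_{t+1}=i_t+1$ the insertion must be pushed further left, past the entire ascending run containing $i_t$, so that right-mostness is preserved; the second invariance fact above ensures that pushing through this run does not disturb the stability at $e$. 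Using the decomposition $I=J_{a_l}^{b_l}\cdots J_{a_1}^{b_1}$ from Corollary \ref{SmallnessCriterionJ} together with Lemma \ref{LEM:deleting entries small} (whose contrapositive controls exactly when inserting an entry preserves smallness), one identifies this canonical slot, verifies the result lies in $\Lambda_{n-1}$, and checks that $t(\sigma(I))=t(I)$. It then follows by inspection that $\sigma^2=\mathrm{id}$.

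The main obstacle is the case $i_{t+1}=i_t+1$, where the naive insertion between positions $t$ and $t+1$ breaks right-mostness, so the canonical slot must be located more carefully. The combinatorics of the $J_a^b$ decomposition and the two lemmas just cited are tailor-made for this, but three consistency checks must be worked out simultaneously: (i) the inserted sequence is right-most, reduced, and small; (ii) the minimal incoherence index $t$ remains unchanged; and (iii) insertion and deletion are mutually inverse with matched canonical slots. Once $\sigma$ is established, the equality $(-1)^{\#I}f_I(\underline{x})+(-1)^{\#\sigma(I)}f_{\sigma(I)}(\underline{x})=0$ on each two-element orbit gives the claim.
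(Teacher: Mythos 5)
Your overall strategy is the same as the paper's: a sign-reversing involution on $\Lambda_{n-1}\sm F_{\underline{x}}$ obtained by inserting or deleting an entry $i_t-1$ at an index $t$ where $f_{i_t}\ldots f_{i_1}(\underline{x})$ is stable at position $i_t-1$, with fixed points exactly the sequences killed by $f_I$. But as described, your map is not defined on all of $\Lambda_{n-1}\sm F_{\underline{x}}$: you commit to the \emph{minimal} stable index $t$ and tacitly assume that either the entry $e=i_t-1$ sits deletably next to position $t$, or that $e$ can be inserted (possibly pushed left through an ascending run) with the result still lying in $\Lambda_{n-1}$. Both can fail at once. Take the paper's own example: $\underline{x}$ essential with $(2,1)$ coherent and $d_3f_2f_1(\underline{x})$ not redundant, and $I=(2,1,3,2,1)$. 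Here the minimal $t$ is $3$, $e=2$; no deletion is possible (the $2$ in the prefix $(2,1)$ has the non-commuting $1$ to its right), and the only right-most reduced insertion candidate is $(2,1,2,3,2,1)$, which decomposes as $J_1^2\,J_2^2\,J_1^3$ and hence is \emph{not} small by Corollary \ref{SmallnessCriterionJ}; since smallness depends only on the $P$-class, no insertion slot works. Yet $f_I(\underline{x})$ need not vanish, so your pairing breaks down exactly there. The paper's proof devotes its Case 3 (Claims 1--3) to this situation: using the $J$-block structure, Lemma \ref{LEM:deleting entries small}, the $Q$-relation $(k,k+1,k,k+1)\sim_Q(k+1,k,k+1)$ and the Evaluation Lemma, it produces a \emph{larger} index $t'>t$ at which $f_{i_{t'}}\ldots f_{i_1}(\underline{x})$ is again stable at position $i_{t'}-1$, and restarts the insertion/deletion procedure there (in the example this yields $\xi(2,1,3,2,1)=(1,2,1,3,2,1)$). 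This step is the technical heart of the argument and is not recoverable from the contrapositive of Lemma \ref{LEM:deleting entries small} alone; without it the involution, and hence the cancellation, is incomplete. Note also that because of this restart the invariant "$t(\sigma(I))=t(I)$ with $t$ minimal" is not the right bookkeeping device.

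A smaller point: your deletion rule fires only when $i_{t+1}=e$, but the partner of an insertion in which $e$ was pushed left past an ascending run has $e$ several slots to the left of position $t$, separated from it only by entries commuting with $e$. The paper's Case 1 accordingly deletes the first occurrence of $e$ to the left of position $t$ whenever all intervening entries commute with it; with your narrower criterion $\sigma^2=\id$ fails on such sequences. That defect is easy to repair, but the missing analogue of Case 3 is not.
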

	Note that the sum makes sense since $F_{\underline{x}} \subseteq \Lambda_{n-1}$ and $\Lambda_{n-1}$ is finite.
	
	\begin{proof}
		The proof is quite long and we therefore give a rough outline first. The idea is as follows. We define an action of $\Z/2$ on $ \Lambda_{n-1} \setminus F_{\underline{x}}$ such that ``the sum over each orbit is zero''. More precisely, we define for each essential $\underline{x}$ a map $\xi\colon \Lambda_{n-1} \setminus F_{\underline{x}} \to \Lambda_{n-1} \setminus F_{\underline{x}}$ with the following properties:
		\begin{enumerate}
			\item	$\xi^2 = \id$.
		
			\item	$f_{\xi(I)}(\underline{x}) = f_I(\underline{x})$.
		
			\item	If $I$ is a fixed point of $\xi$, then $f_I(\underline{x}) = 0$.
		
			\item\label{item:absolute value lengths 1}	If $I$ is not a fixed point of $\xi$, then the lengths of $I$ and $\xi(I)$ differ by one. Together with the second point, this gives $(-1)^{\#I}f_I(\underline{x}) + (-1)^{\#\xi(I)}f_{\xi(I)}(\underline{x}) = 0$.
		\end{enumerate}
		We now construct this map $\xi$. Consider $(i_s, \ldots, i_1) \in \Lambda_{n-1} \setminus F_{\underline{x}}$. If $f_{i_s} \ldots f_{i_1}(\underline{x}) = 0$, then $\xi$ will fix this sequence. 

	For other sequences $(i_s, \ldots, i_1) \in \Lambda_{n-1} \setminus F_{\underline{x}}$, we know by Lemma \ref{not coherent stable} that there is a $t$ so that the cell $f_{i_t} \ldots  f_{i_1}(\underline{x})$ is stable at position $i_t-1$. We will show that, whenever such a $t$ is given, one of the following three cases occurs:
\renewcommand{\labelenumi}{Case \arabic{enumi}:}	
	\begin{enumerate}
	\item  The sequence $(i_s, \ldots, i_{t+1})$ is $P$-equivalent to a sequence with $i_t-1$ on the very right; i.e., there is an entry $i_t-1$ in $(i_s, \ldots, i_{t+1})$ and on the right from it, there are only entries commuting with this one. In this case, we can define $\xi$ be deleting the entry $i_t-1$. 
	\item The sequence $(i_s, \ldots, i_{t+1})$ is not $P$-equivalent to a string with $i_t-1$ on the very right, but$(i_s, \ldots, i_{t+1}) . (i_t\!-\!1) . (i_t, \ldots, i_1)$ is small. Then we will be able to define $\xi$ via inserting $i_t-1$. 
	\item If none of the first two cases occurs, then there is a $t'>t$ so that the cell $f_{i_{t'}} \ldots  f_{i_1}(\underline{x})$ is stable at position $i_t'-1$.
	\end{enumerate}
	
	This makes clear how to define $\xi$ in general: We start with a minimal $t$ satisfying the condition above, and we are either done if we are in Case 1 or 2, or we continue with the minimal $t'$ as in Case 3. Note that this procedure has to terminate since the sequence we started with is finite. We now fill in the details of the case distinction above. 
	
		\renewcommand{\labelenumi}{Case \arabic{enumi}:}
		\begin{enumerate}
			\item	Assume $(i_s, \ldots, i_{t+1})$ is $P$-equivalent to a string with $i_t-1$ on the very right; i.e., there is an entry $i_t-1$ in $(i_s, \ldots, i_{t+1})$ and on the right from it, there are only entries commuting with this one. 
			
				The idea is to define $\xi(i_s, \ldots, i_1)$ to be the finite sequence that arises from ``deleting'' this $(i_t-1)$-entry. Let $k \geq t+1$ be minimal subject to $i_k = i_t-1$
				and set $\xi(i_s, \ldots, i_1) := (i_s, \ldots, \widehat{i_k}, \ldots, i_1)$. We now check that $\xi(i_s, \ldots, i_1)$ lies in $\Lambda_{n-1} \setminus F_{\underline{x}}$.
				
				By assumption, we have
				\begin{eqnarray*}
				(i_s, \ldots, i_{t+1}) \sim_P (i_s, \ldots, i_{k+1}) . (i_{k-1}, \ldots, i_{t+1}) . (i_k).
				\end{eqnarray*}
				
				We claim that $i_k < i_{k-1}$. If $k = t+1$, this is obvious. If $k > t+1$, we know by assumption that $|i_r - i_k| \geq 2$ for every $t\leq r \leq k$. Since $(i_s, \ldots, i_1)$ is right-most, we must have $i_k < i_{k-1}$.
				
				Lemma \ref{LEM:wedge condition delete entry} tells us that $(i_s, \ldots, \widehat{i_k}, \ldots, i_1)$ is indeed right-most, reduced and small. It is obviously not $\underline{x}$-coherent, because $(i_t, \ldots, i_1)$ is not. Note that by definition $\#\xi(I) = \#I - 1$. Observe that in this case, $f_{\xi(I)}(\underline{x}) = f_I(\underline{x})$ holds. Indeed, this follows since $f_{i_t}  \ldots  f_{i_1}(\underline{x})$ is stable at position $i_t-1$ via the Evaluation Lemma using the identity  $(i_s, \ldots, i_{t+1}) \sim_P (i_s, \ldots, i_{k+1}) . (i_{k-1}, \ldots, i_{t+1}) . (i_k)$.
				
			\item  Assume $(i_s, \ldots, i_{t+1})$ is not $P$-equivalent to a string with $i_t-1$ on the very right and assume that $(i_s, \ldots, i_{t+1}) . (i_t\!-\!1) . (i_t, \ldots, i_1)$ is small.
			
				The idea is to define $\xi(i_s, \ldots, i_1)$ by ``inserting'' the entry $i_t-1$ into $(i_s, \ldots, i_1)$. Note that $(i_s, \ldots, i_{t+1}).(i_t\!-\!1).(i_t, \ldots, i_1)$ need not be right-most. Recall however that by (the symmetric counterpart of) Proposition \ref{AllLeftMost}, there exists a unique right-most, reduced sequence $L$ which is $P$-equivalent to $(i_s, \ldots, i_{t+1}).(i_t\!-\!1).(i_t, \ldots, i_1)$.  So we define
				\begin{eqnarray*}
				\xi(i_s, \ldots, i_1) := L\; (i_t, \ldots, i_1).
				\end{eqnarray*}
It is easily checked that this sequence is right-most and reduced. It is not hard to see from the proof of Proposition \ref{AllLeftMost} that the length of $\xi(i_s, \ldots, i_1)$ is $s+1$. Furthermore,  $\xi(i_s, \ldots, i_1)$ is small since smallness depends only on $P$-equivalence class. Since $(i_t, \ldots, i_1)$ is not $\underline{x}$-coherent, $\xi(i_s, \ldots, i_1)$ is neither. Obviously, in this case $\xi$ has all the desired properties.
		\end{enumerate}
Before discussing the remaining case, let us remark that for all sequences $I$ that are covered by Cases 1 and 2, we have $\xi^2(I) = I$.
		
		\begin{enumerate}[resume]
			\item	Assume $(i_s, \ldots, i_{t+1})$ is not $P$-equivalent to a string with $i_t-1$ on the very right and assume that $(i_s, \ldots, i_{t+1}) . (i_t\!-\!1) . (i_t, \ldots, i_1)$ is not small.
			
				We are going to find an index $t' > t$ such that $f_{i_{t'}} \ldots f_{i_1}(\underline{x})$ is stable at position $i_{t'}-1$.  The existence of such an entry $t'$ will follow from Lemma \ref{LEM:deleting entries small}. To have it applicable, we need to find the right-most, reduced representative of $(i_s, \ldots, i_{t+1}) . (i_t\!-\!1) . (i_t, \ldots, i_1)$. Since $(i_s, \ldots, i_1)$ is right-most and reduced it follows that there exists some $k$ such that the right-most, reduced representative is of the form
				\begin{eqnarray*}
				 (i_s, \ldots, i_k) . (i_t\!-\!1) . (i_{k-1}, \ldots, i_1).
				\end{eqnarray*}
				
				Note that $k \geq t+1$ because $(i_t\!-\!1, i_t, \ldots, i_1)$ is right-most and reduced. Intuitively speaking, we obtain this right-most, reduced representative by successively pushing the entry $(i_t\!-\!1)$ to the left, and that we do as long as its left neighbor is $\geq i_t+1$. Observe also that the entry $(i_t\!-\!1)$ cannot be absorbed by this operations since otherwise,  $(i_s, \ldots, i_t)$ would be $P$-equivalent to a string with $i_t-1$ on the very right.
				
				$\quad\quad$\textbf{Claim 1.} $i_t-1 < i_{k-1}$.
				
				If $k-1 = t$ then the Claim 1 reads as $i_t-1 < i_t$, which is obvious. Otherwise, $i_t-1$ and $i_{k-1}$ commute, thus,\ $|(i_t-1) - i_{k-1}| \geq 2$. Since our sequence is right-most, this implies $i_t-1 < i_{k-1}$. Claim 1 is proven.
				
				We are in the following situation: The sequence $(i_s, \ldots, i_1)$ is right-most, reduced and small. The sequence $(i_s, \ldots, i_k) . (i_t\!-\!1) . (i_{k-1}, \ldots, i_1)$ is right-most and reduced but not small. Furthermore, $i_t-1 < i_{k-1}$. Lemma \ref{LEM:deleting entries small} guarantees that $i_k < i_t-1$ and that there exists an entry $t' > k$ such that $i_{t'} = i_t-1$. Amongst all possible choices of $t'$, we choose the smallest one.
				
				$\quad\quad$\textbf{Claim 2.} There is a sequence $K$ so that 
				\begin{eqnarray*}
				(i_{t'}\!-\!1)\; K \simeq_Q  (i_{t'}, \ldots, i_k, i_t\!-\!1).
				\end{eqnarray*}
				
				The connected subsequence $(i_{t'}, \ldots, i_k)$ is small, right-most and reduced  and thus by Corollary \ref{SmallnessCriterionJ},  we can write it as $J_{\sms{a_m}}^{\sms{b_m}} J_{\sms{a_{m-1}}}^{\sms{b_{m-1}}}\ldots J_{\sms{a_1}}^{\sms{b_1}}$ with $a_r<b_{r-1}$  and $b_{r}<b_{r-1}$ for all $2\leq r \leq m$. Note that if $m\geq 2$, then the inequality 
				\begin{eqnarray*}
				a_1=i_k<i_t-1=b_m=i_{t'} < b_1
				\end{eqnarray*}
				holds, so that $i_t-1$ appears in the sequence $ J_{\sms{a_1}}^{\sms{b_1}}$, contradicting the minimality of $t'$. 
				
				Thus, we conclude that $m=1$, and since the sequence \linebreak $(i_{t'}, \ldots, i_k, i_t\!-\!1)$ is reduced, $a_m$ is strictly smaller than $b_m$. So we have $i_{t'-1}=i_{t'}-1=b_m-1$, and all entries between $i_{t'-1}$ and $i_t-1$ commute with $b_m=i_t-1$. Thus, the sequence  $(i_{t'}, \ldots, i_k, i_t\!-\!1)$ is $P$-equivalent to one of the form $(b_m,  b_m-1, b_m) I$, and thus $Q$-equivalent to 
				\begin{eqnarray*}
				(b_m-1, b_m,  b_m-1, b_m) I= (i_{t'}-1)(b_m,  b_m-1, b_m) I.
				\end{eqnarray*}
				This completes the proof of the claim. 
				
				$\quad\quad$\textbf{Claim 3.} $f_{i_{t'}} \ldots f_{i_1}(\underline{x})$ is stable at position $i_{t'}-1$.
				
				Recall that $f_{i_t} \ldots f_{i_1}(\underline{x})$ is stable at position $i_t-1$, yielding
				\begin{eqnarray*}
				f_{i_{t'}} \ldots f_{i_1}(\underline{x}) &= f_{i_{t'}} \ldots f_{i_{t+1}} f_{i_{t}-1} f_{i_{t}} \ldots f_{i_1}(\underline{x}).
				\end{eqnarray*}
					
				Since $(i_{t'}, \ldots, i_{t+1}, i_t-1, i_t, \ldots, i_1)$ is $P$-equivalent to  \linebreak $ (i_{t'}, \ldots, i_k, i_t\!-\!1, i_{k-1}, \ldots, i_1)$, we obtain
				\begin{eqnarray*}
				 f_{i_{t'}} \ldots f_{i_{t+1}} f_{i_t-1} f_{i_{t}} \ldots f_{i_1}(\underline{x})=f_{i_{t'}} \ldots f_{i_{k}} f_{i_t-1} f_{i_{k-1}} \ldots f_{i_1}(\underline{x}).
				\end{eqnarray*}
				 
				Observe that $f_{i_{t'}} \ldots f_{i_1}$ is norm-preserving for $\underline{x}$: Since all entries of $\underline{x}$ are in $\E$, being not norm-preserving would imply $f_{i_{t'}} \ldots f_{i_1}(\underline{x})=0$.
				
				Thus, the $Q$-equivalence of the Claim 2 gives an (honest) equality in the Evaluation Lemma, so that  $f_{i_{t'}} \ldots f_{i_1}(\underline{x})$ is stable at position $i_{t'}-1$. Claim 3 is proven.
		\end{enumerate}
		
	As explained earlier, the completeness of this case distinction shows that the existence of the map $\xi$ with the desired properties, and this in turn yields the vanishing of the sum in question. 
	\end{proof}
	
	The following example should shed some light on the algorithm described in the proof above.
	
	\begin{example}
		Let $(M, \E, \eta)$ be a factorable monoid. Consider an essential $4$-cell $\underline{x} = [x_4 | x_3 | x_2 | x_1] \in \mathbb{B}_4 M$ with the property that the finite sequence $(2,1)$ is $\underline{x}$-coherent and
		\begin{align*}
			d_3 f_2 f_1(\underline{x}) = [ x_4 \overline{x_3 \overline{x_2 x_1}} | (x_3 \overline{x_2 x_1})' | (x_2 x_1)' ]
		\end{align*}
		is not redundant. We write $\Xi$ for the set of all finite sequences in $F_3$ that are right-most, reduced, small and that are of the form $L.(3,2,1)$ for some $L \in F_3$. Explicitly:
		\begin{align*}
			\Xi = \{ (3,2,1), (1,3,2,1),  (2,1,3,2,1), (1,2,1,3,2,1), (2,3,2,1), (1,2,3,2,1) \}.
		\end{align*}
		Note that $\Xi \subseteq \Lambda_3 \setminus F_{\underline{x}}$.
		
		Let us assume that every element in $\Xi$ is norm-preserving for $\underline{x}$, because otherwise it would be a fixed point of $\xi$ and nothing interesting happens. We will now explicitly describe the map $\xi: \Xi \to \Xi$. First of all, $f_3 f_2 f_1(\underline{x})$ is clearly stable at position $3$, and the proof of Proposition \ref{PRP:wedge condition not coherent 0} tells us that it is also stable at position $2$.
		
		\begin{itemize}
			\item	$(i_s, \ldots, i_1) = (3,2,1) = (\,) . (3,2,1)$.
			
				We will discuss this case in detail. The data is the following: $t=3$, $i_t-1=2$ and $(i_s, \ldots, i_{t+1}) = (\,)$. Clearly, we are not in Case 1. Moreover, the sequence $(i_s, \ldots, i_{t+1}) . (i_t\!-\!1) . (i_t, \ldots, i_1) = (2).(3,2,1)$ is small. Therefore we are in Case 2, yielding $\xi(3,2,1) = (2).(3,2,1) = (2,3,2,1)$.
				
			\item	Let us, for convenience, briefly discuss $(i_s, \ldots, i_1) = (2,3,2,1) = (2).(3,2,1)$.
			
				Clearly, we are in Case 1, yielding $\xi(2,3,2,1) = (\widehat{2},3,2,1) = (3,2,1)$. This shows that we indeed have $\xi^2(3,2,1) = (3,2,1)$.
				
			\item	$(i_s, \ldots, i_1) = (1,3,2,1) = (1) . (3,2,1)$.
			
				Clearly, we are not in Case 1, and $(1) . (2) . (3,2,1)$ is small. Therefore we are in Case 2, yielding 
				\begin{eqnarray*} 
				 \xi(1,3,2,1) = (1).(2).(3,2,1) = (1,2,3,2,1).
				\end{eqnarray*}
  It is also easy to see that, conversely, $\xi(1,2,3,2,1)=(1,3,2,1)$. 
				
			\item	$(i_s, \ldots, i_1) = (2,1,3,2,1) = (2,1) . (3,2,1)$.
			
				Again, we are not in Case 1, but $(2,1) . (2) . (3,2,1)$ is not small. Therefore, we are in Case 3. We therefore restart the case distinction with the sequence $(2,1,3,2,1)$ and $t=5$, $i_t-1=1$. We are again not in Case 1 and $(1,2,1,3,2,1)$ is small. Hence, $\xi(2,1,3,2,1) = (1,2,1,3,2,1)$. 
				
				\item $(i_s, \ldots, i_1) = (1,2,1,3,2,1)=(1,2,1).(3,2,1)$. 
				
				As in the last point, we are not in Case 1, and also $(1,2,1) . (2) . (3,2,1)$ is not small. Hence, we are again in Case 3. We therefore restart the case distinction with the sequence $(1,2,1,3,2,1)$ and $t=5$, $i_t-1=1$. Since $(1,2,1,3,2,1)=(1).(2,1,3,2,1)$, we are now in Case 1, and thus $\xi(1,2,1,3,2,1)=(2,1,3,2,1)$, as expected. 
		\end{itemize}

	\end{example}
	
	Combining  Propositions \ref{VisyDifferentialCoherent} and \ref{PRP:wedge condition not coherent 0}, we obtain the following theorem. We have linear maps $i_* \colon\mathbb{V}_*\to \mathbb{B}_*M$ and $\pi_* \colon \mathbb{B}_*M\to \mathbb{V}_*$, given by inclusion and projection on the essential cells. (These are not chain maps.) 
	
	\begin{Theorem}\label{THM:differential Visy resolution}
		Let $(M, \E, \eta)$ be a factorable monoid and denote by $(\mathbb{V}_*, \partial^{\mathbb{V}})$ its Morse complex associated with the matching of Section \ref{Matching for Factorable Monoids}. The differential $\partial^\mathbb{V}: \mathbb{V}_* \to \mathbb{V}_{*-1}$ can be written as follows,
		\begin{eqnarray*}
		\partial^\mathbb{V}_n = \pi_{n-1} \circ d \circ \sum_{I \in \Lambda_{n-1}} (-1)^{\# I}  f_I \circ i_n.
		\end{eqnarray*}
	\end{Theorem}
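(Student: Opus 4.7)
My plan is to assemble the statement directly from Proposition \ref{VisyDifferentialCoherent} and Proposition \ref{PRP:wedge condition not coherent 0}, with $\pi_{n-1}$ and $i_n$ serving to translate between the two complexes.

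Fix an essential cell $\underline{x} \in \Omega_n$. Then $i_n(\underline{x}) = \underline{x}$, viewed in the bar complex. First I would expand the right-hand side of the claimed formula. Since $d = \sum_{j=0}^{n} (-1)^j d_j$ and $\pi_{n-1}$ kills every non-essential basis element, I get
\begin{eqnarray*}
\pi_{n-1} \circ d \circ \sum_{I \in \Lambda_{n-1}} (-1)^{\#I}\, f_I(\underline{x}) \; = \; \sum_{I \in \Lambda_{n-1}} \sum_{j} (-1)^{j+\#I}\, \pi_{n-1}\bigl(d_j\, f_I(\underline{x})\bigr),
\end{eqnarray*}
where only those summands survive for which $d_j f_I(\underline{x})$ is essential.

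The key step is then to split the outer sum as $\Lambda_{n-1} = F_{\underline{x}} \sqcup (\Lambda_{n-1} \setminus F_{\underline{x}})$. For the ``bad'' part, Proposition \ref{PRP:wedge condition not coherent 0} gives
\begin{eqnarray*}
\sum_{I \in \Lambda_{n-1}\setminus F_{\underline{x}}} (-1)^{\#I}\, f_I(\underline{x}) = 0,
\end{eqnarray*}
so applying the linear map $\pi_{n-1} \circ d$ to this identity makes the entire contribution of $\Lambda_{n-1} \setminus F_{\underline{x}}$ vanish. For the ``good'' part, writing $I = (i_r, \ldots, i_1)$ so that $\#I = r$, the remaining sum becomes
\begin{eqnarray*}
\sum_{I \in F_{\underline{x}}} \sum_{\substack{j : d_j f_I(\underline{x}) \\ \text{essential}}} (-1)^{j+r}\, d_j f_{i_r} \cdots f_{i_1}(\underline{x}),
\end{eqnarray*}
which is exactly the formula for $\partial^{\mathbb{V}}(\underline{x})$ from Proposition \ref{VisyDifferentialCoherent}, since $F_{\underline{x}}$ by definition parametrizes the reduced tuples along which all intermediate $d_{i_k} f_{i_{k-1}} \cdots f_{i_1}(\underline{x})$ are redundant.

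The main obstacle is already encapsulated in Proposition \ref{PRP:wedge condition not coherent 0}; once that cancellation is available, the theorem is a bookkeeping reorganization of Proposition \ref{VisyDifferentialCoherent}. A small verification I would carry out is the sign convention: the exponents $j$ and $r = \#I$ appear additively in both formulas, so no sign correction is needed when identifying the two expressions. Extending the identity from a single essential $\underline{x}$ to all of $\mathbb{V}_n$ is immediate by linearity, giving the desired identity of operators $\mathbb{V}_n \to \mathbb{V}_{n-1}$.
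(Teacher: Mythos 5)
Your proposal is correct and is essentially the paper's own argument: the paper obtains Theorem \ref{THM:differential Visy resolution} precisely by combining Proposition \ref{VisyDifferentialCoherent} with the cancellation of the non-coherent part from Proposition \ref{PRP:wedge condition not coherent 0}, exactly as you do. Your bookkeeping (splitting $\Lambda_{n-1}=F_{\underline{x}}\sqcup(\Lambda_{n-1}\setminus F_{\underline{x}})$, noting that $\pi_{n-1}$ discards the non-essential faces, and that the sign $(-1)^{j+\#I}$ matches) fills in the details the paper leaves implicit, and the only point worth making explicit is that the junction condition $j\neq i_r$ is automatic since $d_{i_r}f_{i_r}\cdots=d_{i_r}f_{i_{r-1}}\cdots$ is redundant, hence never essential.
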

	
	Here, $d$ denotes the differential in the bar complex.
	
	\begin{remark}
A variant of Theorem \ref{THM:differential Visy resolution} was obtained first by quite different method by Visy \cite{Visy} and is to appear in \cite{CFBVisy}. 
	
	\end{remark}

\section{Garside theory: Basics}
\label{Garside theory: Basics}

We would like to report on the connection between factorability and Garside theory. In this section, we gather Garside theory basics needed later. Garside theory deals among other things with greedy normal forms, for example on monoids, and with consequences arising from the existence of such normal forms. These normal forms will provide a wide class of examples of factorable monoids and groups. 

The notion of a Garside monoid arises from the following observation: The properties of the braid monoids used by F. Garside in \cite{Garside} to solve the word and the conjugacy problems in the braid monoids and braid groups are also present in a wider class of groups. There are several sets of axioms which reflect the most important of those properties (e.g. \cite{DehornoyF}, \cite{DehornoyParis}, \cite{DehornoyLafont}, \cite{DDGKM}). We will stick to the definition below, which seems to be most appropriate in our context. First however, we clarify which sort of properties is needed. These are certain conditions on the divisibility order in the monoid. 

\begin{Definition}\cite{DehornoyLafont}
Let $M$ be a monoid and let $x,y$ be elements in $M$. We say ``$x$ is a \textbf{left-divisor} of $y$'' or, equivalently, ``$y$ is a \textbf{right-multiple} of $x$'', and write $x\preceq y$ if there is a $z\in M$ such that $y=xz$. We write $x \prec y$ if $x\preceq y$ and $x\neq y$. We call $M$ \textbf{left-noetherian} if there are no infinite descending sequences of the form $\ldots \prec x_3\prec x_2\prec x_1$. Symmetrically, we write $x\succeq y$ for $y$ being a right-divisor of $x$. 
\end{Definition}

\begin{remark}
 A least common left-multiple $c$ of two elements $a,b$ of a monoid $M$ is a common left-multiple of these elements with the following property: whenever $d$ is a common left-multiple of $a$ and $b$, we have $d\succeq c$. This should not be confused with the notion of minimal common left-multiple of $a$ and $b$, meaning a common left-multiple of $a$ and $b$ which is not right-divisible by any other common left-multiple of $a$ and $b$.
\end{remark}

\begin{remark}
There are many variants of the Garside theory. Right-cancellative, left-noetherian monoids in which any two elements admitting a common left-multiple also admit a least common left-multiple were previously called ``left locally Gaussian'' in \cite{DehornoyLafont}. It seems not to be used often in more recent papers, yet it fits exactly in our context. Unfortunately, ``left locally Gaussian'' was also called ``right locally Garside'' in \cite{DehornoyAlternating}. This notion is closely related to the notion of a preGarside monoid in a recent paper by E.~Godelle and L.~Paris (\cite{GodelleParis}).  
It can also be put in the context of the book project by P.~Dehornoy, F.~Digne, E.~Godelle, D.~Krammer and J.~Michel (\cite{DDGKM}).
\end{remark}

\begin{Definition}
A monoid $M$ is called \textbf{atomic} if for any element $m\in M\setminus\{1\}$, the number
\begin{eqnarray*}
\norm{m}:=\sup\{n\,|\, \exists\, a_1,\ldots, a_n \in M\setminus\{1\} \mbox{ such that } m=a_1\ldots a_n\}
\end{eqnarray*}
is finite.
\end{Definition}

The notion of a Garside monoid collects the nice (divisibility) properties enabling many conclusions about the monoid.

\begin{Definition}\cite{DehornoyF}
A monoid $M$ is called \textbf{Garside monoid}  if it is atomic, cancellative and the following conditions hold: 
\begin{enumerate}
 \item For any two elements $x,y$ in $M$, their least common left- and right-multiples and their greatest common left- and right-divisors exist. (We denote them by $\llcm(x, y)$, $\rlcm(x, y)$, $\lgcd(x, y)$, $\rgcd(x, y)$, respectively.)
\item There is an element $\Delta \in M$, called a \textbf{Garside element}, such that the set of the left-divisors of $\Delta$ coincides with the set of right-divisors of $\Delta$, is finite and generates $M$.  
\end{enumerate}
\end{Definition}

The braid groups were the inspiration for the notion of Garside groups, so it is not surprising that these and similar groups provide examples for Garside monoids or weaker forms of those. 

\begin{Definition}\label{ExampleDefinitionArtinGroups}
Recall that an \textbf{Artin group} is a group given by a group presentation of the form
\begin{eqnarray*}
 G(S)=\Mf{S| \underbrace{sts\ldots}_{m_{s,t}} =\underbrace{tst\ldots }_{m_{s,t}}\mbox{ for all } s\neq t \in S}.
\end{eqnarray*}
where $m_{s,t}$ are natural numbers $\geq 2$ or infinity, with $m_{s,t}=m_{t,s}$ for all $s\neq t \in S$. Here, $m_{s,t}=\infty$ means that the pair $s,t$ does not satisfy any relation. We can associate with each Artin group a \textbf{Coxeter group} $W(S)$ by adding relations $s^2=1$ for all $s\in S$; this is consistent with setting the numbers $m_{s,s}$ to be $1$. The matrix $M_S=(m_{s,t})_{s,t\in S}$ will be called the \textbf{Coxeter matrix} defining $G(S)$ or $W(S)$, and the pair $(S, M_S)$ will be also called the \textbf{Coxeter system}. For each Coxeter system, we can define the corresponding \textbf{(positive) Artin monoid} by the monoid presentation
\begin{eqnarray*}
  M(S)= \Mf{S| \underbrace{sts\ldots}_{m_{s,t}} =\underbrace{tst\ldots }_{m_{s,t}}\mbox{ for all } s\neq t \in S}_{mon}.
\end{eqnarray*}
For later use, we will denote the alternating word $sts\ldots$ with ${m}$ factors by $\Mf{s,t}^{m}$. Note that there are a monoid homomorphism $\pi\colon M(S)\to W(S)$ and a group homomorphism $\pi\colon G(S)\to W(S)$ mapping each generator $s$ to its image in the quotient group. 
We will call $M(S)$ as well as $G(S)$ or sometimes, by abuse of notation, even $S$ \textbf{of finite type} if the associated Coxeter group $W(S)$ is finite.
\end{Definition}

\begin{remark} 
  Artin monoids of finite type turn out to determine the behavior of the corresponding Artin group quite completely, as first shown by E.~Brieskorn and K.~Saito (\cite{BrieskornSaito}). In the same article, they investigate more generally the structure of all Artin monoids. Amongst other things, they show that any Artin monoid is left- and right-cancellative, left- and right-noetherian, and any two elements admitting a common left- or  right-multiple also admit a least common left- or right-multiple, respectively. Moreover, it is a Garside monoid if and only if it is of finite type. 

\end{remark}

\begin{remark}
There are further examples of Garside monoids or weaker versions of such as e.g. torus knot groups; see \cite{DDGKM} for a detailed account.
\end{remark}

Now, we are going to describe the normal form mentioned above. This is a greedy normal form: Loosely speaking, one tries to split off a generator which is as large as possible. We are going to make this more precise soon. Furthermore, we are going to recall that being normal form can be checked locally. The following relation makes it easier to describe the local behavior of the normal forms. 

\begin{Definition}\cite{DehornoyLafont}\label{DreieckERelation}
Let $M$ be a monoid and let $\mathcal{E}$ be a subset of $M$. For $x,y\in M$, we say that $x\vartriangleleft_{\mathcal{E}} y$ if every right-divisor of $xy$ lying in $\mathcal{E}$ is a right-divisor of $y$. 
\end{Definition}

We are ready to formulate the main property of the future normal form. 

\begin{Definition} \cite{DehornoyLafont}
 Let $M$ be a monoid and $\mathcal{E}$ a subset of $M$. We say that a finite sequence $(x_p,\ldots, x_1)$ in $\mathcal{E}^p$ is $\mathcal{E}$-\textbf{normal} if for $1\leq i<p$, we have $x_{i+1} \vartriangleleft_{\mathcal{E}} x_{i}$. 
\end{Definition}

The following result is closely related to the existence of the normal form:

\begin{Lemma}(\cite{DehornoyLafont}, Lemma 1.7)
 Let $M$ be a right-cancellative, left-noe\-therian monoid so that any two elements admitting a common left-multiple also admit a least common left-multiple. Let $\E$ a generating set closed under least common left-multiples. Then every element $x\in M\setminus \{1\}$ admits a unique greatest right-divisor lying in $\E$. Here, ``greatest'' means again: Every other right-divisor of $x$ lying in $\E$ is a right-divisor of the greatest one.
\end{Lemma}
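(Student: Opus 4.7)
My plan is to consider the set $D := \{e \in \E \mid e \text{ is a right-divisor of } x\}$ and show that it admits a unique $\succeq$-greatest element. First I would verify that $D$ is nonempty: since $\E$ generates $M$ and $x \neq 1$, any factorization $x = e_k \cdots e_1$ into generators (with $e_i \in \E$) shows that $e_1 \in D$. The key intermediate claim I would establish is that $D$ is closed under least common left-multiples. Indeed, for $e, e' \in D$ one has $x = ye = y'e'$, so $x$ is itself a common left-multiple of $e$ and $e'$, whence by hypothesis $\llcm(e, e')$ exists; by the closure assumption on $\E$, it lies in $\E$; and by the very defining property of the least common left-multiple, $x$ is a right-multiple of $\llcm(e, e')$, so $\llcm(e, e') \in D$. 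By construction $\llcm(e, e') \succeq e$ and $\llcm(e, e') \succeq e'$.

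Next I would extract a $\succeq$-maximal element of $D$ by a noetherian argument. Starting from any $e_0 \in D$, if $e_0$ is not already $\succeq$-maximal in $D$, there exists some $e'_0 \in D$ with $e_0 \not\succeq e'_0$; setting $e_1 := \llcm(e_0, e'_0) \in D$ gives $e_1 \succeq e_0$ strictly, so $e_1 = z_0 e_0$ with $z_0 \neq 1$. Writing $x = y_i e_i$ for each iterate and applying right-cancellation to $y_i e_i = y_{i+1} z_i e_i$, I obtain $y_i = y_{i+1} z_i$ with $z_i \neq 1$, i.e., $y_{i+1} \prec y_i$. Iterating produces an infinite strictly $\prec$-descending chain $\ldots \prec y_2 \prec y_1 \prec y_0$, contradicting left-noetherianity. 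Hence the process terminates at some $\succeq$-maximal element $e^* \in D$.

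Finally I would upgrade ``maximal'' to ``greatest'' using $\llcm$-closure once more: for arbitrary $e \in D$, the element $\llcm(e^*, e) \in D$ satisfies $\llcm(e^*, e) \succeq e^*$, so equals $e^*$ by maximality; this says $e^* = \alpha e$ for some $\alpha$, hence $e$ is a right-divisor of $e^*$, that is, $e^* \succeq e$. For uniqueness, if $\tilde e$ were another greatest element, then $e^* = u \tilde e$ and $\tilde e = v e^*$ give $e^* = uv e^*$, so $uv = 1$ by right-cancellativity; left-noetherianity rules out non-trivial one-sided units (a relation $uv = 1$ with $u \neq 1$ would yield the infinite descending chain $\ldots \prec 1 \prec u \prec 1 \prec u$), so $u = v = 1$ and $e^* = \tilde e$. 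The only real obstacle I anticipate is bookkeeping the asymmetric conventions for left-/right-divisor versus left-/right-multiple, which are easy to confuse; once the $\llcm$-closure of $D$ is in place, the argument is a routine well-foundedness induction.
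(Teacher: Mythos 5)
Your strategy—show that the set $D$ of right-divisors of $x$ lying in $\E$ is nonempty and closed under $\llcm$, extract a maximal element by a noetherian argument, upgrade it to a greatest one by $\llcm$-closure, and get uniqueness from the absence of non-trivial units—is exactly the standard argument for this lemma; the paper itself gives no proof but only cites Dehornoy--Lafont, and their proof runs along the same lines. The nonemptiness of $D$, its closure under $\llcm$, the passage from maximal to greatest, and the uniqueness argument are all fine.

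The genuine gap is the strictness claim in your descent: from $y_i=y_{i+1}z_i$ with $z_i\neq 1$ you conclude $y_{i+1}\prec y_i$, i.e.\ $y_{i+1}\neq y_i$. That inference is a left-cancellation property, and the monoid is only assumed right-cancellative; the possibility $y_{i+1}z_i=y_{i+1}$ with $z_i\neq 1$ is not excluded by the hypotheses as literally stated. Concretely, in the monoid presented by $\langle y,z\mid yz=y\rangle$ one checks that it is right-cancellative, that any two elements admit a least common left-multiple, and that it is left-noetherian in the sense of the paper's definition of $\prec$ (every element has only finitely many left-divisors and distinct elements never left-divide each other), yet $yz=y$ with $z\neq 1$. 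The problem is not cosmetic: taking $\E=\{y,z\}\cup\{z^iy \mid i\geq 1\}$, which generates and is closed under $\llcm$, every $z^iy$ right-divides $y^2$ but no element of $\E$ is right-divisible by all of them, so with this weak reading of ``left-noetherian'' the statement itself fails and no patch of your argument can succeed. The repair is to invoke the noetherianity notion Dehornoy and Lafont actually use, namely well-foundedness of \emph{proper} division, where $y_{i+1}$ is a proper left-divisor of $y_i$ whenever $y_i=y_{i+1}z_i$ with $z_i\neq 1$ (equivalently, their atomicity/norm hypotheses). Under that hypothesis your infinite sequence of factorizations $y_i=y_{i+1}z_i$ with $z_i\neq 1$ is itself the forbidden chain—no distinctness of the $y_i$ is needed—and the rest of your proof goes through verbatim.
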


The following theorem of P.~Dehornoy and Y.~Lafont (\cite{DehornoyLafont}) ensures the existence of normal forms in right-cancellative, left-noe\-therian monoids in which any two elements admitting a common left-multiple also admit a least common left-multiple. Before stating it, we will need one more notion. It is closely connected to the notion of least common multiples: the notion of a left-complement and right-complement.

\begin{Definition} \label{DefinitionLeftComplement}
 Let $M$ be a left-cancellative monoid. For any two $x,y\in M$ whose least common right-multiple exists, we denote by $x\backslash y$ the unique element such that $\rlcm(x,y)=x\cdot (x\backslash y)$, called the \textbf{right-complement} of $y$ in $x$.

Analogously, we define $x/y$ to be the \textbf{left-complement} in a right-can\-cellative monoid, i.e., the unique element with $\llcm(x, y)=(x/y)\cdot y$. 

\end{Definition}

From now on, we assume $1\notin \E$. We say $\E$ is closed under left-complements if for any $x,y\in \mathcal{E}$, their left-complement $x/y$ also lies in $\E\cup\{1\}$ if it exists.

\begin{Theorem}(\cite{DehornoyLafont}, Proposition 1.9)\label{NFGauss}
 Let $M$ be a right-cancellative, left-noetherian monoid so that any two elements admitting a common left-multiple also admit a least common left-multiple. Let $\mathcal{E}$ be a generating subset of $M$ that is closed under least common left-multiples and left-complements. 
Then every nontrivial element $x$ of $M$ admits a unique minimal decomposition $x=x_p\ldots x_1$ such that $(x_p,\ldots, x_1)$ is an $\mathcal{E}$-normal sequence.
\end{Theorem}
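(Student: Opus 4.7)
The plan is to combine a greedy construction for existence with an induction on length for uniqueness, both resting on the preceding lemma which assigns to each nontrivial $x \in M$ a unique greatest right-divisor in $\E$, which I will denote by $h(x)$.

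\textbf{Existence.} I would set $x_1 := h(x)$ and, by right-cancellativity, take the unique $x^{(1)}$ with $x = x^{(1)} x_1$; if $x^{(1)} \neq 1$ I set $x_2 := h(x^{(1)})$ and iterate. Each step replaces $x^{(i)}$ by a strict left-divisor, so left-noetherianity forces the process to terminate after finitely many steps, yielding $x = x_p \cdots x_1$ with $x_i \in \E$. Normality follows directly from the maximality of $h$: if $e \in \E$ is any right-divisor of $x_{i+1} x_i$, then by transitivity $e$ is also a right-divisor of $x_p \cdots x_i$, and maximality of $x_i = h(x_p \cdots x_i)$ forces $e$ to right-divide $x_i$. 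This is precisely $x_{i+1} \vartriangleleft_\E x_i$.

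\textbf{Uniqueness.} The heart of the argument is the following key lemma, which I would prove by induction on $q$: \emph{if $(y_q, \ldots, y_1)$ is any $\E$-normal sequence with product $y$, then $y_1 = h(y)$}. The cases $q = 1$ (trivial) and $q = 2$ (which is exactly the defining property $y_2 \vartriangleleft_\E y_1$) are immediate. For $q \geq 3$, take $e \in \E$ right-dividing $y$ and form the least common left-multiple $c := \llcm(e, y_1)$: it exists because $y$ itself is a common left-multiple, and lies in $\E$ by closure of $\E$ under $\llcm$. Writing $c = \alpha y_1$ with $\alpha := e/y_1 \in \widetilde\E$ (closure under left-complements), the relation $y = z c = z \alpha y_1$ combined with right-cancellativity shows that $\alpha$ right-divides $y_q \cdots y_2$. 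If $\alpha = 1$, then $c = y_1$ and $e$ already right-divides $y_1$; otherwise $\alpha \in \E$, and the induction hypothesis applied to the length-$(q-1)$ $\E$-normal sequence $(y_q, \ldots, y_2)$ forces $\alpha$ to right-divide $y_2$. Hence $c = \alpha y_1$ right-divides $y_2 y_1$, and the local condition $y_2 \vartriangleleft_\E y_1$ then forces $c$, and therefore $e$, to right-divide $y_1$.

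Granted the key lemma, uniqueness of the normal decomposition follows by a straightforward descending induction: given two $\E$-normal decompositions $(x_p, \ldots, x_1)$ and $(y_q, \ldots, y_1)$ of $x$, the lemma yields $x_1 = h(x) = y_1$; right-cancelling on the right reduces the problem to $\E$-normal decompositions of a strictly shorter element, and finitely many iterations (termination again by left-noetherianity) give $p = q$ and $x_i = y_i$ for all $i$. The main obstacle is the inductive step of the key lemma: the condition $y_2 \vartriangleleft_\E y_1$ is purely local and only controls right-divisors of the two-factor product $y_2 y_1$, whereas one must control right-divisors of the full product $y$. Bridging this gap is exactly what forces both closure hypotheses on $\E$ into play simultaneously, through the auxiliary element $c = \llcm(e, y_1) = (e/y_1) \cdot y_1$, whose membership in $\E$ hinges on using closure under $\llcm$ and closure under left-complements at the same time.
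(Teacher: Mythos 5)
Your proposal is correct and is essentially the argument of \cite{DehornoyLafont} itself, which the paper only cites and does not reprove: existence by greedily splitting off the greatest $\E$-right-divisor furnished by the preceding lemma, and uniqueness via the key observation that the last entry of any $\E$-normal sequence equals that greatest right-divisor, proved by exactly your induction through $c=\llcm(e,y_1)=(e/y_1)y_1$, which is where both closure hypotheses enter. The only point to phrase carefully is termination of the greedy step: what is really used is that there is no infinite sequence of factorizations $x^{(i)}=x^{(i+1)}x_{i+1}$ with $x_{i+1}\neq 1$ (the form of noetherianity in \cite{DehornoyLafont}), whereas with the paper's literal definition of $\prec$ (requiring $x^{(i+1)}\neq x^{(i)}$) your claim of strictness would additionally require excluding $x^{(i)}=x^{(i)}x_{i+1}$, so you should either invoke noetherianity in the former sense or add that observation explicitly.
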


\begin{remark}
The original formulations in \cite{DehornoyLafont} use right locally Gaussian monoids (i.e., left-cancellative, right-noetherian monoids so that any two elements admitting a common right-multiple also admit a least common right-multiple) and left $\E$-normal forms. We use everywhere the mirrored version: 
We use $\vartriangleleft_{\mathcal{E}}$ as in Definition \ref{DreieckERelation} symmetrically to $\vartriangleright_{\E}$ of \cite{DehornoyLafont} and our $\mathcal{E}$ is closed under least common left-multiples and left-complement instead of least common right-multiples and right-complements in \cite{DehornoyLafont}. In what follows, we use the term ``normal form'' for the right $\E$-normal form of Theorem \ref{NFGauss}.
\end{remark}

For a Garside monoid, it is possible to make some conclusions about its group of fractions. Recall that the group of fractions of a monoid $M$ consists of a group $G$ together with a monoid homomorphism $i\colon M\to G$ (often suppressed in the notation) and it is characterized by the following universal property: Whenever $G'$ is a group and $f\colon M\to G'$ is a monoid homomorphism, $f$ factors uniquely through $i$, i.e., there is a unique group homomorphism $f'\colon G\to G'$ such that $f'i=f$. As usual, such a group is unique up to canonical isomorphism if it exists. One possible construction of the group of fractions is given by taking any monoid presentation of $M$ and considering it as a group presentation. 

Note that in general $i$ needs not to be injective. Obviously, a necessary condition for $i$ to be injective is the cancellativity of $M$. However, it was shown already by Malcev in 1937 (\cite{Malcev}) that this is not sufficient. One simple sufficient condition is provided by the Ore criterion: If in a cancellative monoid any two elements admit a common left-multiple, then this monoid embeds into a group, which is equivalent to the injectivity of $i$. Moreover, if the Ore condition is satisfied, any element of the group of fractions can be written in the form $a^{-1}b$ with some $a,b\in M$ (cf.\ e.g. \cite{CliffordPreston}, Section 1.10). 
In particular, any Garside monoid satisfies the Ore condition and embeds into its group of fractions. Furthermore, for the example of Artin groups and Artin monoids above, the above description shows that an Artin group is exactly the group of fractions of the corresponding Artin monoid. It is clear from facts collected so far that an Artin monoid of finite type embeds into the corresponding Artin group. In the general case, this question was open for a long time and it was answered affirmatively by L.~Paris:

\begin{Theorem}(\cite{ParisArtin})
Every Artin monoid injects into the corresponding Artin group.
\end{Theorem}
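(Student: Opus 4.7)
The plan is to reduce the statement to the finite-type case, for which injectivity follows from the Garside structure already discussed. As noted in the excerpt, an Artin monoid of finite type is a Garside monoid (Brieskorn--Saito), hence cancellative and such that any two elements admit a common left-multiple (a suitable power of the Garside element $\Delta$). The Ore criterion then yields $M(S) \hookrightarrow G(S)$ in the finite-type case, and this is the base case I would build on.

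For the general case, I would first invoke the parabolic embedding principle at the monoid level. For every subset $T \subseteq S$, the inclusion of generators induces an injection $M(T) \hookrightarrow M(S)$. The underlying reason is that each defining relation of $M(S)$ involves exactly two generators, so any sequence of relations transforming one word over $T$ into another can be carried out entirely within $M(T)$; this injectivity can be established independently of Paris's theorem. The analogous embedding $G(T) \hookrightarrow G(S)$ at the group level is van der Lek's theorem, which I would also take as input.

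The core of the argument is then as follows. Given $x \in M(S)$ with $x = 1$ in $G(S)$, the aim is to conclude $x = 1$ in $M(S)$. The element $x$ involves only finitely many generators, but the corresponding parabolic subsystem $T \subseteq S$ need not be of finite type, so the base case does not apply directly. The strategy is to construct, for each such $x$, an auxiliary finite-type Coxeter system $(S', M_{S'})$ together with a monoid homomorphism $\rho\colon M(S) \to M(S')$ (or from a suitable parabolic submonoid) which preserves positivity, extends to a group homomorphism $G(S) \to G(S')$, and sends $x$ to a nontrivial element. Composed with the finite-type embedding $M(S') \hookrightarrow G(S')$, this would detect the nontriviality of $x$ in $G(S')$ and contradict the assumption $x = 1$ in $G(S)$.

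The hard part will be this last step: constructing the finite-type \emph{probe} $(S', M_{S'})$ and the retraction $\rho$. Paris's original solution proceeds by induction on $|S|$, completing the Coxeter matrix of $S$ in a controlled way so as to produce a spherical submatrix capturing $x$. A more conceptual route, closer in spirit to the present paper, would be to organize $M(S)$ and $G(S)$ into a Garside category in the sense of Digne--Michel, whose positive part recovers $M(S)$ and whose groupoid of fractions recovers $G(S)$, and then to verify an Ore-type condition for this category. Either way, the principal difficulty lies in compensating for the absence of a global Garside element in the infinite-type setting; this is precisely the gap that cannot be bridged by the factorability techniques of Sections \ref{Factorability structure on locally Gaussian monoids}--\ref{Factorability structure on Garside groups} alone, and it is where genuinely new input, as provided by Paris's inductive construction, is required.
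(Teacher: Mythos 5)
There is a genuine gap: your argument never actually proves the statement. The reduction steps you do carry out (the elementary embedding $M(T)\hookrightarrow M(S)$ for $T\subseteq S$, since the defining relations preserve the support of a positive word, and the Ore/Garside argument in the finite-type case) are fine, but the entire content of the theorem is concentrated in the step you label ``the hard part'': producing, for an arbitrary $x\in M(S)$ with non-spherical support, a finite-type system $(S',M_{S'})$ and a positivity-preserving homomorphism $\rho$ that extends to the groups and detects $x$. No such construction is given, and none is known; indeed a generator-to-generator homomorphism must carry each braid relation $\langle s,t\rangle^{m_{s,t}}=\langle t,s\rangle^{m_{s,t}}$ to a valid relation in the target, which already obstructs naive ``completions'' of the Coxeter matrix, and an arbitrary (e.g.\ all labels $\infty$ or large) system simply does not admit finite-type probes of this kind. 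Your description of Paris's proof is also not accurate: Paris does not argue by completing the Coxeter matrix to a spherical one; he constructs a suitable action of the Artin group on a set built from the Coxeter group and shows that the monoid acts faithfully. Likewise the suggested ``Garside category plus Ore condition'' route breaks down precisely because a general Artin monoid fails the Ore condition (two elements need not have any common multiple), which is why no global Garside-type argument applies. Since the missing step is exactly the theorem to be proved, appealing at that point to ``Paris's inductive construction'' makes the proposal circular rather than a proof.

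For comparison, the paper itself offers no argument here: the statement is quoted with a reference to Paris's article, and the factorability/Garside techniques developed in the paper are only used in the finite-type (Garside) situation, consistent with the observation that they cannot bridge the general case.
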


The fact that Garside monoids satisfy the Ore condition allows to extend the normal form of a Garside monoid to its group of fractions. This is our next aim. A group which can be written as a group of fractions of a Garside monoid is called \textbf{Garside group}. We introduce some notation in the following remark. 

\begin{remark}(\cite{DehornoyF})\label{Stern}
Let $M$ be a Garside monoid with a Garside element $\Delta$. We denote the set of left-divisors of $\Delta$ except for $1$ by $\mathcal{D}$, and set $S=\mathcal{D}\cup \{1\}$. Recall that $S$ also coincides with the set of right-divisors of $\Delta$. For any $t \in S$, we define $t^*=t\bs \Delta$ and $^{*}t=\Delta/t$. Observe that $\Delta=tt^*= {}^{*}tt$ and $({}^{*}t)^*=t$.  We also denote ${}^{*}t$ by $\alpha(t)$.

Furthermore, we denote by $\varphi$ the extension as an endomorphism $M\to M$ of the map $t \mapsto t^{**}$ on $\D$. Note that for any $x \in M$ we have $\Delta \varphi(x)=x\Delta$. Denote furthermore $\delta=\varphi^{-1}$. Note that $\delta=\alpha^2$ on $\mathcal{D}$.
\end{remark}

\begin{Prop}[\cite{DehornoyF}]\label{NFGarside} Let $M$ be a Garside monoid with a Garside element $\Delta$ and let $G$ be its group of fractions. Then each element of $G$ has a unique decomposition $x_p\ldots x_1 y_1^{-1}\ldots y_q^{-1}$ with $y_i, x_i\in \D$ for all $i$, $\rgcd(x_1, y_1)=1$ and 
\begin{eqnarray*}
\rgcd({}^*x_k,x_{k+1})=1\\
\rgcd({}^*y_k, y_{k+1})=1
\end{eqnarray*}
for all $k$. 

In particular, if $q=0$, $x_p\ldots x_1\in M$ and $x_1=\rgcd(x, \Delta)$. The normal form of the monoid elements coincides with the $\mathcal{D}$-normal form described in Theorem \ref{NFGauss}.
\end{Prop}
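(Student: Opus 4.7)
The plan is to establish existence and uniqueness separately, building on the Ore property enjoyed by Garside monoids and the monoid normal form of Theorem \ref{NFGauss}.

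For existence, recall that by the Ore property every $g \in G$ can be written as $g = a b^{-1}$ with $a, b \in M$. Setting $d = \rgcd(a, b)$ and writing $a = a_0 d$, $b = b_0 d$, we obtain $g = a_0 b_0^{-1}$ with $\rgcd(a_0, b_0) = 1$. Since $\D$ consists of the nontrivial divisors of $\Delta$, it is closed under $\llcm$ and left-complement, so Theorem \ref{NFGauss} applies to both $a_0$ and $b_0$, producing the factorizations $a_0 = x_p \ldots x_1$ and $b_0 = y_q \ldots y_1$. The coprimality $\rgcd(x_1, y_1) = 1$ follows because any nontrivial common right-divisor of $x_1$ and $y_1$ would right-divide $a_0$ and $b_0$. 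The condition $\rgcd({}^*x_k, x_{k+1}) = 1$ is obtained by translating the $\D$-normality $x_{k+1} \vartriangleleft_{\D} x_k$ supplied by Theorem \ref{NFGauss}: for $s \in \D$, the product $s x_k$ lies in $\D \cup \{1\}$ precisely when $s$ right-divides ${}^*x_k = \Delta / x_k$ (immediate from $\Delta = {}^*x_k \cdot x_k$ by right-cancellation). Hence any nontrivial $s = \rgcd({}^*x_k, x_{k+1})$ would give $s x_k \in \D$ right-dividing $x_{k+1} x_k$ but failing to right-divide $x_k$, contradicting normality. The analogous argument handles the $y_k$'s.

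For uniqueness, suppose $g = A B^{-1} = A'(B')^{-1}$ are two such decompositions, with $A = x_p \ldots x_1$, $B = y_q \ldots y_1$, and similarly for primes. Using that $x_1$ is the greatest right-divisor of $A$ lying in $\D$ (and similarly for $y_1$ and $B$), the same argument as above shows $\rgcd(A, B) = \rgcd(A', B') = 1$: any nontrivial common right-divisor would itself have a nontrivial $\D$-right-divisor, which would then be a common right-divisor of $x_1$ and $y_1$. Clearing denominators yields $A B' = A' B$ in $M$. Under the two coprimality conditions, the standard Garside cancellation argument---analyzing $\rlcm(A, A')$ as a common left-multiple of $A$ and $A'$ that left-divides $A B' = A' B$, and using $\rgcd(A, B) = 1 = \rgcd(A', B')$ to force the resulting left-complements to be trivial---yields $A = A'$ and $B = B'$. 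The uniqueness of the factor sequences themselves is then the uniqueness clause of Theorem \ref{NFGauss}.

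Finally, in the case $q = 0$ the decomposition is $g = x_p \ldots x_1 \in M$, coinciding by construction with the $\D$-normal form of Theorem \ref{NFGauss}, and $x_1$ equals $\rgcd(g, \Delta)$ because $\D$ is exactly the set of nontrivial divisors of $\Delta$ and $x_1$ is the greatest right-divisor of $g$ lying in $\D$. The main obstacle in this program is the Garside cancellation step in the uniqueness argument: the interplay between the two coprimality conditions and the equation $A B' = A' B$ requires careful bookkeeping with $\rlcm$'s and their left-complements to preclude any hidden cancellation between the numerator and the denominator.
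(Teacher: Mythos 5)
The paper itself does not prove this proposition---it is imported from \cite{DehornoyF}---so the only issue is whether your argument stands on its own. The existence half does: writing $g=ab^{-1}$ via the Ore condition, cancelling $\rgcd(a,b)$, applying Theorem \ref{NFGauss} to numerator and denominator, and translating $\D$-normality of a pair into $\rgcd({}^*x_k,x_{k+1})=1$ (via $\Delta={}^*x_k\cdot x_k$ and right-cancellation) are all sound, modulo the routine fact---which you also need later---that local normality forces $x_1=\rgcd(A,\Delta)$; this follows by induction from Lemma \ref{Prod}.

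The uniqueness half, however, fails at its very first step: from $AB^{-1}=A'(B')^{-1}$ in $G$ you cannot ``clear denominators'' to get $AB'=A'B$ in $M$, since $B^{-1}$ and $B'$ do not commute. Concretely, in the $3$-strand braid group $\sigma_1\sigma_2^{-1}=\sigma_1^2(\sigma_2\sigma_1)^{-1}$, yet $\sigma_1\sigma_2\sigma_1=\sigma_2\sigma_1\sigma_2\neq\sigma_1^2\sigma_2$ in the braid monoid, so the cross-multiplied equation is simply false in general; moreover your subsequent sketch treats $\rlcm(A,A')$ as a ``common left-multiple'' (a left/right mix-up) and leaves the decisive cancellation unproved---which you yourself flag as the main obstacle. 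A correct route is different: choose $m,m'\in M$ with $Bm=B'm'=\rlcm(B,B')$ (lcms exist in a Garside monoid); multiplying $AB^{-1}=A'(B')^{-1}$ on the right by this common multiple gives $Am=A'm'$, first in $G$ and then in $M$ because $M$ embeds into $G$. Right-cancellativity yields $\rgcd(Xm,Ym)=\rgcd(X,Y)\,m$, and since $\rgcd(x_1,y_1)=1$ together with the head property gives $\rgcd(A,B)=\rgcd(A',B')=1$, one obtains $m=\rgcd(Am,Bm)=\rgcd(A'm',B'm')=m'$, hence $A=A'$ and $B=B'$ by right cancellation. Uniqueness of the letter sequences is then the uniqueness clause of Theorem \ref{NFGauss}, for which you also need the converse of your translation, namely that $\rgcd({}^*x_k,x_{k+1})=1$ implies $\D$-normality of the pair (write $\rgcd(x_{k+1}x_k,\Delta)=s\,x_k$ and show $s=1$); as written you only establish the direction used for existence.
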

The condition in this proposition reformulates the ``greediness'' of the normal form. 

For the rest of this section, we collect several facts of Garside theory needed in later proofs. 
To investigate the behavior of the normal form with respect to products, we begin with the following lemma.

\begin{Lemma}[\cite{DehornoyF}]\label{Prod}
Let $M$ be a Garside monoid with a Garside element $\Delta$ and let $x,y$ be elements in $M$. Then we have:
\begin{eqnarray*}
\rgcd(xy, \Delta)=\rgcd(\rgcd(x, \Delta)\, y, \Delta)
\end{eqnarray*}
\end{Lemma}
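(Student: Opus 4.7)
The proof should proceed by establishing mutual right-divisibility of $\rgcd(xy,\Delta)$ and $\rgcd(\rgcd(x,\Delta)\cdot y,\Delta)$. Set $d := \rgcd(x,\Delta)$. For the easy inclusion, since $d$ right-divides $x$, right-cancellativity in the Garside monoid $M$ yields a unique $x' \in M$ with $x = x'd$, and hence $xy = x'\cdot dy$. Thus $dy$ right-divides $xy$, so any common right-divisor of $dy$ and $\Delta$ is a common right-divisor of $xy$ and $\Delta$ and therefore right-divides $\rgcd(xy,\Delta)$. In particular $\rgcd(dy,\Delta)$ right-divides $\rgcd(xy,\Delta)$.

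For the reverse inclusion, let $s := \rgcd(xy,\Delta) \in \mathcal{D}\cup\{1\}$ and write $xy = vs$ for some $v \in M$. The goal is to show that $s$ right-divides $dy$. My plan is to exploit the existence of $\rlcm$'s: from $x'dy = vs$ we see that both $x'$ and $v$ left-divide this element, so their least common right-multiple $m := \rlcm(x',v)$ exists in $M$ and also left-divides $x'dy$. Write $m = x'\alpha = v\beta$ with $\alpha,\beta \in M$ and choose $\gamma \in M$ with $m\gamma = x'dy$; left-cancellativity then yields $\alpha\gamma = dy$ and $\beta\gamma = s$. Since $\beta$ left-divides the simple element $s$, both $\beta$ and $\gamma$ lie in $\mathcal{D}\cup\{1\}$.

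The crux of the argument is then to show that $\beta = 1$; granting this, $m = v = x'\alpha$, the equality $v\gamma = vs$ forces $\gamma = s$, and $\alpha\gamma = dy$ becomes $\alpha s = dy$, so $s$ right-divides $dy$ as required. I expect this last deduction to be the principal obstacle: one must invoke the defining maximality of $d$ amongst the simple right-divisors of $x$. The strategy I would pursue is a proof by contradiction — assume $\beta \neq 1$, then use the identity $x'\alpha = v\beta$ together with $x = x'd$ and the complementation operations $t \mapsto t^{*}$, $t \mapsto {}^{*}t$ on $\mathcal{D}$ from Remark \ref{Stern} to pull $\beta$ back through $x'$ and attach it to $d$, thereby constructing a simple right-divisor of $x$ strictly larger than $d$. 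This contradicts $d = \rgcd(x,\Delta)$ and forces $\beta = 1$. Making this last manipulation precise is the delicate heart of the proof, and would likely use the interplay between $\rlcm$'s, left-complements, and the Garside element's lattice structure on $\mathcal{D}$.
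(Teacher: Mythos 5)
Your easy inclusion is fine, and the reduction of the hard inclusion to the claim $\beta=1$ is logically correct --- but observe that this claim is \emph{exactly equivalent} to what you are trying to prove: if $s$ right-divides $dy$, say $dy=ws$, then $xy=x'ws=vs$ forces $v=x'w$ by right-cancellation, i.e.\ $x'$ left-divides $v$ and $\beta=1$; conversely, as you note, $\beta=1$ gives the inclusion. So the step you defer as ``the delicate heart'' is the entire content of the lemma, and the strategy you sketch for it does not contain the missing idea. The elements $\alpha=x'\backslash v$ and $\beta=v\backslash x'$ live at the interface between $x'$ and $v$; there is no mechanism for ``pulling $\beta$ back through $x'$'': the operations $t\mapsto t^{*}$, $t\mapsto{}^{*}t$ of Remark \ref{Stern} are defined only on divisors of $\Delta$, conjugation by $\Delta$ does not transport an element across the arbitrary factor $x'$, and nothing in your data produces a simple right-divisor of $x$ beyond divisors of $d$, so the intended contradiction with the maximality of $d$ has no handle. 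Note also that the maximality of $s$ is irrelevant here: in fact \emph{every} simple right-divisor $s$ of $xy$ right-divides $dy$, which already hints that the lever must be the maximality of $d$ applied to an element built from $s$ and $y$.

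The paper itself gives no proof but quotes the lemma from \cite{DehornoyF}; the standard argument forms an lcm with $y$, not with $v$, and is direct rather than by contradiction. Since $xy$ is a common left-multiple of $s$ and $y$, it is a left-multiple of $\llcm(s,y)=(s/y)\,y$ (Definition \ref{DefinitionLeftComplement}); writing $xy=w\,(s/y)\,y$ and cancelling $y$ on the right shows that the left-complement $s/y$ right-divides $x$. The two ingredients your proposal is missing are precisely: (i) $s/y$ is again a divisor of $\Delta$ (the simples of a Garside monoid are closed under complement), and (ii) the defining maximality of $d=\rgcd(x,\Delta)$, which then yields $d\succeq s/y$. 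Writing $d=u\,(s/y)$ gives $dy=u\,(s/y)\,y\succeq s$, so $s=\rgcd(xy,\Delta)$ right-divides $\rgcd(dy,\Delta)$; together with your easy inclusion and the absence of nontrivial invertible elements (Remark \ref{LeftGaussianNotInvertible}) this gives equality. Your decomposition via $\rlcm(x',v)$ never brings $y$ into contact with the maximality of $d$, which is why the crucial step resists completion along the lines you propose.
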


For our proofs, we will need some rules for the operations of least common left-multiple and left-complement. They are summed up in the following lemma. 

\begin{Lemma}[\cite{DehornoyF}]\label{Rechenregeln}
Let $S$ be again the set of left-divisors of the Garside element $\Delta$ in the Garside monoid $M$. Let $x,y,z$ be elements of $M$ and $s,t \in S$. Then we have:
\begin{enumerate}
 \item $(xy)\backslash z=y\backslash(x\bs z)$
\item $z\bs(xy)=(z\bs x)((x\bs z)\bs y)$
\item $\rgcd(st, \Delta )=(\delta(t)\bs \alpha(s))^*$
\item $ st= \left ( \alpha(s)\bs\delta(t) \right)\cdot\left ((\delta(t)\bs \alpha(s))^*\right )$
\end{enumerate}

\end{Lemma}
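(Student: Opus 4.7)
My plan is to prove the four identities sequentially, treating (1)--(2) as purely formal consequences of the universal property of least common right-multiples in any right-cancellative monoid admitting them, and (3)--(4) as specifically Garside-theoretic, flowing from the single computation $\alpha(s)\cdot st=\Delta\cdot t=\delta(t)\cdot\Delta$.

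For (1), I would compute $\rlcm(xy,z)$ in two ways. Any common right-multiple of $xy$ and $z$ is in particular a common right-multiple of $x$ and $z$, hence a right-multiple of $\rlcm(x,z)=x\cdot(x\bs z)$; writing such a multiple in the form $x\cdot w$ and left-cancelling $x$, the condition of being a right-multiple of $xy$ resp.\ $z$ translates into $w$ being a right-multiple of $y$ resp.\ $x\bs z$. Therefore $\rlcm(xy,z)=x\cdot\rlcm(y,x\bs z)=xy\cdot\bigl(y\bs(x\bs z)\bigr)$. Comparing with the defining equation $\rlcm(xy,z)=xy\cdot\bigl((xy)\bs z\bigr)$ and left-cancelling $xy$ gives (1). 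Identity (2) is proved symmetrically: the same argument yields $\rlcm(z,xy)=\rlcm(z,x)\cdot\bigl((x\bs z)\bs y\bigr)=z\cdot(z\bs x)\cdot\bigl((x\bs z)\bs y\bigr)$, and comparison with $z\cdot(z\bs(xy))$ together with left-cancellation of $z$ yields (2).

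For (4), I would start from the chain $\alpha(s)\cdot st=\Delta\cdot t=\delta(t)\cdot\Delta$, where the first equality uses $\alpha(s)\,s=\Delta$ and the second uses the defining property $\Delta\varphi(x)=x\Delta$ applied to $x=\delta(t)$. Since $\alpha(s)\in\D$, the element $\delta(t)\bs\alpha(s)$ is a right-divisor of $\alpha(s)$, hence lies in $\D\cup\{1\}$, so $(\delta(t)\bs\alpha(s))\cdot(\delta(t)\bs\alpha(s))^{*}=\Delta$. Substituting this into $\delta(t)\cdot\Delta$ and using $\rlcm(\alpha(s),\delta(t))=\alpha(s)(\alpha(s)\bs\delta(t))=\delta(t)(\delta(t)\bs\alpha(s))$ gives
\begin{equation*}
\alpha(s)\cdot st=\alpha(s)\cdot(\alpha(s)\bs\delta(t))\cdot(\delta(t)\bs\alpha(s))^{*},
\end{equation*}
and left-cancelling $\alpha(s)$ yields (4).

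For (3), formula (4) already displays $(\delta(t)\bs\alpha(s))^{*}\in\D$ as a right-divisor of $st$, so the task reduces to showing maximality among right-divisors of $st$ lying in $\D$. I would invoke the standard criterion that if $st=ab$ with $b\in\D$ and $\rgcd(a,\alpha(b))=1$, then $b=\rgcd(st,\Delta)$; applying this to $a=\alpha(s)\bs\delta(t)$ and $b=(\delta(t)\bs\alpha(s))^{*}$, and using $\alpha(b)=\alpha((\delta(t)\bs\alpha(s))^{*})=\delta(t)\bs\alpha(s)$ (since $^{*}(x^{*})=x$ on $\D$), it suffices to prove $\rgcd(\alpha(s)\bs\delta(t),\,\delta(t)\bs\alpha(s))=1$. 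If $g$ were a non-trivial such common right-divisor, writing $\alpha(s)\bs\delta(t)=u'g$ and $\delta(t)\bs\alpha(s)=v'g$, right-cancelling $g$ in $\alpha(s)u'g=\delta(t)v'g$ would produce a strictly smaller common right-multiple of $\alpha(s)$ and $\delta(t)$, contradicting the definition of $\rlcm$. The main obstacle is this last contradiction step: one must carefully verify that the right-cancellativity of $M$ applies (which it does, as Garside monoids are cancellative) and that the resulting multiple is strictly smaller, where ``strictly'' is guaranteed by left-noetherianity.
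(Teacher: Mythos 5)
The paper does not prove this lemma at all (it is imported verbatim from Dehornoy), so there is no in-paper argument to compare with; judged on its own terms, your proposal is the standard Garside-theoretic proof and is essentially correct. Parts (1) and (2), via the observation that $w\mapsto xw$ identifies the common right-multiples of $y$ and $x\bs z$ with those of $xy$ and $z$, are fine. For (3), the reduction to $\rgcd\bigl(\alpha(s)\bs\delta(t),\,\delta(t)\bs\alpha(s)\bigr)=1$ and the cancellation-plus-noetherianity argument excluding a nontrivial common right-divisor are exactly right; note only that the normality criterion you invoke is Proposition \ref{NFGarside} applied to a length-two word, so you should also record that $\alpha(s)\bs\delta(t)$ lies in $\D\cup\{1\}$ (same argument as below) and dispose of the degenerate cases $\alpha(s)\bs\delta(t)=1$ and $\delta(t)\bs\alpha(s)=1$ by hand.

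One justification in (4) is false as stated: $\delta(t)\bs\alpha(s)$ is in general \emph{not} a right-divisor of $\alpha(s)$. For instance in $B_3^+$ with $\Delta=\sigma_1\sigma_2\sigma_1$ one has $\sigma_1\bs\sigma_2=\sigma_2\sigma_1$, which does not right-divide $\sigma_2$. The conclusion you need, namely $\delta(t)\bs\alpha(s)\in\D\cup\{1\}$ so that $(\delta(t)\bs\alpha(s))\cdot(\delta(t)\bs\alpha(s))^*=\Delta$, is nevertheless true, with the correct one-line reason: $\Delta$ is a common right-multiple of $\delta(t)$ and $\alpha(s)$, hence $\rlcm(\delta(t),\alpha(s))=\delta(t)\cdot(\delta(t)\bs\alpha(s))$ left-divides $\Delta=\delta(t)\cdot\delta(t)^*$, and cancelling $\delta(t)$ gives $\delta(t)\bs\alpha(s)\preceq\delta(t)^*$, so this complement divides $\Delta$; the same argument handles $\alpha(s)\bs\delta(t)$. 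With this repair your computation $\alpha(s)\,st=\Delta t=\delta(t)\Delta=\delta(t)(\delta(t)\bs\alpha(s))(\delta(t)\bs\alpha(s))^*=\alpha(s)(\alpha(s)\bs\delta(t))(\delta(t)\bs\alpha(s))^*$ followed by left-cancellation gives (4), and your proof of (3) then goes through.
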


To provide a factorability structure on Garside groups, we need to investigate the properties of the normal form given by Proposition \ref{NFGarside}. First, we need to know how it behaves with respect to the norm. This question is answered by R.~Charney and J.~Meier (\cite{CM}):

\begin{Lemma}[\cite{CM}]\label{Norm}
Let $M$ be a Garside monoid and $G$ its group of fractions. The normal form (as in Proposition \ref{NFGarside}) for an element $g \in G$ is geodesic, i.e., the word-length norm of $g$ with respect to $\mathcal{E}=\D\cup \D^{-1}$ is exactly the length of the normal form of $g$. 
\end{Lemma}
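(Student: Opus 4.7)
The inequality $N_{\mathcal{E}}(g) \le p + q$ is immediate: the normal form $g = x_p\cdots x_1\, y_1^{-1}\cdots y_q^{-1}$ itself represents $g$ by a word of length $p+q$ in $\mathcal{E} = \mathcal{D} \cup \mathcal{D}^{-1}$. For the converse, my plan is to start from an arbitrary $\mathcal{E}$-word of length $k = N_{\mathcal{E}}(g)$ representing $g$ and rewrite it into the form given by Proposition \ref{NFGarside} by two local moves, neither of which increases the length.

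The first move pushes negatives to the right of positives. For $a, b \in \mathcal{D}$, both left-divide $\Delta$, so $\rlcm(a, b)$ exists and lies in $\mathcal{D}$. From $a\,(a\backslash b) = \rlcm(a,b) = b\,(b\backslash a)$ we obtain
\[
a^{-1}b = (a\backslash b)(b\backslash a)^{-1},
\]
with both complements in $\mathcal{D} \cup \{1\}$ (they are left-divisors of the element $\rlcm(a,b) \in \mathcal{D}$). Applying this rule at any adjacent $\mathcal{D}^{-1}\!\cdot\mathcal{D}$ pair never lengthens the word, and strictly shortens it if either complement is $1$. Counting inversions (pairs of positions $i<j$ with the $i$-th letter in $\mathcal{D}^{-1}$ and the $j$-th in $\mathcal{D}$) gives a strictly decreasing monovariant, so the iteration terminates with a presentation $g = X_r\cdots X_1\, Y_s^{-1}\cdots Y_1^{-1}$ with $X_i, Y_j \in \mathcal{D}$ and $r + s \le k$.

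The second move is the greedy normalization inside $M$: by Lemma \ref{Rechenregeln}(3)--(4), any pair $ab$ with $a, b \in \mathcal{D}$ can be rewritten as $a' b'$ with $a' \in \mathcal{D} \cup \{1\}$, $b' = \rgcd(ab, \Delta) \in \mathcal{D}$, and $(a',b')$ already $\mathcal{D}$-normal; scanning this rewrite right-to-left through $X_r\cdots X_1$ and $Y_s\cdots Y_1$ produces the unique $\mathcal{D}$-normal forms $X^{\circ}, Y^{\circ}$ of $X = X_r\cdots X_1$ and $Y = Y_s\cdots Y_1$ (Theorem \ref{NFGauss}), of lengths at most $r$ and $s$ respectively. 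Finally, set $c := \rgcd(X^{\circ}, Y^{\circ})$, which exists in the locally Gaussian monoid $M$; writing $X^{\circ} = X' c$, $Y^{\circ} = Y' c$, we obtain $g = X'(Y')^{-1}$ with $\rgcd(X', Y') = 1$, and the uniqueness clause of Proposition \ref{NFGarside} identifies this factorization with the normal form of $g$. Consequently $p + q \le r + s \le k = N_{\mathcal{E}}(g)$. The step I expect to demand the most care is the greedy normalization in $M$: showing that the local rewrite always terminates at the $\mathcal{D}$-normal form without ever increasing the length calls for a monovariant argument built from atomicity of $M$ together with the formulas of Lemma \ref{Rechenregeln}. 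This is essentially folklore in Garside theory, but it is where the real work sits.
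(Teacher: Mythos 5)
The paper itself offers no proof of this lemma --- it is quoted from Charney--Meier \cite{CM} --- so there is nothing in-paper to compare against; judged on its own, your argument is essentially sound and follows the standard route: rewrite a geodesic $\mathcal{E}$-word using the lcm identity $a^{-1}b=(a\backslash b)(b\backslash a)^{-1}$ to push inverses to the right, normalize the positive and the negative block inside $M$, cancel the common right-divisor, and invoke the uniqueness clause of Proposition \ref{NFGarside}. Two details need attention. First, a small one: the complements $a\backslash b$, $b\backslash a$ are \emph{right}-divisors of $\rlcm(a,b)$, not left-divisors; they still lie in $\D\cup\{1\}$ because the set of divisors of $\Delta$ is closed under left and right division (from $\Delta=a(a\backslash b)w$ the element $(a\backslash b)w$ is a right-, hence left-divisor of $\Delta$, and $a\backslash b$ left-divides it).

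Second --- and this is the only real gap --- your closing inequality $p+q\le r+s$ does not follow from the uniqueness clause alone. After writing $X=X'c$, $Y=Y'c$ with $c=\rgcd(X,Y)$, the numbers $p$ and $q$ are the normal-form lengths of $X'$ and $Y'$, so you need that passing to a left-divisor does not increase normal-form length. This is true, and it follows from Corollary \ref{NFLaenge}: appending a single generator changes the normal-form length by $0$ or $+1$, so by induction on $N_{\D}(c)$ the normal form of $X'c$ is at least as long as that of $X'$; but the step has to be stated. You should also record that $\rgcd(X',Y')=1$ forces $\rgcd(x_1,y_1)=1$ (a common right-divisor of the innermost letters right-divides both $X'$ and $Y'$), and that the conditions $\rgcd({}^*x_k,x_{k+1})=1$ in Proposition \ref{NFGarside} are exactly $\D$-normality of the two blocks, so the uniqueness clause really does identify your factorization with the normal form. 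Conversely, the step you flag as the hard one --- that greedy normalization inside $M$ never increases length --- needs no new monovariant argument: $\D$ is closed under least common left-multiples and left-complements, so Lemma \ref{LLGaussschNFNorm} already gives that the normal-form length of $X$ equals $N_{\D}(X)\le r$, and likewise for $Y$.
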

In particular, this implies that the embedding $M \to G$ preserves word-lengths. Even in cases where a monoid embeds into its group of fractions, this generally need not to be true. 

We will need some further properties of the word length in a Garside group. 

\begin{Lemma}[\cite{CM}]\label{NormExplizit} Let $M$ be a Garside monoid, $G$ its group of fractions. If $a \in M$ has the word length $k$ with respect to $\mathcal{D}$ and $n$ is a positive integer, then $a\Delta^{-n}$ has word length $N_{\mathcal{E}}(a\Delta^{-n})\leq \max\{k,n\}$ and the equality holds if and only if $\Delta\npreceq a$. 
\end{Lemma}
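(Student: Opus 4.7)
The plan is to reduce to the case $\Delta \npreceq a$ via the intertwining relation between $\Delta$ and arbitrary elements of $M$, and then to analyse $a\Delta^{-n}$ using the greedy normal form of Proposition~\ref{NFGarside}, whose length equals $N_{\mathcal{E}}$ by Lemma~\ref{Norm}.

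First I would write $a = \Delta^m b$, where $m \geq 0$ is maximal with $\Delta^m \preceq a$. By atomicity this $m$ is well-defined, and the left-greedy $\mathcal{D}$-normal form of $a$ starts with $m$ copies of $\Delta$, so $b$ has $\mathcal{D}$-length $k-m$ and $\Delta \npreceq b$. The relation $x\Delta = \Delta \varphi(x)$ from Remark~\ref{Stern} gives $\Delta x = \delta(x) \Delta$ and hence $\Delta^m b = \delta^m(b) \Delta^m$, yielding $a\Delta^{-n} = \delta^m(b)\Delta^{m-n}$. Since $\delta$ is a length-preserving automorphism of $M$ fixing $\Delta$, the element $\delta^m(b)$ still has $\mathcal{D}$-length $k-m$ and is not left-divisible by $\Delta$. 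If $m \geq n$ then $a\Delta^{-n} \in M$ has $\mathcal{D}$-length at most $k-n < \max\{k,n\}$, and if $0 < m < n$ the claim reduces (by induction on $m$) to the statement for $\delta^m(b)$ and $n-m$, whose right-hand side is $\max\{k,n\} - m$. In both cases $m \geq 1$ forces strict inequality, so we may assume $\Delta \npreceq a$.

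For this remaining case I would invoke the standard Garside fact that the set of left-divisors of $\Delta^r$ equals the set of right-divisors of $\Delta^r$, and equals the set of elements of $\mathcal{D}$-length at most $r$. If $n \geq k$, this gives $a \preceq \Delta^n$, so $\Delta^n = a c$ for a unique $c \in M$, and commutation yields $a\Delta^{-n} = \delta^n(c)^{-1}$. If $n < k$, I would split the normal form as $a = (a_k \cdots a_{n+1})(a_n \cdots a_1) = a' a''$; applying the previous step to $a''$ (noting that $\Delta \preceq a''$ would force $\Delta \preceq a$ via $a = a' \Delta (\cdots) = \Delta \varphi(a')(\cdots)$), one writes $\Delta^n = a''c$ and gets $a\Delta^{-n} = a' \delta^n(c)^{-1}$. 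The crucial computation is $N_{\mathcal{D}}(c) = n$ exactly: the upper bound is immediate from $c \preceq \Delta^n$, and for the lower bound one assumes $\Delta^{n-1} = cf$, multiplies $\Delta^n = a''c$ on the right by $f$, and uses $\Delta^{n-1} f \Delta^{-(n-1)} = \delta^{n-1}(f) \in M$ to rewrite this as $a'' = \Delta \cdot \delta^{n-1}(f)$, contradicting $\Delta \npreceq a''$.

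The upper bound $N_{\mathcal{E}}(a\Delta^{-n}) \leq \max\{k,n\}$ now follows by counting letters in the explicit expressions $a' \delta^n(c)^{-1}$ or $\delta^n(c)^{-1}$. For the lower bound I would show that these expressions already satisfy the hypotheses of Proposition~\ref{NFGarside} (i.e.\ $\rgcd(a', \delta^n(c)) = 1$ together with the greedy conditions on the positive and negative parts), so that the length of the normal form is exactly $N_{\mathcal{D}}(a') + N_{\mathcal{D}}(c) = \max\{k,n\}$. The main obstacle I expect is verifying that $\rgcd(a', \delta^n(c)) = 1$; the plan for this is to propagate the normal-form condition $\rgcd(a_{i+1} a_i, \Delta) = a_i$ across the boundary $i = n$ using the formula $\rgcd(st, \Delta) = (\delta(t) \bs \alpha(s))^*$ from Lemma~\ref{Rechenregeln}(3) and iterated application of Lemma~\ref{Prod}, showing that a nontrivial common right-divisor of $a'$ and $\delta^n(c)$ would force $\rgcd(a_{n+1} a_n, \Delta)$ to strictly exceed $a_n$.
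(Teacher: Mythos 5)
The paper does not prove this lemma at all --- it is quoted from Charney--Meier \cite{CM} --- so your argument is an independent proof rather than a variant of one in the text. Its skeleton is correct: the reduction to $\Delta\npreceq a$ via $a=\Delta^m b$ and the automorphism $\delta$ works (note $\Delta$ left-divides iff it right-divides, and $\delta$ preserves $\mathcal D$-length); the identities $a\Delta^{-n}=\delta^n(c)^{-1}$ resp.\ $a'\,\delta^n(c)^{-1}$ are right; and your two-sided computation $N_{\mathcal D}(c)=n$ (upper bound from $c$ dividing $\Delta^n$, lower bound by cancelling $\Delta^{n-1}$ against $\Delta\npreceq a''$) is sound. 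Be aware, though, that you repeatedly lean on the ``standard Garside fact'' that the left-divisors of $\Delta^r$ coincide with the right-divisors and are exactly the elements of $\mathcal D$-length $\le r$; this is true but nowhere in the paper, so it must be proved (e.g.\ by induction from Lemma \ref{Prod}/Theorem \ref{NFGauss}) or cited.

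The one step that does not work as literally described is the interface condition. A nontrivial common right-divisor $s$ of $a_{n+1}$ and $\delta^n(c)$ cannot be turned into a violation of $\rgcd(a_{n+1}a_n,\Delta)=a_n$ by looking at the boundary pair and Lemma \ref{Rechenregeln}(3)/Lemma \ref{Prod} alone; you need the whole chain. The repairable route: from $\delta^n(c)\,a_n\cdots a_1=\Delta^n$ and $\delta^n(c)=es$ you get that $s\,a_n\cdots a_1$ right-divides $\Delta^n$, hence has $\mathcal D$-length $\le n$. If the pair $(s,a_n)$ were normal, then, normality being a condition on adjacent pairs, the whole sequence $(s,a_n,\ldots,a_1)$ would be normal of length $n+1$, contradicting that bound; so $\rgcd(s\,a_n,\Delta)$ is a right-divisor of $a_{n+1}a_n$ and of $\Delta$ strictly bigger than $a_n$ (it contains $a_n$ properly since $s\neq 1$), contradicting normality of $(a_{n+1},a_n)$. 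With that insertion your proof is complete. Finally, note that inside this paper there is a much shorter route to the equality case: for $\Delta\npreceq a$ the expression $a\Delta^{-n}$ is already the unique decomposition of Lemma \ref{NFLemma}, whence $q=n$ and $k=r\le p+q$, so Lemma \ref{Norm} gives $N_{\mathcal E}(a\Delta^{-n})=p+q\ge\max\{k,n\}$, and only your easy upper bound is still needed; this avoids verifying the mixed normal-form conditions altogether (and is not circular, since the proof of Lemma \ref{NFLemma} does not use the present lemma).
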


\begin{remark}
 The normal forms as in Proposition \ref{NFGarside} were already used to describe the homology of Garside groups by R.~Charney, J.~Meier and K.~Whittlesey in \cite{CMW}. We slightly generalized their result, cf.\ \cite{DMTDobrinskaya}.
\end{remark}

\begin{remark}\label{LeftGaussianNotInvertible}
Let $M$ be a right-cancellative, left-noetherian monoid so that any two elements admitting a common left-multiple also admit a least common left-multiple. Then there are no non-trivial invertible elements in $M$: Assume $ab=1$. Multiplying with $a^n$ on the left yields $a^{n+1}b=a^n$. Hence, for each $n$, the element $a^{n+1}$ is a left-divisor of $a^n$. Thus, we obtain a chain of left-divisors $\ldots \preceq a^3\preceq a^2 \preceq a^1\preceq 1$. Since the monoid is left-noetherian, this chain has to stabilize. So for some $k$, we have $a^{k+1}=a^k$ and this implies by right cancellation $a=1$. This implies also $b=1$.
\end{remark}

We will need later the following easy lemma.

\begin{Lemma} \label{LCMprod}
 Let $M$ be a right-cancellative, left-noetherian monoid so that any two elements admitting a common left-multiple also admit a least common left-multiple, and let $x,y\in M$ have the least common left-multiple $ax=by$. Then for any $z\in M$, $xz$ and $yz$ admit a least common left-multiple which is equal to $axz$.
\end{Lemma}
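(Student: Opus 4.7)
The plan is to exploit right-cancellativity to reduce any competing common left-multiple of $xz$ and $yz$ back to a common left-multiple of $x$ and $y$, where the hypothesis gives us a factorization through the l-lcm $ax=by$.

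First I would observe that $axz=byz$ is manifestly a common left-multiple of $xz$ and $yz$, so the existence of \emph{some} common left-multiple is free. The real content is that it is \emph{least}. To this end, let $w$ be an arbitrary common left-multiple of $xz$ and $yz$, written as $w=cxz=dyz$ for suitable $c,d\in M$. Right-cancelling $z$ on both sides yields $cx=dy$, which is a common left-multiple of $x$ and $y$ in its own right. Since, by hypothesis, $ax=by$ is a least common left-multiple of $x$ and $y$, there exists $e\in M$ with $cx=e\cdot ax=(ea)x$. One more application of right-cancellativity (this time in the factor $x$) gives $c=ea$, whence $w=cxz=e\cdot axz$. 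So $axz$ left-divides $w$, proving that $axz$ is indeed the least common left-multiple of $xz$ and $yz$.

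There is no real obstacle here; the only subtlety is making sure to invoke right-cancellativity in the correct places, namely twice: once to pass from $cxz=dyz$ down to $cx=dy$, and once to pass from $cx=(ea)x$ down to $c=ea$. Both uses are legitimate by the standing hypothesis on $M$, and neither requires left-noetherianity or the l-lcm closure hypothesis directly, although of course those are what ensured that $ax=by$ was a valid l-lcm to begin with.
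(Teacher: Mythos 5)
Your argument is correct, and it is leaner than the one in the paper. The paper's proof first invokes the standing hypothesis to produce \emph{some} least common left-multiple of $xz$ and $yz$, writes it as $cxz=dyz$, and then compares it with $axz$: right cancellation gives $cx=dy$, hence $cx=uax$ for some $u$, while leastness gives $axz=tcxz$ for some $t$; cancelling yields $a=tc$ and $c=ua$, so $tu=1$, and only now does left-noetherianity enter, via the fact (Remark \ref{LeftGaussianNotInvertible}) that such a monoid has no nontrivial invertible elements, forcing $t=u=1$ and hence $c=a$. You instead verify the universal property of $axz$ directly: every common left-multiple $w=cxz=dyz$ reduces, after cancelling $z$, to a common left-multiple $cx=dy$ of $x$ and $y$, which factors as $cx=(ea)x$ through the l-lcm $ax=by$, so $c=ea$ and $w=e\cdot axz$, i.e.\ $w\succeq axz$. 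This buys you exactly what you observe: only right-cancellativity and the assumption that $ax=by$ is an l-lcm of $x,y$ are used, so neither left-noetherianity nor the global l-lcm hypothesis is needed, whereas the paper's route consumes both (the latter for existence, the former to exclude units). One small terminological correction: in the paper's conventions the relation $w=e\cdot axz$ says that $axz$ is a \emph{right}-divisor of $w$ (that is, $w\succeq axz$), not that it left-divides $w$; the displayed factorization is the condition required in the definition of least common left-multiple, so the substance of your conclusion is unaffected.
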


\begin{pf}
 Since $axz=byz$ is a common left-multiple of $xz$ and $yz$, the elements $xz$ and $yz$ have to admit a least common left-multiple. We write it in the form $cxz=dyz$. By right cancellation, we have $cx=dy$, and since $ax=by$ is the least common left-multiple of $x$ and $y$, we have $cx=uax$ for some $u\in M$. By definition, there is then an element $t$ in $M$ such that $axz=tcxz$ holds.  Right cancellation implies $a=tc$ and $c=ua$, so $1=tu$. Since $M$ is left-noetherian, this implies $u=t=1$, hence $axz$ is the least common left-multiple of $xz$ and $yz$. 
\end{pf}

\section{Factorability Structure on a Class of Monoids}
\label{Factorability structure on locally Gaussian monoids}

The aim of this section is to exhibit a factorability structure on right-cancellative, left-noetherian monoids which have the property that any two elements admitting a common left-multiple also admit a least common left-multiple. This factorability structure will correspond to the normal form we discussed in Section \ref{Garside theory: Basics}. First, we give a reformulation of Theorem \ref{NFGauss}. Again, we assume $1\notin \E$. Recall we say that $\E$ is closed under left-complements if for all $x,y\in \E$, the left-complement $x/y$ lies in $\E\cup\{1\}$ whenever this complement exists.

\begin{Cor}\label{NormalFormGreatestLeftDivisor}
 Let $M$ be a a right-cancellative, left-noetherian monoid so that any two elements admitting a common left-multiple also admit a least common left-multiple. Let $\mathcal{E}$ be a generating subset of $M$ that is closed under least common left-multiples and left-complement. For any $x \in M\setminus\{1\}$, the beginning $x_1$ of the (unique) $\mathcal{E}$-normal form of $x$ is the greatest right-divisor of $x$ lying in $\mathcal{E}$ (greatest with respect to ``being right-divisor'').
\end{Cor}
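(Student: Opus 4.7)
The plan is to identify the beginning $x_1$ of the normal form with the unique greatest $\E$-right-divisor of $x$ provided by the Dehornoy--Lafont lemma cited just before Theorem~\ref{NFGauss}. The key tools are that lemma (existence of a greatest $\E$-right-divisor), the uniqueness part of Theorem~\ref{NFGauss}, and a short verification that ``peeling off'' this greatest right-divisor produces an $\E$-normal sequence.

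In more detail, first I would fix $x \in M \setminus \{1\}$ and let $g$ denote the greatest $\E$-right-divisor of $x$; this exists and is unique by the cited lemma. Note that $g \neq 1$: since $\E$ generates $M$ and $x \neq 1$, $x$ has at least one $\E$-right-divisor, so the greatest is in $\E$. Write $x = u\,g$ with $u \in M$. If $u = 1$, then $x = g \in \E$, the singleton $(g)$ is trivially $\E$-normal, and by the uniqueness statement of Theorem~\ref{NFGauss} this is the normal form of $x$, so $x_1 = g$ as desired.

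If $u \neq 1$, let $(u_q, \ldots, u_1)$ be the $\E$-normal form of $u$, so that $x = u_q \cdots u_1 \, g$. I would then verify that the concatenated sequence $(u_q, \ldots, u_1, g)$ is again $\E$-normal. All pairs $u_{i+1} \vartriangleleft_{\E} u_i$ for $i \geq 1$ are inherited from the normal form of $u$, so the only thing to check is $u_1 \vartriangleleft_{\E} g$. For this, let $e \in \E$ be any right-divisor of $u_1 g$; since $u_1 g$ is itself a right-divisor of $x$ (via $x = u_q \cdots u_2 \cdot (u_1 g)$), $e$ is also an $\E$-right-divisor of $x$. By the defining property of $g$ as the greatest $\E$-right-divisor of $x$, the element $e$ must divide $g$ on the right. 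This is exactly the condition $u_1 \vartriangleleft_{\E} g$.

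Having shown $(u_q, \ldots, u_1, g)$ is $\E$-normal with product $x$, the uniqueness part of Theorem~\ref{NFGauss} forces $(u_q, \ldots, u_1, g) = (x_p, \ldots, x_2, x_1)$; in particular $x_1 = g$, completing the proof. I do not expect a serious obstacle here: the only non-formal step is the verification of $u_1 \vartriangleleft_{\E} g$, and that follows almost tautologically from the maximality of $g$ once one notices that $u_1 g$ is a right-divisor of $x$. The role of the hypotheses on $M$ and $\E$ is essentially to guarantee the existence results (greatest $\E$-right-divisor and unique $\E$-normal form) that we invoke.
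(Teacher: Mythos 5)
Your argument is correct, but it is not the route the paper takes: the paper disposes of this corollary in one line by pointing to the \emph{proof} of Theorem \ref{NFGauss} in \cite{DehornoyLafont}, where the normal form is constructed precisely by repeatedly splitting off the greatest $\E$-right-divisor, so the statement is built into the construction. You instead treat Theorem \ref{NFGauss} and the preceding lemma as black boxes: you take the greatest $\E$-right-divisor $g$ of $x$, prepend the normal form of the complementary factor $u$, check $u_1 \vartriangleleft_{\E} g$ (which, as you say, is immediate from transitivity of right-divisibility and maximality of $g$), and invoke uniqueness. This buys a self-contained deduction from the \emph{statements} rather than the internal construction, at the price of leaning on the exact scope of the uniqueness clause: your concatenated sequence is an $\E$-normal sequence with entries in $\E$, so you need uniqueness among \emph{all} such decompositions of $x$, which is indeed what Dehornoy--Lafont's Proposition 1.9 provides (the word ``minimal'' in the paper's transcription only excludes padding by trivial entries). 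Be aware that if one instead read ``minimal'' as ``of minimal word length,'' your argument would need the geodesic property of the normal form, and that is only established later (Lemma \ref{LLGaussschNFNorm}) using this very corollary, so the argument would become circular under that reading; with the correct reading of Theorem \ref{NFGauss} there is no gap.
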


\begin{pf}
 This follows immediately from the proof of Theorem \ref{NFGauss} in \cite{DehornoyLafont}.
\end{pf}

\begin{Lemma} \label{LetztesAxiom}
  Let $M$ be a right-cancellative, left-noetherian monoid so that any two elements admitting a common left-multiple also admit a least common left-multiple. Let $\mathcal{E}$ be a generating subset of $M$ that is closed under least common left-multiples and left-complements. Let $x\in M\setminus\{1\}$ be any non-trivial element and let $a \in \mathcal{E}$ be a generator. Let $\NF(x)=x_p\ldots x_1$ be the (right) $\mathcal{E}$-normal form as described in Theorem \ref{NFGauss}. Let $\NF(xa)=y_q\ldots y_2y_1$. Then either $x_1a=y_1$ or there is a generator $z\in \mathcal{E}$ such that 
\begin{eqnarray*}
\NF(x_1a)=zy_1.
\end{eqnarray*}

\end{Lemma}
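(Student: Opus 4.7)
The plan is to reduce the problem to identifying $y_1$ as the greatest right-divisor of $x_1 a$ lying in $\mathcal{E}$. Once this is established, since $x_1, a \in \mathcal{E}$ and $x_1 a \neq 1$ (because Remark \ref{LeftGaussianNotInvertible} forbids nontrivial invertibles), the element $x_1 a$ has $\mathcal{E}$-word-length at most $2$, so $\NF(x_1 a)$ consists of either one or two entries; by Corollary \ref{NormalFormGreatestLeftDivisor} its rightmost entry must be the greatest right-divisor of $x_1 a$ in $\mathcal{E}$, namely $y_1$. The two cases then give exactly the two alternatives in the conclusion of the lemma.

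First, I would show that $y_1$ right-divides $x_1 a$. Both $y_1$ and $a$ right-divide $xa$, so $xa$ is a common left-multiple of $y_1$ and $a$, and by hypothesis $\ell := \llcm(y_1, a) = (a/y_1)\, y_1 = (y_1/a)\, a$ exists. By the closure of $\mathcal{E}$ under left-complements, both $a/y_1$ and $y_1/a$ lie in $\mathcal{E} \cup \{1\}$. Since $xa$ is a right-multiple of $\ell$, we can write $xa = \gamma\,(y_1/a)\, a$ for some $\gamma \in M$, and right-cancelling $a$ yields $x = \gamma\,(y_1/a)$. Thus $y_1/a$ is a right-divisor of $x$ contained in $\mathcal{E} \cup \{1\}$. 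If $y_1/a \neq 1$, maximality of $x_1$ (Corollary \ref{NormalFormGreatestLeftDivisor}) gives $x_1 = \delta\,(y_1/a)$ for some $\delta \in M$, and consequently $x_1 a = \delta\,(y_1/a)\, a = \delta\,(a/y_1)\, y_1$, so $y_1 \mid x_1 a$. If $y_1/a = 1$, then $\ell = a$, whence $y_1$ right-divides $a$; combined with the fact that $a \mid y_1$ (as $a \in \mathcal{E}$ is a right-divisor of $xa$, and $y_1$ is maximal among such) and Remark \ref{LeftGaussianNotInvertible}, this forces $y_1 = a$, and trivially $y_1 = a \mid x_1 a$.

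Finally, I would argue that $y_1$ is the greatest right-divisor of $x_1 a$ in $\mathcal{E}$. Writing $x = x' x_1$ gives $xa = x' \cdot x_1 a$, so $x_1 a$ right-divides $xa$. Hence every right-divisor $w \in \mathcal{E}$ of $x_1 a$ is also a right-divisor of $xa$, and therefore right-divides $y_1$ by maximality. Combined with $y_1 \in \mathcal{E}$ and $y_1 \mid x_1 a$ established above, this shows $y_1$ is the greatest right-divisor of $x_1 a$ in $\mathcal{E}$, completing the proof via the initial reduction. The only real subtlety is the boundary case $y_1/a = 1$, which requires extracting the equality $y_1 = a$ from the absence of nontrivial invertibles.
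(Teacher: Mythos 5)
Your identification of $y_1$ as the greatest right-divisor of $x_1a$ lying in $\mathcal{E}$ is sound, and that part is in substance the paper's own argument (maximality of $y_1$ in $xa$, right-cancellation of $a$, maximality of $x_1$ in $x$, closure under left-complements), merely routed through $\llcm(y_1,a)$ rather than through writing $y_1=ta$ directly. The genuine gap is in your ``initial reduction'': you infer from $N_{\mathcal{E}}(x_1a)\le 2$ that $\NF(x_1a)$ has at most two entries. At this point of the development that implication is not available: the fact that the length of the $\mathcal{E}$-normal form is controlled by the word length is exactly Corollary \ref{NFLaenge} and Lemma \ref{LLGaussschNFNorm}, and both are proved \emph{from} Lemma \ref{LetztesAxiom}, so invoking it here is circular. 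A priori one only has the trivial inequality $l(\NF(m))\ge N_{\mathcal{E}}(m)$; knowing that the rightmost letter of $\NF(x_1a)$ is $y_1$ does not by itself exclude a normal form with three or more letters, and then the dichotomy ``$x_1a=y_1$ or $\NF(x_1a)=zy_1$'' does not follow. (A small terminological slip besides: $xa$ is a common \emph{left}-multiple of $y_1$ and $a$, so $\llcm(y_1,a)$ right-divides $xa$, i.e.\ $xa=\gamma\,\llcm(y_1,a)$; your formula is the correct one, but in the paper's conventions this is not ``$xa$ is a right-multiple of $\ell$''.)

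The gap can be closed with ingredients you already have, and doing so essentially reproduces the paper's proof. Since $a\in\mathcal{E}$ right-divides $xa$, maximality of $y_1$ gives $y_1=ta$ with $t=y_1/a\in\mathcal{E}\cup\{1\}$; cancelling in $y_1=e\,(a/y_1)\,y_1$ and using Remark \ref{LeftGaussianNotInvertible} gives $a/y_1=1$, so your factorization becomes $x_1a=\delta\,y_1$, where $\delta$ satisfies $x_1=\delta t$ and hence $\delta=x_1/t\in\mathcal{E}\cup\{1\}$ by closure under left-complements. If $\delta=1$, then $x_1a=y_1$; if $\delta\in\mathcal{E}$, then your final paragraph shows every right-divisor of $x_1a$ lying in $\mathcal{E}$ right-divides $y_1$, i.e.\ $(\delta,y_1)$ is an $\mathcal{E}$-normal sequence, so uniqueness in Theorem \ref{NFGauss} yields $\NF(x_1a)=\delta y_1$. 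This is exactly the paper's route: there one writes $y_1=ta$, cancels $a$ in $xa=y_q\cdots y_2\,ta$ to see that $t$ right-divides $x$ and hence $x_1$, and sets $z=x_1/t$; no appeal to word lengths is needed.
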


\begin{pf}
 Assume $y_1\neq x_1a$. We know by the maximality of $y_1$ that $y_1$ is a left-multiple of $a$, so there is a $t \in M$ with $y_1=ta$. Thus $t$ is the left-complement $y_1/a$, and hence $t\in \mathcal{E}$. Furthermore, we have $xa=y_q\ldots y_2ta$. This implies by cancellation $x\succeq t$. By maximality of $x_1$, we have $x_1\succeq t$ and by assumption, $t\neq x_1$. So there is a $z\in M\setminus\{1\}$ such that $x_1=zt$. As above, $z\in \mathcal{E}$. We already have $x_1a=zta=zy_1$ and have to show that $y_1$ is the greatest right-divisor of $x_1a$ lying in $\mathcal{E}$. Now, for any $u\in \mathcal{E}$ with $x_1a\succeq u$, we also have $xa=(x_p\ldots x_2)x_1a\succeq u$ and thus, by definition, $y_1\succeq u$. This proves the statement. 
\end{pf}

This lemma allows us to compare the lengths of normal forms of $x$ and $xa$ for a generator $a$. This is similar to Lemma 4.6 of \cite{MichelNoteBraid}. 

\begin{Cor} \label{NFLaenge}
 Let $M$ be a a right-cancellative, left-noetherian monoid so that any two elements admitting a common left-multiple also admit a least common left-multiple. Let $\mathcal{E}$ be a generating subset of $M$ that is closed under least common left-multiple and left-complement. Let $x\in M\setminus\{1\}$ and $a \in \mathcal{E}$. Let $\NF(x)=x_p\ldots x_1$ be the (right) $\mathcal{E}$-normal form of $x$. Let $\NF(xa)=y_q\ldots y_2y_1$. Then either $q=p$ or $q=p+1$. 
\end{Cor}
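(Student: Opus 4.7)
\medskip

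My plan is to induct on the length $p$ of the $\E$-normal form of $x$, using Lemma \ref{LetztesAxiom} as the driving tool, together with uniqueness of the $\E$-normal form from Theorem \ref{NFGauss} and the trivial observation that any left-truncation of an $\E$-normal sequence is again $\E$-normal (since the condition $y_{i+1}\vartriangleleft_{\E}y_{i}$ only involves consecutive pairs).

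For the base case $p=1$, we have $x=x_{1}\in\E$, so $\NF(xa)=\NF(x_{1}a)$. Lemma \ref{LetztesAxiom} (or rather, a direct application of Corollary \ref{NormalFormGreatestLeftDivisor} to $x_{1}a$) shows that $\NF(x_{1}a)$ either equals the single generator $y_{1}=x_{1}a$, or has the form $(z,y_{1})$ with $z\in\E$, giving $q\in\{1,2\}=\{p,p+1\}$.

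For the inductive step, I would apply Lemma \ref{LetztesAxiom} to split into two cases. In the first case, $y_{1}=x_{1}a$: cancelling $x_{1}a$ on the right from $xa=y_{q}\dots y_{2}(x_{1}a)=(x_{p}\dots x_{2})x_{1}a$ yields $y_{q}\dots y_{2}=x_{p}\dots x_{2}$. Since $(y_{q},\dots,y_{2})$ is $\E$-normal (as a left-truncation of $(y_{q},\dots,y_{1})$) and represents the element whose unique normal form is $(x_{p},\dots,x_{2})$, uniqueness forces $q-1=p-1$, so $q=p$. In the second case, $\NF(x_{1}a)=(z,y_{1})$ with $z\in\E$: right-cancelling $y_{1}$ from $xa=(x_{p}\dots x_{2})zy_{1}=y_{q}\dots y_{2}y_{1}$ gives $y_{q}\dots y_{2}=(x_{p}\dots x_{2})z$, and the left-truncation argument again identifies $(y_{q},\dots,y_{2})$ as the $\E$-normal form of $x'z$, where $x':=x_{p}\dots x_{2}$ has normal form of length $p-1$. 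Applying the inductive hypothesis to $x'$ (of normal form length $p-1$) and the generator $z\in\E$ shows the length of $\NF(x'z)$ is either $p-1$ or $p$, so $q-1\in\{p-1,p\}$ and thus $q\in\{p,p+1\}$.

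The main obstacle I anticipate is ensuring that the inductive hypothesis actually applies to $x'$ in the second case, i.e., that $x'$ is nontrivial (so that Lemma \ref{LetztesAxiom} is applicable if one wanted to iterate). If $p=1$ then $x'=1$ and one must handle this as part of the base case rather than the inductive step; otherwise $x'\neq 1$ because $\NF(x')=(x_{p},\dots,x_{2})$ is a non-empty product of non-trivial generators (recall $1\notin\E$). Beyond this bookkeeping, every step reduces to right-cancellation plus the uniqueness clause of Theorem \ref{NFGauss}, so no further Garside-theoretic input is required.
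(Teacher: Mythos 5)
Your proposal is correct and follows essentially the same route as the paper: induction on $p$, splitting via Lemma \ref{LetztesAxiom} into the cases $y_1=x_1a$ and $\NF(x_1a)=zy_1$, then right-cancellation plus uniqueness of the $\E$-normal form to identify $(y_q,\dots,y_2)$ with $\NF(x_p\dots x_2)$ respectively $\NF\bigl((x_p\dots x_2)z\bigr)$ and invoke the inductive hypothesis. Your extra remarks (truncations of normal sequences are normal, $x'\neq 1$ for $p\geq 2$) just make explicit what the paper leaves implicit.
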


\begin{pf}
 We proceed by induction on $p$. For $p=1$, we have $x=x_1$, and the claim follows directly from Lemma \ref{LetztesAxiom}. Now assume we have proved the statement for all lengths of normal forms $\leq p-1$. We wish to show the statement for $x$ with $\NF(x)=x_p\ldots x_1$ as in the statement of the theorem. Set $v=x_p\ldots x_2$. We know that this is the normal form of $v$ by the definition of $\E$-normal sequences. Again, by Lemma \ref{LetztesAxiom} we can conclude that either $x_1a=y_1$ or there exists a $z\in \mathcal{E}$ such that $\NF(x_1a)=zy_1$.

In the first case, $xa=vy_1=y_q\ldots y_2y_1$, thus $v=y_q\ldots y_2$ and by the uniqueness of normal forms, $q=p$ follows. 

In the second case,  $\NF(x_1a)=zy_1$, so we know by the induction hypothesis that $vz$ has a normal form of length either $p-1$ or $p$ on the one hand. On the other hand, we have 
\begin{eqnarray*}
vzy_1=x_p\ldots x_2zy_1=x_p\ldots x_2x_1a=xa=y_q\ldots y_2y_1
\end{eqnarray*}
and $\NF(vz)=y_q\ldots y_2$ by cancellation and by the uniqueness of the normal form. So $q-1=p-1$ or $q-1=p$, and the claim follows.
\end{pf}

Now we can apply the previous corollary to show that the $\E$-normal form is geodesic. The following lemma is an analogue of Lemma \ref{Norm} by R.~Charney and J.~Meier.

\begin{Lemma}\label{LLGaussschNFNorm}
  Let $M$ be a right-cancellative, left-noetherian monoid so that any two elements admitting a common left-multiple also admit a least common left-multiple. Let $\mathcal{E}$ be a generating subset of $M$ that is closed under least common left-multiples and left-complements. Then the length $p$ of the normal form $x=x_p\ldots x_1$ coincides with the word length with respect to $\mathcal{E}$, $N_{\mathcal{E}}(x)$, for any $x\in M\setminus \{1\}$. 
\end{Lemma}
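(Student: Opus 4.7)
The plan is to prove the two inequalities $N_\E(x) \le p$ and $p \le N_\E(x)$ separately, where $p$ is the length of the normal form $x = x_p \ldots x_1$.

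The first inequality is essentially trivial: the normal form itself exhibits $x$ as a product of $p$ elements of $\E$, hence $N_\E(x) \le p$ directly from the definition of word length.

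For the nontrivial direction $p \le N_\E(x)$, the plan is to run a short induction on $N_\E(x)$, using Corollary~\ref{NFLaenge} as the key ingredient. That corollary tells us that right-multiplication by a single generator $a \in \E$ increases the length of the normal form by at most one, so normal-form length grows at most linearly in the number of right-multiplications by generators. Concretely, if $x = a_k a_{k-1} \ldots a_1$ is any minimal expression in $\E$ with $k = N_\E(x)$, then one starts from $a_1$ (whose normal form has length $1$ since $a_1 \in \E$ and $1 \notin \E$) and multiplies successively on the right by $a_2, \ldots, a_k$. After each step the normal-form length grows by $0$ or $1$, by Corollary~\ref{NFLaenge}, so after $k-1$ such steps the normal form of $x$ has length at most $k$. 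Hence $p \le k = N_\E(x)$.

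I do not expect any serious obstacle: the only mild subtlety is that Corollary~\ref{NFLaenge} is stated for $x \ne 1$, so the induction must be started at $k = 1$ (where the assertion $\NF(a_1) = (a_1)$ is immediate from Remark~\ref{EtaErzeuger}'s analogue for the $\E$-normal form and the fact that $1 \notin \E$) rather than at $k = 0$. Combining both inequalities yields $p = N_\E(x)$, as claimed.
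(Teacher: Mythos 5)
Your proposal is correct and follows essentially the same route as the paper: the inequality $N_{\E}(x)\le p$ is immediate from the definition of word length, and the bound $p\le N_{\E}(x)$ comes from an induction on $k=N_{\E}(x)$ whose inductive step is exactly Corollary \ref{NFLaenge}. One indexing slip should be fixed: starting from $a_1$ and multiplying successively on the right by $a_2,\ldots,a_k$ produces $a_1a_2\cdots a_k$, which is in general not $x=a_k\cdots a_1$; since Corollary \ref{NFLaenge} concerns right multiplication $xa$ by a generator, you should start from $a_k$ and right-multiply by $a_{k-1},\ldots,a_1$ (equivalently, as the paper does, apply the induction hypothesis to $v=a_k\cdots a_2$, which is again a minimal word of length $k-1$, and then invoke the corollary for $x=va_1$); with that correction, and noting that $\NF(a)=(a)$ for $a\in\E$ follows directly from the uniqueness in Theorem \ref{NFGauss}, your argument is the paper's proof.
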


\begin{pf}
 It is clear that $l(\NF(x))\geq N_{\mathcal{E}}(x)$. For the other inequality, we proceed by induction on $k=N_{\mathcal{E}}(x)$. For $k=1$, the claim is clearly true. Assume we have proven the statement for all word lengths $\leq k-1$. Let $a_k\ldots a_1$ be a minimal word in $\mathcal{E}$ representing $x$. Then $v=a_k\ldots a_2$ is also a minimal word, i.e., word of minimal length, and has thus word length $k-1$, so $\NF(v)=v_{k-1}\ldots v_1$ by the induction hypothesis. By Corollary \ref{NFLaenge}, we know that the normal form of $x=va_1$ is either of length $k-1$ or of length $k$, so $\leq k$. Hence we are done.
\end{pf}

We sum up the results of this section in the following theorem. 

\begin{Theorem}\label{DefinitionFaktorabilitaetGauss}
 Let $M$ be a right-cancellative, left-noetherian monoid so that any two elements admitting a common left-multiple also admit a least common left-multiple. Let $\mathcal{E}$ be a generating subset of $M$ that is closed under least common left-multiple and left-complement. For any $x\in M\setminus\{1\}$ with normal form $\NF(x)=x_p\ldots x_2x_1$, set 
\begin{eqnarray*}
 \eta(x)=(x_p\ldots x_2, x_1),
\end{eqnarray*}
and set furthermore $\eta(1)=(1,1)$.
Then $(M, N_{\mathcal{E}}, \eta)$ is a factorable monoid.
\end{Theorem}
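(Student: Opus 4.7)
The plan is to verify the two halves of Definition \ref{FactDefMonoid} separately: first that $\eta$ is a factorization map (axioms (F1)--(F3)), and then that it is in fact a factorability structure. By Corollary \ref{RechtskuerzbarAequivalenz}, the right-cancellativity of $M$ reduces the second task to verifying weak factorability, and for this I would use the equivalent reformulation in Lemma \ref{FactAxiom}.

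For (F1), the identity $x = \overline{\eta}(x)\eta'(x)$ is built into the definition, since $\eta$ simply splits the rightmost factor off the normal form $\NF(x)=x_p\ldots x_1$. For (F3), $\eta'(x) = x_1 \in \mathcal{E}$ is immediate from Theorem \ref{NFGauss}. For (F2), I would observe that truncating the normal form at the right end preserves $\mathcal{E}$-normality---the condition $x_{i+1}\vartriangleleft_{\mathcal{E}} x_i$ only concerns consecutive pairs---so $(x_p,\ldots,x_2)$ is the $\mathcal{E}$-normal form of $\overline{\eta}(x)$. Lemma \ref{LLGaussschNFNorm} then identifies $N(\overline{\eta}(x)) = p-1$, yielding $N(x) = N(\overline{\eta}(x)) + N(\eta'(x))$. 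The edge cases $p=1$ and $x=1$ are immediate.

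For weak factorability, Lemma \ref{FactAxiom} demands that for every $(x,t)\in M\times\mathcal{E}$ with $(\eta'(x),t)$ and $(\overline{\eta}(x),\overline{\eta}(\eta'(x)t))$ both geodesic, the pair $(x,t)$ is itself geodesic and the two equalities $\eta'(xt) = \eta'(\eta'(x)t)$ and $\overline{\eta}(x)\,\overline{\eta}(\eta'(x)t) = \overline{\eta}(xt)$ hold. The crux is that the first equality in fact holds \emph{unconditionally}: this is precisely the content of Lemma \ref{LetztesAxiom}, whose two alternatives (either $x_1 t$ is itself the last normal-form piece of $xt$, or $\NF(x_1 t) = z\,\eta'(xt)$ for some $z\in\mathcal{E}$) both pin down the final entry of $\NF(x_1 t)$ to coincide with the final entry of $\NF(xt)$. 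Given this first equality, right-cancellativity applied to the chain $\overline{\eta}(xt)\eta'(xt) = xt = \overline{\eta}(x)\overline{\eta}(\eta'(x)t)\eta'(\eta'(x)t)$ (which uses (F1) twice) yields the second equality, and the concluding sentence of Lemma \ref{FactAxiom} then delivers the geodesic condition for free.

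The main obstacle, if one can call it that, is not in the assembly above but has already been surmounted in Lemma \ref{LetztesAxiom}, which captures the precise way normal forms evolve under right-multiplication by a generator; once that is in hand, the theorem reduces to carefully sequencing the invocations above, and right-cancellativity is the ingredient that lets one bypass the independent verification of either the second equality or the norm condition.
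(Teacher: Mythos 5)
Your proposal is correct and follows essentially the same route as the paper: reduce to weak factorability via Corollary \ref{RechtskuerzbarAequivalenz}, verify (F1)--(F3) using that truncated normal forms are still normal together with Lemma \ref{LLGaussschNFNorm}, and discharge condition \ref{eqWF} through Lemma \ref{FactAxiom} with Lemma \ref{LetztesAxiom} as the key input. The only (harmless) variation is that you note $\eta'(xt)=\eta'(\eta'(x)t)$ and, by right-cancellation, $\overline{\eta}(x)\overline{\eta}(\eta'(x)t)=\overline{\eta}(xt)$ hold unconditionally and then invoke the closing remark of Lemma \ref{FactAxiom}, whereas the paper argues under the geodesicity hypotheses and checks $N_{\mathcal{E}}(xt)=p+1$ via the case distinction of Corollary \ref{NFLaenge}; your unconditional equalities are exactly what the paper establishes separately later in Corollary \ref{RewritingSystemLLGaussch}.
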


\begin{pf}
 The proof will essentially assemble the results of the last lemmas. First, since $M$ is right-cancellative, we need to show only weak factorability due to Corollary \ref{RechtskuerzbarAequivalenz}. By definition, $\eta'(x)$ is an element of $\mathcal{E}$ for $x\neq 1$, and 
\begin{eqnarray*}
 \overline{\eta}(x)\cdot \eta'(x)=x
\end{eqnarray*}
holds. Since $x_p\ldots x_2$ is the normal form of $\overline{\eta}(x)$ for an $x$ with $\NF(x)=x_p\ldots x_2x_1$, we conclude by Lemma \ref{LLGaussschNFNorm} that $N_{\mathcal{E}}(x)=p$ and $N_{\mathcal{E}}(\overline{\eta}(x))=p-1$, so the norm condition is satisfied. 

For the last condition given by the Diagram \ref{eqWF}, we have to compare $\NF(x)=x_p\ldots x_2x_1$ with $\NF(xa)$ for some $a\in \mathcal{E}$. Write $\NF(xa)=y_q\ldots y_1$ as before. First, by Lemma \ref{FactAxiom}, we have to show that $N_{\mathcal{E}}(xa)=p+1$ if $(x_1, a)$ and $(\overline{x}, \overline{x'a})=(x_p\ldots x_2, z)$ are assumed to be geodesic, where $z\in \mathcal{E}\setminus \{1\}$ is characterized by $\NF(x_1a)=zy_1$. The case distinction in the proof of Lemma \ref{NFLaenge}, combined with Lemma \ref{LLGaussschNFNorm}, shows that in this case, the $\mathcal{E}$-norm of $xa$ is $p+1$. Furthermore, we know that in this case
\begin{eqnarray*}
 (xa)'=y_1=(x'a)'
\end{eqnarray*}
holds. This completes the proof. 
\end{pf}

\begin{Cor} \label{RewritingSystemLLGaussch}
Let $M$ be a right-cancellative, left-noetherian monoid in which any two elements admitting a common left-multiple also admit a least common left-multiple. Then for any factorability structure on $M$ obtained from Theorem \ref{DefinitionFaktorabilitaetGauss}, its associated rewriting system is complete. 
\end{Cor}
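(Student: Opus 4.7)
The strategy is to invoke Theorem \ref{CompleteRewritingSystemWeak}: it suffices to show that the factorability structure $\eta$ constructed in Theorem \ref{DefinitionFaktorabilitaetGauss} satisfies the two stronger conditions
\[
(xs)' = (x's)' \qquad \text{and} \qquad \overline{xs} = \overline{x}\cdot\overline{x's}
\]
for every $x \in M$ and every $s \in \E$. Since Theorem \ref{DefinitionFaktorabilitaetGauss} already provides factorability, nothing beyond these two identities needs verification.

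For the first identity, I would handle the trivial cases first: if $x = 1$ then $\eta(1) = (1,1)$ so both sides equal $s$; and $xs = 1$ cannot occur when $x \neq 1$ and $s \in \E$, because Remark \ref{LeftGaussianNotInvertible} rules out non-trivial invertible elements. For the main case $x \neq 1$, apply Lemma \ref{LetztesAxiom} to the pair $(x,s)$: writing $\NF(x) = x_p \ldots x_1$ and $\NF(xs) = y_q \ldots y_1$, the lemma produces exactly two possibilities, namely $x_1 s = y_1$ (so that $\NF(x_1 s) = y_1$) or $\NF(x_1 s) = z\,y_1$ for some $z \in \E$. In either possibility, the last entry of $\NF(x_1 s)$ is $y_1$, hence $(x's)' = (x_1 s)' = y_1 = (xs)'$, as required.

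The second identity then follows automatically by right-cancellation, exactly as recorded in the final sentence of Lemma \ref{FactAxiom}: since $\overline{xs}\cdot (xs)' = xs = \overline{x}\cdot x'\cdot s = \overline{x}\cdot\overline{x's}\cdot (x's)'$, and the rightmost factors on the two sides agree by the first identity, right-cancellativity of $M$ forces $\overline{xs} = \overline{x}\cdot\overline{x's}$.

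With both conditions in hand, Theorem \ref{CompleteRewritingSystemWeak} applies directly and yields that the rewriting system associated with $(M, \E, \eta)$ is complete. The only non-routine ingredient is Lemma \ref{LetztesAxiom}, which is the main obstacle but is already established in the preceding section; everything else is a bookkeeping application of the existing machinery, so I anticipate no additional technical difficulty.
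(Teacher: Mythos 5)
Your proposal is correct, and its overall skeleton coincides with the paper's: reduce completeness to the two stronger identities $(xs)'=(x's)'$ and $\overline{xs}=\overline{x}\cdot\overline{x's}$, obtain the second from the first by right-cancellation, and conclude via Theorem \ref{CompleteRewritingSystemWeak}. The one genuine difference is how the first identity is verified. The paper does not cite Lemma \ref{LetztesAxiom} here; instead it re-runs a divisibility argument from scratch, using Corollary \ref{NormalFormGreatestLeftDivisor} to characterize $\eta'$ as the greatest right-divisor in $\E$ and then comparing the greatest right-divisors $a$ of $xs$ and $b$ of $x_1s$ through a case distinction (writing $x_1s=tb$, $a=cb$ and treating $t=1$ and $t\neq 1$ separately, using closure under left-complements). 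You instead observe that Lemma \ref{LetztesAxiom} already says exactly what is needed: in both of its cases the rightmost letter of $\NF(x_1s)$ is $y_1=(xs)'$ (in the case $x_1s=y_1$ because $y_1\in\E$ is its own normal form, in the other case by $\NF(x_1s)=zy_1$), so $(x's)'=(xs)'$ follows in one line, with the degenerate cases $x=1$ and invertibility excluded by Remark \ref{LeftGaussianNotInvertible}. Both arguments are sound; yours is shorter because it reuses the already-established Lemma \ref{LetztesAxiom}, while the paper's version is self-contained at the level of the divisibility order and does not pass through the normal-form formulation.
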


\begin{pf}
 Let $M$ be a monoid as above and $\E$ a generating set closed under least common left-multiples and left-complements. We have to show that $(xs)'=(x's)'$ and $\overline{xs}=\overline{x}\cdot \overline{x's}$ for any $x\in M$ and $s\in \E$. Recall that the factorability structure was defined via $\eta(x)=(x_p\ldots x_2, x_1)$ for $\NF(x)=x_p\ldots x_2x_1$ for $x\neq 1$ and $\eta(1)=(1,1)$, where $\NF$ denotes the $\E$-normal form. Recall furthermore that by Corollary \ref{NormalFormGreatestLeftDivisor}, $x_1$ can be characterized as the greatest right-divisor of $x$ lying in $\E$. 

So we have to compare the greatest right-divisors of $xs$ and $x_1s$ lying in $\E$. Call them $a$ and $b$, respectively. Observe that since $xs\succeq x_1s$, we can conclude that $a\succeq b$. Since $N_{\E}(x_1s)\leq 2$, we know that the $\E$-normal form of $x_1s$ has length at most $2$. So write $x_1s=tb$ with $t\in \E\cup \{1\}$. 
Furthermore, we can write $a=cb$ with $c\in \E\cup\{1\}$ since $\E$ is closed under left-complements. 

Now if $t=1$, then $a=cx_1s$ and there is a $d\in M$ such that 
\begin{eqnarray*}
 xs=x_p\ldots x_1s=da=dcx_1s,
\end{eqnarray*}
so that $cx_1$ right-divides $x$ by right-cancellativity. Now observe that since $s, a\in \E$, also $cx_1\in \E$ since $\E$ is closed under left-complements. Thus, $cx_1$ is a right-divisor of $x$ lying in $\E$ and by definition of $x_1$, it has to right-divide $x_1$. So we conclude $c=1$ and thus $a=b$ in the case $t=1$.

Assume now $t\neq 1$. Since $b$ is the greatest right-divisor of $x_1s$ lying in $\E$, it has to be right-divisible by the right-divisor $s\in \E$ of $x_1s$; so there is a $u\in \E$ such that $b=us$. By the same argument, there is a $v\in \E$ so that $a=vs$ holds. We can again find a $d\in M$ such that $xs=da$. Inserting $a=vs$ and cancelling on the right, we obtain $x=dv$. Hence, there is a $w\in E$ so that $x_1=wv$, by the definition of $x_1$. This implies that $x_1s=wvs=wa$. By the definition of $b$, this shows that $a$ is a right-divisor of $b$. Since we already had $a\succeq b$, the statement $a=b$ follows. This proves $(xs)'=(x's)'$.

Since $xs=\overline{xs}\cdot (xs)'$ on the one hand, and, on the other hand, 
\begin{eqnarray*}
 xs=\overline{x}x's=\overline{x}\cdot \overline{x's}\cdot (x's)',
\end{eqnarray*}
and since we have already shown $(xs)'=(x's)'$, right-cancellativity implies $\overline{x}\cdot \overline{x's}=\overline{xs}$. This completes the proof since now we are in the situation to use Theorem \ref{CompleteRewritingSystemWeak}.

\end{pf}

\begin{remark}
Note that the corollary above in particular provides a complete rewriting system on each Artin monoid. Besides the finite-type Artin monoids (\cite{HermillerMeier}), some other complete rewriting systems for subclasses of Artin monoids were known before (cf.\ \cite{BahlsSmith}).
\end{remark}

\section{Factorability Structure on Artin Monoids}
\label{Factorability Structure on Artin Monoids}

We are going to describe the factorability structures on Artin monoids in more detail. In particular, we are going to take a closer look at square-free elements which are an appropriate generating system for factorability. 
We will rely on the analysis of divisibility in Artin monoids due to E.~Brieskorn and K.~Saito (\cite{BrieskornSaito}). The following results will be needed. 

\begin{Prop}[\cite{BrieskornSaito}, Prop. 2.3]
 All Artin monoids are cancellative.
\end{Prop}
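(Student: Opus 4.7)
The plan is to follow Brieskorn and Saito's original strategy: first establish that word length is well-defined on $M(S)$ (atomicity), and then prove left cancellativity by an induction on word length using a careful analysis of how two words representing the same element can be related by elementary applications of braid relations.

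First I would observe that every defining relation $\Mf{s,t}^{m_{s,t}}=\Mf{t,s}^{m_{s,t}}$ has the same length on both sides. Hence the word length $N(w)$ with respect to $S$ descends to $M(S)$ and is additive under multiplication. This makes $M(S)$ atomic and furnishes the induction parameter.

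Next I would prove the following key lemma (the ``common-multiple'' or ``cube'' lemma): if $s,t\in S$ are distinct generators and there exist $x,y\in M(S)$ with $sx=ty$, then $m_{s,t}<\infty$ and both $sx$ and $ty$ are left-divisible by $\Mf{s,t}^{m_{s,t}}=\Mf{t,s}^{m_{s,t}}$, which is moreover the least common left-multiple of $s$ and $t$. The proof proceeds by a nested induction on $N(sx)$: one inspects a chain of elementary braid moves connecting two representing words of minimal total length and argues that the two leading letters $s$ and $t$ must eventually ``meet'' in some braid relation, producing the prefix $\Mf{s,t}^{m_{s,t}}$. This is the delicate core of the argument and is where all the bookkeeping happens.

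Granted this lemma, left cancellativity $sx=sy\Rightarrow x=y$ is proven by induction on $N(sx)$. If the two distinguished expressions of $sx$ and $sy$ can be connected by a chain of elementary braid moves whose first step does not disturb the initial letter $s$, one simply strips $s$ off and invokes the inductive hypothesis. Otherwise the first move is a braid relation applied to a prefix beginning with $s$; reshuffling via the common-multiple lemma one finds a strictly shorter instance to which the inductive hypothesis applies. Iterating yields $ax=ay\Rightarrow x=y$ for all $a\in M(S)$. Right cancellativity is then a direct consequence of applying the same proof to the opposite monoid: the reverses of the defining relations are again Artin relations for the same Coxeter matrix (the reverse of $\Mf{s,t}^{m}=\Mf{t,s}^{m}$ is either itself or the same relation with $s,t$ swapped), so the opposite monoid is again the Artin monoid $M(S)$.

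The main obstacle throughout is the common-multiple lemma: once pairs of generators admit least common left-multiples of the predicted form, the passage to cancellativity is a routine but intricate induction. Consequently, the principal difficulty lies in organizing the nested induction that produces the prefix $\Mf{s,t}^{m_{s,t}}$ from any equation $sx=ty$.
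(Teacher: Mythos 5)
Your outline is essentially the argument of Brieskorn--Saito that the paper cites without reproving: length-additivity of the relations, the key lemma that $sx=ty$ with $s\neq t$ forces the common prefix $\langle s,t\rangle^{m_{s,t}}$ (their Lemma 2.1), left cancellativity by induction on word length, and right cancellativity via the opposite monoid. Note only that all of the mathematical substance lives in the common-multiple lemma you defer to ``bookkeeping''; as a roadmap, though, this is exactly the proof in the cited source, so nothing is done differently.
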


The following lemma gives a necessary condition for three generators to have a common multiple. (It turns out also to be sufficient.)
\begin{Lemma}[\cite{BrieskornSaito}, \S 2]\label{BrieskornSaitokgVEndlichkeit}
 An element $z$ of an Artin monoid $M(S)$ can only be divisible by three different letters $a,b,c\in S$ if the Coxeter matrix $M_{\{a,b,c\}}$ defines a finite Coxeter group. 
\end{Lemma}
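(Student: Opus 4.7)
The plan is to reduce to the parabolic submonoid on the three letters in question, and then invoke the fact, due to Brieskorn and Saito, that an Artin monoid is Garside (equivalently, admits a common right-multiple of its generators) if and only if its Coxeter group is finite.

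Concretely, set $T = \{a,b,c\}$ and let $M(T)$ denote the Artin monoid associated with the Coxeter matrix $M_S$ restricted to $T \times T$. The parabolic submonoid generated by $T$ inside $M(S)$ is canonically isomorphic to $M(T)$ (a classical result about Artin monoids; see \cite{BrieskornSaito}, \S 4, or van~der~Lek). Under this identification, the letters $a,b,c$ inside $M(S)$ correspond to the generators of $M(T)$.

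The first key step is to show that if $a,b,c$ have a common right-multiple $z$ in $M(S)$, then they also admit a common right-multiple within $M(T)$. Since the Artin monoid $M(S)$ is left- and right-cancellative and any two elements admitting a common right-multiple admit a least common right-multiple, I can successively form the pairwise least common right-multiples in $M(S)$. The pairwise lcm of $a$ and $b$ inside $M(S)$ equals $\Mf{a,b}^{m_{ab}}$ (which forces $m_{ab} < \infty$); in particular, it lies in $M(\{a,b\}) \subseteq M(T)$. Applying this procedure to $\mathrm{lcm}(a,b)$ and $c$, and using associativity of the right-lcm operation, one obtains an iterated lcm $\Delta \in M(T)$ which is simultaneously a right-multiple of $a$, $b$ and $c$.

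The second key step is to conclude that $W(T)$ must be finite. The existence of $\Delta$ means that inside the Artin monoid $M(T)$ — which is itself left- and right-cancellative, left- and right-noetherian, and closed under lcms whenever they exist — the three generators $a,b,c$ have a common right-multiple, hence a least common right-multiple $\Delta_T$. The element $\Delta_T$ then plays the role of a Garside element of $M(T)$: one verifies that its left- and right-divisors coincide and generate $M(T)$, by induction on the word length using the finiteness of the set of divisors of $\Delta_T$ (noetherianity). This exhibits $M(T)$ as a Garside monoid, and hence, by the result of Brieskorn and Saito recalled in the excerpt, $M(T)$ is of finite type, i.e., $W(T) = W(\{a,b,c\})$ is finite.

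The main obstacle I expect is the first step: carefully justifying that the lcm of $a$, $b$, $c$ in the ambient monoid $M(S)$ actually lies in the parabolic submonoid $M(T)$. This relies on the explicit form of pairwise lcms in Artin monoids as alternating words, together with the fact that the parabolic submonoid $\langle T \rangle \subseteq M(S)$ has the presentation of $M(T)$. Once this geometric/combinatorial input is available, the remainder of the argument is a direct application of Garside-theoretic generalities summarized in Section \ref{Garside theory: Basics}.
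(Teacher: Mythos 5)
The paper never proves this lemma --- it is imported verbatim from Brieskorn--Saito, \S 2 --- so your proposal has to stand on its own, and as written it has genuine gaps. The first is the reduction to the parabolic submonoid. Lemma \ref{BrieskornSaitokgVBuchstaben} does give $\llcm$/$\rlcm(a,b)=\Mf{a,b}^{m_{a,b}}$, which lies in the submonoid generated by $a,b$, and associativity identifies $\lcm(\lcm(a,b),c)$ as the lcm of the three letters \emph{in $M(S)$}; but nothing you quote implies that this element, or any common multiple of $a,b,c$, lies in the submonoid generated by $\{a,b,c\}$. That parabolic submonoids of Artin monoids are closed under lcms and complements (equivalently, that the embedding $M(T)\hookrightarrow M(S)$ reflects common multiples) is itself a nontrivial theorem from the same circle of ideas, not among the facts recalled in the paper, so your $\Delta$ has no reason to live in $M(T)$ and the whole reduction does not get off the ground.

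The second gap is the Garside verification: ``one verifies that its left- and right-divisors coincide and generate $M(T)$, by induction on the word length'' is precisely the substantive content, not a routine check. Balancedness of the lcm of the atoms, and the fact that every element divides a power of it (which you need before any two elements of $M(T)$ have lcms and gcds, as the paper's definition of a Garside monoid requires), are the heart of Brieskorn--Saito's fundamental-element theory. Worse, the final appeal to ``an Artin monoid is Garside iff it is of finite type'' is close to circular: the direction you need is proved exactly by showing that every element of the Coxeter group lifts to a divisor of $\Delta$, so that $W$ injects into the finite set of divisors --- i.e.\ it is essentially the statement of the present lemma. A self-contained argument must establish that lifting statement (or an equivalent) directly for the right-lcm of $a,b,c$ inside $M(S)$; granted that, finiteness of $W(\{a,b,c\})$ follows since an element of an Artin monoid has only finitely many divisors (the relations are homogeneous and preserve the support of a word). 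As it stands, your write-up does not prove the lemma but routes it through unproven results of at least equal depth.
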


The following basic observation is also important for us.

\begin{Lemma}[\cite{BrieskornSaito}, \S 3] \label{BrieskornSaitokgVBuchstaben}
 For two letters $a,b$, their least common multiple is given by $\Mf{a,b}^{m_{a,b}}=aba\ldots$ (a product of $m_{a,b}$ factors).
\end{Lemma}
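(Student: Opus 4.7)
The plan is to establish two assertions: that $\Delta := \Mf{a,b}^{m_{a,b}}$ is a common multiple of $a$ and $b$, and that every common multiple of $a$ and $b$ is divisible by $\Delta$. The first is immediate from the defining Artin relation: writing $\Mf{a,b}^{m_{a,b}} = \Mf{b,a}^{m_{a,b}}$ in the form $a \cdot \Mf{b,a}^{m_{a,b}-1} = b \cdot \Mf{a,b}^{m_{a,b}-1}$ exhibits $\Delta$ as left-divisible by both $a$ and $b$, so $\Delta$ is a common right-multiple of $a$ and $b$.

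For the minimality, I would fix a common right-multiple $w$ of $a$ and $b$ and prove by induction on $k = 1, \ldots, m_{a,b}$ that both alternating prefixes $\Mf{a,b}^k$ and $\Mf{b,a}^k$ are left-divisors of $w$. The base case $k=1$ is exactly the hypothesis. For the inductive step $k \to k+1$ with $k < m_{a,b}$, I write $w = \Mf{a,b}^k u = \Mf{b,a}^k v$; taking $k$ odd (the case $k$ even is symmetric), $\Mf{a,b}^k$ ends in $a$ and $\Mf{b,a}^k$ ends in $b$. The crux will be to deduce that $u$ starts with $b$, which yields $\Mf{a,b}^{k+1} = \Mf{a,b}^k \cdot b$ as a left-divisor of $w$; the symmetric argument yields that $\Mf{b,a}^{k+1}$ is a left-divisor of $w$ as well. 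Applied at $k = m_{a,b}$, the Artin relation identifies the two prefixes, and $\Delta$ left-divides $w$ as required.

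The main obstacle will be the inductive step itself, namely deducing from the equality $\Mf{a,b}^k u = \Mf{b,a}^k v$ in $M(S)$ (with $k < m_{a,b}$) that $u$ begins with $b$. The combinatorial content is that the only Artin relation capable of exchanging the $a$-start of the left-hand side for the $b$-start of the right-hand side is $\Mf{a,b}^{m_{a,b}} = \Mf{b,a}^{m_{a,b}}$ itself; since $k < m_{a,b}$, this relation has not yet been ``consumed'' at the front of either word, so any sequence of rewritings transforming one side into the other must leave enough alternation in place to force the prefix to extend by one more letter. To make this rigorous I would proceed by a secondary induction on the atom-length of $w$, using cancellativity of the Artin monoid (proved earlier in \cite{BrieskornSaito}) to peel off common letters from the left and reduce to a strictly shorter common multiple, following the strategy of \cite{BrieskornSaito}~\S 3. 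An alternative route via the rank-$2$ Garside structure on $M(\{a,b\})$ together with divisor-closedness of parabolic submonoids would be available, but invoking it here would be circular since those results are logically downstream of the present lemma.
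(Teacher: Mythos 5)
The easy half of your argument is fine: writing the relation as $a\cdot\Mf{b,a}^{m_{a,b}-1}=b\cdot\Mf{a,b}^{m_{a,b}-1}$ does exhibit $\Mf{a,b}^{m_{a,b}}$ as a common right-multiple of $a$ and $b$, and your induction skeleton for minimality (both alternating prefixes $\Mf{a,b}^k$ and $\Mf{b,a}^k$ left-divide $w$, for $k$ up to $m_{a,b}$) is the standard one. The genuine gap is the inductive step, which you name as the crux but do not prove: from $\Mf{a,b}^k u=\Mf{b,a}^k v$ with $k<m_{a,b}$ you must deduce that $u$ is left-divisible by $b$, and this is essentially the entire content of the lemma, not a technical detail. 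The heuristic that ``the only relation capable of exchanging the $a$-start for the $b$-start is $\Mf{a,b}^{m_{a,b}}=\Mf{b,a}^{m_{a,b}}$ itself'' is not an argument: an equality of positive words in $M(S)$ is realized by a chain of elementary transformations which may pass through words involving all the other generators occurring in $u$ and $v$, and controlling what such a chain does to the leftmost letters is exactly the hard combinatorial point. Moreover, the specific device you propose for making it rigorous --- a secondary induction on the length of $w$, ``peeling off common letters from the left'' by cancellativity --- cannot get started: the two factorizations of $w$ you need to compare begin with the distinct letters $a$ and $b$, so there is no common left letter to cancel, and cancellativity by itself gives no information about which letters can occur at the beginning of a positive word.

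Note also that the present paper does not prove Lemma~\ref{BrieskornSaitokgVBuchstaben} at all; it is quoted from \cite{BrieskornSaito}, so there is no in-paper argument your sketch could be matching. In Brieskorn--Saito the statement is obtained in \S 3 as a consequence of the Reduktionsregel of \S 2, which is established by a delicate double induction on word length and on the length of a chain of elementary transformations relating two positive words; that analysis (or an equivalent substitute, such as Garside's original word argument for braids or Dehornoy's word reversing) is precisely the ingredient your proposal is missing. As written, your proof reduces the lemma to an unproved assertion of the same strength and then defers to ``the strategy of \cite{BrieskornSaito}~\S 3'', so it is a plan rather than a proof.
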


Furthermore, we will need some results about the square-free elements. Recall that an element of an Artin monoid $M(S)$ is \textbf{square-free} if there is no word representing this element and containing a square of a generator $s^2$ ($s\in S$). We will denote the set of square-free elements in $M(S)$ by $\QF(S)$. A classical theorem often attributed to J.~Tits or H.~Matsumoto indicates the importance of square-free elements (cf.\ e.g. \cite{Humphreys}).

\begin{Theorem}[Tits, Matsumoto]
 Any two reduced words representing the same element in a Coxeter group $W(S)$ can be transformed into each other only using braid relations, i.e., without increasing the length of the word in between. 
\end{Theorem}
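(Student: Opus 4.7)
The plan is an induction on the common length $k = \ell_S(w)$ of the two given reduced expressions. The cases $k \leq 1$ are immediate. The technical core, which I would invoke as a black box obtained from the Strong Exchange Condition in Coxeter groups, is: if $w \in W(S)$ admits reduced expressions beginning with two distinct generators $s, t \in S$, then $m := m_{s,t} < \infty$ and $w$ admits a reduced expression beginning with the alternating word $\Mf{s,t}^{m}$.

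Granting this, the inductive step splits into two cases. Let $s_1 \cdots s_k$ and $t_1 \cdots t_k$ be reduced expressions of $w$. If $s_1 = t_1$, strip the common first letter and apply the induction hypothesis to the length-$(k-1)$ reduced expressions $s_2 \cdots s_k$ and $t_2 \cdots t_k$ of $s_1 w$; prepending $s_1$ produces the desired chain of braid moves. If $s_1 \neq t_1$, set $s = s_1$, $t = t_1$, apply the key lemma to obtain a reduced expression $\Mf{s,t}^{m} \cdot v$ of $w$ with $v$ a reduced word of length $k - m$, and use the defining Coxeter relation $\Mf{s,t}^{m} = \Mf{t,s}^{m}$ to obtain the further reduced expression $\Mf{t,s}^{m} \cdot v$, which is exactly one braid move away. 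Both $s_1 \cdots s_k$ and $\Mf{s,t}^{m} \cdot v$ begin with $s$, so by the first case they are connected by a chain of braid moves; similarly $t_1 \cdots t_k$ and $\Mf{t,s}^{m} \cdot v$ are connected. Concatenating the three chains completes the induction, and since every word in the chains is reduced (each braid move preserves length), no step goes through a longer word.

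The main obstacle is the key lemma itself. The standard route passes through the geometric (reflection) representation of $W(S)$ on a real vector space equipped with the symmetric bilinear form attached to the Coxeter matrix $M_S$; this yields the Strong Exchange Condition, and iterating it from reduced expressions of $w$ starting with $s$ and with $t$ respectively produces reduced expressions of $w$ whose alternating $s,t$-prefix is strictly longer at each step. Since the alternating word in $s,t$ of length greater than $m_{s,t}$ fails to be reduced in $W(S)$, the iteration must terminate precisely at length $m_{s,t}$, establishing simultaneously the finiteness of $m_{s,t}$ and the existence of a reduced expression of $w$ with $\Mf{s,t}^{m}$ as a prefix.
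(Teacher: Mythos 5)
The paper does not actually prove this statement: it is quoted as a classical theorem of Tits and Matsumoto with a pointer to the literature (Humphreys), so there is no in-paper argument to compare against. Your proposal is the standard classical proof (essentially Tits' original one) and it is correct: induction on the common length $k$, the case of equal first letters handled by stripping and induction, and the case $s_1\neq t_1$ handled by the key lemma that if $s\neq t$ are both left descents of $w$ then $m_{s,t}<\infty$ and $w$ has a reduced expression beginning with the longest element $\Mf{s,t}^{m_{s,t}}$ of the standard dihedral parabolic $W_{\{s,t\}}$ — a standard consequence of the (strong) exchange condition, and your iterated-exchange sketch of it is the usual argument, with finiteness of $m_{s,t}$ forced by the bound $\ell(w)$ on the alternating prefix. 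One point worth making explicit, which you handle correctly: when you invoke ``the first case'' twice inside the case $s_1\neq t_1$, there is no circularity, since that case only uses the inductive hypothesis for the element $sw$ (respectively $tw$) of length $k-1$; and since every intermediate word is a reduced expression of $w$, the chain never passes through a longer word, which is exactly the assertion of the theorem.
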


In particular, one obtains a set-theoretic section $\tau\colon W(S) \to \QF(S)\subset M(S)$ of the projection $\pi\colon M(S)\to W(S)$. One can use this to show that $\pi\colon \QF(S)\to W(S)$ is a bijection. 
Furthermore, $\tau$ is not a monoid homomorphism in general, but we have $\tau(uv)=\tau(u)\tau(v)$ whenever $l(uv)=l(u)+l(v)$ holds for the Coxeter length $l$ on $W(S)$.

The following proposition is proven by F.~Digne and J.~Michel (\cite{DigneMichel}), based on \cite{MichelNoteBraid}. 

\begin{Prop} (\cite{DigneMichel})\label{QFabgeschlossen}
 Let $M(S)$ be an Artin monoid. Then the set of all square-free elements $\QF(S)\subset M(S)$ is closed under least common left- and  right-multiples and left- and right-complement. In particular, there are $\QF(S)$-normal forms in $M(S)$ like described in Theorem \ref{NFGauss}. 
\end{Prop}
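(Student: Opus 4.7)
The plan is to exploit the Tits--Matsumoto bijection $\pi\colon \QF(S) \to W(S)$, with set-theoretic section $\tau$, and to translate the divisibility structure on square-free elements of $M(S)$ into the well-studied right weak Bruhat order $\leq_R$ on the Coxeter group $W(S)$. Recall that $u\leq_R v$ means $v=wu$ for some $w\in W(S)$ with $\ell(v)=\ell(w)+\ell(u)$, and that this partial order is known to be a meet-semilattice which is a lattice on every interval bounded above.

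First I would establish the following dictionary: for $a,b\in \QF(S)$, the element $a$ right-divides $b$ in $M(S)$ if and only if $\pi(a)\leq_R \pi(b)$. The ``if'' direction is immediate: $\pi(b)=\pi(w)\pi(a)$ with lengths adding lifts via $\tau$ to a length-additive factorisation $b=\tau(w)\,a$ in $M(S)$. The ``only if'' direction uses that any factorisation $b=ca$ of a square-free $b$ forces $N_S(b)=\ell(\pi(b))\leq \ell(\pi(c))+\ell(\pi(a))\leq N_S(c)+N_S(a)=N_S(b)$, so all inequalities are equalities, $c$ is itself square-free and $\pi(c)\pi(a)$ is a reduced product in $W(S)$.

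Next I would argue that if $a,b\in\QF(S)$ admit a common left-multiple in $M(S)$, then $\pi(a)$ and $\pi(b)$ admit a common upper bound in $\leq_R$. The key subtlety here is that I need the stronger statement that whenever $a\in\QF(S)$ right-divides $m\in M(S)$ with $m$ not necessarily square-free, one still has $\pi(a)\leq_R\pi(m)$; I expect to prove this by induction on $N_S(m)$, splitting off a last letter of $m$ and invoking Brieskorn--Saito's exchange phenomena together with Lemma~\ref{BrieskornSaitokgVBuchstaben} which controls lcm's of pairs of generators. Granted this, the meet-semilattice property produces $u\vee v\in W(S)$ for $u=\pi(a)$, $v=\pi(b)$, and I would then verify that $\tau(u\vee v)$ is the least common left-multiple of $a,b$ in $M(S)$: it is a common left-multiple by the dictionary, and it is minimal because any common left-multiple $m$ satisfies $u\vee v\leq_R\pi(m)$ by the subtle step, hence $\tau(u\vee v)$ right-divides $m$ by the dictionary again.

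Closure under left-complements is then formal: writing $\llcm(a,b)=c\cdot a$ with $\llcm(a,b)$ already known to be square-free, the second step above applied to this factorisation forces $c$ to be square-free. The right-sided statements follow by the manifest left/right symmetry of the Artin presentation. Finally, the ``in particular'' clause is Theorem~\ref{NFGauss} applied to $\E=\QF(S)$, which generates $M(S)$ since $S\subset\QF(S)$ and has just been shown to be closed under $\llcm$ and left-complements. The principal obstacle is the promotion of divisibility in $M(S)$ to the weak Bruhat order when only the right-divisor is assumed square-free; once this transport is in place, the rest of the argument is purely formal.
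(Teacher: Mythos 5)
Your step (1), the dictionary between right-divisibility inside $\QF(S)$ and the right weak order on $W(S)$, is fine: homogeneity of the Artin relations plus the Tits--Matsumoto theorem indeed give $N_S(b)=\ell(\pi(b))$ for square-free $b$, and your length count shows that any factorization $b=ca$ of a square-free $b$ is automatically square-free and length-additive. The problem is the step you yourself flag as the key subtlety, and it is not merely subtle but false: it is not true that $a\in\QF(S)$ right-dividing an arbitrary $m\in M(S)$ forces $\pi(a)\leq_R\pi(m)$. Take $m=s^2$ for a single generator $s$; then $a=s$ right-divides $m$, but $\pi(m)=1$ and $s\not\leq_R 1$. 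More generally $\pi$ collapses all information as soon as $m$ is not square-free (e.g.\ $\pi(\Delta^2)$ in finite type), so no induction on $N_S(m)$ can rescue the statement. Since this transport is load-bearing twice in your plan --- once to produce an upper bound of $\pi(a),\pi(b)$ in weak order from a common left-multiple in $M(S)$, and once to prove minimality of $\tau(\pi(a)\vee\pi(b))$ among all common left-multiples $m$ --- both the existence and the minimality halves of your lcm argument collapse.

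What the argument actually needs in place of the false lemma is the statement that the square-free right-divisors of an arbitrary $m\in M(S)$ are directed: equivalently, that $m$ has a greatest square-free right-divisor, or that the least common left-multiple of two square-free elements (which exists by Brieskorn--Saito whenever a common multiple exists) is again square-free. This is essentially equivalent to the proposition itself and is exactly the nontrivial inductive content of Michel's note and of Digne--Michel, proved by an induction that splits off a letter of $m$ and analyzes how a square-free divisor interacts with that letter via the exchange condition and the lcm of pairs of generators (Lemma \ref{BrieskornSaitokgVBuchstaben}); none of that analysis appears in your plan, which instead routes everything through $\pi(m)$. Note also that the paper does not prove this proposition at all --- it cites \cite{DigneMichel}, based on \cite{MichelNoteBraid} --- so if you want a self-contained argument you would have to supply precisely this missing induction; once it is available, your reduction of the complement statement and the appeal to Theorem \ref{NFGauss} for the ``in particular'' clause, as well as the left/right symmetry, are unproblematic.
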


This implies the following corollary concerning factorability structures.
\begin{Cor}
 Let $M(S)$ be an Artin monoid. Then there is a factorability structure on $M(S)$ with respect to the generating system $QF(S)$ of all square-free elements. The prefix $\eta'(x)$ of an element $x \in M(S)$ is given by the maximal square-free right-divisor of $x$. 
\end{Cor}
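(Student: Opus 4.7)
The plan is to deduce this corollary as a direct application of Theorem~\ref{DefinitionFaktorabilitaetGauss} to the Artin monoid $M(S)$ with the generating subset $\E = \QF(S)$. To do so, I would verify the three hypotheses on the monoid and the two hypotheses on the generating set, and then identify the prefix via Corollary~\ref{NormalFormGreatestLeftDivisor}.

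First, I would check that $M(S)$ is a right-cancellative, left-noetherian monoid in which any two elements admitting a common left-multiple admit a least common left-multiple. Right-cancellativity follows from the Brieskorn--Saito result that every Artin monoid is cancellative. For left-noetherianity, I would argue via the $S$-word-length: the defining braid relations preserve word length, so $N_S$ descends to $M(S)$, and if $x = yz$ with $z \neq 1$, then $N_S(y) < N_S(x)$. Hence any strictly descending chain in the left-divisibility order strictly decreases $N_S$, which is impossible since $N_S$ takes values in $\mathbb{N}$. The existence of least common left-multiples when a common left-multiple exists is a further classical theorem of Brieskorn--Saito (the analogue of Lemma~\ref{BrieskornSaitokgVBuchstaben} at the level of arbitrary pairs of elements), so I would quote it.

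Next, I would verify that $\QF(S)$ is a generating set of $M(S)$ that is closed under least common left-multiples and left-complements. Since every $s \in S$ is square-free, we have $S \subset \QF(S)$, so $\QF(S)$ generates $M(S)$. Closure under lcm and left-complement is exactly the content of Proposition~\ref{QFabgeschlossen} of Digne--Michel. I would note that $1 \notin \QF(S)$ in the conventions of the preceding section can be arranged by removing $1$ from $\QF(S)$ (which does not affect any of the closure properties since $1$ is an absorbing unit for these operations).

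Having checked these hypotheses, Theorem~\ref{DefinitionFaktorabilitaetGauss} yields directly a factorability structure $\eta$ on $(M(S), \QF(S))$, defined by $\eta(x) = (x_p\cdots x_2, x_1)$ where $x_p \cdots x_1$ is the $\QF(S)$-normal form of $x$. Finally, to identify $\eta'(x) = x_1$ as the maximal square-free right-divisor of $x$, I would invoke Corollary~\ref{NormalFormGreatestLeftDivisor}, which states that the rightmost factor of the $\E$-normal form is the unique greatest right-divisor of $x$ lying in $\E$; applied to $\E = \QF(S)$, this is precisely the claim. The main potential obstacle is the verification of the Artin-monoid prerequisites, but each of these is covered either by Brieskorn--Saito or by the cited Digne--Michel result, so no new computation is required.
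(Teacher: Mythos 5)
Your proposal is correct and matches the paper's (implicit) argument: the corollary is obtained by feeding the Brieskorn--Saito properties of Artin monoids and the Digne--Michel closure result (Proposition \ref{QFabgeschlossen}) into Theorem \ref{DefinitionFaktorabilitaetGauss}, with the prefix identified via Corollary \ref{NormalFormGreatestLeftDivisor}. Your extra touches -- the word-length argument for left-noetherianity (using homogeneity of the braid relations) and the remark about excluding $1$ from $\QF(S)$ -- are fine and only make explicit what the paper leaves to the cited references.
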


In some cases, it is possible to show that $\QF(S)$ is the smallest subset of $M(S)$ closed under least common left-multiples and left-complements and containing $S$. For example, this is surely true for all Artin groups of finite type (cf.\ e.g. Section 6.6 of \cite{KasselTuraev}). However, this does not hold in general. An example where such a subset is again finite and strictly smaller than $\QF(S)$ is given in second author's thesis (\cite{MyThesis}). A general finiteness result for these subsets can be found in \cite{EndlichkeitArtin}.

\section{Factorability Structure on Garside Groups}
\label{Factorability structure on Garside groups}

We have already shown that there is a factorability structure on right-cancellative, left-noetherian monoids in which any two elements admitting a common left-multiple also admit a least common left-multiple. However, it is hard to transfer information about monoids into information about the corresponding groups of fractions in the general case. In this section, we show how this works in the case of Garside monoids. The key ingredient is the normal form described by P.~Dehornoy (\cite{DehornoyF}) as in Proposition \ref{NFGarside}. Furthermore, we will use the results by R.~Charney and J.~Meier concerning the lengths of normal forms.

In this section, let $M$ be a Garside monoid with a Garside element $\Delta$ and let $\D$ be the set of its non-trivial left-divisors. Let $G$ be the group of fractions of $M$.

We define now $\eta$ on $G$ as follows: For any $z\in G$, let $x_p\ldots x_1 y_1^{-1}\ldots y_q^{-1}$ be the normal form of Proposition \ref{NFGarside}. We set
\begin{eqnarray*}
\eta'(z) &= &y_q^{-1}\\
\overline{\eta}(z) &= &x_p\ldots x_1 y_1^{-1}\ldots y_{q-1}^{-1}
\end{eqnarray*}
in the case that $q\neq 0$ and 
\begin{eqnarray*}
\eta'(z) &=& x_1\\
\overline{\eta}(z)&=&x_p\ldots x_2
\end{eqnarray*}
for $q=0$ (equivalently, for $z\in M$).

\begin{Prop}\label{GarsideGruppenFaktorabel}
The map $\eta$ turns the group $G$ with the chosen generating system $\mathcal{E}$ into a factorable group. Here, $\E$ denotes the generating set $\mathcal{E}=\D\cup \D^{-1}$.
\end{Prop}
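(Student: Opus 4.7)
The plan is to verify that $\eta$ is a factorization map, to reduce the full factorability axiom to its weak form via right-cancellation, and finally to verify weak factorability by a case analysis based on Lemma \ref{FactAxiom}.

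First I would check axioms (F1)--(F3) of Definition \ref{FactDefMonoid} directly from the definition of $\eta$. Condition (F1) is immediate, and (F3) holds because $y_q^{-1}\in\D^{-1}\subset\mathcal{E}$ (resp.\ $x_1\in\D\subset\mathcal{E}$) whenever $q\ge 1$ (resp.\ $q=0$). For (F2), Lemma \ref{Norm} gives $N_{\mathcal{E}}(z)=p+q$. The truncated word representing $\overline{\eta}(z)$, namely $x_p\ldots x_1 y_1^{-1}\ldots y_{q-1}^{-1}$ (or $x_p\ldots x_2$ if $q=0$), still satisfies all the conditions of Proposition \ref{NFGarside}, hence is itself a normal form of length $p+q-1$, and by Lemma \ref{Norm} has $N_{\mathcal{E}}$-length $p+q-1$; combined with $N_{\mathcal{E}}(\eta'(z))=1$, this yields (F2).

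Since $G$ is a group and therefore right-cancellative, Corollary \ref{RechtskuerzbarAequivalenz} reduces the task to showing that $\eta$ is a weak factorability structure. By Lemma \ref{FactAxiom}, it suffices to show that whenever $(z,t)\in G\times\mathcal{E}$ is such that both $(\eta'(z),t)$ and $(\overline{\eta}(z),\overline{\eta}(\eta'(z)t))$ are geodesic, one has $\eta'(zt)=\eta'(\eta'(z)t)$; the remaining identity $\overline{\eta}(z)\overline{\eta}(\eta'(z)t)=\overline{\eta}(zt)$ then follows automatically from right-cancellation in $G$, and the geodesicity of $(z,t)$ itself is obtained from the last remark in Lemma \ref{FactAxiom}.

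To establish this key identity I would perform a case analysis on the four possibilities determined by $q=0$ vs.\ $q\ge 1$ and $t\in\D$ vs.\ $t\in\D^{-1}$. In each case $\eta'(z)\,t$ is a product of two letters from $\mathcal{E}$, whose normal form is explicitly computable using Lemma \ref{Rechenregeln}, which describes how a product $st$ of two letters in $\D$ rewrites greedily via least common multiples and complements (the mixed cases $\D\cdot\D^{-1}$ and $\D^{-1}\cdot\D$ are handled by extracting a common right-divisor and using the analogous right-multiple formulas). The case $q=0$, $t\in\D$ is essentially Lemma \ref{Prod} in $M$: $\rgcd(zt,\Delta)=\rgcd(\rgcd(z,\Delta)\,t,\Delta)=\rgcd(x_1 t,\Delta)$, which is precisely $\eta'(zt)=\eta'(\eta'(z)t)$ in that case; the three remaining cases follow the same pattern.

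The main obstacle will be controlling possible ``cascading'' rewrites in the normal form of $zt$ after the normal form of $\eta'(z)\,t$ has been substituted in. Indeed, the word so obtained satisfies the normal-form conditions of Proposition \ref{NFGarside} at its two rightmost letters, but the condition linking the second-rightmost letter to its left neighbour (e.g.\ $\rgcd({}^{*}y_{q-1},t\backslash y_q)=1$ in the case $q\ge 1$, $t\in\D$) need not hold a priori. Any such cascade would, however, strictly decrease the length of $\overline{\eta}(z)\cdot\overline{\eta}(\eta'(z)t)$, contradicting the geodesicity hypothesis supplied by Lemma \ref{FactAxiom}; here Lemma \ref{NormExplizit} and the geodesicity of normal forms from Lemma \ref{Norm} provide the precise bookkeeping. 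Once no such cascade is possible, the rightmost letter of the normal form of $zt$ coincides with that of $\eta'(z)\,t$, which is the required identity.
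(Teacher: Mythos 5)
Your reduction is sound and matches the paper's: verify (F1)--(F3) via Lemma \ref{Norm}, invoke Corollary \ref{RechtskuerzbarAequivalenz} to pass to weak factorability, apply Lemma \ref{FactAxiom}, and split into the four cases according to $z\in M$ or not and $t\in\D$ or $t\in\D^{-1}$; the case $z\in M$, $t\in\D$ is indeed just Lemma \ref{Prod}, with no geodesicity hypotheses needed. The gap lies in your treatment of the remaining three cases, which do \emph{not} ``follow the same pattern''. When $z\notin M$ (or $t\in\D^{-1}$), the word obtained by substituting $\NF(\eta'(z)t)$ into $\overline{\eta}(z)\cdot\eta'(z)t$ is not ``a normal form except for one junction'': e.g.\ for $z\notin M$, $t\in\D$ one gets $x_p\ldots x_1y_1^{-1}\ldots y_{q-1}^{-1}\,a\,b^{-1}$ with a positive letter $a$ sitting to the right of negative letters, which violates the very shape of the normal form of Proposition \ref{NFGarside}. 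Bringing it to normal form requires conjugating $a$ (twisted by powers of $\varphi$) past all $q-1$ negative letters, a global rearrangement; this is exactly why the paper introduces the auxiliary normal form $z=w_r\ldots w_1\Delta^{-q}$ of Lemma \ref{NFLemma}, the relation $\Delta^{-q}s=\varphi^{q}(s)\Delta^{-q}$, and the identity $\eta'(z)=\Delta^{-1}\varphi^{-q}(w_1)$, so that both $\eta'(zs)$ and $\eta'(\eta'(z)s)$ can be computed explicitly as $\varphi^{-q+1}(\rgcd(w_1\varphi^{q}(s),\Delta))\Delta^{-1}$ and compared. Your proposal never produces such an identification.

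Moreover, your mechanism for controlling the rearrangement -- ``any cascade would strictly decrease the length of $\overline{\eta}(z)\cdot\overline{\eta}(\eta'(z)t)$, contradicting geodesicity'' -- does not do the required work. Length bookkeeping alone cannot identify \emph{which} letter ends the normal form of $zt$; even when all norms are preserved one still has to compute the greedy tail, and conversely it is not clear that a change of tail forces a norm drop in the displayed pair. In the paper the geodesicity hypotheses are used quite differently and more precisely: via Lemma \ref{NormExplizit} they show, for instance, that $w_1\varphi^{q}(s)$ and $w_r\ldots w_2a$ are not right-divisible by $\Delta$ and that $r=\max\{r,q\}$, which is what makes the explicit formulas for the tails valid and yields geodesicity of $(z,s)$; in the mixed cases one must also isolate the subcases $x_1^{*}\preceq t$ resp.\ $w_1^{*}\preceq\varphi^{q}(t)$, where $(\eta'(z),t)$ fails to be geodesic and the implication is vacuous. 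Without the $\Delta$-power normal form and these norm computations, the key identity $\eta'(zt)=\eta'(\eta'(z)t)$ in the non-monoid cases remains unproved.
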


Before proving the proposition, we need the following lemma varying slightly the normal forms already known in the literature. Since we need to connect these normal forms with the ones in Proposition \ref{NFGarside}, we will give a proof of this lemma. 

\begin{Lemma}\label{NFLemma}
For any $z\in G$, there is a unique decomposition  
\begin{eqnarray*}
 z=w_r\ldots w_1 \Delta^{-m}
\end{eqnarray*}
 with $m\geq 0$ where $w_r\ldots w_1 \in M$ is in its normal form like in Proposition \ref{NFGarside}, and $w_r\ldots w_1\nsucceq \Delta$ if $m>0$. If the normal form of $z$ described in Proposition \ref{NFGarside} is given by 
\begin{eqnarray*}
 x_p\ldots x_1 y_1^{-1}\ldots y_q^{-1}
\end{eqnarray*}
and $q>0$, then $m=q$ and $r\leq p+q$. Furthermore, if in this case $z$ is not a power of $\Delta$, the relation between $w_1$ and $y_q$ is given by $w_1=\varphi^{q-1}(y_q^*)$ or, equivalently, $y_q^{-1}=\Delta^{-1}\varphi^{-q}(w_1)$.
\end{Lemma}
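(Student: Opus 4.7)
The plan is to pass from the Proposition \ref{NFGarside} normal form of $z$ to the desired decomposition by collecting all inverses on the right. Given $z = x_p\ldots x_1 y_1^{-1}\ldots y_q^{-1}$ with $q > 0$, I would use the identity $y_j^{-1} = y_j^* \Delta^{-1}$ (from $y_j y_j^* = \Delta$) and the commutation rule $\Delta^{-1} x = \varphi(x) \Delta^{-1}$ of Remark \ref{Stern} to push each $\Delta^{-1}$ through the factors to its right, arriving at
\begin{eqnarray*}
z = W \Delta^{-q}, \qquad W = x_p \ldots x_1 \cdot y_1^* \varphi(y_2^*) \varphi^2(y_3^*) \ldots \varphi^{q-1}(y_q^*) \in M.
\end{eqnarray*}
Setting $m = q$ and denoting by $w_r\ldots w_1$ the $\D$-normal form of $W$, the bound $r \leq p + q$ is immediate since $W$ is exhibited as a product of $p + q$ elements of $\D$.

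To verify $W \nsucceq \Delta$ when $m > 0$, I would first note that $\Delta \varphi(x) = x\Delta$ together with $\varphi$ being an automorphism of $M$ forces left and right divisibility by $\Delta$ on $M$ to coincide, so it suffices to show $\Delta \npreceq W$. Lemma \ref{Norm} gives $N_\E(z) = p + q$, while Lemma \ref{NormExplizit} gives $p + q = N_\E(W \Delta^{-q}) \leq \max\{r, q\}$ with equality if and only if $\Delta \npreceq W$; combined with $r \leq p + q$, a short case distinction between $p \geq 1$ and $p = 0$ forces the required equality. Uniqueness of $(W, m)$ is then a standard argument: if $z = W'\Delta^{-m'} = W\Delta^{-m}$ with $m \geq m'$, then $W = W'\Delta^{m-m'}$ in $M$, so $m > m'$ would force $\Delta \preceq W$, contradicting $W \nsucceq \Delta$; hence $m = m'$ and right-cancellation gives $W = W'$.

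The main obstacle is establishing the formula $w_1 = \varphi^{q-1}(y_q^*)$, for which I would induct on $q$ using the characterization $w_1 = \rgcd(W, \Delta)$. For the inductive step, iterated application of Lemma \ref{Prod} reduces $\rgcd(W, \Delta)$ to $\rgcd\left(\rgcd(A, \Delta) \cdot \varphi^{q-1}(y_q^*),\, \Delta\right)$ where $A = x_p \ldots x_1 y_1^* \ldots \varphi^{q-2}(y_{q-1}^*)$; the induction hypothesis gives $\rgcd(A, \Delta) = \varphi^{q-2}(y_{q-1}^*)$, while the base case $q = 1$ uses $\rgcd(x_p \ldots x_1, \Delta) = x_1$ from Proposition \ref{NFGarside}. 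Lemma \ref{Rechenregeln}(3), together with the identity $\alpha \circ \varphi = \varphi \circ \alpha$ on $\D$ (a consequence of $\delta = \alpha^2 = \varphi^{-1}$), then reduces the target identity to $\rlcm(y_q^*, y_{q-1}) = \Delta$ (respectively $\rlcm({}^*x_1, {}^*y_1) = \Delta$ in the base case). This in turn is equivalent, via the standard duality stating that $\rgcd(a, b) = 1$ if and only if $\rlcm({}^*a, {}^*b) = \Delta$ for $a, b \in \D$, to the greediness condition $\rgcd({}^*y_{q-1}, y_q) = 1$ (resp.\ $\rgcd(x_1, y_1) = 1$) from Proposition \ref{NFGarside}. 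The non-triviality of $w_1$, needed for the formula even to make sense, follows since $y_q = \Delta$ combined with the normal-form conditions $\rgcd({}^*y_k, y_{k+1}) = 1$ and $\rgcd(x_1, y_1) = 1$ would cascade to make all $y_i$ and all $x_j$ equal to $\Delta$, forcing $z$ to be a power of $\Delta$.
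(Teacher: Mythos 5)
Your proposal is correct, and it starts from the same decomposition as the paper (rewriting $y_j^{-1}=y_j^*\Delta^{-1}$ and pushing all $\Delta^{-1}$'s to the right to get $z=W\Delta^{-q}$ with $W=x_p\ldots x_1y_1^*\varphi(y_2^*)\ldots\varphi^{q-1}(y_q^*)$), but the verification is genuinely different. The paper's central step is to check, letter by letter via the conditions $\rgcd({}^*x_k,x_{k+1})=1$, $\rgcd(y_1,x_1)=1$ and $\rgcd({}^*\varphi^i(y_{i+1}^*),\varphi^{i-1}(y_i^*))=\varphi^i(\rgcd({}^*y_i,y_{i+1}))=1$, that the displayed word (with trivial letters removed, which requires the auxiliary argument that $y_j=\Delta$ cannot occur when $p>0$, and the ``all $\Delta$'s on the right'' analysis when $p=0$) is \emph{already} the $\D$-normal form; then $r\le p+q$, $w_1=\varphi^{q-1}(y_q^*)$ and $w_r\ldots w_1\nsucceq\Delta$ are read off directly, and uniqueness is referred back to Proposition \ref{NFGarside}. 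You never establish normality of the explicit word: instead you bound $r$ trivially, get $\Delta\nmid W$ by the norm count via Lemmas \ref{Norm} and \ref{NormExplizit} (together with the correct observation that $\Delta\varphi(x)=x\Delta$ makes left- and right-divisibility by $\Delta$ coincide), prove uniqueness by a clean direct divisibility argument, and recover $w_1=\rgcd(W,\Delta)$ by an induction on $q$ through Lemma \ref{Prod}, Lemma \ref{Rechenregeln}(3) and the standard duality $\rgcd(a,b)=1\Leftrightarrow\rlcm({}^*a,{}^*b)=\Delta$; I checked that your reductions to $\rlcm(y_q^*,y_{q-1})=\Delta$ and $\rlcm({}^*x_1,{}^*y_1)=\Delta$ are exactly right, including the degenerate case $y_{q-1}=\Delta$. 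Your route avoids the paper's case analysis on trivial letters at the price of a longer gcd recursion, while the paper's route yields the slightly stronger fact that the rewritten word is itself normal (so $r=p+q$ when $p>0$). One small correction: in your non-triviality cascade, $y_q=\Delta$ indeed forces all $y_i=\Delta$, but the condition $\rgcd(x_1,y_1)=1$ with $y_1=\Delta$ then forces $x_1=1$, i.e., $p=0$ (no $x_j$ exists), not ``all $x_j$ equal to $\Delta$''; the conclusion that $z$ is a power of $\Delta$ stands.
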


\begin{pf}
The uniqueness is a direct consequence of the uniqueness of the normal form in Proposition \ref{NFGarside}. 

If $q=0$ (equivalently, $z\in M$), the existence of the decomposition is immediate. So we assume $q>0$ (equivalently, $z\notin M$). 

We start with the normal form of $z$ as in Proposition \ref{NFGarside} and apply the observations of Remark \ref{Stern}. In particular, we know for each divisor $s$ of $\Delta$ that $s^{-1}=s^*\Delta^{-1}$ holds. 
\begin{eqnarray*}
z&=&x_p\ldots x_1 y_1^{-1}\ldots y_q^{-1}\\
&=&x_p\ldots x_1 y_1^*\Delta^{-1}\ldots y_q^*\Delta^{-1}\\
&=&x_p\ldots x_1y_1^*\varphi(y_2^*)\ldots \varphi^{q-1}(y_q^*)\Delta^{-q}.
\end{eqnarray*}
Observe that all $\varphi^{i-1}(y_i^*)$ are divisors of $\Delta$ since $\varphi(\Delta)=\Delta$ and $y_i^*$ are divisors of $\Delta$. However, it can happen that $\varphi^{i-1}(y_i^*)=1$. Since $\varphi$ is a monoid automorphism, this is equivalent to $y_i^*=1$, and this in turn is equivalent to $y_i=\Delta$. Observe that if $p>0$, this cannot occur. Indeed, if $y_j=\Delta$, then we have
\begin{eqnarray*}
  x_p\ldots x_1 y_1^{-1}\ldots y_q^{-1}= x_p\ldots x_2(x_1\Delta^{-1}) \varphi^{-1}(y_1^{-1})\ldots \varphi^{-1}(y_{j-1}^{-1}) y_{j+1}^{-1}\ldots y_q^{-1}
\end{eqnarray*}
which would contradict the fact that this normal form is geodesic (cf.\ Lemma \ref{Norm}) since $x_1\Delta^{-1}\in \E\cup \{1\}$. Now let $p=0$. Then $z^{-1}=y_q\ldots y_1$ is a right greedy normal form normal form of Proposition \ref{NFGarside}. In it, all $\Delta$'s are on the very right. Indeed, if an element of $M$ can be written in the form $a\Delta b$, then it is equal to $a\varphi^{-1}(b)\Delta$. It follows from the definition of greediness that if the normal form of $z^{-1}$ contains a $\Delta$, then $\Delta=\rgcd(\Delta, z^{-1})=y_1$. Iterating this argument, we see that all $\Delta$'s in the normal form are on the very right. 

Denote now by $w_r,\ldots, w_1$ the letters $x_p, \ldots, x_1, y_1^*, \varphi(y_2^*), \ldots, \varphi^{q-1}(y_q^*)$ if $p>0$ and the letters $\varphi^{j-1}(y_j^*),\ldots, \varphi^{q-1}(y_q^*)$ for $p=0$ where $j$ is the smallest index such that $y_j\neq \Delta$. 

We check that $z\Delta^{q}$ has $w_r\ldots w_1$ as its normal form. Indeed, the equation 
\begin{eqnarray*}
\rgcd({}^*x_k,x_{k+1})=1
\end{eqnarray*}
 holds by definition. Next,  we obtain
\begin{eqnarray*}
\rgcd({}^*(y_1^*), x_1)=\rgcd(y_1, x_1)=1.
\end{eqnarray*}
At last, we note that Remark \ref{Stern} implies $x^*={}^*\varphi(x)$ and $\varphi(x^*)=\varphi(x)^*$ for all $\Delta\succeq x$. Recall furthermore from Remark \ref{Stern} that for all $t\in \mathcal{D}$, the identity $({}^*t)^*=t$ holds. Analogously, also the identity ${}^*(t^*)=t$ holds true. We use this to check
\begin{eqnarray*}
\rgcd({}^*\varphi^{i}(y_{i+1}^*), \varphi^{i-1}(y_i^*)) &=&\rgcd(\varphi^{i}(y_{i+1}), \varphi^i(^*y_i))\\
&=&\varphi^i(\rgcd({}^*y_i, y_{i+1}))=1.
\end{eqnarray*}
So, $x_p\ldots x_1y_1^*\varphi(y_2^*)\ldots \varphi^{q-1}(y_q^*)$ is a normal form. 

Assume now, for contradiction, $x_p\ldots x_1y_1^*\varphi(y_2^*)\ldots \varphi^{q-1}(y_q^*)\succeq \Delta$ holds. Then we know that $\varphi^{q-1}(y_q^*)=\Delta$ by the second part of Proposition \ref{NFGarside}, thus also $y_q^*=\Delta$ and $y_q=1$ follows. This contradicts the choice of $y_i$. 

The uniqueness of the new normal form now follows from the uniqueness of the normal form in Proposition \ref{NFGarside}.

We have defined $w_1$ to be $\varphi^{q-1}(y_q^*)$. We want to prove the equivalent formulation $y_q^{-1}=\Delta^{-1}\varphi^{-q}(w_1)$. The definition is equivalent to
\begin{eqnarray*}
\varphi^{1-q}(w_1)=y_q^*.
\end{eqnarray*}
We can rewrite $y_qy_q^*=\Delta$ to $y_q^*=y_q^{-1}\Delta$. Inserting this into the equation above and multiplying with $\Delta^{-1}$, we obtain 
\begin{eqnarray*}
y_q^{-1}=\varphi^{1-q}(w_1)\Delta^{-1}=\Delta^{-1}\varphi^{-q}(w_1).
\end{eqnarray*}
This completes the proof.
\end{pf}

The following easy fact will be used several times in the proof of Proposition \ref{GarsideGruppenFaktorabel}:
\begin{Lemma}\label{ProduktZweierTeilerStern}
 Let $M$ be a Garside monoid and let $a,b\in \mathcal{D}$ be two divisors of $\Delta$. Then $a^*\preceq b$ is equivalent to $ab\succeq \Delta$. Indeed, in this case one can even show that there is a $t\in\mathcal{D}\cup \{1\}$ so that $ab=t\Delta$. 
\end{Lemma}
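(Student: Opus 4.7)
The plan is to exploit the three identities from Remark \ref{Stern}: for $a \in \mathcal{D}$ we have $\Delta = a \cdot a^* = {}^*a \cdot a$, and for any $x \in M$ the conjugation relation $x\Delta = \Delta \varphi(x)$ holds, equivalently $\Delta x = \delta(x) \Delta$ where $\delta = \varphi^{-1}$ and both $\varphi$, $\delta$ restrict to bijections of $\mathcal{D} \cup \{1\}$. These identities let us freely move $\Delta$ across factors in $M$, converting the condition ``$\Delta$ left-divides $ab$'' to ``$\Delta$ right-divides $ab$'' and vice versa.

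For the forward implication ``$a^* \preceq b \Rightarrow ab \succeq \Delta$'', I would write $b = a^* c$ with $c \in M$ and compute $ab = aa^*c = \Delta c = \delta(c)\Delta$, exhibiting $\Delta$ as a right-divisor of $ab$ with quotient $w := \delta(c)$. For the converse, starting from $ab = w\Delta$, I would rewrite the right-hand side as $\Delta \varphi(w) = a\cdot a^*\varphi(w)$ and apply left-cancellativity of the Garside monoid $M$ to conclude $b = a^* \varphi(w)$, i.e., $a^* \preceq b$.

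For the sharper statement $t \in \mathcal{D} \cup \{1\}$, I would bound $ab$ from above in the divisibility order: since $b \preceq \Delta$ we get $ab \preceq a\Delta$, and since $a\Delta = \Delta \varphi(a)$ with $\varphi(a) \preceq \Delta$ we obtain $ab \preceq \Delta^2$. Combining with $ab = w\Delta$ gives $w\Delta \preceq \Delta^2$, i.e., $\Delta^2 = w\Delta z$ for some $z \in M$. Applying the conjugation identity once more yields $\Delta^2 = w\delta(z)\Delta$, and right-cancellativity of $M$ gives $\Delta = w\delta(z)$. Hence $w \preceq \Delta$, so $w \in \mathcal{D} \cup \{1\}$, and we take $t := w$.

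The argument is essentially a bookkeeping exercise in the calculus of $*$, $\varphi$, $\delta$; there is no serious obstacle. The only point that requires a moment's thought is the final divisibility bound $ab \preceq \Delta^2$: one needs to move a $\Delta$ past $a$ via $\varphi$ rather than trying to multiply two instances of $\Delta$ directly on opposite sides. Once this step is in place, right-cancellativity of the Garside monoid does the rest.
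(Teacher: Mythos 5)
Your proof is correct. For the equivalence itself you follow essentially the same route as the paper: write $\Delta=aa^*$, use cancellativity to pass between $a^*\preceq b$ and $ab=\Delta d$, and then move $\Delta$ to the right via the conjugation identity $x\Delta=\Delta\varphi(x)$. Where you genuinely diverge is in the final claim that the quotient lies in $\mathcal{D}\cup\{1\}$. The paper argues directly with the complement: from $b=a^*d$ one sees that $d$ is a right-divisor of $b$, hence of $\Delta$ (left- and right-divisors of $\Delta$ coincide), so $d\in\mathcal{D}\cup\{1\}$, and then $t=\varphi^{-1}(d)$ is again in $\mathcal{D}\cup\{1\}$ because $\varphi^{-1}$ preserves divisors of $\Delta$. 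You instead bound $ab\preceq a\Delta=\Delta\varphi(a)\preceq\Delta^2$, conjugate once more, and right-cancel $\Delta$ to get $\Delta=w\delta(z)$, hence $w\preceq\Delta$. Both arguments are valid and short; the paper's identifies the element $t$ explicitly in terms of the complement $d$, whereas yours is agnostic about how $w$ arose and only needs that some $w$ with $ab=w\Delta$ exists, at the mild cost of the extra bookkeeping step through $\Delta^2$.
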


\begin{pf}

The statement $a^*\preceq b$ is equivalent to the existence of a $d\in M$ with $a^*d=b$. Due to cancellativity, this is equivalent to the existence of $d\in M$ with $\Delta d=aa^*d=ab$, and this is in turn equivalent to the existence of $d\in M$ with $ab=\Delta d=\varphi^{-1}(d) \Delta$. This proves the first statement.

Now we assume these equivalent conditions hold. Since $b$ is a right-divisor of $\Delta$ and $d$ is a right-divisor of $b$, the element $d$ lies necessarily in $\mathcal{D}\cup \{1\}$. Since $\varphi^{-1}$ maps divisors of $\Delta$ to divisors of $\Delta$, this completes the proof. 

\end{pf}

We are now ready to prove the proposition above. 

\begin{pf}(of Proposition \ref{GarsideGruppenFaktorabel})
Since we are dealing with a group, which is in particular right-cancellative, we only have to show that $\eta$ is a weak factorability structure.

Note that for a non-trivial $z$ with normal form 
\begin{eqnarray*}
x_p\ldots x_1y_1^{-1}\ldots y_{q}^{-1}
\end{eqnarray*}
 the element $y_q^{-1}$ is in $\D^{-1}$ and $x_1 \in \D$, so (F3) of Definition \ref{FactDefMonoid} holds. Furthermore, (F1) holds by definition. By Lemma \ref{Norm} and since by definition $\overline{\eta}(z)$ has the normal form $x_p\ldots x_1 y_1^{-1}\ldots y_{q-1}^{-1}$ or $x_p\ldots x_2$, we observe that (F2) is fulfilled.

We now proceed to show that the last condition \ref{eqWF}. Note that if the two compositions in the diagram coincide, they are in particular equal in graded sense. 

We have to distinguish several cases. 
\begin{enumerate}
\item First, take $z\in M, s\in \D$. We use Proposition \ref{NFGarside} and Lemma \ref{Prod} to see:
\begin{eqnarray*}
\eta'(zs)=\rgcd(zs, \Delta)=\rgcd(\rgcd(z, \Delta)s, \Delta)=\eta'(\eta'(z)s).
\end{eqnarray*}
This implies that on such elements, both compositions from Definition \ref{FactDefMonoid} coincide.

\item Now assume $z \notin M$, $s\in \D$. Recall that by Lemma \ref{FactAxiom}, we have to show that if the pairs $(\eta'(z), s)$ and $(\overline{\eta}(z), \overline{\eta}(\eta'(z)s))$ are geodesic pairs, then $(z,s)$ is a geodesic pair and that in this case $\eta'(\eta'(z)s)=\eta'(zs)$ holds. So we assume the pairs $(\eta'(z), s)$ and $(\overline{\eta}(z), \overline{\eta}(\eta'(z)s))$ are geodesic.

 We use the normal form $z=w_{r}\ldots w_1\Delta^{-q}$, $q>0$, of Lemma \ref{NFLemma}. Observe that we can write
\begin{eqnarray*}
zs=w_{r}\ldots w_1\Delta^{-q}s=w_{r}\ldots w_1\varphi^{q}(s)\Delta^{-q}.
\end{eqnarray*}
This implies in particular that 
\begin{eqnarray*}
 N_{\E}(zs)\leq \max\{N_{\mathcal{E}}(w_{r}\ldots w_1\varphi^{q}(s)), q\}
\end{eqnarray*}
by Lemma \ref{NormExplizit}. (Recall that for elements in $M$, the word length with respect to $\E$ is the same as the word length with respect to $\D$.)

Now we have to analyze when the pairs $(\eta'(z), s)=(y^{-1}_q, s)$ and  $(\overline{\eta}(z), \overline{\eta}(\eta'(z)s))$ are geodesic. We show first that if $r=0$, the pair $(\eta'(z), s)$ is non-geodesic. If $r=0$, we have $z=\Delta^{-q}$ and $\eta'(z)=\Delta^{-1}$. This implies
\begin{eqnarray*}
 N_{\E}(\eta'(z)s)=N_{\E}(\Delta^{-1}s)=N_{\E}\left((s^*)^{-1}\right)\leq 1.
\end{eqnarray*}
So we don't need to consider the case $r=0$.

Assume from now on $r\geq 1$, i.e., $w_1\neq 1$. By Lemma \ref{NFLemma}, we have 
\begin{eqnarray*}
 y^{-1}_q=\Delta^{-1}\varphi^{-q}(w_1)=\varphi^{-q+1}(w_1)\Delta^{-1}.
\end{eqnarray*}
The product $\eta'(z)s$ can thus be written as
\begin{eqnarray*}
 \eta'(z)s = \varphi^{-q+1}(w_1)\Delta^{-1}s=\varphi^{-q+1}(w_1\varphi^{q}(s))\Delta^{-1}.
\end{eqnarray*}

Since we assumed the pair $(\eta'(z), s)$ to be geodesic, the element $\eta'(z)s$ has length $2$. So we use Lemma \ref{NormExplizit} to see 
\hspace{1cm}
\begin{eqnarray*}
 2=N_{\E}\left (\varphi^{-q+1}(w_1\varphi^{q}(s))\Delta^{-1}\right )\leq \max\{N_{\D}(\varphi^{-q+1}(w_1\varphi^{q}(s))), 1\}.
\end{eqnarray*}
Thus, we know that the $\D$-length of $\varphi^{-q+1}(w_1\varphi^{q}(s))$ is at least $2$. Since $\varphi$ is an automorphism of $M$ mapping $\D$ to itself, we conclude that $w_1\varphi^{q}(s)$ has at least $\D$-length $2$. But as a product of two elements of $\D$, it has $\D$-norm exactly $2$. In particular, we know that the above inequality is in this case an equality, so by Lemma \ref{NormExplizit}, the element $\varphi^{-q+1}(w_1\varphi^{q}(s))$ and thus also $w_1\varphi^{q}(s)$ is not divisible by $\Delta$.

Denote $\eta(w_1\varphi^{q}(s))$ by $(a,b)$. Since $w_1\varphi^{q}(s)$ has norm $2$, the pair $(a,b)$ is already the normal form of $w_1\varphi^{q}(s)$. In particular, $a$ and $b$ are in $\D$, and by the definition of the normal form in Proposition \ref{NFGarside}, we have $\rgcd({}^*b, a)=1$. Moreover, since $w_1\varphi^{q}(s)$ is not divisible by $\Delta$, the prefix $b$ is not equal to $\Delta$. We would like to express $\eta'(\eta'(z)s)$ in $a$ and $b$ using Lemma \ref{NFLemma}. Since $\varphi^{-q+1}(w_1\varphi^{q}(s))$ is not divisible by $\Delta$, the normal form of $\eta'(z)s$ in the sense of Lemma \ref{NFLemma} is given by $\NF(\varphi^{-q+1}(w_1\varphi^{q}(s)))\Delta^{-1}$. Observe that
\begin{eqnarray*}
  \eta'(z)s & = & \varphi^{-q+1}(w_1\varphi^{q}(s))\Delta^{-1} \\
  &=&\varphi^{-q+1}(ab)\Delta^{-1}=\varphi^{-q+1}(a)\varphi^{-q+1}(b) \Delta^{-1}.
\end{eqnarray*}
We check that $\varphi^{-q+1}(a)\varphi^{-q+1}(b)$ is the normal form of $\varphi^{-q+1}(w_1\varphi^{q}(s))$. We only have to show that $\varphi^{-q+1}(b)$ is the right-most letter of the normal form since the element $\varphi^{-q+1}(w_1\varphi^{q}(s))$ has norm $2$. Now we have: 
\begin{eqnarray*}
 \rgcd(\varphi^{-q+1}(a)\varphi^{-q+1}(b), \Delta)=\varphi^{-q+1}(\rgcd(ab, \Delta))=\varphi^{-q+1}(b).
\end{eqnarray*}
So we know by Lemma \ref{NFLemma} that 
\begin{eqnarray*}
 \eta'(\eta'(z)s)=\Delta^{-1}\varphi^{-1}(\varphi^{-q+1}(b))=\Delta^{-1}\varphi^{-q}(b)=\varphi^{-q+1}(b)\Delta^{-1}
\end{eqnarray*}
 and thus $\overline{\eta}(\eta'(z)s)=\varphi^{-q+1}(a)$.

We now look at the second pair. By definition, $\overline{\eta}(z)$ is given by $zy_q$. We want to compute its normal form as given in Lemma \ref{NFLemma}. We have
\begin{eqnarray*}
 zy_q=w_r\ldots w_1\Delta^{-q}y_q= w_r\ldots w_1\varphi^q(y_q)\Delta^{-q}.
\end{eqnarray*}

We look closer at the product $w_1\varphi(y_q)$ using Lemma \ref{NFLemma} once again:
\begin{eqnarray*}
 w_1\varphi^q(y_q)=\varphi^{q-1}(y^*_q)\varphi^q(y_q)=\varphi^{q-1}(y^*_q\varphi(y_q)).
\end{eqnarray*}
We can now simplify $y^*_q\varphi(y_q)$ by observing that 
\begin{eqnarray*}
 y_qy^*_q\varphi(y_q)=\Delta\varphi(y_q)=y_q\Delta,
\end{eqnarray*}
which immediately implies $y^{*}_q\varphi(y_q)=\Delta$. 
So we obtain in total 
\begin{eqnarray*}
 zy_q= w_r\ldots w_1\varphi^q(y_q)\Delta^{-q}=w_r\ldots w_2\Delta^{-q+1},
\end{eqnarray*}
which is now again a normal form in the sense of Lemma \ref{NFLemma}. Here, we use that $w_r\ldots w_2$ is not divisible by $\Delta$ since $w_r\ldots w_1$ is not divisible by $\Delta$. Hence, the $\E$-length of $w_r\ldots w_2\Delta^{-q+1}$ is by Lemma \ref{NormExplizit} given by $\max\{r-1, q-1\}$.

Since the pair 
\begin{eqnarray*}
(\overline{\eta}(z), \overline{\eta}(\eta'(z)s))= (w_r\ldots w_2\Delta^{-q+1}, \varphi^{-q+1}(a))
\end{eqnarray*}
 is geodesic, we know that the norm of 
 \begin{eqnarray*}
 w_r\ldots w_2\Delta^{-q+1}\cdot\varphi^{-q+1}(a)=w_r\ldots w_2a\Delta^{-q+1}
 \end{eqnarray*}
must be equal to $\max\{r-1, q-1\}+1=\max\{r, q\}$. Combining this with Lemma \ref{NormExplizit} applied to $w_r\ldots w_2a\Delta^{-q+1}$, we see that
\begin{eqnarray*}
\max\{r,q\} \leq \max\{N_{\D}(w_r\ldots w_2a), q-1\}\leq\max\{r, q-1\}.
\end{eqnarray*}
This, in turn, implies $r=\max\{r,q\} =\max\{r, q-1\}$. So we conclude that the norm of $w_r\ldots w_2a$ is $r$ and by the second part of Lemma \ref{NormExplizit}, $\Delta$ does not right-divide $w_r\ldots w_2a$. 

Recall that we need to calculate $\eta'(zs)$, so it is helpful to consider the normal form of $w_r\ldots w_1\varphi^q(s)=zs\Delta^q$. Using Lemma \ref{Prod} and Proposition \ref{NFGarside} once again, we see that the normal form of $w_r\ldots w_1\varphi^q(s)$ ends with 
\begin{eqnarray*}
\rgcd(w_r\ldots w_1\varphi^q(s), \Delta) &=&\rgcd(\rgcd(w_r\ldots w_1, \Delta)\varphi^q(s), \Delta)\\
&=&\rgcd(w_1\varphi^q(s),\Delta)=b.
\end{eqnarray*}
In particular, since we have seen $b\neq \Delta$, it follows that $\Delta$ does not right-divide $w_r\ldots w_1\varphi^q(s)$.
So the normal form of $w_r\ldots w_1\varphi^q(s)$ is given by
\begin{eqnarray*}
\NF(w_r\ldots w_1\varphi^q(s))=\NF(w_r\ldots w_2a)b.
\end{eqnarray*}
Since $\NF(w_r\ldots w_2a)$ has length $r$ (e.g. by Lemma \ref{LLGaussschNFNorm}), the normal form of $w_r\ldots w_1\varphi^q(s)$ is of length $r+1$ and the norm of 
\begin{eqnarray*}
zs=w_r\ldots w_1\varphi^q(s)\Delta^{-q}
\end{eqnarray*}
is $\max\{q,r+1\}=r+1$. So we have shown that $(z,s)$ is a geodesic pair. Using Lemma \ref{NFLemma}, we can compute $\eta'(zs)$: It is given by 
\begin{eqnarray*}
\Delta^{-1}\varphi^{-q}(b)=\varphi^{-q+1}(b)\Delta^{-1}.
\end{eqnarray*}
This coincides with $\eta'(\eta'(z)s)$. So we have proven the statement for this case.

\item Consider now the case $z \in M$, $s \in \D^{-1}$. There is a $t\in \D\cup\{1\}\setminus \{\Delta\}$ with $s=t\Delta^{-1}$, namely, if $s=u^{-1}$ with $u\in \D$, we have $t=u^*$. Let $z=x_p\ldots x_1$ be the normal form of $z$. Then $zs=x_p\ldots x_1 t \Delta^{-1}$. 

If $x_1^*\preceq t$, then there is by Lemma \ref{ProduktZweierTeilerStern} an element $c \in \D\cup \{1\}$ such that $x_1t=c\Delta$. Therefore, 
\begin{eqnarray*}
\eta'(z)\cdot s=x_1t\Delta^{-1}=c
\end{eqnarray*}
so the pair $(\eta'(z), s)$ is not geodesic. Thus, we can use Lemma \ref{FactAxiom} to conclude in this case.

If $x_1^*\npreceq t$, then Lemma \ref{ProduktZweierTeilerStern} implies that $x_1t\nsucceq \Delta$. We show first that this implies $x_p\ldots x_1 t$ is not right-divisible by $\Delta$. Suppose it were, then there is a $u\in M$ so that
\begin{eqnarray*}
x_p\ldots x_1t=u\Delta=u\cdot {}^*t t,
\end{eqnarray*}
and by cancellation, ${}^*t\in \D$ is a right-divisor of $x_p\ldots x_1$. By definition, $x_1$ is the greatest divisor of $x_p\ldots x_1$ lying in $\D$, so this would imply the existence of some $v\in M$ with $x_1=v\cdot {}^*t$, and $x_1t=v\cdot{}^*tt=v\Delta$, contradicting our assumptions. 
 
Hence, $\NF(x_p\ldots x_1 t)\Delta^{-1}$ is a normal form in the sense of Lemma \ref{NFLemma}. The normal form $\NF(x_p\ldots x_1 t)$ ends with 
\begin{eqnarray*}
 \rgcd(x_p\ldots x_1 t, \Delta)=\rgcd(\rgcd(x_p\ldots x_1, \Delta)t, \Delta)=\rgcd(x_1t,\Delta),
\end{eqnarray*}
which we will denote by $a$. Here, we use Lemma \ref{Prod} for the first equality. Thus by Lemma \ref{NFLemma} and by definition of $\eta$, we have $\eta'(zs)=\Delta^{-1}\varphi^{-1}(a)$. On the other hand,
\begin{eqnarray*}
 \eta'(\eta'(z)\cdot s)=\eta'(x_1t \Delta^{-1})=\Delta^{-1}\varphi^{-1}(\rgcd(x_1t, \Delta))=\Delta^{-1}\varphi^{-1}(a).
\end{eqnarray*}
This implies that on such elements $z$ and $s$, the maps from the Definition of factorability coincide, so the diagram \ref{eqWF} commutes.

\item Last, we come to the case $z\notin M$, $s \in \D^{-1}$. There is again a $t\in \D\cup\{1\}\setminus \{\Delta\}$ with $s=t\Delta^{-1}$. Let as before $z=w_{r}\ldots w_1\Delta^{-q}$, $q>0$, be the normal form of Lemma \ref{NFLemma}. Then $zs=w_{r}\ldots w_1\varphi^{q}(t)\Delta^{-q-1}$. Again, two cases are possible. 

If $w_1^*\preceq\varphi^{q}(t)$, we have $w_1\varphi^q(t)=d\Delta$ for some $d\in \D\cup\{1\}$ by Lemma \ref{ProduktZweierTeilerStern}. 
We obtain using Lemma \ref{NFLemma}:
\begin{eqnarray*}
\eta'(z)s&=&\Delta^{-1}\varphi^{-q}(w_1) t\Delta^{-1}\\
&=&\Delta^{-1}\varphi^{-q}(w_1\varphi^{q}(t))\Delta^{-1}=\Delta^{-1}\varphi^{-q}(d) \in \D^{-1}\cup\{1\}.
\end{eqnarray*}
So $(\eta'(z), s)$ is not geodesic. Thus, we can apply Lemma \ref{FactAxiom}.

If $w_1^*\npreceq\varphi^{q}(t)$, we conclude again that $\Delta$ is not a divisor of $w_{r}\ldots w_1\varphi^{q}(t)$. Therefore, $\NF(w_{r}\ldots w_1\varphi^{q}(t))\Delta^{-q-1}$ is the normal form of $zs$ in the sense of Lemma \ref{NFLemma}. We compute again as before:
\begin{eqnarray*}
 \rgcd(w_{r}\ldots w_1\varphi^{q}(t), \Delta)=\rgcd(w_1\varphi^{q}(t), \Delta)=:a.
\end{eqnarray*}
Thus by Lemma \ref{NFLemma}, we have $\eta'(zs)=\Delta^{-1}\varphi^{-q-1}(a)=\varphi^{-q}(a)\Delta^{-1}$.

On the other hand, we have to determine $\eta'(\eta'(z)\cdot s)$. First, we can write $\eta'(z)s$ as follows using Lemma \ref{NFLemma}:
\begin{eqnarray*}
 \eta'(z)\cdot s&=&\varphi^{-q+1}(w_1)\Delta^{-1}s\\
 &=&\varphi^{-q+1}(w_1)\Delta^{-1}t\Delta^{-1}=\varphi^{-q+1}(w_1)\varphi(t)\Delta^{-2}.
\end{eqnarray*}
We now want to compute its prefix. By Lemma \ref{ProduktZweierTeilerStern}, our assumption $w_1^*\npreceq\varphi^{q}(t)$ implies $w_1\varphi^{q}(t)\nsucceq \Delta$, and thus also
\begin{eqnarray*}
\varphi^{-q+1}(w_1)\varphi(t)=\varphi^{-q+1}(w_1\varphi^{q}(t))
\end{eqnarray*}
is not divisible by $\Delta$. So $\NF(\varphi^{-q+1}(w_1)\varphi(t))\Delta^{-2}$ is the normal form of $\eta'(z)s$ in sense of Lemma \ref{NFLemma}. We use this lemma as well as the Proposition \ref{NFGarside} once again to conclude:
\begin{eqnarray*}
 \eta'(\eta'(z) s)=\varphi^{-1} \left (\rgcd(\varphi^{-q+1}(w_1)\varphi(t), \Delta)\right)\Delta^{-1}\\
=\varphi^{-q}\left(\rgcd(w_1\varphi^{q}(t), \Delta)\right)\Delta^{-1}=\varphi^{-q}(a)\Delta^{-1}
\end{eqnarray*}
This implies that on such elements $z$ and $s$, the maps of the diagram \ref{eqWF} coincide. This completes the case distinction.

\end{enumerate}
\end{pf}

In the case of Garside groups, there are already complete rewriting systems describing them due to S.~Hermiller and J.~Meier (\cite{HermillerMeier}). 
\begin{remark}
Let $M$ be a Garside monoid with a Garside element $\Delta$. Let $G$ be the group of fractions of $M$, and let $\mathcal{D}$ be the set of left-divisors of $\Delta$ except for $1$. Then Lemma \ref{FactorabilityStructureInducesRewriting} allows us to associate a rewriting system with the factorability structure on $(G, \mathcal{D}\cup\mathcal{D}^{-1})$ described above. This rewriting system is exactly the second rewriting system $R_2$ for Garside groups in \cite{HermillerMeier}. There, this rewriting system is shown to be complete. By this argument, we know that the rewriting systems associated with factorability structures in Garside groups are complete. 
\end{remark}

\newpage
\begin{appendix}
\section*{Appendix: Factorable Monoid with Non-Complete Rewriting System}
\label{Factorable Monoid with Non-Complete Rewriting System}
We present now an example of a factorable monoid where the associated rewriting system is not noetherian. We prove the factorability by exhibiting a local factorability structure and using then the criterion \ref{PhiFaktorabilitaetEtaFaktorabilitaet}. We will also need the following two lemmas, which are used in the proof of Rodenhausen's Theorem. The first one reduces the fifth condition of local factorability to a few cases.

\begin{Lemma} \label{Bedingung5PhiFakt}
 Let $\mathcal{E}$ be a set. Let 
\begin{eqnarray*}
 \varphi\colon \mathcal{E}^+\times \E^+ \to \E^+\times \E^+
\end{eqnarray*}
be a map with $\varphi^2=\varphi$ and $\varphi(a,1)=(1,a)$ and satisfying stability for triples. (In other words, we start with any set $\E$ and a map $\varphi$ satisfying the second, third and fourth conditions of Definition \ref{Phifaktorabilitaet}.) Furthermore, we define the normal form function $\NF\colon \E^*\to \E^*$ as in Definition \ref{Phifaktorabilitaet}. Then the fifth condition of Definition \ref{Phifaktorabilitaet} is satisfied for a triple $(a,b,c)$ whenever $(a,b)$ or $(b,c)$ is a stable pair. 
\end{Lemma}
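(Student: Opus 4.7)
The plan is to verify the normal form condition $\NF(a,b,c) = \NF(\varphi_1(a,b,c))$ by unfolding both sides via the recursive definition of $\NF$, and then using condition 4 (stability for triples) to close the argument.

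First I would dispose of the easy case: if $(b,c)$ is $\varphi$-stable, then $\varphi_1(a,b,c) = (a, \varphi(b,c)) = (a,b,c)$, so the two normal forms coincide by inspection.

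The substantive case is when $(a,b)$ is stable. Here I plan to unfold both sides carefully. Writing $\varphi(b,c) = (p,q)$ and $\varphi(a,p) = (u,v)$, the left-hand side unfolds as
\[ \NF(a,b,c) = \varphi_2\varphi_1(\NF(a,b), c) = \varphi_2\varphi_1(a,b,c) = \varphi_2(a,p,q) = (u,v,q), \]
using $\NF(a,b) = \varphi(a,b) = (a,b)$ by stability. Setting $\varphi(v,q) = (v_1, v_0)$, the right-hand side unfolds as
\[ \NF(\varphi_1(a,b,c)) = \NF(a,p,q) = \varphi_2\varphi_1(\NF(a,p), q) = \varphi_2(u, v_1, v_0) = (\varphi(u, v_1), v_0). \]
The claim therefore reduces to showing that $(v,q)$ and $(u,v)$ are both $\varphi$-stable, i.e., that $(u,v,q)$ is totally $\varphi$-stable. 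At this point condition 4 supplies exactly what is needed: since $\varphi_2(a,b,c) = (a,b,c)$ by stability of $(a,b)$, we have $\varphi_2\varphi_1\varphi_2(a,b,c) = (u,v,q)$, which by hypothesis is either totally stable or contains a $1$. In the totally stable sub-case, the proof closes immediately.

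The main obstacle is the ``contains a $1$'' sub-case. Here I plan to exploit the following key observation: whenever $\varphi(x,y) = (z,1)$, idempotency combined with condition 3 gives $(z,1) = \varphi(z,1) = (1,z)$, which forces $z = 1$. This constrains the positions of $1$-entries in any output of $\varphi$, reducing the sub-case to a few scenarios for where a $1$ can sit in $(u,v,q)$: either $q = 1$ (which forces $\varphi(b,c) = (1,1)$), or $v = 1$ (which by the same observation forces $u = 1$), or only $u = 1$ with $v, q \in \E$. In each scenario I would retrace the $\NF$ recursion while carefully applying the $1$-removal rule, and verify by direct computation that both sides collapse to the same short tuple. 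This finite case analysis is tedious but essentially mechanical.
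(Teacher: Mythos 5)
The paper itself does not prove this lemma: it is stated as one of the ingredients of Rodenhausen's (unpublished) proof of Theorem \ref{PhiFaktorabilitaetEtaFaktorabilitaet}, with the written-up arguments delegated to the second author's thesis (\cite{MyThesis}), so there is no in-text proof to compare your attempt against. Judged on its own, your argument is correct and the outline is complete. The two load-bearing points are exactly right: since $(a,b)$ is stable, $\varphi_2(a,b,c)=(a,b,c)$, so $\varphi_2\varphi_1(a,b,c)=\varphi_2\varphi_1\varphi_2(a,b,c)=(u,v,q)$ and the stability-for-triples axiom applies to precisely the tuple you need; and idempotency makes every pair in the image of $\varphi$ stable, so once $(v,q)$ is known to be stable the right-hand side also collapses to $(u,v,q)$. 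Your observation that $\varphi(x,y)=(z,1)$ forces $z=1$ (from $\varphi^2=\varphi$ and $\varphi(z,1)=(1,z)$) is the correct tool for the ``contains a $1$'' branch, and the three resulting scenarios ($q=1$, forcing $\varphi(b,c)=(1,1)$; $v=1$, forcing $u=1$; and $u=1$ alone with $v,q\in\E$) do all close by direct computation, including the sub-distinction $p=1$ versus $p\in\E$ inside the last one. Two small points should be made explicit in a final write-up: first, the alternatives in the stability-for-triples axiom are not exclusive, so a totally stable triple may still contain a $1$ --- your $1$-branch computation does not use instability, hence covers such triples, but this should be said; second, when $\NF(a,p)$ turns out shorter than two letters (a single letter, or empty when $\varphi(a,p)=(1,1)$), the recursive formula for $\NF(a,p,q)$ must be read via extended normal forms, i.e.\ padding with $1$'s on the left, which is harmless because $\varphi(1,x)=(1,x)$ and $\varphi(1,1)=(1,1)$ follow from idempotency together with the axiom $\varphi(x,1)=(1,x)$.
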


The second lemma says how to compute normal forms, even if $\varphi$ is not shown to be a local factorability map. 

\begin{Lemma} \label{NormalFormTriple}
Let $\mathcal{E}$ be a set. Let 
\begin{eqnarray*}
 \varphi\colon \mathcal{E}^+\times \E^+ \to \E^+\times \E^+
\end{eqnarray*}
be a map with $\varphi^2=\varphi$ and $\varphi(a,1)=(1,a)$ for which stability of triples holds. (In other words, we start with any set $\E$ and a map $\varphi$ satisfying the second, third and fourth conditions of Definition \ref{Phifaktorabilitaet}.) Furthermore, we define the normal form function $\NF\colon \E^*\to \E^*$ as in Definition \ref{Phifaktorabilitaet}. Then it follows that an application of $\varphi_1\varphi_2\varphi_1\varphi_2$ to a triple $(a,b,c)$ yields an extended normal form for this triple.
\end{Lemma}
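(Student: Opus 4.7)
The plan is to unwind the composition $\varphi_1\varphi_2\varphi_1\varphi_2$ step by step on $(a,b,c)$, compare the result with the recursive definition of $\NF$, and use stability for triples (condition (4) of Definition \ref{Phifaktorabilitaet}) to close the argument.

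First I would introduce shorthand for the intermediate outputs: write $\varphi(a,b) = (a_1, b_1)$, $\varphi(b_1, c) = (b_2, c_2)$, $\varphi(a_1, b_2) = (A, B)$, and $\varphi(B, c_2) = (B', C')$. A direct calculation with Notation \ref{NotationAlphaI} gives $\varphi_2\varphi_1\varphi_2(a,b,c) = (A, B, c_2)$ and hence $\varphi_1\varphi_2\varphi_1\varphi_2(a,b,c) = (A, B', C')$. I would next isolate two consequences of idempotency together with condition (3): (i) $\varphi(1, x) = \varphi(\varphi(x,1)) = \varphi(x,1) = (1, x)$, so $\varphi$ fixes every pair whose left entry is $1$; and (ii) if the right slot of any $\varphi$-image is $1$, then so is the left slot, as $\varphi$ applied to $(u, 1)$ yields $(1, u)$ by (3) and idempotency forces $u = 1$. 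These pin down the possible shapes of $(a_1, b_1)$, $(b_2, c_2)$, and $(A, B)$.

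Next I would unfold $\NF(a,b,c) = \varphi_2\varphi_1(\NF(a,b), c)$ (modulo stripping $1$s and re-applying $\NF$), using $\NF(a,b) = \varphi(a,b)$ modulo stripping. By (ii), $\varphi(a,b)$ must take one of the three shapes $(a_1, b_1)\in\E\times\E$, $(1, b_1)$ with $b_1\in\E$, or $(1,1)$. In each case a short check shows that $\varphi_2\varphi_1(\NF(a,b), c)$ produces exactly $(A, B, c_2)$, after padding $(\NF(a,b), c)$ on the left with $1$s when $\NF(a,b)$ is shorter than length $2$ and invoking observation (i). Consequently $\NF(a,b,c)$ is obtained from $(A, B, c_2)$ by erasing all $1$s, and its extended normal form of length three is $(A, B, c_2)$ with the $1$-entries pushed to the left; by (ii), the $1$s in $(A, B, c_2)$ already form a prefix except possibly for the pattern $(1, a_1, 1)$, so the ``pushing'' is almost always trivial.

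Finally I would apply condition (4) to $(A, B, c_2) = \varphi_2\varphi_1\varphi_2(a,b,c)$. If it is totally $\varphi$-stable, the outer $\varphi_1$ is the identity, so $\varphi_1\varphi_2\varphi_1\varphi_2(a,b,c) = (A, B, c_2)$, which is an extended normal form by the previous paragraph. Otherwise some entry equals $1$, and (ii) together with (i) forces $A = 1$; a short sub-case analysis reduces the non-stable situations to $(1, B, c_2)$ with $B, c_2\in\E$ and $(B, c_2)$ unstable, or $(1, a_1, 1)$ with $a_1\in\E$. In each, the outer $\varphi_1$ rewrites the rightmost pair by $\varphi$, and this is exactly the extra step the recursive $\NF$-procedure performs next (namely $\NF(B, c_2) = \varphi(B, c_2)$, respectively $\NF(a_1, 1) = (a_1)$). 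Matching the results in both sub-cases completes the proof. The main obstacle will be the bookkeeping of how the stripping-and-repadding of $1$s in the recursive definition of $\NF$ interleaves with the padded application of $\varphi_2\varphi_1$; once the two idempotency observations are isolated, each sub-case is a one-line verification.
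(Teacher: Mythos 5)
This lemma is not proved in the paper at all: it is quoted from Rodenhausen's unpublished work, with the written-up proof deferred to the second author's thesis (\cite{MyThesis}), so there is no in-paper argument to compare against. Judged on its own, your argument is correct and is essentially the expected one: the identification $\varphi_2\varphi_1\varphi_2(a,b,c)=(A,B,c_2)$, the two consequences (i) and (ii) of idempotency and condition (3), the check that the (padded) $\varphi_2\varphi_1(\NF(a,b),c)$ reproduces $(A,B,c_2)$ for all three possible shapes of $\varphi(a,b)$, and the final split via condition (4) into the totally stable case (where the last $\varphi_1$ acts as the identity) and the cases $(1,B,c_2)$ with $(B,c_2)$ unstable and $(1,a_1,1)$ (where the last $\varphi_1$ performs exactly the remaining step of the $\NF$-recursion, with (ii) guaranteeing that any new $1$ again lands on the left) do assemble into a complete proof.

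One intermediate sentence should be repaired, because as stated it is false and even contradicts your own final paragraph: the claim that ``$\NF(a,b,c)$ is obtained from $(A,B,c_2)$ by erasing all $1$s, and its extended normal form is $(A,B,c_2)$ with the $1$s pushed to the left.'' Erasing the $1$s is not enough: by the recursive definition one must apply $\NF$ again to the stripped word, and precisely in the non-stable cases this applies $\varphi$ once more to $(B,c_2)$. For instance, for $(1,B,c_2)$ with $(B,c_2)$ unstable the extended normal form is $(1,\varphi(B,c_2))=(1,B',C')$, not $(1,B,c_2)$. Your last paragraph computes exactly this, so the proof survives, but the intermediate claim should be weakened to ``$\NF(a,b,c)$ equals $\NF$ of $(A,B,c_2)$ with its $1$s erased,'' reserving the literal identification with $(A,B,c_2)$ for the totally stable case; it would also be worth stating explicitly that, by observation (i), the left-padding convention used to make sense of $\varphi_2\varphi_1(\NF(a,b),c)$ does not affect the letters produced.
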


Since the result is quite technical, we first show how the example was constructed. We want to find a cycle in the system of rewritings when $(\varphi_3\varphi_2\varphi_1\varphi_2)^{N}$ is applied. (This can be checked to be the only candidate to produce a cycle when applied to a $4$-tuple.) One can construct a monoid with a cycle of length $N=1$, but naive attempts yield then a not right-cancellative monoid. So we will force the cycle to have length $2$. Thus, we have necessarily rewritings of the form depicted on the next page. \\

Furthermore, one can see that the pairs $(a_2, b_3)$ and $(a_1, b_6)$ should not be geodesic since otherwise, the rewritings have to stabilize. The further rewriting rules arose during the proof. This should justify the definition in the next proposition.\newpage

\begin{center}
     \begin{tikzpicture}[node distance = 2cm, auto]
 \node[cloud](a1){$a_1$};
 \node[cloud, right of=a1](b1){$b_1$};
 \node[cloud, right of=b1](c1){$c_{1}$};
  \node[cloud, right of=c1](d1){$d_{1}$};
  
 \node[cloud, below of=a1](a2){$a_{2}$};
 \node[cloud, below of=b1](b2){$b_{2}$};
  \node[cloud, below of=c1](c2){$c_{1}$};
   \node[cloud, below of=d1](d2){$d_{1}$};
   
  \node[cloud, below of=a2](a3){$a_{2}$};
 \node[cloud, below of=b2](b3){$b_{3}$};
 \node[cloud, below of=c2](c3){$c_{2}$};
  \node[cloud, below of=d2](d3){$d_{1}$};
 
  \node[cloud, below of=a3](a4){$a_{2}$};
 \node[cloud, below of=b3](b4){$b_3$};
 \node[cloud, below of=c3](c4){$c_3$};
  \node[cloud, below of=d3](d4){$d_2$};
  
  \node[cloud, below of=a4](a7){$a_2$};
 \node[cloud, below of=b4](b7){$b_4$};
 \node[cloud, below of=c4](c7){$c_4$};
 \node[cloud, below of=d4](d7){$d_2$};

    \node[cloud, below of=a7](a8){$a_{1}$};
 \node[cloud, below of=b7](b8){$b_{5}$};
 \node[cloud, below of=c7](c8){$c_{4}$};
 \node[cloud, below of=d7](d8){$d_{2}$};
 
     \node[cloud, below of=a8](a9){$a_{1}$};
 \node[cloud, below of=b8](b9){$b_{6}$};
 \node[cloud, below of=c8](c9){$c_{5}$};
 \node[cloud, below of=d8](d9){$d_{2}$};
 
      \node[cloud, below of=a9](a10){$a_{1}$};
 \node[cloud, below of=b9](b10){$b_{6}$};
 \node[cloud, below of=c9](c10){$c_{6}$};
 \node[cloud, below of=d9](d10){$d_{1}$};
 
       \node[cloud, below of=a10](a11){$a_{1}$};
 \node[cloud, below of=b10](b11){$b_{1}$};
 \node[cloud, below of=c10](c11){$c_{1}$};
 \node[cloud, below of=d10](d11){$d_{1}$};
 
    \node[below of=a11](a12){$\ldots$};
 \node[below of=b11](b12){$\ldots$};
 \node[below of=c11](c12){$\ldots$};
 \node[below of=d11](d12){$\ldots$};

\draw (c1) -- (c2); 
\draw (d1) -- (d2); 
    \draw (a1.south) -- ++(0.0, -0.5) -| ++(1.0, -0.5) -- ++(0.0,-0.0)-- ++(-1.0, 0.)--  (a2.north);
    \draw (b1.south) -- ++(0.0,-0.5) -| ++(-1.0, -0.5) -- ++(0.0,-0.0)-- ++(1.0, 0.)--  (b2.north);
\draw (d2) -- (d3); 
\draw (a2) -- (a3); 
    \draw (b2.south) -- ++(0.0, -0.5) -| ++(1.0, -0.5) -- ++(0.0,-0.0)-- ++(-1.0, 0.)--  (b3.north);
    \draw (c2.south) -- ++(0.0,-0.5) -| ++(-1.0, -0.5) -- ++(0.0,-0.0)-- ++(1.0, 0.)--  (c3.north);
\draw (a3) -- (a4); 
\draw (b3) -- (b4);
    \draw (c3.south) -- ++(0.0, -0.5) -| ++(1.0, -0.5) -- ++(0.0,-0.0)-- ++(-1.0, 0.)--  (c4.north);
    \draw (d3.south) -- ++(0.0,-0.5) -| ++(-1.0, -0.5) -- ++(0.0,-0.0)-- ++(1.0, 0.)--  (d4.north);
\draw (a4) -- (a7); 
\draw (d4) -- (d7); 
    \draw (b4.south) -- ++(0.0, -0.5) -| ++(1.0, -0.5) -- ++(0.0,-0.0)-- ++(-1.0, 0.)--  (b7.north);
    \draw (c4.south) -- ++(0.0,-0.5) -| ++(-1.0, -0.5) -- ++(0.0,-0.0)-- ++(1.0, 0.)--  (c7.north);
    
\draw (c7) -- (c8); 
\draw (d7) -- (d8); 
    \draw (a7.south) -- ++(0.0, -0.5) -| ++(1.0, -0.5) -- ++(0.0,-0.0)-- ++(-1.0, 0.)--  (a8.north);
    \draw (b7.south) -- ++(0.0,-0.5) -| ++(-1.0, -0.5) -- ++(0.0,-0.0)-- ++(1.0, 0.)--  (b8.north);
\draw (d8) -- (d9); 
\draw (a8) -- (a9); 
    \draw (b8.south) -- ++(0.0, -0.5) -| ++(1.0, -0.5) -- ++(0.0,-0.0)-- ++(-1.0, 0.)--  (b9.north);
    \draw (c8.south) -- ++(0.0,-0.5) -| ++(-1.0, -0.5) -- ++(0.0,-0.0)-- ++(1.0, 0.)--  (c9.north);
\draw (a9) -- (a10); 
\draw (b9) -- (b10);
    \draw (c9.south) -- ++(0.0, -0.5) -| ++(1.0, -0.5) -- ++(0.0,-0.0)-- ++(-1.0, 0.)--  (c10.north);
    \draw (d9.south) -- ++(0.0,-0.5) -| ++(-1.0, -0.5) -- ++(0.0,-0.0)-- ++(1.0, 0.)--  (d10.north);
\draw (a10) -- (a11); 
\draw (d10) -- (d11); 
    \draw (b10.south) -- ++(0.0, -0.5) -| ++(1.0, -0.5) -- ++(0.0,-0.0)-- ++(-1.0, 0.)--  (b11.north);
    \draw (c10.south) -- ++(0.0,-0.5) -| ++(-1.0, -0.5) -- ++(0.0,-0.0)-- ++(1.0, 0.)--  (c11.north);
    
\draw (c11) -- (c12); 
\draw (d11) -- (d12); 
    \draw (a11.south) -- ++(0.0, -0.5) -| ++(1.0, -0.5) -- ++(0.0,-0.0)-- ++(-1.0, 0.)--  (a12.north);
    \draw (b11.south) -- ++(0.0,-0.5) -| ++(-1.0, -0.5) -- ++(0.0,-0.0)-- ++(1.0, 0.)--  (b12.north);
 \end{tikzpicture}
    \end{center}
 \newpage
 
\begin{Prop}
 Let $\mathcal{E}$ be the following set:
\begin{eqnarray*}
 \{a_1, a_2, b_{1}, b_2, b_3, b_4, b_5, b_6, c_1, c_2, c_3, c_4, c_5, c_6, d_{1}, d_2, e_{2}, e_3, f_2, f_3, g_2, g_3, h_2, h_3, i, j, k\}
\end{eqnarray*}
Define a function $\varphi\colon \E^+\times \E^+ \to \E^+\times \E^+$ as follows: 
\begin{eqnarray*}
 \varphi(a_1, b_{1}) & = & (a_{2}, b_{2})\\
 \varphi(b_{2}, c_{1}) & = & (b_{3}, c_{2})\\
 \varphi(c_{2}, d_1) &=& (c_{3}, d_{2})\\
 \varphi(b_{3}, c_{3}) &=& (b_{4}, c_{4})\\
 \varphi(a_2, b_4) &=& (a_1, b_5)\\
 \varphi(b_5, c_4) &=& (b_6, c_5)\\
 \varphi(c_5, d_2) &=& (c_6, d_1)\\
 \varphi(b_6, c_6) &=& (b_1, c_1)\\
 \varphi(a_{2}, b_{3}) &=& (1, e_{2})\\
 \varphi(a_1, b_6) &=& (1, e_3)\\
 \varphi(e_{2}, c_{2}) &=& (f_{2}, g_{2})\\
 \varphi(e_{3}, c_{5}) &=& (f_{3}, g_{3})\\ 
 \varphi(e_{2}, c_{3}) &=& (f_{3}, g_{3})\\
 \varphi(e_3, c_{6}) &=& (f_{2}, g_{2})\\
 \varphi(g_2, d_1) &=& (h_2, i)\\
 \varphi(g_3, d_2) &=& (h_3, i)\\
 \varphi(f_2, h_2) &=& (j,k)\\
 \varphi(f_3, h_3) &=& (j,k)\\
 \varphi(s, 1) &= &(1,s) \mbox{ for all } s\in \E^+ 
\end{eqnarray*}
and $\varphi(s,t)=(s,t)$ if $(s,t)$ is not in the list above. 

This function is a local factorability structure in the sense of Definition \ref{Phifaktorabilitaet}. The associated rewriting system is not noetherian. Furthermore, the monoid $M$ defined by this local factorability structure is right-cancellative.
\end{Prop}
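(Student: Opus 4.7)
The plan is to verify the five axioms of Definition~\ref{Phifaktorabilitaet} for $\varphi$, then exhibit an explicit non-terminating rewriting chain, and finally establish right-cancellativity by a normal form argument. Axioms~(2) and (3) (idempotency and the value on pairs involving $1$) follow by direct inspection of the table: every right-hand side of $\varphi$ is either a pair fixed by $\varphi$ or one of the ``terminal'' pairs $(1, e_k)$, $(f_k, g_k)$, $(h_k, i)$, $(j,k)$, $(1,s)$, which are all easily seen to be fixed. Axiom~(1) then holds by definition of $M$.

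For axiom~(4) (stability for triples), I would enumerate the triples $(a,b,c)$ for which at least one of $(a,b)$, $(b,c)$ is unstable; the list of unstable pairs is finite and explicit, and in every case the application $\varphi_2\varphi_1\varphi_2$ either produces a tuple containing a $1$ (which happens whenever an $(a_i, b_j)$-pair collapses to $(1, e_k)$, or an $(f_k, h_k)$-pair collapses to $(j,k)$ after clearing) or lands in a $\varphi$-totally stable configuration like $(j,k,d_l)$, $(h_k, i, *)$, or $(f_k, g_k, d_l)\to (f_k, h_k, i)\to(j,k,i)$. For axiom~(5) (the normal form condition $\NF(a,b,c)=\NF(\varphi_1(a,b,c))$), Lemma~\ref{Bedingung5PhiFakt} reduces us to triples in which both $(a,b)$ and $(b,c)$ are unstable; inspection of the table shows that the only middle letters $b$ appearing as the first coordinate of an unstable pair and simultaneously as the second coordinate of an unstable pair are $b_2, b_3, b_5, b_6, c_2, c_3, c_5, c_6, e_2, e_3, g_2, g_3, h_2, h_3$, giving a short finite list of triples to check by hand using Lemma~\ref{NormalFormTriple}.

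For non-noetherianity of the associated rewriting system, the diagram preceding the proposition is exactly the plan: starting from the tuple $(a_1, b_1, c_1, d_1)$, the composition $(\varphi_3\varphi_2\varphi_1\varphi_2)^2$ returns the same tuple, giving an infinite sequence of nontrivial rewritings; I would simply track the diagram step by step and observe that each of the eight intermediate steps applies a genuine rewriting rule from the list above. Since $\mathcal{E}$ is finite, this also furnishes an example with the extra finiteness hypothesis.

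The hard part will be right-cancellativity of $M$. My approach is to use Theorem~\ref{PhiFaktorabilitaetEtaFaktorabilitaet} to promote $\varphi$ to a factorability structure $\eta$, so that every element of $M$ has a unique normal form in $\mathcal{E}^*$, and then to analyse how the right-most letter of the normal form of $xs$ depends on $x$ and $s\in\mathcal{E}$. Concretely, I would show by case analysis on the table that for each $s\in\mathcal{E}$ the set of possible ``normal-form tails'' landing in $s$ under right-multiplication is controlled: if $\NF(xs) = w_1\ldots w_1$, then $s$ determines both the last letter $w_1$ and the penultimate letter $w_2$ of $\NF(xs)$ up to finitely many options, each of which in turn determines the tail of $\NF(x)$. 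Iterating this reading-off procedure from the right reconstructs $\NF(x)$ from $\NF(xs)$ and $s$, so the map $x\mapsto xs$ is injective on each degree and hence on all of $M$. The main technical obstacle is that some generators (notably $b_2$, $b_3$, $c_1$, $c_4$) appear on the right of several distinct unstable pairs, so the ``read-off'' step branches; showing that only one branch is consistent with the global stability of $\NF(xs)$ is where most of the combinatorial work lives.
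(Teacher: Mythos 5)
Your treatment of the local factorability axioms and of the non-terminating cycle follows the paper's own proof in outline: axioms (2) and (3) by inspection, axiom (4) by a finite case distinction over triples containing an unstable pair, and axiom (5) via Lemmas \ref{Bedingung5PhiFakt} and \ref{NormalFormTriple}. Note, though, that the reduction in axiom (5) leaves exactly the four totally unstable triples $(a_2,b_3,c_3)$, $(a_1,b_6,c_6)$, $(e_2,c_2,d_1)$, $(e_3,c_5,d_2)$; your list of admissible middle letters does not match the criterion you state (the correct set is $b_3, b_6, c_2, c_5$), which is harmless since it contains these four, but it should be tightened. For non-noetherianity, observing that each of the eight steps of the cycle matches a line of the table is not quite sufficient: the rewriting system of Lemma \ref{FactorabilityStructureInducesRewriting} has alphabet $\E\subset M$, so one must also know that the formal letters occurring in the cycle are not identified in $M$; otherwise some of the pairs would be $\eta$-stable and the chain would break. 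The paper settles this by remarking that no nontrivial rule begins or ends with the one-letter word $a_1$, so its congruence class is a singleton and $a_1\neq a_2$ in $M$; some such argument is needed.

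The genuine gap is right-cancellativity, which is the hardest part of the proposition and which your proposal plans rather than proves. The claim that $s$ together with the right end of $\NF(xs)$ determines the tail of $\NF(x)$ up to finitely many branches, exactly one of which is globally consistent, is precisely what has to be established, and you explicitly leave the branching resolution open. The difficulty is real: right multiplication by a generator need not lengthen the normal form and can consume its trailing letters, e.g.\ $\NF(x)=x_m\cdots x_3a_2b_2$ and $s=c_1$ give $\NF(xs)=x_m\cdots x_3f_2g_2$; moreover $\varphi$ is not injective on values, since $(e_2,c_2)$ and $(e_3,c_6)$ both map to $(f_2,g_2)$ and $(e_2,c_3)$, $(e_3,c_5)$ both map to $(f_3,g_3)$, so the right end of $\NF(xs)$ does not determine the branch locally. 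The paper avoids reconstruction altogether and runs a minimal-counterexample induction instead: reduce to $z\in\E$; take $x\neq y$ with $xz=yz$ and the total normal-form length minimal; if computing the normal form of $(x_m,\ldots,x_1,z)$ never produces a $1$, use that for a fixed right-hand input letter the right-hand output letter determines the rule, forcing $x_1=y_1$ and yielding a shorter counterexample; if a $1$ is produced, the collapsing pair must be $(a_{4-r},b_{3r-3})$ with value $e_r$, and the cases $p=1$, $p=2$, $p\geq 3$ are excluded one by one, the automorphism $\gamma$ exchanging the two halves of the alphabet disposing of $r=3$. Without either this induction or a fully worked-out resolution of your read-off branching, the cancellativity statement remains unproved.
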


\begin{pf}
  The map $\varphi$ satisfies by definition $ \varphi(x,1) = (1,x)$ for all $x\in \E^+$ and also $\varphi^2=\varphi$.

Since the proof is quite technical and requires a lengthy case distinction with many similar steps, we will only present some cases. For a complete proof, we refer the reader to \cite{MyThesis}, Section 7.1.

Now we are going to check the fourth condition of Definition \ref{Phifaktorabilitaet}, the stability for triples condition. We will consider several cases. Note that the stability for triples condition is automatically satisfied if the triple we start with is totally stable. Moreover, we are done as soon as the triple contains a $1$; thus, we do not have to consider triples already containing $1$ and we are also immediately done with triples of the form $(a_2,b_3,t)$ or $(a_1,b_6,t)$ for all $t\in \E$. To make the steps more transparent, we will use graphical presentation. 
\renewcommand{\labelenumi}{Case \arabic{enumi}:}
\begin{enumerate}
 \item Here, we start with the triple $\mathbf{(a_1, b_{1}, t)}$ for some $t\in \E$. Observe that we are done after applying $\varphi_2$ unless $t=c_1$, so we assume this from the second step on. 
\begin{center}
     \begin{tikzpicture}[node distance = 2cm, auto]
 \node[cloud](a1){$a_1$};
 \node[cloud, right of=a1](b1){$b_{1}$};
 \node[cloud, right of=b1](c1){$t$};
 \node[cloud, below of=a1](a2){$a_2$};
 \node[cloud, below of=b1](b2){$b_2$};
 \node[cloud, below of=c1](c2){$t$};
 \node[cloud, below of=a2](a3){$a_2$};
 \node[cloud, below of=b2](b3){$b_{3}$};
 \node[cloud, below of=c2](c3){$c_{2}$};
 \node[cloud, below of=a3](a4){$1$};
 \node[cloud, below of=b3](b4){$e_2$};
 \node[cloud, below of=c3](c4){$c_{2}$};
%
\draw (c1) -- (c2); 
    \draw (a1.south) -- ++(0.0, -0.5) -| ++(1.0, -0.5) -- ++(0.0,-0.0)-- ++(-1.0, 0.)--  (a2.north);
    \draw (b1.south) -- ++(0.0,-0.5) -| ++(-1.0, -0.5) -- ++(0.0,-0.0)-- ++(1.0, 0.)--  (b2.north);
    \draw (a2) -- (a3);
    \draw (b2.south) -- ++(0.0, -0.5) -| ++(1.0, -0.5) -- ++(0.0,-0.0)-- ++(-1.0, 0.)--  (b3.north);
    \draw (c2.south) -- ++(0.0,-0.5) -| ++(-1.0, -0.5) -- ++(0.0,-0.0)-- ++(1.0, 0.)--  (c3.north);
    \draw (c3) -- (c4); 
    \draw (a3.south) -- ++(0.0, -0.5) -| ++(1.0, -0.5) -- ++(0.0,-0.0)-- ++(-1.0, 0.)--  (a4.north);
    \draw (b3.south) -- ++(0.0,-0.5) -| ++(-1.0, -0.5) -- ++(0.0,-0.0)-- ++(1.0, 0.)--  (b4.north);

 \end{tikzpicture}
    \end{center}
Thus, in this case, application of $\varphi_2\varphi_1\varphi_2$ yields a $1$. 

\item We start with the triple $\mathbf{(b_{2}, c_{1}, t)}$ for some $t\in \E$. Here, we are done after applying $\varphi_2$ unless $t=d_1$, so we assume this from the second step on. 
\begin{center}
     \begin{tikzpicture}[node distance = 2cm, auto]
 \node[cloud](a1){$b_{2}$};
 \node[cloud, right of=a1](b1){$c_{1}$};
 \node[cloud, right of=b1](c1){$t$};
 \node[cloud, below of=a1](a2){$b_{3}$};
 \node[cloud, below of=b1](b2){$c_{2}$};
 \node[cloud, below of=c1](c2){$t$};
 \node[cloud, below of=a2](a3){$b_{3}$};
 \node[cloud, below of=b2](b3){$c_{3}$};
 \node[cloud, below of=c2](c3){$d_{2}$};
 \node[cloud, below of=a3](a4){$b_{4}$};
 \node[cloud, below of=b3](b4){$c_{4}$};
 \node[cloud, below of=c3](c4){$d_{2}$};
%
\draw (c1) -- (c2); 
    \draw (a1.south) -- ++(0.0, -0.5) -| ++(1.0, -0.5) -- ++(0.0,-0.0)-- ++(-1.0, 0.)--  (a2.north);
    \draw (b1.south) -- ++(0.0,-0.5) -| ++(-1.0, -0.5) -- ++(0.0,-0.0)-- ++(1.0, 0.)--  (b2.north);
    \draw (a2) -- (a3);
    \draw (b2.south) -- ++(0.0, -0.5) -| ++(1.0, -0.5) -- ++(0.0,-0.0)-- ++(-1.0, 0.)--  (b3.north);
    \draw (c2.south) -- ++(0.0,-0.5) -| ++(-1.0, -0.5) -- ++(0.0,-0.0)-- ++(1.0, 0.)--  (c3.north);
    \draw (c3) -- (c4); 
    \draw (a3.south) -- ++(0.0, -0.5) -| ++(1.0, -0.5) -- ++(0.0,-0.0)-- ++(-1.0, 0.)--  (a4.north);
    \draw (b3.south) -- ++(0.0,-0.5) -| ++(-1.0, -0.5) -- ++(0.0,-0.0)-- ++(1.0, 0.)--  (b4.north);

 \end{tikzpicture}
    \end{center}

Since the pair $(c_{4}, d_{2})$ is stable, the resulting triple is totally stable.

\item We start with the triple $\mathbf{(e_{2}, c_{2}, t)}$ for some $t\in \E$. Here, we are done after applying $\varphi_2$ unless $t=d_{1}$, so we assume this from the second step on. 
\begin{center}
     \begin{tikzpicture}[node distance = 2cm, auto]
 \node[cloud](a1){$e_{2}$};
 \node[cloud, right of=a1](b1){$c_{2}$};
 \node[cloud, right of=b1](c1){$t$};
 \node[cloud, below of=a1](a2){$f_{2}$};
 \node[cloud, below of=b1](b2){$g_{2}$};
 \node[cloud, below of=c1](c2){$t$};
 \node[cloud, below of=a2](a3){$f_{2}$};
 \node[cloud, below of=b2](b3){$h_{2}$};
 \node[cloud, below of=c2](c3){$i$};
 \node[cloud, below of=a3](a4){$j$};
 \node[cloud, below of=b3](b4){$k$};
 \node[cloud, below of=c3](c4){$i$};

\draw (c1) -- (c2); 
    \draw (a1.south) -- ++(0.0, -0.5) -| ++(1.0, -0.5) -- ++(0.0,-0.0)-- ++(-1.0, 0.)--  (a2.north);
    \draw (b1.south) -- ++(0.0,-0.5) -| ++(-1.0, -0.5) -- ++(0.0,-0.0)-- ++(1.0, 0.)--  (b2.north);
    \draw (a2) -- (a3);
    \draw (b2.south) -- ++(0.0, -0.5) -| ++(1.0, -0.5) -- ++(0.0,-0.0)-- ++(-1.0, 0.)--  (b3.north);
    \draw (c2.south) -- ++(0.0,-0.5) -| ++(-1.0, -0.5) -- ++(0.0,-0.0)-- ++(1.0, 0.)--  (c3.north);
    \draw (c3) -- (c4); 
    \draw (a3.south) -- ++(0.0, -0.5) -| ++(1.0, -0.5) -- ++(0.0,-0.0)-- ++(-1.0, 0.)--  (a4.north);
    \draw (b3.south) -- ++(0.0,-0.5) -| ++(-1.0, -0.5) -- ++(0.0,-0.0)-- ++(1.0, 0.)--  (b4.north);

 \end{tikzpicture}
    \end{center}

Since the pair $(k, i)$ is stable, the resulting triple is totally stable. 

\noindent After considering some cases where the first pair is unstable, we show some cases where the first pair is stable, so we only have to show that the application of $\varphi_2\varphi_1$ to such a triple yields either a $1$ or a totally stable triple. 

\end{enumerate}

 \renewcommand{\labelenumi}{Case \arabic{enumi}:}

 \begin{enumerate}
  \setcounter{enumi}{3}

\item We consider the triple $\mathbf{(t, a_{1}, b_{1})}$ for some $t\in \E$. Here, we are done after $\varphi_1$ since all pairs of the form $(t, a_{2})$ are stable.
\begin{center}
     \begin{tikzpicture}[node distance = 2cm, auto]
 \node[cloud](a1){$t$};
 \node[cloud, right of=a1](b1){$a_{1}$};
 \node[cloud, right of=b1](c1){$b_{1}$};
 \node[cloud, below of=a1](a2){$t$};
 \node[cloud, below of=b1](b2){$a_{2}$};
 \node[cloud, below of=c1](c2){$b_{2}$};
 

\draw (a1) -- (a2); 
    \draw (b1.south) -- ++(0.0, -0.5) -| ++(1.0, -0.5) -- ++(0.0,-0.0)-- ++(-1.0, 0.)--  (b2.north);
    \draw (c1.south) -- ++(0.0,-0.5) -| ++(-1.0, -0.5) -- ++(0.0,-0.0)-- ++(1.0, 0.)--  (c2.north);

 \end{tikzpicture}
    \end{center}

\item We consider the triple $\mathbf{(t, c_{2}, d_{1})}$ for some $t\in \E$. Observe that by assumption $t\neq e_{2}$. Thus, we are done after applying $\varphi_1$ unless $t=b_{3}$, so we assume this in the second step. 
\begin{center}
     \begin{tikzpicture}[node distance = 2cm, auto]
 \node[cloud](a1){$t$};
 \node[cloud, right of=a1](b1){$c_{2}$};
 \node[cloud, right of=b1](c1){$d_{1}$};
 \node[cloud, below of=a1](a2){$t$};
 \node[cloud, below of=b1](b2){$c_{3}$};
 \node[cloud, below of=c1](c2){$d_{2}$};
 \node[cloud, below of=a2](a3){$b_{4}$};
 \node[cloud, below of=b2](b3){$c_{4}$};
 \node[cloud, below of=c2](c3){$d_{2}$};

\draw (a1) -- (a2); 
    \draw (b1.south) -- ++(0.0, -0.5) -| ++(1.0, -0.5) -- ++(0.0,-0.0)-- ++(-1.0, 0.)--  (b2.north);
    \draw (c1.south) -- ++(0.0,-0.5) -| ++(-1.0, -0.5) -- ++(0.0,-0.0)-- ++(1.0, 0.)--  (c2.north);
    \draw (c2) -- (c3);
    \draw (a2.south) -- ++(0.0, -0.5) -| ++(1.0, -0.5) -- ++(0.0,-0.0)-- ++(-1.0, 0.)--  (a3.north);
    \draw (b2.south) -- ++(0.0,-0.5) -| ++(-1.0, -0.5) -- ++(0.0,-0.0)-- ++(1.0, 0.)--  (b3.north);

 \end{tikzpicture}
    \end{center}
We are done in this case since the pair $(c_{4}, d_{2})$ is stable.

\end{enumerate}
After checking the complete list of triples which are not everywhere stable or contain $1$, the proof of the fourth condition would be complete. 

Now we are going to check the fifth condition for local factorability, the normal form condition. Recall that we have to show that the normal form of a triple remains unchanged under applying $\varphi_1$. We will use Lemma \ref{Bedingung5PhiFakt} which says we only have to check the totally unstable triples. For the given map $\varphi$, these are only the triples  $(a_{2}, b_{3}, c_{3})$, $(a_1, b_6, c_6)$, $(e_{2}, c_{2}, d_1)$ and $(e_3, c_5, d_2)$. We use Lemma \ref{NormalFormTriple} to compute normal forms for triples. 

In the first case, the application of $\varphi_1\varphi_2\varphi_1\varphi_2$ gives the following picture.

\begin{center}
     \begin{tikzpicture}[node distance = 2cm, auto]
 \node[cloud](a1){$a_{2}$};
 \node[cloud, right of=a1](b1){$b_{3}$};
 \node[cloud, right of=b1](c1){$c_{3}$};
 \node[cloud, below of=a1](a2){$1$};
 \node[cloud, below of=b1](b2){$e_{2}$};
 \node[cloud, below of=c1](c2){$c_{3}$};
  \node[cloud, below of=a2](a3){$1$};
 \node[cloud, below of=b2](b3){$f_{3}$};
 \node[cloud, below of=c2](c3){$g_{3}$};

\draw (c1) -- (c2); 
    \draw (a1.south) -- ++(0.0, -0.5) -| ++(1.0, -0.5) -- ++(0.0,-0.0)-- ++(-1.0, 0.)--  (a2.north);
    \draw (b1.south) -- ++(0.0,-0.5) -| ++(-1.0, -0.5) -- ++(0.0,-0.0)-- ++(1.0, 0.)--  (b2.north);
\draw (a2) -- (a3); 
    \draw (b2.south) -- ++(0.0, -0.5) -| ++(1.0, -0.5) -- ++(0.0,-0.0)-- ++(-1.0, 0.)--  (b3.north);
    \draw (c2.south) -- ++(0.0,-0.5) -| ++(-1.0, -0.5) -- ++(0.0,-0.0)-- ++(1.0, 0.)--  (c3.north);
 \end{tikzpicture}
    \end{center}

which is everywhere stable and thus already the (extended) normal form.

On the other hand, the application of $\varphi_1\varphi_2\varphi_1\varphi_2\varphi_1$ yields the following.

\begin{center}
     \begin{tikzpicture}[node distance = 2cm, auto]
 \node[cloud](a1){$a_{2}$};
 \node[cloud, right of=a1](b1){$b_{3}$};
 \node[cloud, right of=b1](c1){$c_{3}$};
 \node[cloud, below of=a1](a2){$a_{2}$};
 \node[cloud, below of=b1](b2){$b_{4}$};
 \node[cloud, below of=c1](c2){$c_{4}$};
  \node[cloud, below of=a2](a3){$a_{1}$};
 \node[cloud, below of=b2](b3){$b_{5}$};
 \node[cloud, below of=c2](c3){$c_{4}$};
  \node[cloud, below of=a3](a4){$a_{1}$};
 \node[cloud, below of=b3](b4){$b_{6}$};
 \node[cloud, below of=c3](c4){$c_{5}$};
  \node[cloud, below of=a4](a5){$1$};
 \node[cloud, below of=b4](b5){$e_{3}$};
 \node[cloud, below of=c4](c5){$c_{5}$};
  \node[cloud, below of=a5](a6){$1$};
 \node[cloud, below of=b5](b6){$f_{3}$};
 \node[cloud, below of=c5](c6){$g_{3}$};

\draw (a1) -- (a2); 
    \draw (b1.south) -- ++(0.0, -0.5) -| ++(1.0, -0.5) -- ++(0.0,-0.0)-- ++(-1.0, 0.)--  (b2.north);
    \draw (c1.south) -- ++(0.0,-0.5) -| ++(-1.0, -0.5) -- ++(0.0,-0.0)-- ++(1.0, 0.)--  (c2.north);
\draw (c2) -- (c3); 
    \draw (a2.south) -- ++(0.0, -0.5) -| ++(1.0, -0.5) -- ++(0.0,-0.0)-- ++(-1.0, 0.)--  (a3.north);
    \draw (b2.south) -- ++(0.0,-0.5) -| ++(-1.0, -0.5) -- ++(0.0,-0.0)-- ++(1.0, 0.)--  (b3.north);
\draw (a3) -- (a4); 
    \draw (b3.south) -- ++(0.0, -0.5) -| ++(1.0, -0.5) -- ++(0.0,-0.0)-- ++(-1.0, 0.)--  (b4.north);
    \draw (c3.south) -- ++(0.0,-0.5) -| ++(-1.0, -0.5) -- ++(0.0,-0.0)-- ++(1.0, 0.)--  (c4.north);
\draw (c4) -- (c5); 
    \draw (a4.south) -- ++(0.0, -0.5) -| ++(1.0, -0.5) -- ++(0.0,-0.0)-- ++(-1.0, 0.)--  (a5.north);
    \draw (b4.south) -- ++(0.0,-0.5) -| ++(-1.0, -0.5) -- ++(0.0,-0.0)-- ++(1.0, 0.)--  (b5.north);
\draw (a5) -- (a6); 
    \draw (b5.south) -- ++(0.0, -0.5) -| ++(1.0, -0.5) -- ++(0.0,-0.0)-- ++(-1.0, 0.)--  (b6.north);
    \draw (c5.south) -- ++(0.0,-0.5) -| ++(-1.0, -0.5) -- ++(0.0,-0.0)-- ++(1.0, 0.)--  (c6.north);
 \end{tikzpicture}
    \end{center}

So both normal forms coincide in this first case. 

The second case is very similar: The application of $\varphi_1\varphi_2\varphi_1\varphi_2$ gives the following picture.

\begin{center}
     \begin{tikzpicture}[node distance = 2cm, auto]
 \node[cloud](a1){$a_{1}$};
 \node[cloud, right of=a1](b1){$b_{6}$};
 \node[cloud, right of=b1](c1){$c_{6}$};
 \node[cloud, below of=a1](a2){$1$};
 \node[cloud, below of=b1](b2){$e_{3}$};
 \node[cloud, below of=c1](c2){$c_{6}$};
  \node[cloud, below of=a2](a3){$1$};
 \node[cloud, below of=b2](b3){$f_{2}$};
 \node[cloud, below of=c2](c3){$g_{2}$};

\draw (c1) -- (c2); 
    \draw (a1.south) -- ++(0.0, -0.5) -| ++(1.0, -0.5) -- ++(0.0,-0.0)-- ++(-1.0, 0.)--  (a2.north);
    \draw (b1.south) -- ++(0.0,-0.5) -| ++(-1.0, -0.5) -- ++(0.0,-0.0)-- ++(1.0, 0.)--  (b2.north);
\draw (a2) -- (a3); 
    \draw (b2.south) -- ++(0.0, -0.5) -| ++(1.0, -0.5) -- ++(0.0,-0.0)-- ++(-1.0, 0.)--  (b3.north);
    \draw (c2.south) -- ++(0.0,-0.5) -| ++(-1.0, -0.5) -- ++(0.0,-0.0)-- ++(1.0, 0.)--  (c3.north);
 \end{tikzpicture}
    \end{center}

which is everywhere stable and thus already the (extended) normal form. 

On the other hand, the application of $\varphi_1\varphi_2\varphi_1\varphi_2\varphi_1$ yields the following.

\begin{center}
     \begin{tikzpicture}[node distance = 2cm, auto]
 \node[cloud](a1){$a_{1}$};
 \node[cloud, right of=a1](b1){$b_{6}$};
 \node[cloud, right of=b1](c1){$c_{6}$};
 \node[cloud, below of=a1](a2){$a_{1}$};
 \node[cloud, below of=b1](b2){$b_{1}$};
 \node[cloud, below of=c1](c2){$c_{1}$};
  \node[cloud, below of=a2](a3){$a_{2}$};
 \node[cloud, below of=b2](b3){$b_{2}$};
 \node[cloud, below of=c2](c3){$c_{1}$};
  \node[cloud, below of=a3](a4){$a_{2}$};
 \node[cloud, below of=b3](b4){$b_{3}$};
 \node[cloud, below of=c3](c4){$c_{2}$};
  \node[cloud, below of=a4](a5){$1$};
 \node[cloud, below of=b4](b5){$e_{2}$};
 \node[cloud, below of=c4](c5){$c_{2}$};
  \node[cloud, below of=a5](a6){$1$};
 \node[cloud, below of=b5](b6){$f_{2}$};
 \node[cloud, below of=c5](c6){$g_{2}$};

\draw (a1) -- (a2); 
    \draw (b1.south) -- ++(0.0, -0.5) -| ++(1.0, -0.5) -- ++(0.0,-0.0)-- ++(-1.0, 0.)--  (b2.north);
    \draw (c1.south) -- ++(0.0,-0.5) -| ++(-1.0, -0.5) -- ++(0.0,-0.0)-- ++(1.0, 0.)--  (c2.north);
\draw (c2) -- (c3); 
    \draw (a2.south) -- ++(0.0, -0.5) -| ++(1.0, -0.5) -- ++(0.0,-0.0)-- ++(-1.0, 0.)--  (a3.north);
    \draw (b2.south) -- ++(0.0,-0.5) -| ++(-1.0, -0.5) -- ++(0.0,-0.0)-- ++(1.0, 0.)--  (b3.north);
\draw (a3) -- (a4); 
    \draw (b3.south) -- ++(0.0, -0.5) -| ++(1.0, -0.5) -- ++(0.0,-0.0)-- ++(-1.0, 0.)--  (b4.north);
    \draw (c3.south) -- ++(0.0,-0.5) -| ++(-1.0, -0.5) -- ++(0.0,-0.0)-- ++(1.0, 0.)--  (c4.north);
\draw (c4) -- (c5); 
    \draw (a4.south) -- ++(0.0, -0.5) -| ++(1.0, -0.5) -- ++(0.0,-0.0)-- ++(-1.0, 0.)--  (a5.north);
    \draw (b4.south) -- ++(0.0,-0.5) -| ++(-1.0, -0.5) -- ++(0.0,-0.0)-- ++(1.0, 0.)--  (b5.north);
\draw (a5) -- (a6); 
    \draw (b5.south) -- ++(0.0, -0.5) -| ++(1.0, -0.5) -- ++(0.0,-0.0)-- ++(-1.0, 0.)--  (b6.north);
    \draw (c5.south) -- ++(0.0,-0.5) -| ++(-1.0, -0.5) -- ++(0.0,-0.0)-- ++(1.0, 0.)--  (c6.north);
 \end{tikzpicture}
    \end{center}

So both normal forms coincide in this case.

We continue with the third case: The normal form of $(e_{2}, c_{2}, d_1)$ is obtained as follows.

\begin{center}
     \begin{tikzpicture}[node distance = 2cm, auto]
 \node[cloud](a1){$e_{2}$};
 \node[cloud, right of=a1](b1){$c_{2}$};
 \node[cloud, right of=b1](c1){$d_{1}$};
 \node[cloud, below of=a1](a2){$f_2$};
 \node[cloud, below of=b1](b2){$g_{2}$};
 \node[cloud, below of=c1](c2){$d_{1}$};
  \node[cloud, below of=a2](a3){$f_2$};
 \node[cloud, below of=b2](b3){$h_{2}$};
 \node[cloud, below of=c2](c3){$i$};
  \node[cloud, below of=a3](a4){$j$};
 \node[cloud, below of=b3](b4){$k$};
 \node[cloud, below of=c3](c4){$i$};

\draw (c1) -- (c2); 
    \draw (a1.south) -- ++(0.0, -0.5) -| ++(1.0, -0.5) -- ++(0.0,-0.0)-- ++(-1.0, 0.)--  (a2.north);
    \draw (b1.south) -- ++(0.0,-0.5) -| ++(-1.0, -0.5) -- ++(0.0,-0.0)-- ++(1.0, 0.)--  (b2.north);
\draw (a2) -- (a3); 
    \draw (b2.south) -- ++(0.0, -0.5) -| ++(1.0, -0.5) -- ++(0.0,-0.0)-- ++(-1.0, 0.)--  (b3.north);
    \draw (c2.south) -- ++(0.0,-0.5) -| ++(-1.0, -0.5) -- ++(0.0,-0.0)-- ++(1.0, 0.)--  (c3.north);
\draw (c3) -- (c4); 
    \draw (a3.south) -- ++(0.0, -0.5) -| ++(1.0, -0.5) -- ++(0.0,-0.0)-- ++(-1.0, 0.)--  (a4.north);
    \draw (b3.south) -- ++(0.0,-0.5) -| ++(-1.0, -0.5) -- ++(0.0,-0.0)-- ++(1.0, 0.)--  (b4.north);
 \end{tikzpicture}
    \end{center}

and the tuple $(j,k,i)$ is already everywhere stable. On the other hand, the application of $\varphi_1\varphi_2\varphi_1\varphi_2\varphi_1$ yields the following.

\begin{center}
     \begin{tikzpicture}[node distance = 2cm, auto]
 \node[cloud](a1){$e_{2}$};
 \node[cloud, right of=a1](b1){$c_{2}$};
 \node[cloud, right of=b1](c1){$d_{1}$};
 \node[cloud, below of=a1](a2){$e_{2}$};
 \node[cloud, below of=b1](b2){$c_{3}$};
 \node[cloud, below of=c1](c2){$d_{2}$};
  \node[cloud, below of=a2](a3){$f_{3}$};
 \node[cloud, below of=b2](b3){$g_{3}$};
 \node[cloud, below of=c2](c3){$d_{2}$};
  \node[cloud, below of=a3](a4){$f_{3}$};
 \node[cloud, below of=b3](b4){$h_{3}$};
 \node[cloud, below of=c3](c4){$i$};
  \node[cloud, below of=a4](a5){$j$};
 \node[cloud, below of=b4](b5){$k$};
 \node[cloud, below of=c4](c5){$i$};


\draw (a1) -- (a2); 
    \draw (b1.south) -- ++(0.0, -0.5) -| ++(1.0, -0.5) -- ++(0.0,-0.0)-- ++(-1.0, 0.)--  (b2.north);
    \draw (c1.south) -- ++(0.0,-0.5) -| ++(-1.0, -0.5) -- ++(0.0,-0.0)-- ++(1.0, 0.)--  (c2.north);
\draw (c2) -- (c3); 
    \draw (a2.south) -- ++(0.0, -0.5) -| ++(1.0, -0.5) -- ++(0.0,-0.0)-- ++(-1.0, 0.)--  (a3.north);
    \draw (b2.south) -- ++(0.0,-0.5) -| ++(-1.0, -0.5) -- ++(0.0,-0.0)-- ++(1.0, 0.)--  (b3.north);
\draw (a3) -- (a4); 
    \draw (b3.south) -- ++(0.0, -0.5) -| ++(1.0, -0.5) -- ++(0.0,-0.0)-- ++(-1.0, 0.)--  (b4.north);
    \draw (c3.south) -- ++(0.0,-0.5) -| ++(-1.0, -0.5) -- ++(0.0,-0.0)-- ++(1.0, 0.)--  (c4.north);
\draw (c4) -- (c5); 
    \draw (a4.south) -- ++(0.0, -0.5) -| ++(1.0, -0.5) -- ++(0.0,-0.0)-- ++(-1.0, 0.)--  (a5.north);
    \draw (b4.south) -- ++(0.0,-0.5) -| ++(-1.0, -0.5) -- ++(0.0,-0.0)-- ++(1.0, 0.)--  (b5.north);
 \end{tikzpicture}
    \end{center}
So we obtain in both cases the same normal form. The last case is again very similar: The normal form of $(e_{3}, c_{5}, d_2)$ is obtained as follows.

\begin{center}
     \begin{tikzpicture}[node distance = 2cm, auto]
 \node[cloud](a1){$e_{3}$};
 \node[cloud, right of=a1](b1){$c_{5}$};
 \node[cloud, right of=b1](c1){$d_{2}$};
 \node[cloud, below of=a1](a2){$f_3$};
 \node[cloud, below of=b1](b2){$g_{3}$};
 \node[cloud, below of=c1](c2){$d_{2}$};
  \node[cloud, below of=a2](a3){$f_3$};
 \node[cloud, below of=b2](b3){$h_{3}$};
 \node[cloud, below of=c2](c3){$i$};
  \node[cloud, below of=a3](a4){$j$};
 \node[cloud, below of=b3](b4){$k$};
 \node[cloud, below of=c3](c4){$i$};

\draw (c1) -- (c2); 
    \draw (a1.south) -- ++(0.0, -0.5) -| ++(1.0, -0.5) -- ++(0.0,-0.0)-- ++(-1.0, 0.)--  (a2.north);
    \draw (b1.south) -- ++(0.0,-0.5) -| ++(-1.0, -0.5) -- ++(0.0,-0.0)-- ++(1.0, 0.)--  (b2.north);
\draw (a2) -- (a3); 
    \draw (b2.south) -- ++(0.0, -0.5) -| ++(1.0, -0.5) -- ++(0.0,-0.0)-- ++(-1.0, 0.)--  (b3.north);
    \draw (c2.south) -- ++(0.0,-0.5) -| ++(-1.0, -0.5) -- ++(0.0,-0.0)-- ++(1.0, 0.)--  (c3.north);
\draw (c3) -- (c4); 
    \draw (a3.south) -- ++(0.0, -0.5) -| ++(1.0, -0.5) -- ++(0.0,-0.0)-- ++(-1.0, 0.)--  (a4.north);
    \draw (b3.south) -- ++(0.0,-0.5) -| ++(-1.0, -0.5) -- ++(0.0,-0.0)-- ++(1.0, 0.)--  (b4.north);
 \end{tikzpicture}
    \end{center}

and the tuple $(j,k,i)$ is already everywhere stable. On the other hand, the application of $\varphi_1\varphi_2\varphi_1\varphi_2\varphi_1$ yields the following.

\begin{center}
     \begin{tikzpicture}[node distance = 2cm, auto]
 \node[cloud](a1){$e_{3}$};
 \node[cloud, right of=a1](b1){$c_{5}$};
 \node[cloud, right of=b1](c1){$d_{2}$};
 \node[cloud, below of=a1](a2){$e_{3}$};
 \node[cloud, below of=b1](b2){$c_{6}$};
 \node[cloud, below of=c1](c2){$d_{1}$};
  \node[cloud, below of=a2](a3){$f_{2}$};
 \node[cloud, below of=b2](b3){$g_{2}$};
 \node[cloud, below of=c2](c3){$d_{1}$};
  \node[cloud, below of=a3](a4){$f_{2}$};
 \node[cloud, below of=b3](b4){$h_{2}$};
 \node[cloud, below of=c3](c4){$i$};
  \node[cloud, below of=a4](a5){$j$};
 \node[cloud, below of=b4](b5){$k$};
 \node[cloud, below of=c4](c5){$i$};


\draw (a1) -- (a2); 
    \draw (b1.south) -- ++(0.0, -0.5) -| ++(1.0, -0.5) -- ++(0.0,-0.0)-- ++(-1.0, 0.)--  (b2.north);
    \draw (c1.south) -- ++(0.0,-0.5) -| ++(-1.0, -0.5) -- ++(0.0,-0.0)-- ++(1.0, 0.)--  (c2.north);
\draw (c2) -- (c3); 
    \draw (a2.south) -- ++(0.0, -0.5) -| ++(1.0, -0.5) -- ++(0.0,-0.0)-- ++(-1.0, 0.)--  (a3.north);
    \draw (b2.south) -- ++(0.0,-0.5) -| ++(-1.0, -0.5) -- ++(0.0,-0.0)-- ++(1.0, 0.)--  (b3.north);
\draw (a3) -- (a4); 
    \draw (b3.south) -- ++(0.0, -0.5) -| ++(1.0, -0.5) -- ++(0.0,-0.0)-- ++(-1.0, 0.)--  (b4.north);
    \draw (c3.south) -- ++(0.0,-0.5) -| ++(-1.0, -0.5) -- ++(0.0,-0.0)-- ++(1.0, 0.)--  (c4.north);
\draw (c4) -- (c5); 
    \draw (a4.south) -- ++(0.0, -0.5) -| ++(1.0, -0.5) -- ++(0.0,-0.0)-- ++(-1.0, 0.)--  (a5.north);
    \draw (b4.south) -- ++(0.0,-0.5) -| ++(-1.0, -0.5) -- ++(0.0,-0.0)-- ++(1.0, 0.)--  (b5.north);
 \end{tikzpicture}
    \end{center}

Thus, also in this case, the normal form condition is satisfied. This implies that the map $\varphi$ defined in this Lemma is indeed a local factorability structure. 

Next, observe that the associated rewriting system is not noetherian. Indeed, we have the chain of rewritings

\begin{center}
     \begin{tikzpicture}[node distance = 2cm, auto]
 \node[cloud](a1){$a_1$};
 \node[cloud, right of=a1](b1){$b_1$};
 \node[cloud, right of=b1](c1){$c_{1}$};
  \node[cloud, right of=c1](d1){$d_{1}$};
  
 \node[cloud, below of=a1](a2){$a_{2}$};
 \node[cloud, below of=b1](b2){$b_{2}$};
  \node[cloud, below of=c1](c2){$c_{1}$};
   \node[cloud, below of=d1](d2){$d_{1}$};
   
  \node[cloud, below of=a2](a3){$a_{2}$};
 \node[cloud, below of=b2](b3){$b_{3}$};
 \node[cloud, below of=c2](c3){$c_{2}$};
  \node[cloud, below of=d2](d3){$d_{1}$};
 
  \node[cloud, below of=a3](a4){$a_{2}$};
 \node[cloud, below of=b3](b4){$b_3$};
 \node[cloud, below of=c3](c4){$c_3$};
  \node[cloud, below of=d3](d4){$d_2$};
  
  \node[cloud, below of=a4](a7){$a_2$};
 \node[cloud, below of=b4](b7){$b_4$};
 \node[cloud, below of=c4](c7){$c_4$};
 \node[cloud, below of=d4](d7){$d_2$};

    \node[cloud, below of=a7](a8){$a_{1}$};
 \node[cloud, below of=b7](b8){$b_{5}$};
 \node[cloud, below of=c7](c8){$c_{4}$};
 \node[cloud, below of=d7](d8){$d_{2}$};
 
     \node[cloud, below of=a8](a9){$a_{1}$};
 \node[cloud, below of=b8](b9){$b_{6}$};
 \node[cloud, below of=c8](c9){$c_{5}$};
 \node[cloud, below of=d8](d9){$d_{2}$};
 
      \node[cloud, below of=a9](a10){$a_{1}$};
 \node[cloud, below of=b9](b10){$b_{6}$};
 \node[cloud, below of=c9](c10){$c_{6}$};
 \node[cloud, below of=d9](d10){$d_{1}$};
 
       \node[cloud, below of=a10](a11){$a_{1}$};
 \node[cloud, below of=b10](b11){$b_{1}$};
 \node[cloud, below of=c10](c11){$c_{1}$};
 \node[cloud, below of=d10](d11){$d_{1}$};
 
    \node[below of=a11](a12){$\ldots$};
 \node[below of=b11](b12){$\ldots$};
 \node[below of=c11](c12){$\ldots$};
 \node[below of=d11](d12){$\ldots$};

\draw (c1) -- (c2); 
\draw (d1) -- (d2); 
    \draw (a1.south) -- ++(0.0, -0.5) -| ++(1.0, -0.5) -- ++(0.0,-0.0)-- ++(-1.0, 0.)--  (a2.north);
    \draw (b1.south) -- ++(0.0,-0.5) -| ++(-1.0, -0.5) -- ++(0.0,-0.0)-- ++(1.0, 0.)--  (b2.north);
\draw (d2) -- (d3); 
\draw (a2) -- (a3); 
    \draw (b2.south) -- ++(0.0, -0.5) -| ++(1.0, -0.5) -- ++(0.0,-0.0)-- ++(-1.0, 0.)--  (b3.north);
    \draw (c2.south) -- ++(0.0,-0.5) -| ++(-1.0, -0.5) -- ++(0.0,-0.0)-- ++(1.0, 0.)--  (c3.north);
\draw (a3) -- (a4); 
\draw (b3) -- (b4);
    \draw (c3.south) -- ++(0.0, -0.5) -| ++(1.0, -0.5) -- ++(0.0,-0.0)-- ++(-1.0, 0.)--  (c4.north);
    \draw (d3.south) -- ++(0.0,-0.5) -| ++(-1.0, -0.5) -- ++(0.0,-0.0)-- ++(1.0, 0.)--  (d4.north);
\draw (a4) -- (a7); 
\draw (d4) -- (d7); 
    \draw (b4.south) -- ++(0.0, -0.5) -| ++(1.0, -0.5) -- ++(0.0,-0.0)-- ++(-1.0, 0.)--  (b7.north);
    \draw (c4.south) -- ++(0.0,-0.5) -| ++(-1.0, -0.5) -- ++(0.0,-0.0)-- ++(1.0, 0.)--  (c7.north);
    
\draw (c7) -- (c8); 
\draw (d7) -- (d8); 
    \draw (a7.south) -- ++(0.0, -0.5) -| ++(1.0, -0.5) -- ++(0.0,-0.0)-- ++(-1.0, 0.)--  (a8.north);
    \draw (b7.south) -- ++(0.0,-0.5) -| ++(-1.0, -0.5) -- ++(0.0,-0.0)-- ++(1.0, 0.)--  (b8.north);
\draw (d8) -- (d9); 
\draw (a8) -- (a9); 
    \draw (b8.south) -- ++(0.0, -0.5) -| ++(1.0, -0.5) -- ++(0.0,-0.0)-- ++(-1.0, 0.)--  (b9.north);
    \draw (c8.south) -- ++(0.0,-0.5) -| ++(-1.0, -0.5) -- ++(0.0,-0.0)-- ++(1.0, 0.)--  (c9.north);
\draw (a9) -- (a10); 
\draw (b9) -- (b10);
    \draw (c9.south) -- ++(0.0, -0.5) -| ++(1.0, -0.5) -- ++(0.0,-0.0)-- ++(-1.0, 0.)--  (c10.north);
    \draw (d9.south) -- ++(0.0,-0.5) -| ++(-1.0, -0.5) -- ++(0.0,-0.0)-- ++(1.0, 0.)--  (d10.north);
\draw (a10) -- (a11); 
\draw (d10) -- (d11); 
    \draw (b10.south) -- ++(0.0, -0.5) -| ++(1.0, -0.5) -- ++(0.0,-0.0)-- ++(-1.0, 0.)--  (b11.north);
    \draw (c10.south) -- ++(0.0,-0.5) -| ++(-1.0, -0.5) -- ++(0.0,-0.0)-- ++(1.0, 0.)--  (c11.north);
    
\draw (c11) -- (c12); 
\draw (d11) -- (d12); 
    \draw (a11.south) -- ++(0.0, -0.5) -| ++(1.0, -0.5) -- ++(0.0,-0.0)-- ++(-1.0, 0.)--  (a12.north);
    \draw (b11.south) -- ++(0.0,-0.5) -| ++(-1.0, -0.5) -- ++(0.0,-0.0)-- ++(1.0, 0.)--  (b12.north);
 \end{tikzpicture}
    \end{center}

So we obtain a cycle in the rewritings. Observe that this cycle is non-trivial: For example, note that if $a_1=a_2$, there had to be a zigzag of rewritings from $a_1$ to $a_2$. But none of the non-trivial rewriting rules starts or ends with $a_1$, so this is impossible. 

Last, we are going to show that the monoid $M$ defined by the local factorability structure as above is right-cancellative. We are going to rely on the $\varphi$-normal forms in this monoid. Assume $M$ is not right-cancellative. Then there are some elements $x,y,z\in M$ such that $xz=yz$, but $x\neq y$. Obviously, $z\neq 1$. We want to consider an example with minimal $\E$-word length of $z$. Then $N_{\E}(z)=1$: Indeed, otherwise there is an $s\in \E$ and $w\in M\setminus\{1\}$ such that $z=ws$. Then $(xw)s=(yw)s$, so that either $xw=yw$ contradicting the minimality of $z$, or $xw\neq yw$, which is again contradicting the minimality. 

So we know that there are $x\neq y\in M$ and $z\in \E$ such that $xz=yz$. Let $(x_m,\ldots, x_1)$ be the normal form of $x$ and $(y_n, \ldots, y_1)$ the normal form of $y$. We may choose an example where $m+n$ is minimal. First, we want to demonstrate that $m+n\geq 3$. Indeed, $m+n$ has to be at least $1$ by definition. Then, if $m=1$ and $n=0$ (the other case can be treated symmetrically), we have $x_1z=z$, which in particular implies $\varphi(x_1,z)=(1,z)$. But there is no pair $(x_1,z)$ with $x_1\neq 1$ and image $(1,z)$ under $\varphi$, so this cannot happen. More generally, we can exclude the case $n=0$ (and so symmetrically $m=0$): We compute the normal form of $(x_m, \ldots, x_1, z)$ using the definition. We already know that this normal form has to be $z$. Note that in the step where the number of non-trivial letters in the string reduces to one, we have to get a $1$ out of a pair of elements of $\E$ by applying $\varphi$, so that $z$ must be $e_i$ with $i\in \{2,3\}$. But all pairs of the form $(x_1,
 e_i)$ 
are stable, so that $(x_m, \ldots, x_1, e_i)$ has to be already the normal form, contradicting the assumption $m+n>0$. Furthermore, we can exclude the case $m=n=1$, observing that in the definition of $\varphi$, there are no distinct pairs $(x_1, z)$, $(y_1,z)$ which are mapped to the same pair by $\varphi$.

Now we know $m+n\geq 3$. In order to compute the normal forms of tuples $(x_m, \ldots, x_1, z)$ and $(y_n,\ldots, y_1, z)$, we first have to apply $\varphi_m\varphi_{m-1}\ldots \varphi_1$ or $\varphi_n\varphi_{n-1}\ldots \varphi_1$, respectively.  Observe that if both $(x_1,z)$ and $(y_1, z)$ are stable pairs, then we would have two different normal forms for the same element $xz=yz$ of $M$, yielding a contradiction. So we may assume $\varphi(x_1,z)=(u_1, v_1)$ with $(x_1, z)\neq (u_1,v_1)$. Note that this in particular implies $z\neq v_1$ by the definition of $\varphi$. 

Consider now the case where applications of both $\varphi_m\varphi_{m-1}\ldots \varphi_1$ and $\varphi_n\varphi_{n-1}\ldots \varphi_1$ do not produce a $1$. 
Then the results have to be equal, in particular, $n=m$. Furthermore, this implies $\varphi(x_1, z)=(u_1,v_1)$ and $\varphi(y_1, z)=(t_1, v_1)$ with the same $v_1$, in particular, $(y_1,z)\neq (t_1,v_1)$. There are no two distinct pairs with same right letters in the list defining $\varphi$, i.e., if $\varphi(\alpha, \beta)=(\gamma, \delta)$ and $\varphi(\widetilde{\alpha}, \beta)=(\widetilde{\gamma}, \delta)$ and $\beta \neq \delta$, we know that $\widetilde{\alpha}=\alpha$ and $\widetilde{\gamma}=\gamma$. Hence, we may conclude that $x_1=y_1$ and $u_1=t_1$. Now since $x\neq y$, we know that $x_{m}\ldots x_2\neq y_n\ldots y_2$. The normal forms of $(x_m, \ldots, x_2, u_1)$ and $(y_n, \ldots, y_2, u_1)$ have to coincide since they are the same as the normal form of $xz=yz$ with the right-most letter $v_1$ deleted, and so the elements $x_m\ldots x_2u_1$ and $y_n\ldots y_2u_1$ have to coincide. But this contradicts the minimality assumption on $n+m$. 

So we have to consider the case where we obtain a $1$ while building the normal form. We may assume that application of $\varphi_p$ to 
\begin{eqnarray*}
 \varphi_{p-1}\ldots \varphi_1(x_n, x_{n-1}, \ldots, x_1,z)
\end{eqnarray*}
 produces the first $1$. Define $u_q$ and $v_q$ inductively via $u_0:=z$ and $\varphi(x_{q+1}, u_q)=(u_{q+1}, v_{q+1})$ for $0\leq q\leq p-1$. By assumption, $u_p=1$. (We illustrate the situation by the following picture.)
\begin{center}
\tikzstyle{cloud} = [draw, rectangle, node distance=1.4cm, minimum size=4.55mm]
 \begin{tikzpicture}[scale=0.7]
   \node[cloud](a1){$x_n$};
 \node[right of=a1](b1){$\ldots$};
 \node[cloud, right of=b1](c1){$x_{p}$};
  \node[cloud, right of=c1](d1){$x_{p-1}$};
 \node[right of=d1, xshift=0.5cm](g1){$\ldots$};
 \node[cloud, right of=g1](h1){$x_3$};
  \node[cloud, right of=h1](i1){$x_2$};
 \node[cloud, right of=i1](j1){$x_1$};
 \node[cloud, right of=j1](k1){$z$};
  \node[cloud, below of=a1](a2){$x_n$};
 \node[right of=a2](b2){$\ldots$};
 \node[cloud, right of=b2](c2){$x_{p}$};
  \node[cloud, right of=c2](d2){$x_{p-1}$};
 \node[right of=d2, xshift=0.5cm](g2){$\ldots$};
 \node[cloud, right of=g2](h2){$x_3$};
  \node[cloud, right of=h2](i2){$x_2$};
 \node[cloud, right of=i2](j2){$u_1$};
 \node[cloud, right of=j2](k2){$v_1$};

  \node[cloud, below of=a2](a3){$x_n$};
 \node[right of=a3](b3){$\ldots$};
 \node[cloud, right of=b3](c3){$x_{p}$};
  \node[cloud, right of=c3](d3){$x_{p-1}$};
 \node[right of=d3, xshift=0.5cm](g3){$\ldots$};
 \node[cloud, right of=g3](h3){$x_3$};
  \node[cloud, right of=h3](i3){$u_2$};
 \node[cloud, right of=i3](j3){$v_2$};
 \node[cloud, right of=j3](k3){$v_1$};

  \node[cloud, below of=a3](a4){$x_n$};
 \node[right of=a4](b4){$\ldots$};
 \node[cloud, right of=b4](c4){$x_{p}$};
  \node[cloud, right of=c4](d4){$x_{p-1}$};
 \node[right of=d4, xshift=0.5cm](g4){$\ldots$};
 \node[cloud, right of=g4](h4){$u_3$};
  \node[cloud, right of=h4](i4){$v_3$};
 \node[cloud, right of=i4](j4){$v_2$};
 \node[cloud, right of=j4](k4){$v_1$};

  \node[ below of=a4](a5){$\ldots$};
 \node[below of=b4](b5){$\ldots$};
 \node[ below of=c4](c5){$\ldots$};
  \node[ below of=d4](d5){$\ldots$};
 \node[below of=g4](g5){$\ldots$};
 \node[below of=h4](h5){$\ldots$};
  \node[below of=i4](i5){$\ldots$};
 \node[ below of=j4](j5){$\ldots$};
 \node[below of=k4](k5){$\ldots$};

  \node[cloud, below of=a5, yshift=0.7cm](a6){$x_n$};
 \node[right of=a6](b6){$\ldots$};
 \node[cloud, right of=b6](c6){$x_{p}$};
  \node[cloud, right of=c6](d6){$u_{p-1}$};
 \node[right of=d6, xshift=0.5cm](g6){$\ldots$};
 \node[cloud, right of=g6](h6){$v_4$};
  \node[cloud, right of=h6](i6){$v_3$};
 \node[cloud, right of=i6](j6){$v_2$};
 \node[cloud, right of=j6](k6){$v_1$};

  \node[cloud, below of=a6](a7){$x_n$};
 \node[right of=a7](b7){$\ldots$};
 \node[cloud, right of=b7](c7){$1$};
  \node[cloud, right of=c7](d7){$v_{p}$};
 \node[right of=d7, xshift=0.5cm](g7){$\ldots$};
 \node[cloud, right of=g7](h7){$v_4$};
  \node[cloud, right of=h7](i7){$v_3$};
 \node[cloud, right of=i7](j7){$v_2$};
 \node[cloud, right of=j7](k7){$v_1$};

 \draw (a1)--(a2);
  \draw (c1)--(c2);
  \draw (d1)--(d2);
  \draw (h1)--(h2);
  \draw (i1)--(i2);
\draw (j1.south) -- ++(0.0, -0.5) -| ++(1.0, -0.5) -- ++(0.0,-0.0)-- ++(-1.0, 0.)--  (j2.north);
    \draw (k1.south) -- ++(0.0,-0.5) -| ++(-1.0, -0.5) --node[anchor=west, yshift=0.15cm]{$\varphi_1$} ++(0.0,-0.0)-- ++(1.0, 0.)--  (k2.north);
 
\draw (a2)--(a3);
  \draw (c2)--(c3);
  \draw (d2)--(d3);
  \draw (h2)--(h3);
  \draw (k2)--(k3);
\draw (i2.south) -- ++(0.0, -0.5) -| ++(1.0, -0.5) -- ++(0.0,-0.0)-- ++(-1.0, 0.)--  (i3.north);
    \draw (j2.south) -- ++(0.0,-0.5) -| ++(-1.0, -0.5) --node[anchor=west, yshift=0.15cm]{$\varphi_2$} ++(0.0,-0.0)-- ++(1.0, 0.)--  (j3.north);

\draw (a3)--(a4);
  \draw (c3)--(c4);
  \draw (d3)--(d4);
  \draw (j3)--(j4);
  \draw (k3)--(k4);
\draw (h3.south) -- ++(0.0, -0.5) -| ++(1.0, -0.5) -- ++(0.0,-0.0)-- ++(-1.0, 0.)--  (h4.north);
    \draw (i3.south) -- ++(0.0,-0.5) -| ++(-1.0, -0.5) --node[anchor=west, yshift=0.15cm]{$\varphi_3$} ++(0.0,-0.0)-- ++(1.0, 0.)--  (i4.north);

\draw (a6)--(a7);
  \draw (j6)--(j7);
  \draw (k6)--(k7);
  \draw (h6)--(h7);
  \draw (i6)--(i7);
\draw (c6.south) -- ++(0.0, -0.5) -| ++(1.0, -0.5) -- ++(0.0,-0.0)-- ++(-1.0, 0.)--  (c7.north);
    \draw (d6.south) -- ++(0.0,-0.5) -| ++(-1.0, -0.5) --node[anchor=west, yshift=0.15cm]{$\varphi_p$} ++(0.0,-0.0)-- ++(1.0, 0.)--  (d7.north);
 \end{tikzpicture}

\end{center}
Observe that by definition of the normal form, we have 
\begin{eqnarray*}
 \NF(x_{p-1}, \ldots x_1,z)=\varphi_{p-1}\ldots\varphi_1(x_{p-1}, \ldots x_1,z)=(u_{p-1}, v_{p-1},\ldots, v_1).
\end{eqnarray*}
In particular, this tuple is totally stable. 

Now since $\varphi(x_p, u_{p-1})=(1,v_p)$ and we know that $x_p\neq 1$ and $u_{p-1}\neq 1$, we conclude that $v_p=e_r$ with $r\in\{2,3\}$. Thus, the pair $(x_p, u_{p-1})$ equals $(a_{4-r}, b_{3r-3})$. Furthermore, observe that there is no unstable pair which ends with $e_r$ on the right, so the tuple $(x_{m}, \ldots, x_{p+1}, e_r)$ is totally stable. 

If we assume $p=1$, then $z=b_{3r-3}$ and $x_1=a_{4-r}$ and $(x_m, \ldots, x_2, e_r)$ is the normal form of $xz=yz$. So in particular, this is the normal form of the tuple $(y_n,\ldots, y_1, b_{3r-3})$. In particular, since $b_{3r-3}\neq e_r$, the tuple $(y_1, b_{3r-3})$ has to be unstable, so that $y_1=a_{4-r}$ follows. This now implies that $y_n\ldots y_2e_r=x_m\ldots x_2 e_r$, while $y_n\ldots y_2\neq x_m\ldots x_2$, contradicting the minimality of $n+m$. 

So we have shown that $p\geq 2$. To simplify the notation, we consider first the case $r=2$. Here, we know that $x_{p}=a_2$ and $u_{p-1}=b_3$. Since the pair $(x_p, x_{p-1})$ was stable, we know that $x_{p-1}$ is neither $b_3$ nor $b_4$. In particular, the pair $(x_{p-1}, u_{p-2})$ cannot be stable since $u_{p-1}\neq x_{p-1}$. Thus, $(x_{p-1}, u_{p-2})$ has to be an unstable pair such that $\varphi(x_{p-1}, u_{p-2})=(b_3, v_{p-1})$. This implies $x_{p-1}=b_2$, $u_{p-2}=c_1$ and $v_{p-1}=c_2$.

If $p=2$, then $z=c_1$ and we have 
\begin{eqnarray*}
 \varphi_m\varphi_{m-1}\ldots \varphi_1(x_m, \ldots, x_1, z) &=&  \varphi_m\varphi_{m-1}\ldots \varphi_1(x_m, \ldots, x_3, a_2, b_2, c_1)\\
&=& \varphi_m\varphi_{m-1}\ldots \varphi_2(x_m, \ldots, x_3, a_2, b_3, c_2)\\
&=& \varphi_m\varphi_{m-1}\ldots \varphi_3(x_m, \ldots, x_3, 1, e_2, c_2)\\
&=& ( 1, x_m, \ldots, x_3, e_2, c_2).
\end{eqnarray*}
Computing the normal form of this tuple is done after applying $\varphi_1$ since then we obtain the tuple $(x_m, \ldots, x_3, f_2, g_2)$ and all pairs of the form $(x_3, f_2)$ are stable. So we know that $(y_n, \ldots, y_1, c_1)$ is not in the normal form. In particular, the pair $(y_1, c_1)$ is unstable and so $y_1=b_2$, so that $\varphi_n\ldots \varphi_1$ yields a $c_2$ in the right-most place when applied to $(y_n, \ldots, y_1, c_1)$. Since the normal form of this tuple ends with $g_2$ on the right, we know that applying $\varphi_n\ldots \varphi_1$ must have produced a $1$. If $y_2$ would not be equal to $a_2$, the triple $(y_2, \varphi(y_1, c_1))=(y_2, b_3, c_2)$ would be stable and thus the whole tuple $(y_n, \ldots, y_2, b_3, c_2)$, which contradicts our assumptions. So $y_2=a_2$, and thus we have the equality
\begin{eqnarray*}
 (y_n\ldots y_3e_2)c_2 &=& (y_n\ldots y_3a_2b_3)c_2 = (y_n\ldots y_3a_2)b_3c_2 =y_n\ldots y_3y_2b_2c_1\\
&=& y_n\ldots y_3y_2y_1z=x_m\ldots x_1z=(x_m\ldots x_3e_2)c_2.
\end{eqnarray*}
Now since $x_1=y_1$ and $x_2=y_2$ and $x\neq y$, we conclude that $x_m\ldots x_3\neq y_n\ldots y_3$. Then either $(y_n\ldots y_3)e_2=(x_m\ldots x_3)e_2$ and this produces a counterexample to right-cancellativity contradicting the minimality of $n+m$, or $(y_n\ldots y_3)e_2\neq(x_m\ldots x_3)e_2$, then we have a contradictory counterexample due to 
\begin{eqnarray*}
 (y_n\ldots y_3e_2)c_2 =(x_m\ldots x_3e_2)c_2.
\end{eqnarray*}

So we know that $p\geq 3$. Recall that $x_{p}=a_2$, $u_{p-1}=b_3$, $x_{p-1}=b_2$, $u_{p-2}=c_1$ and $v_{p-1}=c_2$. Now the pair $(x_{p-1}, x_{p-2})$ is stable, thus $x_{p-2}$ is not $c_1$. As before, this implies that the pair $(x_{p-2}, u_{p-3})$ is unstable and is mapped via $\varphi$ to $(c_1, v_{p-2})$. However, this is impossible. This completes the proof for the case $r=2$.

The proof in case $r=3$ is completely analogous. This can be also seen as follows: There is a map $\gamma\colon M\to M$, defined below by its values on generators, which is a monoid homomorphism and involution and which maps $e_2$ to $e_3$ and preserves $\varphi$. This allows to avoid the symmetrical argument. The map $\gamma$ is given as follows.

\begin{align*}
 a_1 &\mapsto  a_2\\
 a_2  &\mapsto  a_1\\
 b_1  &\mapsto  b_4\\
 b_2  &\mapsto  b_5\\
 b_3  &\mapsto  b_6\\
 b_4  &\mapsto  b_1\\
 b_5  &\mapsto  b_2\\
 b_6  &\mapsto b_3\\
 c_1  &\mapsto  c_4\\
 c_2  &\mapsto  c_5\\
 c_3  &\mapsto  c_6\\
 c_4  &\mapsto  c_1\\
 c_5  &\mapsto  c_2\\
 c_6  &\mapsto  c_3
 \end{align*}
 
 \begin{align*}
 d_1  &\mapsto d_2\\
 d_2  &\mapsto  d_1\\
 e_2  &\mapsto  e_3\\
 e_3  &\mapsto e_2\\
 f_2  &\mapsto  f_3\\
 f_3  &\mapsto  f_2\\
 g_2  &\mapsto  g_3\\
 g_3  &\mapsto  g_2\\
 h_2  &\mapsto  h_3\\
 h_3  &\mapsto  h_2\\
  i   &\mapsto  i\\
  j   &\mapsto  j\\
  k   &\mapsto  k
\end{align*}

This completes the proof. 
\end{pf}

\end{appendix}

\bibliographystyle{plain}
\bibliography{../UniKram/Garside}
\end{document}